\documentclass[11pt]{amsart}

\usepackage[
paper=a4paper,
text={147mm,230mm},centering
]{geometry}

\iftrue
\makeatletter
\def\@settitle{%
  \vspace*{-20pt}
  \begin{flushleft}%
    \baselineskip14\p@\relax
    \normalfont\bfseries\LARGE
    \@title
  \end{flushleft}%
}
\def\@setauthors{
  \begingroup
  \def\thanks{\protect\thanks@warning}%
  \trivlist
  \large \@topsep30\p@\relax
  \advance\@topsep by -\baselineskip
  \item\relax
  \author@andify\authors
  \def\\{\protect\linebreak}%
  \authors
  \ifx\@empty\contribs
  \else
    ,\penalty-3 \space \@setcontribs
    \@closetoccontribs
  \fi
  \normalfont
  \endtrivlist
  \endgroup
}
\def\@setabstracta{%
    \ifvoid\abstractbox
  \else
    \skip@25\p@ \advance\skip@-\lastskip
    \advance\skip@-\baselineskip \vskip\skip@
    \box\abstractbox
    \prevdepth\z@ 
    \vskip-10pt
  \fi
}
\renewenvironment{abstract}{%
  \ifx\maketitle\relax
    \ClassWarning{\@classname}{Abstract should precede
      \protect\maketitle\space in AMS document classes; reported}%
  \fi
  \global\setbox\abstractbox=\vtop \bgroup
    \normalfont\small
    \list{}{\labelwidth\z@
      \leftmargin0pc \rightmargin\leftmargin
      \listparindent\normalparindent \itemindent\z@
      \parsep\z@ \@plus\p@
      
    }%
    \item[\hskip\labelsep\bfseries\abstractname.]%
}{%
  \endlist\egroup
  \ifx\@setabstract\relax \@setabstracta \fi
}
\def\section{\@startsection{section}{1}%
  \z@{-1.2\linespacing\@plus-.5\linespacing}{.8\linespacing}%
  {\normalfont\bfseries\large}}
\def\subsection{\@startsection{subsection}{2}%
  \z@{-.8\linespacing\@plus-.3\linespacing}{.3\linespacing\@plus.2\linespacing}%
  {\normalfont\bfseries}}
\def\subsubsection{\@startsection{subsubsection}{3}%
  \z@{.7\linespacing\@plus.1\linespacing}{-1.5ex}%
  {\normalfont\itshape}}
\def\@secnumfont{\bfseries}
\makeatother
\fi 

\usepackage{amssymb}
\usepackage{mathrsfs}
\usepackage[all]{xy}
\usepackage{graphicx,color,float}
\usepackage[bookmarks]{hyperref}
\usepackage{pinlabel}
\usepackage[textwidth=1.3in,color=yellow]{todonotes}
\usepackage{amsmath}
\usepackage{pb-diagram,pb-xy}
\usetikzlibrary{calc}
\usepackage{tikz}
\usetikzlibrary{arrows,matrix,positioning}
\usepackage{multirow, url}
\usepackage{stmaryrd}
\usepackage{mathtools}

\raggedbottom

\oddsidemargin=1in
\textwidth=\paperwidth \advance\textwidth by-2\oddsidemargin
\advance\oddsidemargin by-1in
\evensidemargin=\oddsidemargin
\calclayout

\theoremstyle{plain}
\newtheorem{theorem}{Theorem}[section]
 
\newtheorem{thmx}{Theorem}

\newtheorem{proposition}[theorem]{Proposition}
\newtheorem{lemma}[theorem]{Lemma}
\newtheorem{corollary}[theorem]{Corollary}

\theoremstyle{definition}
\newtheorem{question}[theorem]{Question}

\newtheorem{definition}[theorem]{Definition}
\newtheorem{example}[theorem]{Example}

\theoremstyle{remark}
\newtheorem{remark}[theorem]{Remark}

\renewcommand{\tilde}{\widetilde}

\newcommand{\ddge}{\rotatebox[origin=c]{270}{$\leq$}}

\newcommand{\C}{\mathbb{C}}

\newcommand{\Q}{\mathbb{Q}}

\newcommand{\bS}{\mathbb{S}}

\newcommand{\R}{\mathbb{R}}

\newcommand{\Z}{\mathbb{Z}}

\newcommand{\CP}{\mathbb{C}P}

\def\mcal{\mathcal}
\def\frak{\mathfrak}
\def\scr{\mathscr}

\numberwithin{equation}{section} \numberwithin{table}{section}

\def\to{\mathchoice{\longrightarrow}{\rightarrow}{\rightarrow}{\rightarrow}}
\makeatletter
\newcommand{\shortxra}[2][]{\ext@arrow 0359\rightarrowfill@{#1}{#2}}
\def\longrightarrowfill@{\arrowfill@\relbar\relbar\longrightarrow}
\newcommand{\longxra}[2][]{\ext@arrow 0359\longrightarrowfill@{#1}{#2}}

\makeatother
\numberwithin{equation}{section}

\begin{document}                                                                          
\title[Disk potential functions for polygon spaces]{Disk potential functions for polygon spaces}

\author{Yoosik Kim}
\address{Department of Mathematics and Institute of Mathematical Science, Pusan National University}
\email{yoosik@pusan.ac.kr}

\author{Siu-Cheong Lau}
\address{Department of Mathematics and Statistics, Boston University}
\email{lau@math.bu.edu}

\author{Xiao Zheng}
\address{The Institute of Mathematical Sciences, The Chinese University of Hong Kong}
\email{xiaozh259@gmail.com}

\begin{abstract}
We derive a Floer theoretical SYZ mirror for an equilateral and generic polygon space. The disk potential function of the monotone torus fiber of the caterpillar bending system is calculated by computing non-trivial open Gromov--Witten invariants from the structural result of the monotone Fukaya category, the topology of fibers of completely integrable systems, and toric degenerations. Then, combining the result with the work of Nohara--Ueda \cite{NoharaUedaGraclus} and Marsh--Rietsch \cite{MarshRietsch}, we obtain the disk potential functions of bending systems and produce a mirror cluster variety of type $A$ without frozen variables via Lagrangian Floer theory.
\end{abstract}


\maketitle
\setcounter{tocdepth}{1} 
\tableofcontents

\section{Introduction}

For a tuple $\mathbf{r} = (r_1, r_2, \cdots, r_{n+3})$ of positive numbers, a polygon space $\mcal{M}_\mathbf{r}$ is defined to be the moduli space of vectors $(v_1, v_2, \cdots, v_{n+3})$ in $\R^3$ with side length $\mathbf{r}$ and $\sum_{j=1}^{n+3} v_j = 0$ modulo orientation preserving isometries, that is, 
$$
\mcal{M}_\mathbf{r} = \left\{ \mathbf{v} = (v_1, v_2, \dots, v_{n+3}) \in \prod_{j=1}^{n+3} \bS^2(r_j) \colon \sum_{j=1}^{n+3} v_j = 0 \right\} \slash \mathrm{SO}(3).
$$ 
The polygon space can be viewed and constructed in many different ways. It has been broadly investigated from various aspects of geometry and topology. Also, there have been its applications to physics and chemistry as the polygon space provides a model for the moduli spaces of constraint brownian motions and polymers. 
 
The main purpose of this paper is to explore polygon spaces from the aspect of Lagrangian Floer theory and SYZ mirror symmetry. Each polygon space $\mcal{M}_\mathbf{r}$ carries a completely integrable system, called a \emph{bending system}, constructed by Kapovich--Millson  \cite{KapovichMillson}. Each component of the system generates a Hamiltonian bending $\bS^1$-action along the diagonal corresponding to the component. A triangulation of a polygon, a representative of an element of $\mcal{M}_\mathbf{r}$, obtained by adding non-crossing diagonals gives rise to a bending system, a completely integrable system consisting of moment maps of the bending actions. This paper concerns Lagrangian Floer theory of a Lagrangian torus orbit of this system. 

If the polygon space $\mcal{M}_\mathbf{r}$ is a toric manifold and a bending system is a toric integrable system, the study of the bending system boils down to the toric case, which has been extensively studied. Lagrangian Floer theory for toric fibers of toric manifolds was developed in Fukaya--Oh--Ohta--Ono \cite{FOOOToric1, FOOOToric2}, see also Woodward \cite{Woodward}. In particular, when the toric manifold is Fano, Cho--Oh in \cite{ChoOh} computed the disk potential function, which agrees with Givental--Hori--Vafa potential. The holomorphic disks of Maslov index two bounded by a toric fiber are in one-to-one correspondence with the facets of its moment polytope. Such a disk emanating from the inverse image of a facet is called \emph{basic}. The second named author with Chan, Leung, and Tseng in \cite{ChanLauLeungTseng} computed the disk potential function of  semi-Fano toric manifold.

In this paper, we shall study \emph{non-toric} bending systems. Especially, we are concerned with bending systems on the space of equilateral polygons with a fixed length. The given tuple $\mathbf{r}$ is called \emph{equilateral} if the lengths of all edges are equal, that is, $r_1 = \cdots = r_{n+3}$. In the case where $\dim \mcal{M}_\mathbf{r} \geq 4$, every bending system on $\mcal{M}_\mathbf{r}$ is \emph{not} toric because it always admits a fiber having an $\bS^2$-factor. The main part of this paper focuses on the disk potential function of the system, which can be employed to compute deformed Floer cohomology and to provide (a cluster chart of) Landau--Ginzburg mirror of the polygon space. 

A Floer theoretical SYZ mirror for the space of equilateral polygons is a Landau--Ginzburg (LG) model, which is obtained by gluing the charts arising from the moduli space of deformations of a bending torus fiber. To describe the mirror LG model systematically, consider the complex Grassmannian $\mathrm{Gr}(2, \C^{n+3})$
 of two planes in the complex vector space $\C^{n+3}$, regarding it as an embedded subvariety of $\CP(\wedge^2 \C^{n+3})$ via the Pl\"{u}cker embedding. We denote the Pl\"{u}cker coordinates for $\CP(\wedge^2 \C^{n+3})$ by $\{ p_{i,j} \mid 1 \leq i < j \leq n+3 \}$. Regard the variable $p_{i,j}$ in the affine chart $p_{1, 2} \neq 0$ as affine coordinate $p_{i,j}/p_{1,2}$ by setting $p_{i,j} \coloneqq p_{i,j}/p_{1,2}$. 
 
The given tuple $\mathbf{r}$ is called \emph{generic} if $\mcal{M}_\mathbf{r}$ is a smooth algebraic variety. The main theorem of this paper is then stated below.

\begin{thmx}
Suppose that $\mathbf{r} = (r_1, r_2, \cdots, r_{n+3})$ is equilateral and generic (and hence $n$ is {even}). Let 
$$
\check{X} \coloneqq (\mathrm{Gr}(2, \C^{n+3})  \backslash \{ p_{1,2} p_{2,3} \cdots p_{n+2,n+3} p_{1,n+3} = 0\} ) 
\cap \{ p_{1,2} = \cdots = p_{n+2,n+3} = p_{1,n+3} \}
$$
in $\CP(\wedge^2 \C^{n+3})$. Passing $\check{X}$ to a subvariety of the affine chart given by $p_{1, 2} \neq 0$, regard the variable $p_{i,j}$ as affine coordinate $p_{i,j}/p_{1,2}$ by setting $p_{i,j} \coloneqq p_{i,j}/p_{1,2}$. Then a Floer theoretical SYZ mirror of the polygon space $\mcal{M}_\mathbf{r}$ arising from the bending systems is a Landau--Ginzburg model consisting of the mirror space $\check{X}$ together with the superpotential 
\begin{equation}\label{equ_diskpotentialcomp}
W_\mathrm{Bend} (\mathbf{p}) = \sum_{i=1}^{n+3}p_{i, i +2} \colon \check{X} \to \C.
\end{equation}

Moreover, this mirror has a cluster structure of $A$-type without frozen variables. The seeds of the cluster variety are parametrized by the set of triangulations of the regular $(n+3)$-gon by adding non-crossing diagonals.
\end{thmx}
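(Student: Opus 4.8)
I would break the statement into three tasks: (i) compute the disk potential function of the monotone Lagrangian torus fiber of one distinguished bending system --- the \emph{caterpillar} system; (ii) propagate that computation to every bending system by a wall-crossing argument and identify the resulting glued chamber with $\check X$; (iii) extract the cluster structure.

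For (i), I would fix the caterpillar triangulation of the regular $(n+3)$-gon, locate the unique monotone torus orbit $L$ of the corresponding bending system, and compute its disk potential. Since $\dim\mcal{M}_\mathbf{r}\geq 4$ the system is non-toric --- some fibers carry an $\bS^2$-factor --- so the toric count of Maslov-index-two disks is not directly available. I would instead combine three inputs: (a) the structural description of the monotone Fukaya category of $\mcal{M}_\mathbf{r}$, which forces the critical values of $W$ to be eigenvalues of quantum multiplication by $c_1$ and thereby constrains the possible disk classes; (b) a toric degeneration of $\mcal{M}_\mathbf{r}$ to a Gelfand--Cetlin-type toric variety, on which the ``visible'' part of $W$ --- the basic disks attached to the facets of the moment polytope --- is computed by the formulas of Cho--Oh and Fukaya--Oh--Ohta--Ono \cite{ChoOh, FOOOToric1, FOOOToric2}; and (c) the topology of the singular fibers of the integrable system (the vanishing $\bS^2$-cycles), which together with (a) and an explicit low-area disk count should pin down the remaining open Gromov--Witten invariants. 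The target is the closed form $W=\sum_{i=1}^{n+3}p_{i,i+2}$ once the Floer-theoretic chart is matched with an open Pl\"ucker chart of $\mathrm{Gr}(2,\C^{n+3})$.

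For (ii), I would use that any two bending systems differ by a sequence of diagonal flips, each flip being a wall-crossing between the two Lagrangian torus fibers whose effect on the Floer-theoretic coordinates is a cluster mutation; hence the disk potential transforms by the prescribed mutation rule and the charts glue to one variety. The equilateral hypothesis identifies $\mcal{M}_\mathbf{r}$ with the weight variety of $\mathrm{Gr}(2,\C^{n+3})$ cut out by equating the consecutive (edge) Pl\"ucker coordinates $p_{1,2}=\cdots=p_{n+2,n+3}=p_{1,n+3}$, and, under this identification in the affine chart $p_{1,2}\neq 0$, the Grassmannian mirror of Marsh--Rietsch \cite{MarshRietsch} together with the Gelfand--Cetlin computations of Nohara--Ueda \cite{NoharaUedaGraclus} restrict to exactly $\sum_{i=1}^{n+3}p_{i,i+2}$ on the open locus $\check X$ where the cyclically consecutive Pl\"ucker coordinates are nonvanishing. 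Matching the caterpillar chart of (i) with the Marsh--Rietsch chart is what justifies the closed form obtained in (i).

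For (iii), the cluster structure descends from the homogeneous coordinate ring of $\mathrm{Gr}(2,\C^{n+3})$, which is the type $A_n$ cluster algebra with the Pl\"ucker coordinates as cluster variables and the edge coordinates $p_{i,i+1}$ frozen; setting all frozen variables equal and then to $1$ in the chart removes them, leaving a cluster variety of type $A$ without frozen variables whose seeds are in bijection with the triangulations of the regular $(n+3)$-gon, mutations matching diagonal flips --- precisely the wall-crossing combinatorics of (ii). The main obstacle is step (i)(c): rigorously evaluating the open Gromov--Witten invariants of the non-toric monotone fiber coming from the $\bS^2$-factors --- showing that exactly the expected disk classes contribute, with the expected multiplicities --- since the toric machinery does not apply and one must orchestrate the monotone Fukaya-categorical constraints, the integrable-system topology, and the toric degeneration tightly enough to determine every coefficient.
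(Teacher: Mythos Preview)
Your three-step decomposition---(i) compute the caterpillar disk potential, (ii) propagate to all bending systems via mutations using Nohara--Ueda \cite{NoharaUedaGraclus} and Marsh--Rietsch \cite{MarshRietsch}, (iii) read off the type-$A$ cluster structure by freezing and then killing the edge Pl\"ucker variables---matches the paper's architecture exactly. Steps (ii) and (iii) are carried out essentially as you describe.

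The substantive divergence is in how step (i) is executed. You propose applying Sheridan's eigenvalue constraint to the monotone Fukaya category of $\mcal{M}_\mathbf{r}$ itself to pin down the unknown coefficients. The paper explicitly warns that this does not scale: as $n$ grows the number of undetermined invariants grows, and the global eigenvalue constraint is not sharp enough to determine them all. Instead, the paper first uses the toric degeneration (your (b)) to classify the possible Maslov-two classes and to show that each \emph{non-basic} class is supported in a small neighborhood of a single non-toric fiber---either an $\bS^2\times T^{n-2}$ fiber (at the two ends) or an $\mathrm{SO}(3)\times T^{n-3}$ fiber (in the middle). It then performs a symplectic cut on that neighborhood to embed the local piece into a low-dimensional monotone model: $\CP^1\times\CP^1$ for the $\bS^2$-fibers and $\CP^3$ for the $\mathrm{SO}(3)$-fibers. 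Sheridan's theorem is applied to \emph{these} models, where it does determine the remaining coefficient uniquely ($\kappa=2$ for the $\bS^2$-model, $\kappa=0$ for the $\mathrm{SO}(3)$-model), and a product argument transports the count back to $\mcal{M}_\mathbf{r}$.

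Two concrete points you are missing: first, the $\mathrm{SO}(3)$-fibers are just as important as the $\bS^2$-fibers---they are what force the middle coefficients $\kappa_2,\dots,\kappa_{n-1}$ to vanish; your proposal mentions only the $\bS^2$ vanishing cycles. Second, the ``explicit low-area disk count'' you allude to is not done directly; it is replaced by the local-model/symplectic-cut maneuver, which is the technical heart of the paper and the mechanism by which the computation is made independent of $n$.
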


The LG model for the polygon space $\mcal{M}_\mathbf{r}$ in~\eqref{equ_diskpotentialcomp} with an equilateral and generic tuple $\mathbf{r} = (r_1, r_2, \cdots, r_{n+3})$ can be combinatorially written from the regular $(n+3)$-gon as follows. Label the vertices of the $(n+3)$-gon by $\{1, 2, \cdots, n+3\}$ with counter-clockwise orientation. Note that each line segment joining two vertices $i$ and $j$ corresponds to the variables $p_{i,j}$ of the affine chart of $\CP(\wedge^2 \C^{n+3})$. Then the superpotential~\eqref{equ_diskpotentialcomp} is equal to the summation of all variables corresponding to a red line in the first figure of Figure~\ref{fig_bendpotential}. 

\begin{figure}[h]
	\scalebox{0.85}{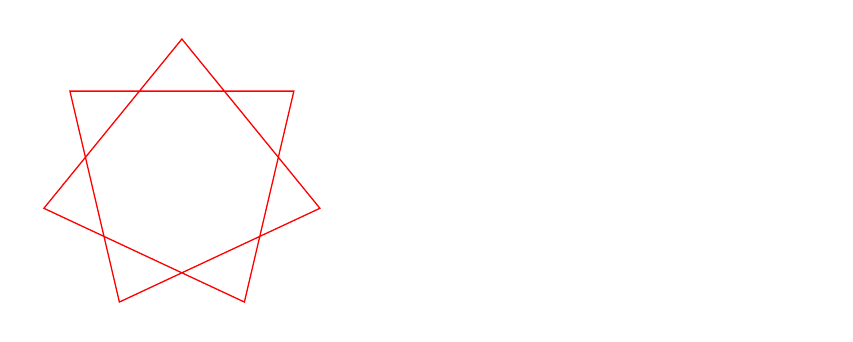}
	\caption{\label{fig_bendpotential} Bending superpotential and caterpillar bending systems}	
\end{figure}

The fiber at the center of each bending system is a monotone Lagrangian torus, see Section~\ref{subsection_diskpotential} for the precise definition of monotone Lagrangian submanifolds. The disk potential function of the monotone bending torus fiber of each bending system can be written from~\eqref{equ_diskpotentialcomp} as follows. Take a triangulation $\mcal{T}$ of the regular $(n+3)$-gon. Then the disk potential function
$$
W_\mathrm{Bend} (\mathbf{p}) |_{\mcal{U}_\mcal{T}} \colon \mcal{U}_\mcal{T} \to \C 
$$
is derived by restricting $W_\mathrm{Bend}$ to the cluster chart 
$$
\mcal{U}_\mcal{T} \coloneqq \mathrm{Spec} \left( \C [\{ p^\pm_{i,j} \colon (i,j) \in \mcal{T} \} ] \right) \subset \check{X}.
$$
Let $\mcal{T}_0$ be the triangulation obtained by adding all diagonals containing the vertex labeled by $1$ as in the second figure of Figure~\ref{fig_bendpotential}. The \emph{caterpillar bending system} is a bending system associated to $\mcal{T}_0$ and its disk potential function can be written as follows.

\begin{thmx}
The disk potential function of the caterpillar bending system of an equilateral and generic polygon space is
\begin{align}\label{equ_diskpotentialcaterpillar}
W_{\mcal{T}_0} = p_{1,3} + \frac{2}{p_{1,3}} + p_{1,n+2} + \frac{2}{p_{1,n+2}} + \sum_{j=3}^{n+1} \left( \frac{p_{1,j+1}}{p_{1,j}} + \frac{p_{1,j}}{p_{1,j+1}} + \frac{1}{p_{1,j}p_{1,j+1}} \right), 
\end{align}
which is defined on $(\C^*)^n$ with the coordinates $p_{13}, p_{14}, \cdots, p_{1, n+2}$ associated to $\mcal{T}_0$.

Moreover, the monotone caterpillar bending torus fiber cannot be displaced by any Hamiltonian diffeomorphisms.
\end{thmx}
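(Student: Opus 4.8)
The plan is to prove the two assertions in turn: first to evaluate the caterpillar disk potential $W_{\mcal T_0}$ by a disk count, and then to deduce the non-displaceability of the central fiber from the explicit formula. The central fiber $L_0$ of the caterpillar bending system is a monotone Lagrangian torus of minimal Maslov number two, so $W_{\mcal T_0}$ is a Laurent polynomial in the variables $p_{1,3},\dots,p_{1,n+2}$ dual to the bending Hamiltonians, carrying one monomial for each Maslov index two disk class bounded by $L_0$, with coefficient the corresponding one-point open Gromov--Witten invariant. First I would produce a toric degeneration of the equilateral polygon space $\mcal M_{\mathbf r}$ whose central fiber is the (in general singular) toric variety $X_{\Delta_0}$ attached to the caterpillar bending polytope $\Delta_0$, and under which $L_0$ degenerates to the central toric fiber; such a degeneration can be extracted from the Gelfand--Cetlin (Pl\"ucker) degeneration of $\Gr(2,\C^{n+3})$ together with the Gelfand--MacPherson presentation of $\mcal M_{\mathbf r}$ as a torus quotient of an open part of $\Gr(2,\C^{n+3})$, under which the caterpillar bending system descends from a Gelfand--Cetlin system. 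The faces of $\Delta_0$, recording the triangle inequalities of the triangles of $\mcal T_0$, then produce precisely the monomials occurring in \eqref{equ_diskpotentialcaterpillar}, and the toric model assigns coefficient $1$ to each face surviving the degeneration faithfully, as in the Givental--Hori--Vafa potential; what remains is to pin down the coefficients that are corrected.

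The only non-toric feature of \eqref{equ_diskpotentialcaterpillar} is that two coefficients, namely those of $p_{1,3}^{-1}$ and of $p_{1,n+2}^{-1}$, equal $2$ rather than $1$; these come from the walls of $\Delta_0$ along which a triangle of $\mcal T_0$ with two rigid edges collapses and the corresponding bending fiber acquires an $\bS^2$-factor. To establish this I would combine, as indicated in the statement, (i) the spectral decomposition $\Fuk(\mcal M_{\mathbf r})=\bigoplus_\lambda\Fuk(\mcal M_{\mathbf r})_\lambda$ of the monotone Fukaya category over the eigenvalues of $c_1\star(-)$ on $QH^*(\mcal M_{\mathbf r})$, which forces the critical values of $W_{\mcal T_0}$ into the spectrum and, through the $c_1$-compatibility of the closed--open map, leaves only a short list of admissible correction patterns; (ii) the topology of the bending fibers, analysed in the local model of a fiber over such a wall, which is an $\bS^2$-bundle over a lower-dimensional Lagrangian torus: this identifies the open Gromov--Witten invariant in question with an invariant of $\bS^2$ and yields the value $\chi(\bS^2)=2$; and (iii) the toric degeneration once more, to certify that the remaining coefficients are genuinely $1$. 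I expect ingredient (ii) to be the main obstacle: setting up the $\bS^2$-local model precisely, matching the open Gromov--Witten invariants of $(\mcal M_{\mathbf r},L_0)$ with those of the local model with the correct multiplicities and signs, and verifying this matching is unaffected by the degeneration, is where the non-trivial open Gromov--Witten invariants advertised in the statement are actually evaluated and is the technical core of this step. Combining (i)--(iii) then yields \eqref{equ_diskpotentialcaterpillar}.

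Granting \eqref{equ_diskpotentialcaterpillar}, the non-displaceability follows once we exhibit a critical point of $W_{\mcal T_0}$ in $(\C^*)^n$, since a critical point of the disk potential of a monotone Lagrangian torus defines a weak bounding cochain (a local system) with non-vanishing self-Floer cohomology, and a Lagrangian with non-vanishing Floer cohomology cannot be displaced by a Hamiltonian diffeomorphism. Write $x_k:=p_{1,k}$ for $3\le k\le n+2$ and adjoin the normalized edge values $x_2 = x_{n+3} = 1$. One checks that the critical-point equations of \eqref{equ_diskpotentialcaterpillar} factor, and that one component of the critical locus is cut out by the relations $x_k^2=1+x_{k-1}x_{k+1}$ for $3\le k\le n+2$ subject to $x_2=x_{n+3}=1$. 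A direct computation shows that
$$
x_k=\frac{\sin\!\big((k-1)\pi/(n+3)\big)}{\sin\!\big(\pi/(n+3)\big)},\qquad 3\le k\le n+2,
$$
solves this recursion together with the boundary conditions and has all coordinates in $\R_{>0}\subset\C^*$; this provides the desired critical point, and one may further check that its Hessian is non-degenerate, so that the self-Floer cohomology there is $H^*(T^n)\otimes\C$. The only point needing attention is that the reduction to the recursion is valid away from the loci where some consecutive sum $x_{k-1}+x_{k+1}$ vanishes, which is clear for this positive solution. This last step is routine compared with the disk count of the previous two paragraphs.
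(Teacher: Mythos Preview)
Your overall strategy matches the paper's: classify Maslov-two disk classes via the toric degeneration of the caterpillar system, compute the basic counts as $1$, and handle the non-basic classes by passing to local models near non-toric fibers, where Sheridan's spectral constraint on the monotone Fukaya category pins down the remaining coefficients. The explicit critical point you produce via $x_k=\sin((k-1)\pi/(n+3))/\sin(\pi/(n+3))$ is a pleasant addition; the paper does not spell out non-displaceability, and your reduction of the critical equations to the Chebyshev-type recursion $x_k^2=1+x_{k-1}x_{k+1}$ (away from $x_{k-1}+x_{k+1}=0$) is correct.

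There is, however, a genuine gap in your disk count. You assert that the only non-toric corrections are the two endpoint coefficients $2/p_{1,3}$ and $2/p_{1,n+2}$, and that the toric degeneration certifies all other coefficients are~$1$. This is not what the degeneration gives you. The classification of effective Maslov-two classes (done by pushing to the central fiber, lifting to a simplicial resolution, and using Cho--Poddar's positivity) produces, in addition to the basic classes, the non-basic classes $(\alpha_{i,0}+\alpha_{i,1})/2$ for \emph{every} $i=2,\dots,n-1$, each contributing a potential term $\kappa_i/z_i$ with $z_i=p_{1,i+2}$. The toric degeneration tells you these are the only possible extra classes, but it does \emph{not} compute their open Gromov--Witten invariants. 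Your $\bS^2$ local model, where the fiber acquires an $\bS^2$-factor at the codimension-two stratum $u_1=2$ (resp.\ $u_n=n+1$), handles only $\kappa_1$ and $\kappa_n$; for the interior indices the relevant stratum is codimension three, the fiber acquires an $\mathrm{SO}(3)\cong\R P^3$ factor rather than $\bS^2$, and the local model is a disk bundle in $T^*\R P^3$ whose symplectic cut is $\CP^3$. One then needs the eigenvalues of $c_1\star$ on $QH^*(\CP^3)$ together with Sheridan's constraint to conclude $\kappa_i=0$ for $2\le i\le n-1$. Without this second local computation your argument does not rule out spurious $1/p_{1,j}$ terms for interior $j$, and neither your global use of (i) nor ingredient (iii) fills this in: the spectrum of $c_1\star$ on $QH^*(\mcal M_{\mathbf r})$ is not available to you a priori, and the degeneration by itself only bounds, rather than computes, the non-basic counts.
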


The space of equilateral pentagons is isomorphic to a del Pezzo surface $dP_5$ of degree five, $\CP^2$ blown up at generic four points as an algebraic variety. Pascaleff--Tonkonog \cite{PascaleffTonkonog} computed the disk potential function of a Lagrangian torus constructed via Lagrangian surgery for $dP_5$. Their superpotential is equal to the disk potential function of the caterpillar bending system in~\eqref{equ_diskpotentialcaterpillar}. Recently, the scattering diagram and the disk potential of a special Lagrangian fibration in a non-monotone del Pezzo surface were computed by Bardwell-Evans--Cheung--Hong--Lin in \cite{Bardwell-EvansCheungHongLin}, which agrees with the scattering diagram of Gross--Pandharipande--Siebert and Gross--Hacking--Keel \cite{GrossPandharipandeSiebert, GrossHackingKeel}. Their chamber structure agrees with the chambers structure given by various bending systems. Gr{\"{a}}fnitz--Ruddat--Zaslow and You showed that the Carl--Pumperla--Siebert's LG superpotential in \cite{CarlPumperlaSiebert} counting tropical disks agrees with the open mirror map in \cite{GRZ22, You} in the toric case. The disk potential function should be equal to the LG superpotential obtained via tropical coutings.

At the level of completely integrable systems, Nohara--Ueda constructed a toric degeneration of bending systems in \cite{NoharaUedaGrasPoly}. By running their minimal model program, Charest and Woodward provide a method, which can detect torus orbits having non-trivial Floer cohomology in \cite{CharestWoodward}. Also, Woodward and Xu computed Floer cohomology of some non-torus fiber of polygon spaces in \cite{WoodwardXu}. 

Yet, the full disk potential function of non-toric bending systems of higher dimensional polygon spaces remains unknown due to the following difficulties. The major difficulty is to analyze contributions from \emph{non-basic} disks to the disk potential function. Although the bending system has a toric degeneration, we would like to emphasize that it does \emph{not} fit into the situation of Nishinou--Nohara--Ueda \cite{NishinouNoharaUeda}. Indeed, there exist holomorphic disks emanating from the inverse image of dimensional stratum of codimension $\geq 4$. Heuristically, as the bending system admits vanishing cycles, the inverse image can have an extra dimension so that there is a chance for the disk class to have Maslov index two even if it intersects lower dimensional stratum. 

Along the line of the classification of disks, another difficulty lies on a combinatorial description for the image of a bending system. The image is a polytope given by the intersection of half spaces from the triangle inequalities of a chosen triangulation. Indeed, some of half spaces are \emph{not} essential. It means that the superpotential merely written from the triangle inequalities may contain a Laurent monomial which does appear in the disk potential. In the equilateral case, we manage to describe the image via the relation of Grassmannians and polygon spaces obtained by the Gelfand--MacPherson correspondence in \cite{GelfandMacPherson, HausmannKnutson}. 

The strategy to compute the disk potential function is in order. Exploiting the non-redundant combinatorial description for the image and the corresponding toric degeneration, we classify the possible homotopy classes which can be realized as a holomorphic disk contributing the disk potential and compute the counting invariants for basic disks. In other words, we classify lattice points of the Newton polytope of the disk potential which can be a non-zero term of the disk potential. The plan is to compute  unknown counting invariants via relation between closed and open mirror symmetry.

As the dimension of a polygon space increases, the number of undetermined counting invariants that we should compute increases. It makes the computing process hard and sometimes impossible. To get around this difficulty, we implement the following strategy. According to Bouloc \cite{Bouloc}, a fiber of the bending systems can be expressed as a product of $\mathrm{SO}(3)$, $\bS^2$, and $\bS^1$. Using them non-toric Lagrangian submanifold as an anchor, we obtain a local model for $\mcal{M}_\mathbf{r}$. The above classification result ensures that each disk belongs to one of the local models. We then compute the counting invariant by embedding each local model into a simpler global model.

The method we present in the main body for computing the disk potential for equilateral case should not only restricted to the equilateral and generic case. It should work as long as $\mcal{M}_\mathbf{r}$ is smooth and monotone and the combinatorial structure is analyzed. To demonstrate it, we work out non-equilateral examples when $\dim_\C \mcal{M}_\mathbf{r} = 2$.

The authors hope that the result in this paper is paved way for testing many interesting theory by providing concrete examples. The first one is about the relation on the equivariant Floer theory and Floer theory. The polygon space is the symplectic reduction of a Grassmannian. The disk potential function on the quotient should be related to the equivariant disk potential function on the pre-quotient. This process is a subtlety on a choice of a stability condition or a level set. We illustrate this point by examples. Also, we discuss a role of the polygon space for a construction problem for the mirror cluster variety. Namely, the polygon space with bending systems is a symplectic model for the cluster variety of type A without frozen variables.

\subsection*{Acknowledgments}
The authors would like to thank the referee for his/her detailed and helpful comments on the manuscript. The authors would like to thank H. Hong and Y.-S. Lin for helpful comments and explanation on their joint work with S. Bardwell-Evans and M.-W. Cheung. The first named author would like to thank C. Woodward for explanation on his joint work with F. Charest, and thank J.-H. Lee for explanation on his joint with E. Lee. The second named author thanks Nai-Chung Conan Leung for motivating discussions on Gelfand--Zeitlin systems. The first named author was supported by the National Research Foundation of Korea (NRF) grant funded by the Korea government (NRF-2021R1F1A1057739) and (NRF-2020R1A5A1016126).  The second named author was supported by Simons Grant 580648.

\section{Polygon spaces and bending systems}

In this section, we review construction of polygon spaces and bending systems.  Polygon spaces and bending systems were formulated by Kapovich and Millson \cite{KapovichMillson}, which serve as a class of very interesting integrable systems in symplectic geometry. The relation between polygon spaces and Grassmannians of two planes are also reviewed.

\subsection{Polygon spaces and Grassmannians}

Let $n$ be a positive integer.
For an $(n+3)$-tuple $\mathbf{r} = (r_1, r_2, \cdots, r_{n+3})$ of positive real numbers, consider the set of all vectors $v_1, v_2, \dots, v_{n+3} \in \R^3$ satisfying
\begin{enumerate}
\item $\|v_j\|=r_j$ for each $j = 1, 2, \cdots, n+3$ and
\item $\sum_{j=1}^{n+3} v_j= 0$.
\end{enumerate}
For each tuple $(v_1, v_2, \cdots, v_{n+3})$ of vectors in this set, we locate the initial point of the first vector $v_1$ at the origin and the initial point of the vector $v_{j+1}$ at the terminal point of the vector $v_j$ for $j = 1, \dots, n+2$. Arranging the vectors in this manner, the terminal point of $v_{n+3}$ is the origin by the second condition $(2)$. Thus, each tuple of vectors can be regarded as an $(n+3)$-gon in the Euclidean space $\R^3$.

Let the real special orthogonal group $\mathrm{SO}(3)$ act on $\R^3$ linearly. For each $j$, the linear $\mathrm{SO}(3)$-action induces an action on each sphere $\bS^2(r_j)$ with the radius $r_j$ centered at the origin in $\R^3$. We then have the diagonal $\mathrm{SO}(3)$-action on the product $\prod_{j=1}^{n+3} \bS^2(r_j)$ of spheres. The \emph{polygon space} $\mcal{M}_\mathbf{r}$ is defined by the quotient of the space of $(n+3)$-gons in $\R^3$ by the diagonal $\mathrm{SO}(3)$-action, that is,
\begin{equation}\label{equ_polyso3quotient}
\mcal{M}_\mathbf{r} = \left\{ \mathbf{v} = (v_1, v_2, \dots, v_{n+3}) \in \prod_{j=1}^{n+3} \bS^2(r_j) \colon v_1 + v_2 + \cdots + v_{n+3} = 0 \right\} \slash \mathrm{SO}(3).
\end{equation}

If $\mathbf{r}$, the tuple of lengths of vectors, does not meet all necessary \emph{polygonal} inequalities, the polygon space $\mcal{M}_\mathbf{r}$ becomes empty. To avoid this triviality, we require that
\begin{equation}\label{equ_nonemptycondition}
\left( \sum_{j=1}^{n+3} r_j \right) - 2 r_i > 0 \quad \mbox{for all $i = 1, 2, \cdots, n+3$.}
\end{equation} 
The polygon space $\mcal{M}_\mathbf{r}$ in~\eqref{equ_polyso3quotient} can be seen as the GIT quotient $(\CP^1)^{n+3} \sslash \mathrm{PGL}(2, \C)$ and hence $\mcal{M}_\mathbf{r}$ is a projective algebraic variety of complex dimension $n$. 

\begin{definition}
A tuple $\mathbf{r} = (r_1, r_2, \dots, r_{n+3})$ obeying~\eqref{equ_nonemptycondition} is called \emph{generic} if there is no $(\epsilon_1, \epsilon_2, \cdots, \epsilon_{n+3}) \in \{\pm1\}^{n+3}$ such that
\begin{equation}\label{equ_generic}
\sum_{j=1}^{n+3} \epsilon_j r_j = 0.
\end{equation}
\end{definition}

The polygon space $\mcal{M}_\mathbf{r}$ is a smooth variety if and only if $\mathbf{r}$ is generic. Depending on our choice of $\mathbf{r}$, polygon spaces can produce different algebraic varieties. For instance, when $n = 2$, the projective space $\CP^2$, $\CP^1 \times \CP^1$, blow-ups of $\CP^2$ at a point, two points, three points, and four points can be realized as polygon spaces of pentagons with different choices of stability condition $\mathbf{r}$, cf. Appendix~\ref{AppendixA}.

Fix a generic tuple $\mathbf{r}$. To adorn the polygon space $\mcal{M}_\mathbf{r}$ with a K\"{a}hler structure, we regard $\CP^1 \simeq \bS^2$ as the coadjoint orbit $\mcal{O}_{(r_j, -r_j)}$. It is identified with the conjugation $\mathrm{U}(2)$-orbit of the $(2 \times 2)$-diagonal matrix with entries $(r_j, -r_j)$ via the Killing form, that is,
$$
\mcal{O}_{(r_j, -r_j)} = \left\{ Q \ \mathrm{diag} (r_j, - r_j) \, Q^{-1} \mid Q \in \mathrm{U}(2) \right\}.
$$
Each coadjoint orbit $\mcal{O}_{(r_j, -r_j)}$ naturally comes with an invariant symplectic form determined by $r_j$, which is called a Kirillov--Kostant--Souriau (KKS) symplectic form. The linear $\mathrm{SO}(3)$-action on $\R^3$ (and $\bS^2(r_j)$) is induced from the adjoint action of $\mathrm{SU}(2)$ on $\sqrt{-1} \mathfrak{su}(2)$ (and $\mcal{O}_{(r_j, -r_j)}$) via the double cover $\mathrm{SU}(2) \to \mathrm{SO}(3)$ and the identification between $\R^3$ and $\sqrt{-1} \mathfrak{su}(2) \colon$
$$
(x_1, x_2, x_3) \mapsto 
\begin{pmatrix}
x_3 & x_1 -  \sqrt{-1} x_2 \\
x_1 +  \sqrt{-1} x_2  & - x_3
\end{pmatrix}.
$$
The polygon space is the symplectic reduction of the product of $\mcal{O}_{(r_j, - r_j)} \simeq \CP^1$ by the diagonal $\mathrm{SU}(2)$-action, that is, $\prod_{j=1}^{n+3} {\mcal{O}}_{(r_j, -r_j)} \sslash \mathrm{SU}(2)$. It is equipped with the reduced symplectic form $\omega_\mathbf{r}$ from the product KKS forms. Then the polygon space is K\"{a}hler with respect to the reduced symplectic form and the complex structure from the GIT quotient $(\CP^1)^{n+3} \sslash \mathrm{PGL}(2, \C)$. 

The polygon spaces are related with the Grassmannians of two planes.  First, let us recall the description of Grassmannians as a coadjoint orbit.  For each $i = 1, 2, \cdots, n+3$, we set
\begin{equation}
| \mathbf{r} |_{i} \coloneqq \sum_{j=1}^i r_j \, \mbox{ and } \, |\mathbf{r}| \coloneqq |\mathbf{r}|_{n+3}
\end{equation}
and take an $(n+3)$-tuple 
\begin{equation}\label{equ_lambda}
\lambda = (|\mathbf{r}|, |\mathbf{r}|, 0, \dots, 0).
\end{equation}
The choice yields the diagonal matrix $\mathrm{diag}(\lambda) = \mathrm{diag}(|\mathbf{r}|, |\mathbf{r}|, 0, \dots, 0)$. We denote by $\mcal{O}_\lambda$ the conjugation $\mathrm{U}(n+3)$-orbit of the diagonal matrix whose entries are $\lambda$, that is,  
\begin{equation}\label{equ_coadjointorbitfrompoly}
\mcal{O}_\lambda \coloneqq \left\{ Q  \, \mathrm{diag}(|\mathbf{r}|, |\mathbf{r}|, 0, \dots, 0) \, Q^{-1} \mid Q \in \mathrm{U}(n+3) \right\}.
\end{equation}
The adjoint action $\mathrm{U}(n+3)$ on $\mcal{O}_\lambda$ is transitive, and $\mcal{O}_\lambda = \mathrm{U}(n+3) / (\mathrm{U}(2) \times \mathrm{U}(n+1))$, the complex Grassmannian $\mathrm{Gr}(2, \C^{n+3})$ of two-planes in the complex vector space $\C^{n+3}$. 
When $|\mathbf{r}|$ is integral in particular, one can associate $\mathrm{Gr}(2, \C^{n+3})$ to the pull-backed K\"{a}hler form $|\mathbf{r}| \cdot \omega_{\CP^{N}}$ on $\CP^{N}$ where $N = {n+3 \choose 2} - 1$ via the Pl\"{u}cker embedding. This pull-backed form agrees with the KKS form $\omega_{\lambda}$.

Grassmannians can also be constructed via a symplectic reduction. Let us consider the space $\hom_\C (\C^2, \C^{n+3}) \simeq \C^{(n+3) \times 2}$ of complex linear transformations from $\C^2$ to $\C^{n+3}$. The group $\mathrm{U}(n+3)$ (resp. $\mathrm{U}(2)$) acts on $\hom_\C (\C^2, \C^{n+3})$ on the left  (resp. right) matrix multiplication in a Hamiltonian fashion. A moment map of the $\mathrm{U}(2)$-action can be taken as
\begin{equation}\label{equ_momentmapofu2}
\Phi_{\mathrm{U}(2)} \colon \C^{(n+3) \times 2} \to \frak{u}(2)^*, \quad  \Phi_{\mathrm{U}(2)}(A) \coloneqq \frac{A^H A}{2}
\end{equation}
where $A^H$ is the conjugate transpose (or Hermitian transpose) of $A$. Then the Grassmannian $\mathrm{Gr}(2,\C^{n+3}) \simeq \mcal{O}_\lambda$ is isomorphic to the reduction of $\C^{(n+3) \times 2}$ by $\mathrm{U}(2) \colon$
\begin{equation}\label{equ_grquotient}
\mathrm{Gr}(2,\C^{n+3}) \simeq \C^{(n+3) \times 2} \sslash_{|\mathbf{r}| \cdot I_{2}} \mathrm{U}(2).
\end{equation}

We discuss a relation between $\mathrm{Gr}(2,\C^{n+3})$ and the polygon space $\mcal{M}_{\mathbf{r}}$.  Let $T_{\mathrm{U}(n+3)}$ be the maximal torus consisting of the diagonal matrices of $\mathrm{U}(n+3)$. The torus also acts on $\C^{(n+3) \times 2}$ linearly with a moment map 
\begin{equation}\label{equ_momentmaptorus}
\begin{pmatrix}
z_1 & z_2 & \dots &z_{n+3} \\
w_1 & w_2 & \dots & w_{n+3} \\
\end{pmatrix}^T \mapsto 
\left( \frac{|z_1|^2 + |w_1|^2}{2}, \frac{|z_2|^2 + |w_2|^2}{2}, \dots, \frac{|z_{n+3}|^2 + |w_{n+3}|^2}{2} \right).
\end{equation}
The Hamiltonian torus action descends to the quotient~\eqref{equ_grquotient}.  Then the polygon space is isomorphic to the symplectic reduction of $\mcal{O}_\lambda$ by the residual $T$-action, where $T$ is the quotient of $T_{\mathrm{U}(n+3)}$ by the diagonal $\bS^1$-action.

\begin{theorem}[\cite{HausmannKnutson}]\label{Proposition_HK}
For an $(n+3)$-tuple $\mathbf{r}$ of positive real numbers, suppose that $\lambda$ is given by~\eqref{equ_lambda}. Then the polygon space $(\mcal{M}_\mathbf{r}, \omega_\mathbf{r})$ is symplectomorphic to the symplectic reduction ${T} \bbslash_{2\mathbf{r}} \mcal{O}_{\lambda}$ of the coadjoint orbit $(\mcal{O}_\lambda, \omega_\lambda)$.
\end{theorem}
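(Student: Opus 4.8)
The plan is to realize both $\mcal{M}_\mathbf{r}$ and $T\bbslash_{2\mathbf{r}}\mcal{O}_\lambda$ as symplectic quotients of a single ``master space'' and to conclude by commuting reductions (reduction in stages). Let $M \coloneqq \hom_\C(\C^2, \C^{n+3}) \simeq \C^{(n+3)\times 2}$ carry its flat K\"ahler form. The group $\mathrm{U}(2)$ acts on $M$ by right multiplication and the diagonal torus $(\bS^1)^{n+3} \subset T_{\mathrm{U}(n+3)}$ acts by left multiplication; these Hamiltonian actions commute, with moment maps $\Phi_{\mathrm{U}(2)}$ of~\eqref{equ_momentmapofu2} and $\Phi_{(\bS^1)^{n+3}}$ of~\eqref{equ_momentmaptorus}. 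The structural point to isolate is that the central circle of $\mathrm{U}(2)$ and the diagonal circle of $(\bS^1)^{n+3}$ act on $M$ by the \emph{same} formula $A \mapsto e^{\sqrt{-1}\theta}A$; hence the effective symmetry group is $G \coloneqq \bigl(\mathrm{U}(2) \times (\bS^1)^{n+3}\bigr)/\bS^1$, the quotient by the antidiagonal circle. Both $\mathrm{U}(2)$ and $(\bS^1)^{n+3}$ embed in $G$ and commute there, with $G/\mathrm{U}(2) \cong T = (\bS^1)^{n+3}/\bS^1$ and $G/(\bS^1)^{n+3} \cong \mathrm{U}(2)/\bS^1 \cong \mathrm{PU}(2) \cong \mathrm{SO}(3)$, the last acting on quotients of $M$ by the residual right multiplication, i.e.\ diagonally on products of copies of $\CP^1 = \bS^2$.

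The engine is the commuting-reduction principle: for commuting Hamiltonian subgroups $K_1, K_2$ of $G$ acting on $M$ at invariant levels $a_1, a_2$, there is a canonical symplectomorphism $(M\sslash_{a_1}K_1)\sslash_{a_2}K_2 \cong (M\sslash_{a_2}K_2)\sslash_{a_1}K_1$, which persists as an isomorphism of stratified symplectic spaces at singular levels (Guillemin--Sternberg; Sjamaar--Lerman). I would apply it with $K_1 = \mathrm{U}(2)$ at $|\mathbf{r}|\cdot I_2$ and $K_2 = (\bS^1)^{n+3}$ at $2\mathbf{r}$.

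Reducing $\mathrm{U}(2)$ first, $M\sslash_{|\mathbf{r}|\cdot I_2}\mathrm{U}(2) \cong \mcal{O}_\lambda \simeq \mathrm{Gr}(2,\C^{n+3})$ with its KKS form $\omega_\lambda$, by~\eqref{equ_grquotient} and the discussion preceding the theorem; the residual $T$-action has moment map induced by $\Phi_{(\bS^1)^{n+3}}$, and on $\Phi_{\mathrm{U}(2)}^{-1}(|\mathbf{r}|\cdot I_2)$ the two columns of $A$ have squared norm $2|\mathbf{r}|$, so $\sum_j \Phi_{(\bS^1)^{n+3}}(A)_j$ is the constant $2|\mathbf{r}| = \sum_j 2r_j$, which makes reducing the residual $T$-action at $2\mathbf{r}$ legitimate and produces precisely $T\bbslash_{2\mathbf{r}}\mcal{O}_\lambda$. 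Reducing $(\bS^1)^{n+3}$ first, the problem decouples over the $n+3$ rows: for each $j$, reducing $\C^2$ by its circle at the level $\Phi = 2r_j$ realizes $\CP^1$ as the coadjoint orbit $\mcal{O}_{(r_j,-r_j)} \simeq \bS^2(r_j)$ with its KKS form $\omega_{r_j}$, and the diagonal circle drops out. Hence $M\sslash_{2\mathbf{r}}(\bS^1)^{n+3} \cong \prod_{j=1}^{n+3}\bS^2(r_j)$ with the product KKS form, carrying the residual diagonal $\mathrm{SO}(3)$-action of~\eqref{equ_polyso3quotient}, whose moment map is $\mathbf{v}\mapsto\sum_j v_j$; reducing at level $0$ gives $\{\sum_j v_j = 0\}/\mathrm{SO}(3) = (\mcal{M}_\mathbf{r},\omega_\mathbf{r})$. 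Comparing the two orders through $M\sslash G$ yields the asserted symplectomorphism $(\mcal{M}_\mathbf{r},\omega_\mathbf{r}) \cong T\bbslash_{2\mathbf{r}}\mcal{O}_\lambda$.

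The step I expect to demand the most care is this normalization bookkeeping: handling the central-circle overlap so that $G$, $T$, and the residual $\mathrm{SO}(3)$ are identified correctly (in particular, that the residual right $\mathrm{U}(2)/\bS^1$-action is genuinely the diagonal $\mathrm{SO}(3)$-action of the polygon picture and not a cover of it), checking that the factors of $\tfrac12$ in the moment maps turn the naive level $\mathbf{r}$ into $2\mathbf{r}$, matching the reduced forms with the prescribed KKS forms $\omega_{r_j}, \omega_\lambda, \omega_\mathbf{r}$, and recording that ``$2\mathbf{r}$'' is to be read as a level for the residual $T$-action (legitimate because the total diagonal moment is pinned to the constant $2|\mathbf{r}|$ on the Grassmannian). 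A secondary, purely formal caveat: if $\mathbf{r}$ is not generic the levels $2\mathbf{r}$ and $0$ need not be regular, so one uses the singular form of commuting reduction and reads the statement as an isomorphism of stratified symplectic spaces; for generic $\mathbf{r}$ all quotients are smooth and the argument gives a genuine symplectomorphism.
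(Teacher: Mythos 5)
Your argument is exactly the commuting-reduction proof of Hausmann--Knutson, which the paper cites for this theorem without reproducing it; the master space $\C^{(n+3)\times 2}$ with the commuting $\mathrm{U}(2)$ and torus actions, and the moment maps~\eqref{equ_momentmapofu2} and~\eqref{equ_momentmaptorus}, are precisely the data the paper lays out in the surrounding text in preparation for quoting the result. The normalization bookkeeping you flag (the factors of $\tfrac12$ turning the level into $2\mathbf{r}$, the center of $\mathrm{U}(2)$ matching the diagonal circle so the residual group is $\mathrm{PU}(2)\cong\mathrm{SO}(3)$ and not a cover, and the stratified-space reading for non-generic $\mathbf{r}$) is indeed where the care is needed, and you handle it correctly.
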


This paper mainly deals with the case where 
\begin{equation}\label{equ_equilateral111}
r_1 = r_2 = \dots = r_{n+3}.
\end{equation} 
In this case, the tuple $\mathbf{r}$ is called \emph{equilateral}.  The corresponding polygon space $\mathcal{M}_\mathbf{r}$ is monotone, namely, the first Chern class $c_1(T\mathcal{M}_\mathbf{r})=\frac{1}{2\pi r_1} [\omega_{\mathbf{r}}] \in H^2(\mathcal{M}_\mathbf{r})$. By normalizing the K\"{a}hler form, we may assume that 
$$
c_1(T\mathcal{M}_\mathbf{r}) = \frac{1}{r_1} [ \omega_{\mathbf{r}}].
$$ 
The coadjoint orbit $\mcal{O}_\lambda$ with the KKS form $\omega_\lambda$ is denoted by $(\mcal{O}_{\lambda}, \omega_{\lambda})$.

\begin{remark}
The polygon space $\mcal{M}_\mathbf{r}$ can be a monotone symplectic manifold even though $\mathbf{r}$ is not equilateral. Such spaces would be also of interest, see some examples in Appendix~\ref{AppendixA}.
\end{remark}

\subsection{Bending systems and Gelfand--Zeitlin systems}\label{sec_bengzsystems}

The coadjoint orbit $\mcal{O}_{\lambda} \simeq \mathrm{Gr}(2,\C^{n+3})$ in~\eqref{equ_coadjointorbitfrompoly} carries a peculiar completely integrable system, which is called a Gelfand--Zeitlin system \cite{GuilleminSternbergGC}. We shall briefly recall this system below. 

Let $A^{(i)}$ be the leading principal $(i \times i)$ submatrix of $A \in \mcal{O}_{\lambda}$. As every element in $\mcal{O}_{\lambda}$ is a Hermitian matrix, every eigenvalue of each submatrix $A^{(i)}$ is real. Our specific choice of the tuple $\lambda$ of the form~\eqref{equ_lambda} implies that each submatrix $A^{(i)}$ has at most two non-zero eigenvalues. We denote by ${\Phi}_{i,1}(A)$ (resp. ${\Phi}_{i-1,2}(A)$) the first (resp. the second) largest eigenvalue of $A^{(i)}$. They define real-valued functions, ${\Phi}_{i,1} \colon \mcal{O}_{\lambda} \to \R$ and ${\Phi}_{i-1,2} \colon \mcal{O}_{\lambda} \to \R$. 

\begin{definition}
The \emph{Gelfand--Zeitlin system} (\emph {GZ system} for short) is defined by 
\begin{equation}\label{equ_GZsystem}
{\Phi}_{\lambda} \colon \mcal{O}_{\lambda} \to \R^{2(n+1)}, \, {\Phi}_{\lambda} \coloneqq ({\Phi}_{1,1}, ({\Phi}_{2,1}, {\Phi}_{1,2}), ({\Phi}_{3,1}, {\Phi}_{2,2}), \dots, ({\Phi}_{n+1,1}, {\Phi}_{n,2}), {\Phi}_{n+1,2}).
\end{equation}
\end{definition}

Each component ${\Phi}_{i,j}$ may \emph{not} be smooth on the whole space $\mcal{O}_{\lambda}$. By the implicit function theorem, it is smooth on an open dense subset of $\mcal{O}_{\lambda}$. 

\begin{theorem}[\cite{Thimm, GuilleminSternbergGC}]
The Gelfand--Zeitlin system ${\Phi}_{\lambda}$ is a completely integrable system on $\mcal{O}_{\lambda}$. Moreover, each component ${\Phi}_{i,j}$ generates a Hamiltonian $\bS^1$-action on its smooth loci.
\end{theorem}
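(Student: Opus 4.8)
The plan is to prove this as the rank‑two case of the Thimm–Guillemin–Sternberg construction, working throughout on the open dense locus $\mcal{U}\subseteq\mcal{O}_\lambda$ where, for every $i$, the nonzero eigenvalues of the leading submatrix $A^{(i)}$ are simple; the complement of $\mcal{U}$ is a closed subset of codimension $\ge 2$. \emph{Poisson commutativity.} Let $\rho_i\colon\mathfrak{u}(n+3)^*\to\mathfrak{u}(i)^*$ denote restriction to the leading $(i\times i)$ block, i.e. the Poisson map dual to the inclusion $\mathfrak{u}(i)\hookrightarrow\mathfrak{u}(n+3)$. The coefficients of the characteristic polynomial of $A^{(i)}$ — equivalently, the elementary symmetric functions of the eigenvalues of $A^{(i)}$ — generate the invariant ring $\R[\mathfrak{u}(i)^*]^{\mathrm{U}(i)}$, hence are Casimirs of the Lie–Poisson structure on $\mathfrak{u}(i)^*$. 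For $i\le i'$ one has $\rho_i=\rho^{i'}_i\circ\rho_{i'}$, so the pull‑back of a level‑$i'$ Casimir Poisson‑commutes with the pull‑back of any level‑$i$ invariant, while within a single level the Casimirs commute among themselves. On $\mcal{U}$ each $\Phi_{i,j}$ is a smooth function of the symmetric functions of $A^{(i)}$, so the $\Phi_{i,j}$ pairwise Poisson‑commute on $\mcal{U}$.

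\emph{Count and independence.} Since $\dim_\C\mcal{O}_\lambda=\dim_\C\mathrm{Gr}(2,\C^{n+3})=2(n+1)$ and \eqref{equ_GZsystem} lists exactly $1+2n+1=2(n+1)$ functions, it remains to show their differentials are linearly independent on a dense open set. I would do this by producing the Hamiltonian flows of the $\Phi_{i,j}$ (next paragraph) and checking that, over the interior of the Gelfand–Zeitlin polytope — the polytope cut out by the interlacing inequalities $\Phi_{i+1,1}\ge\Phi_{i,1}\ge\Phi_{i+1,2}$, $\Phi_{i,1}\ge\Phi_{i,2}$, $\Phi_{i+1,2}\ge\Phi_{i,2}$ together with the boundary data imposed by $\lambda$ — the $2(n+1)$ resulting vector fields are linearly independent. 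They span an isotropic, hence Lagrangian, subspace of the tangent space, so the joint level set through such a point is a Lagrangian torus and the $\Phi_{i,j}$ are functionally independent there.

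\emph{The circle actions.} Fix $i,j$ and work where $\mu:=\Phi_{i,j}(A)$ is a simple eigenvalue of $A^{(i)}$; let $v(A)$ be a unit $\mu$‑eigenvector and $P(A):=v(A)v(A)^H$ the rank‑one orthogonal projection onto its line, regarded as an element of $\mathfrak{u}(i)$ (up to the factor $\sqrt{-1}$) and extended to $\mathfrak{u}(n+3)$ by zero off the top‑left $i\times i$ block. First‑order eigenvalue perturbation gives $d\Phi_{i,j}|_A(\xi)=\mathrm{tr}\big(P(A)\,\xi^{(i)}\big)$ for $\xi\in T_A\mcal{O}_\lambda$, and the Guillemin–Sternberg computation shows that the $A$‑dependence of $P$ contributes nothing, so the Hamiltonian vector field of $\Phi_{i,j}$ at $A$ is the infinitesimal coadjoint action of $P(A)$, i.e. the velocity of $t\mapsto U(t)AU(t)^{-1}$ with $U(t)=\exp\!\big(2\pi\sqrt{-1}\,t\,P(A)\big)$ (the power of $2\pi$ depending on the normalization of $\omega_\lambda$). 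Since $P(A)$ is a projection, $U(1)=I$, so the flow is periodic of period one and defines a Hamiltonian $\bS^1$‑action on the smooth locus of $\Phi_{i,j}$; this is precisely the bending rotation of Kapovich–Millson after passing to the symplectic reduction.

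\emph{Main obstacle.} Commutativity is formal; the real work is the control of the non‑smooth locus and the combinatorics of the non‑full‑flag interlacing pattern. One must verify that the set where some $A^{(i)}$ has a repeated nonzero eigenvalue is closed of codimension $\ge 2$ and is disjoint from the fibers over the interior of the Gelfand–Zeitlin polytope, and that this interior is nonempty with fibers Lagrangian tori of the full rank $2(n+1)$ — here $A$ has rank two and $\lambda$ carries the repeated eigenvalue $0$ of multiplicity $n+1$, so the relevant interlacing pattern is that of the partial flag variety $\mathrm{Gr}(2,\C^{n+3})$ rather than a full flag. I would reduce this to the explicit description of the Gelfand–Zeitlin polytope and its interior, or else invoke the general theorem of Guillemin–Sternberg directly, of which the present $\mcal{O}_\lambda$ is the case of a partial flag variety with two moving eigenvalues.
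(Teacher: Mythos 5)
The paper does not prove this theorem — it is quoted as a citation to Thimm and Guillemin--Sternberg — so there is no proof in the paper to compare against. Your proposal is, however, a correct sketch of the classical argument those references give, and it is the right argument for the rank-two (Grassmannian) case.

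Two remarks worth recording. First, on Poisson commutativity: the phrase ``each $\Phi_{i,j}$ is a smooth function of the symmetric functions of $A^{(i)}$'' is not quite the cleanest way to say it, since an individual eigenvalue is an algebraic rather than a polynomial function of the coefficients of the characteristic polynomial; what makes the argument work is that, on the locus where the eigenvalue of $A^{(i)}$ is simple, $\Phi_{i,j}$ is a smooth $\mathrm{U}(i)$-invariant function on the image of $\rho_i$, hence a \emph{local} Casimir of $\mathfrak{u}(i)^*$, which is exactly what is needed to run the nested-Poisson-map argument. Second, the independence count and the periodicity of the flow are handled correctly: the dimension bookkeeping $1+2n+1=2(n+1)=\dim_\C\mathcal{O}_\lambda$ matches, and the Hellmann--Feynman identity $d\Phi_{i,j}|_A(\xi)=\mathrm{tr}(P(A)\,\xi^{(i)})$ together with $P(A)^2=P(A)$ gives $\exp(2\pi\sqrt{-1}\,P(A))=I$, hence a genuine $\bS^1$-action, which is the Guillemin--Sternberg content (and, as you note, specializes upon reduction to the Kapovich--Millson bending rotations). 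The ``main obstacle'' you flag — density and codimension estimates for the non-smooth locus and nonemptiness of the polytope interior in the degenerate spectrum $\lambda=(|\mathbf{r}|,|\mathbf{r}|,0,\dots,0)$ — is the real technical work in any self-contained proof, and is precisely what the paper outsources by citing the references; you are right to treat it as the crux rather than the commutativity.
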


The prescribed eigenvalues of an $(n+3) \times (n+3)$ matrix give restrictions on the eigenvalues of the submatrices by the min-max principle. In particular, the choice~\eqref{equ_lambda} says that ${\Phi}_{n+2,1}$ is the constant function with the value $| \mathbf{r} |$. Moreover, the image of the GZ system is determined by the following system of inequalities$\colon$
\begin{equation}\label{equation_gzpattern}
	\begin{alignedat}{27}
			  & &&&& 0 &&&&  \,\, \leq \,\ &&&& {u}_{1,2}  &&&& \,\, \leq \,\, &&&&  {u}_{2,2} &&&& \,\, \leq \,\, &&&& \dots &&&& \,\, \leq \,\, &&&& {u}_{n,2} &&&& \,\, \leq \,\, &&&& {u}_{n+1,2} \\
			  & &&&& &&&& &&&& \,\,\, \ddge &&&& &&&&  \,\,\, \ddge &&&& &&&& &&&&  &&&& \,\,\, \ddge  &&&& &&&& \,\,\,\,\, \ddge \\
			  			  & &&&&  &&&& \,\,  \,\, &&&& {u}_{1,1}  &&&& \,\, \leq \,\, &&&& {u}_{2,1} &&&& \,\, \leq \,\, &&&& \dots &&&& \,\, \leq \,\, &&&& {u}_{n,1} &&&& \,\, \leq \,\, &&&& {u}_{n+1,1}  &&&& \,\,  \leq \,\, &&&& | \mathbf{r} | \,\,	
			  \end{alignedat}
\end{equation}
For later purposes, we label the half spaces in~\eqref{equation_gzpattern} as follows$\colon$
\begin{equation}\label{equ_halfspacesgzpoly}
\begin{cases}
k_{i,0}(\mathbf{u}) \coloneqq u_{i,1} - u_{i,2} \geq 0 \quad \mbox{ for $i = 1, 2, \dots, n+1$}, \\
k_{i,1}(\mathbf{u}) \coloneqq u_{i+1,1} - u_{i,1} \geq 0 \quad \mbox{ for $i = 1, 2, \dots, n+1$}, \\
k_{i,2}(\mathbf{u}) \coloneqq u_{i+1,2} - u_{i,2} \geq 0 \quad \mbox{ for $i = 0, 1, \dots, n$}
\end{cases}
\end{equation}
where $u_{0,2} \coloneqq 0$ and $u_{n+2,1} \coloneqq | \mathbf{r} |$. The polytope given by~\eqref{equation_gzpattern} is denoted by $\Delta_{\lambda}$ in $\R^{2(n+1)}$ and called a \emph{Gelfand--Zeitlin polytope}. 

We now recall the bending system on the polygon space $\mcal{M}_\mathbf{r}$. Recall that each element in $\mcal{M}_\mathbf{r}$ can be represented by a polygon in $\R^3$. For each pair $(i,j)$ with $1 \leq i < j \leq n+3$ and $j - i \not\equiv 1 \mod n+3$, let 
$$
d_{i,j} \coloneqq v_i + \dots + v_{j-1} \in \R^3
$$
and define
\begin{equation}\label{equ_phiij}
\Psi_{i,j} = \Psi_{d_{i,j}}  \colon \mcal{M}_\mathbf{r} \to \R, \quad [\mathbf{v}] = [(v_1, \dots, v_{n+3})] \mapsto \mbox{(the length of the diagonal $d_{i,j}$)}.
\end{equation}
For the sake of notational simplicity, we set $\Psi_{j} \coloneqq \Psi_{1,j+2}$ for $j = 1, 2, \cdots, n$, that is, 
\begin{equation}\label{equ_phij}
\Psi_{j} \colon \mcal{M}_\mathbf{r} \to \R, \quad [\mathbf{v}] = [(v_1, \dots, v_{n+3})] \mapsto \mbox{(the length of the diagonal $d_j$)}
\end{equation}
where 
\begin{equation}\label{equ_djdj1}
d_j \coloneqq d_{1, j+2} = v_1 + \dots + v_{j+1} \in \R^3.
\end{equation} 
If two diagonals $d$ and $d^\prime$ (of the polygon) do not cross each other, then the corresponding functions $\Psi_d$ and $\Psi_{d^\prime}$ in~\eqref{equ_phiij} are Poisson commutative. Therefore, a maximal collection of functions of the form~\eqref{equ_phiij} satisfying that the corresponding diagonals do not cross produces a completely integrable system on $\mcal{M}_\mathbf{r}$. As a result, there are Catalan number of such completely integrable systems arising from maximal collections of non-crossing diagonals. 

Each of these completely integrable systems is called a \emph{bending system}. There is a one-to-one correspondence between the set of bending systems on $\mcal{M}_\mathbf{r}$ and the set of triangulations of the $(n+3)$-gon by adding diagonals. A bending system consisting of the functions of the form~\eqref{equ_phij} corresponds to the diagonals $\mathbf{d} = (d_1, d_2, \cdots, d_n)$ in~\eqref{equ_djdj1}. We denote this bending system by 
\begin{equation}\label{equ_caterpillaerbeding}
\Psi_\mathbf{r} \coloneqq (\Psi_1, \Psi_2, \dots, \Psi_n) \colon \mcal{M}_\mathbf{r} \to \R^n.
\end{equation}
This completely integrable system is called a \emph{caterpillar bending system} on $\mcal{M}_\mathbf{r}$, see Figure~\ref{fig_caterpillar}. 

\begin{figure}[h]
	\scalebox{0.85}{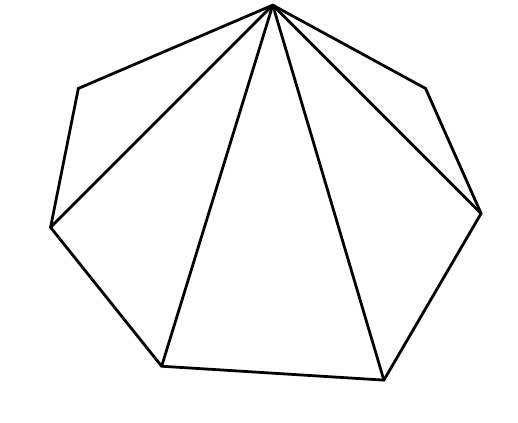}
	\caption{\label{fig_caterpillar} Caterpillar bending system}	
\end{figure}

Furthermore, each component $\Psi_{i,j}$ is an action variable on an open dense subset of $\mcal{M}_\mathbf{r}$. In other words, each component generates a ``bending" Hamiltonian $\bS^1$-action described below. The diagonal $d_{i,j}$ decomposes the polygon $\mathbf{v}$ into two pieces of polygons$\colon$ $(v_i, \dots, v_{j-1}, -d_{i,j})$ and $(d_{i,j}, v_{j}, \dots, v_{n+3}, v_1, \dots v_{i-1})$. By taking a slice of the diagonal $\mathrm{SO}(3)$-action, we may fix the first piece. Then the Hamiltonian $\bS^1$-action of $\Psi_{i,j}$ is to bend the other piece along the diagonal $d_{i,j}$ in the space $\R^3$. 

Let us summarize what we have discussed. 

\begin{theorem}[\cite{Klyachko, KapovichMillson}]
A maximal collection of functions of the form~\eqref{equ_phiij} satisfying that the corresponding diagonals do not cross each other forms a completely integrable system on $\mcal{M}_\mathbf{r}$. Moreover, each component ${\Psi}_{i,j}$ in the system generates a bending Hamiltonian $\bS^1$-action on its smooth loci.
\end{theorem}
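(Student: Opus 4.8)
The plan is to work on $P\coloneqq\prod_{k=1}^{n+3}\bS^2(r_k)$, of which $\mcal{M}_\mathbf{r}$ is the symplectic reduction at the zero level of the moment map $\mathbf{v}\mapsto\sum_{k}v_k$ of the diagonal $\mathrm{SO}(3)$-action (cf.~\eqref{equ_polyso3quotient} and Theorem~\ref{Proposition_HK}), to compute Hamiltonian flows and Poisson brackets there, and then descend. For a diagonal as in~\eqref{equ_phiij} put $S\coloneqq\{i,\dots,j-1\}$, so that $d_{i,j}=\mu_S\coloneqq\sum_{k\in S}v_k\colon P\to\R^3\cong\mathfrak{so}(3)^*$ is the moment map of the diagonal $\mathrm{SO}(3)$-action on $\prod_{k\in S}\bS^2(r_k)$. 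The smooth $\mathrm{SO}(3)$-invariant function $\tfrac12\|d_{i,j}\|^2=\tfrac12\langle\mu_S,\mu_S\rangle$ is a collective Hamiltonian; since $\langle a,\eta\times a\rangle=0$, it Poisson-commutes with every component of $\mu_S$, hence $d_{i,j}$ is constant along its Hamiltonian flow, which is therefore the rigid rotation of $(v_k)_{k\in S}$ about the axis $d_{i,j}$ at angular speed $\|d_{i,j}\|$ while fixing $v_k$ for $k\notin S$. This flow is $\mathrm{SO}(3)$-invariant and tangent to $\{\sum_k v_k=0\}$, so it descends to the bending motion of Section~\ref{sec_bengzsystems} on $\mcal{M}_\mathbf{r}$; and on the dense open locus $\{d_{i,j}\ne 0\}$, where $\Psi_{i,j}=\|d_{i,j}\|$ is smooth, $X_{\Psi_{i,j}}=\|d_{i,j}\|^{-1}X_{\frac12\|d_{i,j}\|^2}$ performs the same rotation at \emph{unit} speed, so its flow is complete and $2\pi$-periodic, i.e.\ a Hamiltonian $\bS^1$-action on the smooth locus of $\Psi_{i,j}$.

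For the commutation, by the reduction procedure it suffices to produce $\mathrm{SO}(3)$-invariant representatives on $P$ of $\Psi_{i,j}$ and $\Psi_{i',j'}$ whose Poisson bracket vanishes on $P$. Given a pair of non-crossing diagonals, choose for each of them one of the two boundary arcs it cuts off, with corresponding edge sets $S$ and $S'$, so that $S\cap S'=\varnothing$: this is immediate when the diagonals are disjoint, and when they share a vertex, or when one of them lies on an arc cut off by the other, one takes the two complementary outer arcs. Using $\sum_k v_k=0$, the functions $\Psi_{i,j}^2$ and $\Psi_{i',j'}^2$ on $\mcal{M}_\mathbf{r}$ are represented on $P$ by $\|\sum_{k\in S}v_k\|^2$ and $\|\sum_{k\in S'}v_k\|^2$, which involve disjoint groups of the $v_k$ and so Poisson-commute on $P$; hence their brackets, and those of $\Psi_{i,j},\Psi_{i',j'}$ on the common smooth locus, vanish on $\mcal{M}_\mathbf{r}$. (Equivalently: the bending flow of one of the two diagonals moves only the edge vectors on one side of it, none of which enter the length of the other.) Thus the functions attached to any collection of pairwise non-crossing diagonals Poisson-commute on $\mcal{M}_\mathbf{r}$.

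Finally, a maximal collection of pairwise non-crossing diagonals of a convex $(n+3)$-gon is a triangulation, and therefore consists of exactly $(n+3)-3=n$ of them, which equals $\dim_\C\mcal{M}_\mathbf{r}=\tfrac12\dim_\R\mcal{M}_\mathbf{r}$; combined with the previous two steps this produces $n$ Poisson-commuting functions, each generating a Hamiltonian $\bS^1$-action. It remains to see that they are functionally independent on a dense open subset. On the open set where all $n$ diagonals are nonzero, the $n$ commuting bending circle actions assemble into a Hamiltonian $T^n$-action, and this action is \emph{effective}: bending a sub-polygon that is not collinear with its diagonal about that diagonal is a nontrivial motion, and the $n$ bendings are carried out about distinct diagonals. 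An effective Hamiltonian $T^n$-action on a symplectic $2n$-manifold has $n$-dimensional, hence Lagrangian, principal orbits, which forces the differentials $d\Psi_{i,j}$ of the $n$ functions to be linearly independent there. This effectiveness claim is the only point demanding genuine, if elementary, work; alternatively it can be deduced from Thimm's trick \cite{Thimm} applied to the chain of sub-polygons determined by the triangulation, following the original arguments of \cite{Klyachko, KapovichMillson}. Combining the three steps proves the theorem.
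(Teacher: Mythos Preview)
The paper does not give its own proof of this theorem; it is stated as a cited result from \cite{Klyachko, KapovichMillson}, with only a brief description of the bending flow preceding the statement. Your argument correctly reconstructs the standard Kapovich--Millson proof: lift to the product of spheres, identify $\tfrac12\|d_{i,j}\|^2$ as a collective Hamiltonian whose flow is rigid rotation about $d_{i,j}$, renormalize to get a $2\pi$-periodic flow on $\{d_{i,j}\neq 0\}$, and observe that for non-crossing diagonals one can always choose the two arcs to be disjoint so that the lifted functions depend on separate sphere factors and hence Poisson-commute.

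One small point: your functional-independence step is slightly circular as written. You invoke effectiveness of the $T^n$-action to conclude the orbits are Lagrangian and hence the $d\Psi_{i,j}$ are independent, but your justification of effectiveness (``each individual bending is nontrivial'') only shows each circle factor acts nontrivially, not that the full torus does. The clean way, which is what Kapovich--Millson actually do, is to argue directly: on the dense open set where all triangles of the triangulation are nondegenerate, the $n$ dihedral angles along the diagonals together with the $n$ diagonal lengths give action-angle coordinates, and the $k$-th bending moves exactly the $k$-th dihedral angle. This gives linear independence of the $n$ vector fields (hence of the $n$ differentials) at once, without appealing to effectiveness as an intermediate step.
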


Fix a maximal collection $\mathbf{d}$ of non-crossing diagonals of the $(n+3)$-gon with side length $\mathbf{r}$. Suppose that the maximal collection $\mathbf{d}$ consists of $n$-diagonals, say $d_{i_1, j_1}, d_{i_2, j_2}, \cdots, d_{i_n, j_n}$. We denote by $\Psi_{\mathbf{d}} \colon \mcal{M}_\mathbf{r} \to \R^{n}$ the bending system corresponding to the collection $\mathbf{d}$. In fact, the image of each bending system is a polytope as that of the GZ system is. We call the image of a bending system a \emph{bending polytope}. The bending polytope is defined by the system of triangle inequalities from the fixed triangulation.

We now explain the topology of the fiber over a given point in the bending polytope $\Delta \coloneqq \Psi_{\mathbf{d}}(\mcal{M}_\mathbf{r})$. Note that a choice of a point $\ell_\mathbf{d}$ in the bending polytope $\Delta$ determines the lengths of diagonals,
$$
\ell_\mathbf{d} = \left(|d_{i_1, j_1}|, |d_{i_2, j_2}|, \cdots, |d_{i_n,j_n}| \right) \in \Delta.
$$ 
In fact, the fiber of each point in the interior of $\Delta$ is a Lagrangian $T^n$-orbit. An interesting fiber occurs when some components in $\ell_\mathbf{d}$ vanish, that is, $\ell_\mathbf{d}$ is contained in a proper face of $\Delta$.

To describe the topology of the fiber of the given point $\ell_\mathbf{d}$, we take a ``polygonal" representative $\mathbf{v}$ of a point in the fiber of $\ell_\mathbf{d}$, that is, $[\mathbf{v}] \in \Psi_\mathbf{d}^{-1}(\ell_\mathbf{d})$. The representative $\mathbf{v}$ is a (possibly singular) polygon in $\R^3$ with the lengths of its diagonals specified by $\ell_\mathbf{d}$. Note that $\mathbf{v}$ consists of lines and polygons. For a natural number $\nu$ with $\nu \geq 2$, we set
$$
c_\nu \coloneqq
\begin{cases}
\mbox{the number of line segments in $\mathbf{v}$} &\mbox{if $\nu = 2$,}\\
\mbox{the number of $\nu$-gons in $\mathbf{v}$} &\mbox{if $\nu \geq 3$.}
\end{cases}
$$
Let 
\begin{itemize}
\item $m_1 \coloneqq \sum_{\nu \geq 4} (c_\nu \cdot (\nu - 3))$, the number of non-crossing diagonals of polygons in $\mathbf{v}$.
\item $m_2 \coloneqq c_2$, the number of line segments in $\mathbf{v}$,
\item $m_3 + 1 \coloneqq \sum_{\nu \geq 3} c_\nu$, the number of polygons in $\mathbf{v}$.
\end{itemize} 
The \emph{genericity} of $\mathbf{r}$ ensures the existence of at least one $\nu$-gon ($\nu \geq 3$), that is, $m_3 \geq 0$. It implies that every fiber is smooth.

\begin{theorem}[Theorem 4.4 in \cite{Bouloc}]\label{theorem_Bouloc}
For a generic $(n+3)$-tuple $\mathbf{r}$ which determines the lengths of edges, consider a bending system on the polygon space $\mcal{M}_\mathbf{r}$.
Then every fiber of the bending system is an isotropic submanifold. 

Moreover, for an $n$-tuple $\ell_\mathbf{d}$ which determines the length of diagonals, the fiber of the bending system over the $n$-tuple is diffeomorphic to 
\begin{equation}\label{equ_s1s2so3}
(\bS^1)^{m_1} \times (\bS^2)^{m_2} \times \mathrm{SO}(3)^{m_3}
\end{equation}
where $m_1, m_2, m_3$ can be read off from a polygon determined by the length of diagonals.
\end{theorem}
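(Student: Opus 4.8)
The plan is to describe the fiber $F\coloneqq\Psi_{\mathbf d}^{-1}(\ell_{\mathbf d})$ explicitly as a moduli space of spatial polygons and reconstruct it by cutting. By definition $F$ consists of the $(n+3)$-gons $\mathbf v=(v_1,\dots,v_{n+3})$ in $\R^3$ with $\|v_j\|=r_j$, closing up, whose $n$ diagonals from the triangulation dual to $\mathbf d$ have the lengths recorded by $\ell_{\mathbf d}$, taken modulo the diagonal $\mathrm{SO}(3)$-action. Cutting along those $n$ diagonals breaks $\mathbf v$ into $n+1$ triangles, each with all three side lengths pinned down by $\mathbf r$ and $\ell_{\mathbf d}$. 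First I would contract every diagonal of prescribed length $0$ to a point: each triangle then becomes a non-degenerate triangle, a doubled segment (when a single side is contracted, forcing the other two to coincide), or a point; and regluing along the surviving positive diagonals, the non-degenerate triangles assemble into $m_3+1$ planar polygon pieces — a $\nu$-gon piece still carrying its $\nu-3$ internal triangulation diagonals — while the doubled segments form $m_2$ segment pieces. The essential structural fact is that distinct pieces are joined only at the marked points coming from contracted diagonals, which impose no angle relations.

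Next I would compute the configuration space piece by piece and reassemble. Since a shared point imposes nothing, the space of $\mathbf v$'s before quotienting is the product, over the pieces, of their configuration spaces, glued at the marked points. A non-degenerate triangle, and hence any polygon piece with its internal diagonals held fixed, has configuration space $\mathrm{SO}(3)\times(\bS^1)^{\nu-3}$: place one sub-triangle and then fold along each internal diagonal; its $\mathrm{SO}(3)$-stabilizer is trivial for generic side lengths. A segment piece has configuration space $\bS^2$ with $\mathrm{SO}(2)$-stabilizer. Genericity of $\mathbf r$ now enters: it forces $m_3\ge0$, i.e.\ at least one polygon piece occurs (otherwise all $v_j$ are collinear and $\sum\pm r_j=0$ for some choice of signs), and it keeps the triangle data generic enough to have trivial stabilizers. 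Using that polygon piece to absorb the diagonal $\mathrm{SO}(3)$ — which then acts freely — shows $F$ is a smooth manifold and
\[
F\;\cong\;(\bS^1)^{\sum_i(\nu_i-3)}\times(\bS^2)^{m_2}\times\mathrm{SO}(3)^{(m_3+1)-1}\;=\;(\bS^1)^{m_1}\times(\bS^2)^{m_2}\times\mathrm{SO}(3)^{m_3},
\]
with $m_1=\sum_{\nu\ge4}c_\nu(\nu-3)$; matching $m_1,m_2,m_3$ to the stated counts is then bookkeeping in the dual tree of the triangulation.

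For the isotropy of $F$, recall that the components of a bending system pairwise Poisson-commute, so over the open dense locus where they are submersions the fibers are Lagrangian tori and nothing is needed; the content is the singular $F$. I would pull everything back to the product $\prod_j\bS^2(r_j)$ of coadjoint orbits, where the lifted diagonal vectors $d_{i,j}=v_i+\dots+v_{j-1}$ are $\so(3)^*$-valued, and work near $F$. The diagonals of positive prescribed length have smooth lengths there and generate a Hamiltonian torus action preserving $F$; after reducing by it, $F$ descends to some $\bar F$, and it suffices to see $\bar F$ is isotropic. The key point is that although bending around a contracted diagonal $d$ is not a circle action, the smooth object to use is the $\R^3$-valued function $d$ itself — whose squared norm is the smooth $\Psi_d^2$ — which on a neighborhood of $F$ is (a component of) a moment map for an $\mathrm{SO}(3)$-action rotating the cut-off sub-polygon, its components satisfying the $\so(3)$-bracket relations; since $d=0$ on $F$, the corresponding $\bS^2$- and $\mathrm{SO}(3)$-orbit directions pairwise $\omega$-annihilate on $F$, and together with Poisson-commutativity for the remaining (torus) directions this gives $\omega|_F=0$.

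I expect the main obstacle to be exactly this last step: setting up the $\mathrm{SO}(3)$-moment map $d$ rigorously in a neighborhood of the singular fiber together with the attendant nested symplectic reductions, and confirming that $T_xF$ is spanned precisely by the (limiting) bending directions so that the zero-level argument applies uniformly over $\ell_{\mathbf d}$. The combinatorial side — verifying that contracting the zero diagonals yields exactly the pieces counted by the $c_\nu$ for every $\ell_{\mathbf d}$, including degenerate-but-uncontracted triangles — is delicate but routine, and the diffeomorphism type then follows; an alternative route is via the Gelfand--MacPherson correspondence of Theorem~\ref{Proposition_HK}, realizing $F$ as a reduction of a known Gelfand--Zeitlin fiber on $\mathrm{Gr}(2,\C^{n+3})$.
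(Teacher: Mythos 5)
The paper does not prove this statement; it quotes it as Theorem~4.4 of \cite{Bouloc} and uses it as a black box, so there is no in-paper argument to compare against. Your sketch reconstructs the cited proof and captures the correct ideas at both stages, though several of its claims would need to be made rigorous.

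For the diffeomorphism type, the decomposition-and-reassembly is the right mechanism. The claim that the pre-quotient configuration space is ``the product, over the pieces, of their configuration spaces, glued at the marked points'' is the step that most needs justification, and the observation that makes it work is that the gluing graph is a tree (the dual graph of a triangulated disk is a tree, and pinching at zero-length diagonals preserves this): rooting at a polygon piece, each descendant piece rotates freely about its unique parent attachment point by $\mathrm{SO}(3)$, giving $\bS^2=\mathrm{SO}(3)/\mathrm{SO}(2)$ when the piece is a doubled segment, and no cycle ever forces a compatibility relation. You correctly locate both uses of genericity of $\mathbf r$: it guarantees at least one non-degenerate polygon piece, so that $m_3\geq 0$ and the diagonal $\mathrm{SO}(3)$ can be absorbed, and it keeps stabilizers trivial. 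The taxonomy of pieces is slightly more subtle than your initial three-way list (degenerate-but-uncontracted collinear configurations with all positive side lengths also give rise to segment pieces), but you flag that.

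For isotropy, replacing the non-smooth $\Psi_d$ near its zero level by the smooth $\so(3)^*$-valued map $d$, and observing that $F\subset\{d=0\}$ so $\omega(X_\xi,X_\eta)=\langle d,[\xi,\eta]\rangle=0$ there, is exactly the crux; Poisson-commutativity across non-crossing diagonals covers the remaining pairings. Your diagnosis of the chief gap is accurate: one must show that these circle and $\mathrm{SO}(3)$ directions actually span $T_xF$ at every $x\in F$, i.e.\ that $F$ is genuinely an orbit of the local Hamiltonian action, uniformly over $\ell_{\mathbf d}$ and in particular at points where several diagonals vanish simultaneously. That identification is the substantive content of Bouloc's argument. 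With those two points filled in, the sketch is sound and consistent with the cited result.
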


For instance, the fiber corresponding to the configuration polygon given in Figure~\ref{fig_singularfiber} is diffeomorphic to  $(\bS^1)^3 \times \bS^2 \times \mathrm{SO}(3)^{2-1}$.

\begin{figure}[h]
	\scalebox{0.85}{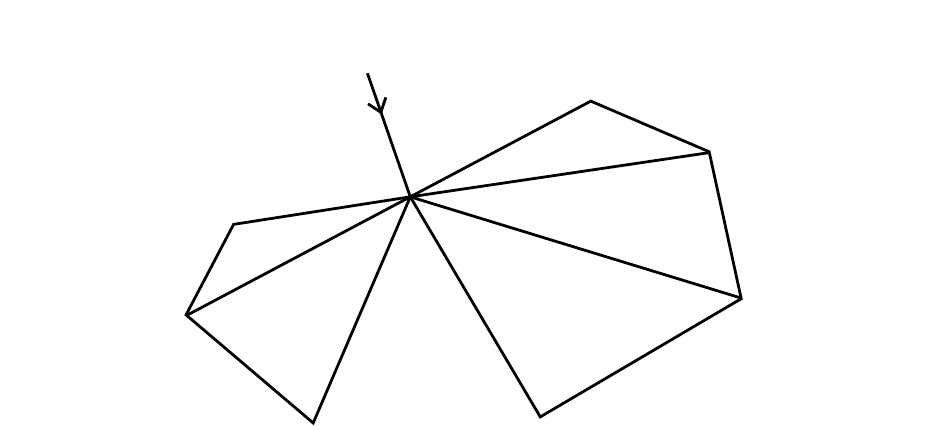}
	\caption{\label{fig_singularfiber} Singular fiber $(\bS^1)^3 \times \bS^2 \times \mathrm{SO}(3)^{2-1}$.}	
\end{figure}

We now explain a relation between the caterpillar bending system $\Psi_{\mathbf{r}}$ in~\eqref{equ_caterpillaerbeding} is related to the GZ system ${\Phi}_{\lambda}$ in~\eqref{equ_GZsystem}. 
Recall that the polygon space $\mathcal{M}_\mathbf{r}$ is symplectomorphic to the symplectic reduction ${T} \bbslash_{2\mathbf{r}} \mcal{O}_{\lambda}$ by Theorem~\ref{Proposition_HK}. Choose a set $\{\Phi_{1,2}, \Phi_{2,2}, \cdots, \Phi_{n,2}\}$ of components of the Gelfand--Zeitlin system $\Phi_{\lambda}$ on $\mcal{O}_\lambda$ in~\eqref{equ_GZsystem}. Those components restricted to the level set at $2\mathbf{r}$ factor through the symplectic reduction and the chosen components induce the completely integrable system
$$
\Phi_{\mathbf{r}} \colon {T} \bbslash_{2\mathbf{r}} \mcal{O}_{\lambda} \to \R^{n}.
$$ 
We denote by $\Phi_j$ the function on ${T} \bbslash_{2\mathbf{r}} \mcal{O}_{\lambda}$ induced by $\Phi_{j,2}$ for $j = 1, 2, \cdots, n$. Each component is a moment map for the residual $\bS^1$-action (defined on an open dense subset). The induced system agrees with the caterpillar bending system on $\mcal{M}_\mathbf{r}$ (up to some coordinate changes).

The moment map induced by~\eqref{equ_momentmaptorus} provides the relation 
\begin{equation}\label{equ_rjuu}
(u_{i+1,1} + u_{i,2}) - (u_{i,1} + u_{i-1,2} ) = r_{i+1}
\end{equation}
for $i = 0, 1, \dots, n+2$. From~\eqref{equation_gzpattern} and~\eqref{equ_rjuu}, the image of $\mcal{M}_\mathbf{r}$ under the caterpillar bending system can be described below.

Let $\mathbf{u} = (u_1, u_2, \cdots, u_n)$ be the coordinate system for $\R^n$. 
Consider a polytope in $\R^n$ given by the intersection of the following half-spaces$\colon$
\begin{equation}\label{equation_polypattern}
	\begin{alignedat}{27}
			  & &&&& 0 &&&& \,\, \leq \,\, &&&&  \,\,\, u_1  &&&& \,\, \leq \,\, &&&& \quad \quad u_{2} &&&& \,\, \leq \,\, &&&& \dots &&&& \,\, \leq \,\, &&&& \quad \quad u_n &&&& \,\, \leq \,\, &&&&  \,\,\, |\mathbf{r}| - 2r_{n+3} \\
			  & &&&& &&&& &&&& \,\,\, \ddge &&&& &&&&   \quad \quad \, \ddge &&&& &&&& &&&&  &&&&  \quad \quad \ddge  &&&& &&&& \,\, \quad \quad \ddge \\
			  			  & &&&& &&&& &&&& 2 |\mathbf{r}|_{1} &&&& \,\, \leq \,\, &&&& 2|\mathbf{r}|_{2} - u_1 &&&& \,\, \leq \,\, &&&& \dots &&&& \,\, \leq \,\, &&&& 2|\mathbf{r}|_{n} - u_{n-1} &&&& \,\, \leq \,\, &&&& 2|\mathbf{r}|_{n+1} - u_n &&&& \,\, \leq \,\, &&&& |\mathbf{r}| .
	\end{alignedat}
\end{equation}
We denote by $\Delta_{\mathbf{r}}$ the polytope determined by~\eqref{equation_polypattern} in $\R^n$. We label the inequalities in the columns, the inequalities in the first row, and the inequalities in the second row as follows. 
\begin{equation}\label{equ_elli0i1i2}
\begin{cases}
\ell_{i,0}(\mathbf{u}) \geq 0 \quad \mbox{ for $i = 1, 2, \dots, n+1$}, \\
\ell_{i,1}(\mathbf{u}) \geq 0 \quad \mbox{ for $i = 1, 2, \dots, n+1$}, \\
\ell_{i,2}(\mathbf{u}) \geq 0 \quad \mbox{ for $i = 0, 1, \dots, n$}
\end{cases}
\end{equation}
where
\begin{itemize}
\item $\ell_{1,0}(\mathbf{u}) = 2 |\mathbf{r}|_1 - u_1,\cdots, \, \ell_{n,0}(\mathbf{u}) = 2 | \mathbf{r} |_n  -u_{n-1} - u_n, \, \ell_{n+1,0}(\mathbf{u}) = 2|\mathbf{r}|_{n+1}  - | \mathbf{r} | +  2 r_{n+3} - u_n$,
\item $\ell_{1,1}(\mathbf{u}) = 2 r_2 - u_1,\cdots, \, \ell_{n,1}(\mathbf{u}) = 2r_{n+1} - u_n + u_{n-1}, \, \ell_{n+1,1}(\mathbf{u}) = | \mathbf{r} | - 2 | \mathbf{r} |_{n+1} +u_n $,
\item $\ell_{0,2}(\mathbf{u}) = u_1,\cdots, \, \ell_{n-1,2}(\mathbf{u}) = u_n - u_{n-1}, \, \ell_{n,2}(\mathbf{u}) = |\mathbf{r}| - 2r_{n+3} -  u_n$.
\end{itemize}

\begin{theorem}[\cite{HausmannKnutson}]\label{proposition_NNUgzbe}
The integrable system $( \Phi_{1,2}, \Phi_{2,2}, \cdots, \Phi_{n,2})$ on $\mcal{O}_\lambda$ induces a completely integrable system $\Phi_{\mathbf{r}} = (\Phi_1, \Phi_2, \cdots, \Phi_n)$ on $\mcal{M}_{\mathbf{r}}$. More precisely, the caterpillar bending system $\Psi_{\mathbf{r}}$ is related to the induced system $\Phi_{\mathbf{r}}$ as follows$\colon$
\begin{equation}\label{equ_underlinephi2j}
{\Phi}_{j} \circ \Upsilon = | \mathbf{r} |_{j+1} - \Psi_j \quad \mbox{ for $j = 1, 2, \cdots, n$}
\end{equation}
where $\Upsilon \colon \mcal{M}_\mathbf{r} \to {T} \bbslash_{2\mathbf{r}} \mcal{O}_{\lambda} $ is the symplectomorphism in Theorem~\ref{Proposition_HK}.

In particular, the image of $\mcal{M}_{\mathbf{r}}$ of the caterpillar bending system agrees with the polytope $\Delta_{\mathbf{r}}$ given by~\eqref{equation_polypattern} up to the affine transformation 
$$
u_{j,2} = | \mathbf{r} |_{j+1} - u_j \quad \mbox{for $j = 1, 2, \cdots, n$}.
$$
\end{theorem}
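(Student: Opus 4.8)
The plan is to read the statement as the Gelfand--MacPherson correspondence of Theorem~\ref{Proposition_HK} made explicit in the coordinates of the Gelfand--Zeitlin system \eqref{equ_GZsystem} and the caterpillar bending system \eqref{equ_caterpillaerbeding}, and to carry it out in four steps: that the functions $\Phi_{i,2}$ descend to the reduction $T\bbslash_{2\mathbf{r}}\mcal{O}_\lambda$; the identity \eqref{equ_underlinephi2j}, which is the geometric core; complete integrability of the resulting system, which will follow for free; and the passage from the Gelfand--Zeitlin polytope \eqref{equation_gzpattern} to the polytope \eqref{equation_polypattern}.

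For the descent, I would note that conjugating a Hermitian matrix by a diagonal unitary matrix leaves the eigenvalues of every leading principal submatrix unchanged; hence each $\Phi_{i,j}$ is invariant under the conjugation action of the maximal torus $T_{\mathrm{U}(n+3)} \subset \mathrm{U}(n+3)$, so $\Phi_{1,2},\dots,\Phi_{n,2}$ Poisson-commute with the torus moment map \eqref{equ_momentmaptorus}, restrict to the level set at $2\mathbf{r}$, and descend to functions $\Phi_1,\dots,\Phi_n$ on $T\bbslash_{2\mathbf{r}}\mcal{O}_\lambda$ (smooth on the open dense locus where the eigenvalue functions are), hence via $\Upsilon$ to functions on $\mcal{M}_\mathbf{r}$.

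The heart is \eqref{equ_underlinephi2j}, which I would prove by unwinding $\Upsilon$. A point of $\mcal{O}_\lambda$ on the level set $\{\Phi_{T_{\mathrm{U}(n+3)}} = 2\mathbf{r}\}$ is a positive semidefinite rank-two matrix $A$ with prescribed diagonal entries, so $A = \Gamma\Gamma^H$ for a $\Gamma \in \C^{(n+3)\times 2}$ whose $k$-th row $\gamma_k$ satisfies $\|\gamma_k\|^2 = A_{kk}$, a fixed multiple of $r_k$; under the Hausmann--Knutson identification the polygon edge $v_k \in \bS^2(r_k) \subset \sqrt{-1}\,\mathfrak{su}(2)$ is the suitably normalized traceless part of $\gamma_k^H\gamma_k$, so the diagonal vector $d_j = v_1 + \dots + v_{j+1}$ is the traceless part of $\Gamma_{\le j+1}^H\Gamma_{\le j+1}$, where $\Gamma_{\le j+1}$ is the submatrix of the first $j+1$ rows. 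The $2\times 2$ matrix $\Gamma_{\le j+1}^H\Gamma_{\le j+1}$ has the same nonzero eigenvalues $a \ge b \ge 0$ as $\Gamma_{\le j+1}\Gamma_{\le j+1}^H = A^{(j+1)}$, so $\Phi_{j,2}(A) = b$; its trace $a + b = \sum_{k \le j+1}\|\gamma_k\|^2$ is the same multiple of $|\mathbf{r}|_{j+1}$, while its traceless part has eigenvalues $\pm\tfrac{a-b}{2}$, so $\Psi_j([\mathbf{v}]) = \|d_j\|$ is the matching multiple of $a - b$. Eliminating $a$ and $b$ gives $\Phi_j \circ \Upsilon = |\mathbf{r}|_{j+1} - \Psi_j$ once the normalization constants are reconciled. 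From this, complete integrability of $\Phi_\mathbf{r} = (\Phi_1,\dots,\Phi_n)$ is immediate: it is then an affine reparametrization of the caterpillar bending system $(\Psi_1,\dots,\Psi_n)$, already known to be a completely integrable system on $\mcal{M}_\mathbf{r}$ by the theorem of Klyachko and Kapovich--Millson recalled above.

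Finally, the image of $\mcal{O}_\lambda$ under $\Phi_\lambda$ is $\Delta_\lambda$ of \eqref{equation_gzpattern}, and the torus moment map \eqref{equ_momentmaptorus} reads off the diagonal entries $A_{kk} = \operatorname{tr}A^{(k)} - \operatorname{tr}A^{(k-1)}$, which in Gelfand--Zeitlin coordinates is exactly the left-hand side of \eqref{equ_rjuu}; imposing the level $2\mathbf{r}$ therefore solves each $u_{i,1}$ as an affine function of $u_{i-1,2}$ and $\mathbf{r}$. I would then substitute these expressions into the defining inequalities $k_{i,0}, k_{i,1}, k_{i,2} \ge 0$ of \eqref{equ_halfspacesgzpoly} and apply the affine change of variables $u_{j,2} = |\mathbf{r}|_{j+1} - u_j$ from \eqref{equ_underlinephi2j}, checking family by family that they become $\ell_{i,0}, \ell_{i,1}, \ell_{i,2} \ge 0$ of \eqref{equ_elli0i1i2}, i.e.\ that the image of the caterpillar bending system is $\Delta_\mathbf{r}$. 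The main obstacle is this middle step: making the Gelfand--MacPherson model of $\Upsilon$ explicit and tracking the normalization constants --- between the level $2\mathbf{r}$, the KKS form on $\mcal{O}_{(r_j,-r_j)}$ versus the round sphere $\bS^2(r_j)$, and the passage from $\gamma_k^H\gamma_k$ to $v_k$ --- so that the shift comes out to be exactly $|\mathbf{r}|_{j+1}$; the final substitution is then routine bookkeeping.
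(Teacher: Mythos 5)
The paper does not actually prove this theorem: it cites \cite{HausmannKnutson} and only records the relation~\eqref{equ_rjuu} as the hint for the polytope identification. Your proposal is a correct reconstruction of the Hausmann--Knutson / Gelfand--MacPherson argument, and in particular the decisive computation is right: writing $A=\Gamma\Gamma^H$ (up to normalization) on the $T$-level set, the $2\times 2$ Gram matrix $\Gamma_{\le j+1}^H\Gamma_{\le j+1}$ and the leading principal block $A^{(j+1)}=\Gamma_{\le j+1}\Gamma_{\le j+1}^H$ share their nonzero eigenvalues $a\ge b$, so $\Phi_{j,2}=b$, while the trace $a+b$ is pinned by the level constraint to a fixed multiple of $|\mathbf{r}|_{j+1}$ and the traceless part is $d_j$ with $\|d_j\|$ proportional to $a-b$; eliminating $a,b$ yields~\eqref{equ_underlinephi2j}, and complete integrability and the polytope description then follow as you say. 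The one place that genuinely needs care is the point you flag yourself: the factor-of-two conventions in~\eqref{equ_momentmapofu2}, \eqref{equ_momentmaptorus}, and the level $2\mathbf{r}$ must be reconciled so that the shift is exactly $|\mathbf{r}|_{j+1}$; as a sanity check while doing this, note that the displayed relation~\eqref{equ_rjuu} appears to be off by a factor of $2$ relative to the polytope data in~\eqref{equation_polypattern} (the latter has $u_{1,1}=2|\mathbf{r}|_1$, consistent with level $2\mathbf{r}$), so one should trust the polytope, not the displayed constant, when matching normalizations.
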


Thanks to Theorem~\ref{Proposition_HK} and~\ref{proposition_NNUgzbe}, from now on, we do not make any distinction on $(\Phi_j$ and $\Psi_j)$, and $({T} \bbslash_{2\mathbf{r}} \mcal{O}_{\lambda}$ and $\mcal{M}_\mathbf{r})$. The completely integrable system
\begin{equation}\label{equ_caterpillarbendingshifted}
\Phi_\mathbf{r} \coloneqq (\Phi_j  \colon j = 1, 2, \cdots, n ) \colon \mcal{M}_\mathbf{r} \simeq {T} \bbslash_{2\mathbf{r}} \mcal{O}_{\lambda} \to \R^n
\end{equation}
is also called a \emph{caterpillar bending system} and the polytope $\Delta_\mathbf{r}$ given by~\eqref{equation_polypattern} is called a \emph{caterpillar bending polytope}. 

For later usage, when $\mathbf{r}$ is equilateral, we figure out which hyperplane given by $\ell_{i,j}(\mathbf{u}) = 0$ in~\eqref{equ_elli0i1i2} contains a facet of the caterpillar bending polytope $\Delta_\mathbf{r}$. We emphasize that in contrast to the GZ case, not every hyperplane contain a facet of $\Delta_\mathbf{r}$.  

\begin{proposition}\label{proposition_monotone}
Suppose that $\mathbf{r}$ is equilateral in~\eqref{equ_equilateral111}. Consider the caterpillar bending polytope $\Delta_\mathbf{r}$ defined in~\eqref{equ_elli0i1i2}. 
\begin{enumerate}
\item If $(i,j) \neq (1,0), (1,1), (n,2)$, and $(n+1, 0)$, then the intersection of the hyperplane $\left\{ \mathbf{u} \in \R^n \mid \ell_{i,j}(\mathbf{u}) = 0 \right\}$ and the polytope $\Delta_\mathbf{r}$ is a facet of $\Delta_\mathbf{r}$.
\item If $(i,j) = (1,0), (1,1), (n,2)$, or $(n+1, 0)$, then the intersection of the hyperplane $\left\{ \mathbf{u} \in \R^n \mid \ell_{i,j}(\mathbf{u}) = 0 \right\}$ and the polytope $\Delta_\mathbf{r}$ is a face of codimension two of $\Delta_\mathbf{r}$.
\end{enumerate}
\end{proposition}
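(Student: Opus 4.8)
The statement is purely combinatorial, so the plan is to work directly with the linear forms defining $\Delta_{\mathbf r}$ in \eqref{equ_elli0i1i2}. First I would normalize $r_1=\cdots=r_{n+3}=1$, so that $|\mathbf r|_i=i$ and $|\mathbf r|=n+3$; then, writing $u_0:=0$, the forms become $\ell_{i,0}(\mathbf u)=2i-u_{i-1}-u_i$ for $1\le i\le n$ and $\ell_{n+1,0}(\mathbf u)=(n+1)-u_n$; $\ell_{i,1}(\mathbf u)=2-u_i+u_{i-1}$ for $1\le i\le n$ and $\ell_{n+1,1}(\mathbf u)=u_n-(n-1)$; and $\ell_{0,2}(\mathbf u)=u_1$, $\ell_{i,2}(\mathbf u)=u_{i+1}-u_i$ for $1\le i\le n-1$, $\ell_{n,2}(\mathbf u)=(n+1)-u_n$. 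The crucial preliminary observation is that $\ell_{1,0}=\ell_{1,1}$ and $\ell_{n+1,0}=\ell_{n,2}$ as affine functions, so the four exceptional indices cut out only the two hyperplanes $H_1=\{u_1=2\}$ and $H_n=\{u_n=n+1\}$, while the remaining $3n-1$ forms are pairwise non-proportional. Since $\mathbf q:=(1,2,\dots,n)$ satisfies every inequality in \eqref{equation_polypattern} strictly, $\Delta_{\mathbf r}$ is $n$-dimensional, and throughout I will use the elementary fact that, for a full-dimensional polytope, a valid inequality cuts out a facet once there is a point of the polytope lying on its zero-hyperplane but on no other bounding hyperplane.

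For part (2), I would first note that on $H_1\cap\Delta_{\mathbf r}$ we have $u_1=2$, whereupon $\ell_{1,2}(\mathbf u)\ge 0$ forces $u_2\ge 2$ and $\ell_{2,0}(\mathbf u)\ge 0$ forces $u_2\le 4-u_1=2$, so $u_2=2$; hence $H_1\cap\Delta_{\mathbf r}\subseteq\{u_1=u_2=2\}$. For the reverse, the point $(2,2,3,4,\dots,n)$ lies in $\Delta_{\mathbf r}$ and makes only $\ell_{1,0}(=\ell_{1,1})$, $\ell_{1,2}$ and $\ell_{2,0}$ vanish, so an $(n-2)$-dimensional neighborhood inside $\{u_1=u_2=2\}$ stays in $\Delta_{\mathbf r}$; thus $H_1\cap\Delta_{\mathbf r}$ is a face of codimension exactly two. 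The hyperplane $H_n$ is symmetric: $u_n=n+1$ together with $\ell_{n,1}\ge 0$ and $\ell_{n,0}\ge 0$ forces $u_{n-1}=n-1$, with witness $(1,2,\dots,n-2,n-1,n+1)$. It is worth recording the geometric reason this collapse occurs only at the two ends of the caterpillar: since $u_j=|\mathbf r|_{j+1}-\Psi_j=(j+1)-|d_j|$, the condition $u_1=2$ means $|d_1|=0$, i.e. $v_2=-v_1$, so $d_2=v_3$ and $|d_2|=1$, i.e. $u_2=2$; symmetrically $u_n=n+1$ forces $v_{n+3}=-v_{n+2}$, hence $|d_{n-1}|=|v_{n+1}|=1$, i.e. $u_{n-1}=n-1$. (One needs $n\ge 2$ here, which is automatic for generic equilateral $\mathbf r$ since then $n$ is even.)

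For part (1), the plan is to produce, for each of the $3n-1$ non-exceptional indices $(i,j)$, a single point of $\Delta_{\mathbf r}$ on $\{\ell_{i,j}=0\}$ lying on no other bounding hyperplane; by the criterion above this makes $\{\ell_{i,j}=0\}\cap\Delta_{\mathbf r}$ a facet. These witnesses are small perturbations of $\mathbf q=(1,2,\dots,n)$: take $(0,1+\varepsilon,2+\varepsilon,\dots,n-1+\varepsilon)$ for $\ell_{0,2}$; take $(1,2,\dots,n-2,n-\tfrac32,n-1)$ for $\ell_{n+1,1}$; for $\ell_{i,0}$ with $2\le i\le n$ replace the entries $(u_{i-1},u_i)$ of $\mathbf q$ by $(i-\tfrac12,i+\tfrac12)$; for $\ell_{i,1}$ with $2\le i\le n$ replace them by $(i-\tfrac32,i+\tfrac12)$; and for $\ell_{i,2}$ with $1\le i\le n-1$ collapse $u_i$ and $u_{i+1}$ to a common value, chosen strictly between their neighbors and strictly inside $(0,n+1)$.

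The main obstacle is exactly this last step of bookkeeping: for each witness one must verify from the explicit forms that none of the other $\approx 3n$ inequalities degenerates to an equality, and in particular the near-boundary indices $i\in\{2,n\}$ — where $\ell_{i,0}$ and $\ell_{i,2}$ sit next to $H_1$ and $H_n$ — must be treated separately, so that the perturbations are chosen not to touch the exceptional hyperplanes. There is no conceptual difficulty beyond this.
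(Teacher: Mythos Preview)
Your proposal is correct and follows essentially the same strategy as the paper's proof: for part~(1) exhibit, for each non-exceptional $(i,j)$, a point of $\Delta_{\mathbf r}$ lying on $\{\ell_{i,j}=0\}$ and on no other bounding hyperplane (the paper does this explicitly only for $\ell_{2,0}$ via the translated center $\mathbf u_{2,0}=(3/2,5/2,3,\dots,n)$ and declares the other cases similar); for part~(2) observe the coincidences $\ell_{1,0}=\ell_{1,1}$ and $\ell_{n,2}=\ell_{n+1,0}$, deduce from the remaining inequalities that $u_1=2$ forces $u_2=2$ (resp.\ $u_n=n+1$ forces $u_{n-1}=n-1$), and use the witness $(2,2,3,\dots,n)$ (resp.\ $(1,\dots,n-1,n+1)$) with an $(n-2)$-parameter perturbation. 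Your write-up is in fact more explicit than the paper's in listing all the witnesses, and the additional geometric reading via $|d_1|=0\Rightarrow |d_2|=1$ is not in the paper but is a helpful sanity check.
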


\begin{proof}
First, we claim that every hyperplane given by  $\ell_{i,j}(\mathbf{u}) = 0$ except the four cases $(i,j) \neq (1,0), (1,1), (n,2)$, and $(n+1, 0)$ contains a facet of $\Delta_\mathbf{r}$. We only prove that the hyperplane given by $\ell_{2,0}(\mathbf{u}) =0$ contains a facet because the other cases are similar. We denote by $\mathbf{u}_0$ the \emph{center} of the polytope $\Delta_\mathbf{r}$ determined by
\begin{equation}\label{equ_centerofdeltamathbfr}
u_1 = r_1, \,  u_2 = 2 \cdot r_1, \, \cdots, \, u_n = n \cdot r_1.
\end{equation}
In particular, the center $\mathbf{u}_0$ satisfies $\ell_{i,j}(\mathbf{u}) > 0$ in~\eqref{equ_elli0i1i2}. We consider the following point $\mathbf{u}_{2,0}$ by translating the center $\mathbf{u}_0$ in the outward normal direction of $\ell_{2,0}(\mathbf{u}) \geq 0$,
$$
u_1 = (1 + {1}/{2}) \cdot r_1, u_2 = ( 2 + {1}/{2} ) \cdot r_1, u_3 = 3 \cdot r_1, \cdots, u_n = n \cdot r_1.  
$$
For an $n$-tuple ${\boldsymbol \epsilon} = ( \epsilon_1 , - \epsilon_1, \epsilon_3, \cdots, \epsilon_n)$ sufficiently close to the origin, note that $\mathbf{u}_{\epsilon} \coloneqq \mathbf{u}_{2,0} + r_1 {\boldsymbol \epsilon}$ satisfies $\ell_{2,0}(\mathbf{u}_{\epsilon}) = 0$ and all the other inequalities $\ell_{i,j}(\mathbf{u}_{\epsilon}) > 0$. It yields that the point $\mathbf{u}_{\epsilon}$ is contained in a facet and the hyperplane given by $\ell_{2,0}(\mathbf{u}) = 0$. Hence, the claim is proven.

Next, we deal with the remaining four cases. If $\mathbf{r}$ is equilateral, then we have $\ell_{1,0}(\mathbf{u}) = \ell_{1,1}(\mathbf{u})$ and $\ell_{n,2}(\mathbf{u}) = \ell_{n+1,0}(\mathbf{u})$. It suffices to deal with the two cases$\colon \ell_{1,0}(\mathbf{u}) = 0$ and $\ell_{n,2}(\mathbf{u})= 0$. We only prove that the hyperplane given by $\ell_{1,0}(\mathbf{u}) = 0$ contains a codimension two face of $\Delta_\mathbf{r}$ since the other case is similar. Note that $\ell_{1,0}(\mathbf{u}) = 0$ yields $u_1 =2 r_1$. Since $u_1 =2 | \mathbf{r}|_{2} - u_1$ in~\eqref{equation_polypattern}, $\ell_{1,2} (\mathbf{u}) = 0$ and $\ell_{2,0}(\mathbf{u}) = 0$ must hold, and hence $u_2 = 2 r_1$. We consider the following point $\mathbf{u}_{1,0}$ by translating the center $\mathbf{u}_0$ in the outward normal direction of $\ell_{1,0}(\mathbf{u}) \geq 0$,
$$
u_1 = 2 \cdot r_1, u_2 = 2 \cdot r_1, u_3 = 3 \cdot r_1, \cdots, u_n = n \cdot r_1.
$$
For an $n$-tuple ${\boldsymbol \epsilon} = ( 0, 0, \epsilon_3, \cdots, \epsilon_n)$ sufficiently close to the origin, note that $\mathbf{u}_{\epsilon} \coloneqq \mathbf{u}_{1,0} + r_1 {\boldsymbol \epsilon}$ satisfies $\ell_{1,0}(\mathbf{u}_{\epsilon}) = \ell_{2,0}(\mathbf{u}_{\epsilon}) = \ell_{1,2}(\mathbf{u}_{\epsilon}) = \ell_{1,1}(\mathbf{u}_{\epsilon}) = 0$ and all the other inequalities $\ell_{i,j}(\mathbf{u}_{\epsilon}) > 0$. It yields that the point $\mathbf{u}_{\epsilon}$ is contained in a face of codimension two and the hyperplane given by $\ell_{1,0}(\mathbf{u}) = 0$. Hence, the claim is proven.
\end{proof}

\section{Potential functions of polygon spaces}

The aim of this section is to state the main results and discuss the results from the perspectives of equivariant Floer theory and the theory of cluster algebras and varieties. After reviewing the disk potential functions, we derive the disk potential function of a monotone Lagrangian torus fiber of the caterpillar bending system from equivariant Floer theory. By using the relation of generalized Gelfand--Zeitlin fibers, we construct a glued SYZ mirror of the equilateral and generic polygon space, which will be a cluster variety of type $A$.

\subsection{Disk potential functions}\label{subsection_diskpotential}

We begin by recalling the definition of a monotone Lagrangian submanifold. A Lagrangian submanifold $L$ of a symplectic manifold $(X, \omega)$ is called \emph{monotone} if the Maslov index and the symplectic area are positively proportional, that is, there exists a positive number $c > 0$ such that $\omega(\beta) = c \cdot \mu(\beta)$ for all $\beta \in \pi_2(X,L)$.

In order to prove that the Lagrangian torus fiber at the center of a caterpillar bending polytope is monotone, we recall the Maslov index formula for a gradient holomorphic disk in \cite{ChoKimMONO}, which is useful to verify the monotonicity of a Lagrangian submanifold when an ambient symplectic manifold admits a Hamiltonian (local) torus action. Using this formula, we shall prove the Lagrangian torus located at the center of the caterpillar bending polytope is monotone.

Consider a symplectic manifold $X$ equipped with a Hamiltonian $\bS^1$-action with a moment map $H$. Suppose that we have an $\bS^1$-invariant Lagrangian submanifold $L$ contained in a level set of $H$. To generate a gradient holomorphic disk, we pick the following data$\colon$
\begin{enumerate}
\item an $\bS^1$-invariant compatible almost complex structure $J$,
\item a free $\bS^1$-orbit $\vartheta$ in $L$.
\end{enumerate}
Let $X_H$ be the Hamiltonian vector field generated by $H$. Then the moment map $H$ generates the gradient vector field $\nabla H = J X_H$ using the Riemannian metric $g_J$ from $\omega$ and $J$, which is defined by $g_J( \nabla H, \cdot ) = d H (\cdot )$. Assume that the integral curve $\gamma$ of the vector field $\nabla H$ with an initial condition $p \in \vartheta$ converges to a fixed point. The collection of $\bS^1$-orbits of $\gamma$ forms a $J$-holomorphic disk whose boundary is $\vartheta$. We call such a holomorphic disk a \emph{gradient holomorphic disk} of $\vartheta$.

Suppose that a connected Lie group $G$ acts on the manifold $X$. The action is called \emph{semi-free} if the action is free on the complement of the set of fixed points. In the following circumstance, the Maslov index of a gradient holomorphic disk of $\vartheta$ can be computed by the codimension of the maximal fixed component of the action intersecting the disk. 

\begin{proposition}[Corollary 3.8 in \cite{ChoKimMONO}]\label{proposition_maslovformula}
Suppose that $X$ is a symplectic manifold equipped with a Hamiltonian $\bS^1$-action. Let $H$ be a moment map of the $\bS^1$-action and $L$ an $\bS^1$-invariant Lagrangian submanifold lying in a level set of $H$. Choose an $\bS^1$-invariant compatible almost complex structure and a free $\bS^1$-orbit $\vartheta$ in $L$. Suppose that a class $\beta \in \pi_2(X, L)$ is represented by the gradient holomorphic disk of $\vartheta$ and the map $H$ attains the maximum at the fixed point. If the $\bS^1$-action is semi-free near the fixed point of the gradient disk, then the Maslov index $\mu(\beta)$ is the real codimension of the maximal fixed component.	
\end{proposition}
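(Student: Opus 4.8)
The plan is to reduce the Maslov index to the isotropy representation of the $\bS^1$-action at the fixed point, using only that the gradient holomorphic disk is \emph{equivariant} (its holomorphicity plays no role here, since $\mu(\beta)$ depends only on the class $\beta\in\pi_2(X,L)$). Let $u\colon(D^2,\partial D^2)\to(X,L)$ denote the disk obtained by sweeping a gradient trajectory of $\nabla H=JX_H$ by the $\bS^1$-action, normalized so that $u(\partial D^2)=\vartheta$ and $u(0)$ is the limiting fixed point. By construction $u(e^{\sqrt{-1}\theta}\zeta)=e^{\sqrt{-1}\theta}\cdot u(\zeta)$, where $\bS^1$ acts on $D^2$ by rotation and on $X$ by the Hamiltonian action; in particular $q\coloneqq u(0)$ is a fixed point, which by hypothesis is where $H$ attains its maximum, so $q$ lies in the maximal fixed component $F$. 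Since $F$ is a symplectic submanifold, $\mathrm{codim}_\R F=2k$ with $k\coloneqq\mathrm{rank}_\C N_q$ for $N_q$ the symplectic normal space to $F$ at $q$, and it suffices to prove $\mu(\beta)=2k$.

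First I would choose an $\bS^1$-equivariant symplectic trivialization of $u^*TX$. Since $D^2$ equivariantly deformation retracts onto $\{0\}$ via $\zeta\mapsto s\zeta$, the equivariant symplectic vector bundle $u^*TX$ is equivariantly symplectically isomorphic to $D^2\times(T_qX,\omega_q)$, on which $\bS^1$ acts by rotation on the base and by the isotropy representation $\rho\colon\bS^1\to\mathrm{Sp}(T_qX)$ on the fibre; fixing an invariant compatible complex structure on $T_qX$ we may take $\rho$ to be unitary. Under such a trivialization the Lagrangian boundary condition $\theta\mapsto T_{u(e^{\sqrt{-1}\theta})}L$ becomes a loop $\theta\mapsto\Lambda(\theta)$ of Lagrangian subspaces of $T_qX$. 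Using the equivariance of the trivialization together with the $\bS^1$-invariance of $L$, which gives $T_{u(e^{\sqrt{-1}\theta})}L=d(e^{\sqrt{-1}\theta})\,T_{u(1)}L$, a direct chase yields $\Lambda(\theta)=\rho(\theta)\cdot\Lambda_0$ with $\Lambda_0\coloneqq\Lambda(0)$ a fixed Lagrangian plane. Hence $\mu(\beta)$ equals the Maslov index of the loop $\theta\mapsto\rho(\theta)\Lambda_0$ in the Lagrangian Grassmannian of $T_qX$, which, via the squared-determinant map, equals $2\deg\bigl(\theta\mapsto\det_\C\rho(\theta)\bigr)$.

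It then remains to compute $\rho$. Splitting $T_qX=T_qF\oplus N_q$ equivariantly, $\rho$ is trivial on $T_qF$, so $\det_\C\rho(\theta)=\det_\C\bigl(\rho(\theta)|_{N_q}\bigr)$. Semi-freeness of the action near $q$ forces every weight of $\rho$ on $N_q$ to be $\pm1$: a nonzero normal vector supported in a single weight line has trivial $\bS^1$-stabilizer precisely when that weight has absolute value one. The hypothesis that $H$ is maximal at $q$ then pins all these weights to $+1$, since with the standard Hamiltonian sign convention the normal weights at a maximum of the moment map are positive. Therefore $\rho(\theta)|_{N_q}=e^{\sqrt{-1}\theta}\,\mathrm{Id}_{N_q}$, so $\det_\C\rho(\theta)=e^{\sqrt{-1}k\theta}$ and $\mu(\beta)=2k=\mathrm{codim}_\R F$, as claimed.

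The step I expect to require the most care is the identification $\Lambda(\theta)=\rho(\theta)\Lambda_0$ of the preceding paragraph: transporting the Lagrangian boundary condition correctly through the equivariant trivialization, and keeping all sign conventions mutually consistent (the orientation of $D^2$, the direction of the gradient flow in $\nabla H=JX_H$, and the sign in $\iota_{X_H}\omega=dH$), so that the Maslov index comes out $+\,\mathrm{codim}_\R F$ rather than its negative. The remaining ingredients --- equivariant triviality of $u^*TX$ over the disk, the Maslov index of a loop of the form $\rho(\theta)\Lambda_0$, and the weight computation --- are standard.
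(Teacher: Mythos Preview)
The paper does not give its own proof of this proposition: it is simply quoted from \cite{ChoKimMONO} as Corollary~3.8 there, and used as a black box. So there is no in-paper argument to compare your proposal against.

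That said, your argument is the standard one and is essentially what one finds in the cited reference. The reduction to the isotropy representation via an $\bS^1$-equivariant symplectic trivialization of $u^*TX$ over the (equivariantly contractible) disk is correct, and your chase $\Lambda(\theta)=\rho(\theta)\Lambda_0$ goes through exactly as you indicate: equivariance of the trivialization gives $\Phi_{e^{i\theta}}\circ d(e^{i\theta})=\rho(\theta)\circ\Phi_1$, hence $\Lambda(\theta)=\rho(\theta)\Phi_1(T_{u(1)}L)$. The identification $\mu(\beta)=2\deg(\det_\C\rho)$ and the weight analysis (semi-freeness forces $|w_j|=1$ on $N_q$; the maximum hypothesis fixes the common sign) are also correct. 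With the convention $\iota_{X_H}\omega=dH$ used in the paper, a local model check on $\C$ with action $e^{i\theta}\cdot z=e^{iw\theta}z$ gives $H=-\tfrac{w}{2}|z|^2$, so a maximum at the fixed point indeed corresponds to positive normal weights, confirming your sign claim. Your own assessment that the sign bookkeeping is the only delicate point is accurate; this is also reflected in the paper's Remark following the proposition, which states the more general formula ``Maslov index $=$ sum of weights'' from \cite[Theorem~3.7]{ChoKimMONO}, of which the semi-free/maximum case is the clean special instance you have recovered.
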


\begin{remark}
More generally, the Maslov index of a gradient disk can be computed by the sum of weights at the unique fixed point even when $H$ does not attain the maximum value at the fixed point, see \cite[Theorem 3.7]{ChoKimMONO}. In this paper, we only need Proposition~\ref{proposition_maslovformula}.
\end{remark}

The fiber over the center $\mathbf{u}_0$ of the caterpillar bending system $\Phi_\mathbf{r}$ is a monotone Lagrangian torus where the center $\mathbf{u}_0$ of $\Delta_\mathbf{r}$ is located at~\eqref{equ_centerofdeltamathbfr}.

\begin{proposition}\label{lemma_monotonecbs}
If $\mathbf{r}$ is equilateral and generic, then the fiber $\Phi^{-1}_{\mathbf{r}}(\mathbf{u}_0)$ over the center of the caterpillar bending system on $\mcal{M}_\mathbf{r}$ is a monotone Lagrangian torus.
\end{proposition}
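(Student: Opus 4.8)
The plan is to verify monotonicity of the central fiber $L_0 := \Phi_\mathbf{r}^{-1}(\mathbf{u}_0)$ by exhibiting enough Maslov index two holomorphic disks to pin down the proportionality constant $c$, and then checking that every boundary class is generated by these. First I would fix the toric-degeneration picture: the caterpillar bending system is, after the identification of Theorem~\ref{proposition_NNUgzbe}, the system $(\Phi_1,\dots,\Phi_n)$ induced from the Gelfand--Zeitlin components $(\Phi_{1,2},\dots,\Phi_{n,2})$ on $\mcal{O}_\lambda \simeq \mathrm{Gr}(2,\C^{n+3})$, and the image is the bending polytope $\Delta_\mathbf{r}$ whose center $\mathbf{u}_0$ is given by \eqref{equ_centerofdeltamathbfr}. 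Since $\mathbf{r}$ is equilateral, the global symplectic manifold $\mcal{M}_\mathbf{r}$ is monotone with $c_1 = \tfrac{1}{r_1}[\omega_\mathbf{r}]$; the task is purely about the Lagrangian torus fiber $L_0$, i.e.\ to show $\omega_\mathbf{r}(\beta) = c\,\mu(\beta)$ for all $\beta \in \pi_2(\mcal{M}_\mathbf{r}, L_0)$.

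The key tool is Proposition~\ref{proposition_maslovformula}: each bending Hamiltonian $\bS^1$-action $\Phi_j$ (defined on an open dense set, and in particular in a neighborhood of $L_0$) produces gradient holomorphic disks with boundary on the free orbits in $L_0$, and the Maslov index of such a disk equals the real codimension of the fixed component it limits to. The strategy is: (1) For each facet $F$ of $\Delta_\mathbf{r}$ — by Proposition~\ref{proposition_monotone} these correspond to the hyperplanes $\ell_{i,j}(\mathbf{u})=0$ with $(i,j)\notin\{(1,0),(1,1),(n,2),(n+1,0)\}$ — produce a gradient disk $\beta_F$ emanating from $L_0$ toward that facet; the relevant $\bS^1$-action is semi-free near the limiting fixed locus (a reduced-space stratum, of real codimension two since $F$ is a facet), so $\mu(\beta_F)=2$. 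Its symplectic area is the affine distance from $\mathbf{u}_0$ to $F$, which by the explicit formulas for $\ell_{i,j}$ in \eqref{equ_elli0i1i2} and the equilateral normalization one computes to be exactly $r_1$ (one checks $\ell_{i,j}(\mathbf{u}_0)=r_1$ for every facet, after normalizing the primitive outward conormals). (2) For the four ``degenerate'' hyperplanes of Proposition~\ref{proposition_monotone}(2), which cut out codimension-two faces, the corresponding gradient disks either have Maslov index $\geq 4$ or are glued from two Maslov index two disks; in either case one verifies area/index $= r_1$ as well, or simply notes these classes are sums of the basic ones. (3) Since $H_1(L_0;\Z)=\Z^n$ and the basic disk classes $\beta_F$ span $\pi_2(\mcal{M}_\mathbf{r},L_0)$ modulo $\pi_2(\mcal{M}_\mathbf{r})$ (standard for fibers of such integrable systems with a toric degeneration), and $\mcal{M}_\mathbf{r}$ itself is monotone with the same constant $r_1$, additivity of both $\omega_\mathbf{r}$ and $\mu$ gives $\omega_\mathbf{r}=r_1\cdot\mu$ on all of $\pi_2(\mcal{M}_\mathbf{r},L_0)$, i.e.\ $c=r_1$.

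For the area computation in step (1) it is cleanest to work upstream on $\mcal{O}_\lambda$ with the GZ polytope $\Delta_\lambda$ of \eqref{equation_gzpattern}: the GZ torus fiber of $\mcal{O}_\lambda$ over the center of $\Delta_\lambda$ is monotone (this is the coadjoint-orbit/Grassmannian analogue, and the KKS form is normalized so that the lattice distance to each facet is equal), its basic disk areas are all equal, and the relation \eqref{equ_underlinephi2j} together with $u_{j,2}=|\mathbf{r}|_{j+1}-u_j$ transports this to $\Delta_\mathbf{r}$; one then only has to track how symplectic reduction by the residual torus $T$ at level $2\mathbf{r}$ affects areas of disks, which it does not change for the bending directions since those are transverse to the reduction. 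The main obstacle, and the place requiring genuine care rather than bookkeeping, is step (1)--(2) for the four exceptional hyperplanes: because the bending system is \emph{not} toric (fibers have $\bS^2$ and $\mathrm{SO}(3)$ factors, per Theorem~\ref{theorem_Bouloc}) and some ``triangle-inequality'' walls are non-essential, one must argue that the gradient disks limiting onto the codimension-two loci $\ell_{1,0}=\ell_{1,1}=0$ etc.\ genuinely contribute with the expected Maslov index — invoking the semi-freeness hypothesis of Proposition~\ref{proposition_maslovformula} and the precise description in Proposition~\ref{proposition_monotone}(2) of which additional constraints become active there — and that no anomalous disk of Maslov index $0$ or negative area with boundary on $L_0$ exists. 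Once the exceptional walls are handled, the proof reduces to the additivity argument and the explicit evaluation $\ell_{i,j}(\mathbf{u}_0)=r_1$, completing the proof that $L_0$ is a monotone Lagrangian torus.
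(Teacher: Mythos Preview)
Your core strategy matches the paper's: compute $\mu(\beta_F)=2$ via Proposition~\ref{proposition_maslovformula} and $\omega_\mathbf{r}(\beta_F)=\ell_{i,j}(\mathbf{u}_0)=r_1$ for the basic gradient disks associated to genuine facets, then invoke ambient monotonicity on the spherical part. Your step~(3) is exactly the paper's argument, and is already sufficient.

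Where you diverge from the paper is in misidentifying the difficulty. Your step~(2) and the entire final paragraph about the four exceptional hyperplanes are unnecessary: monotonicity is a purely homological statement, so you only need to verify $\omega_\mathbf{r} = \tfrac{r_1}{2}\mu$ on a generating set of $H_2(\mcal{M}_\mathbf{r}, L_0;\Q)$, and there is no need to produce gradient disks toward the codimension-two loci, to check semi-freeness there, or to rule out ``anomalous'' holomorphic disks. The paper handles this cleanly by computing (in Section~\ref{sec_computationofhomotopygroups}) a splitting $H_2(\mcal{M}_\mathbf{r}, L_0;\Q) \simeq \mcal{G}\oplus\Q^2$, where $\mcal{G}$ is spanned by the basic classes for genuine facets only, and the complementary $\Q^2$ is generated by two explicit spherical classes $\alpha_1,\alpha_n$ represented by the $\bS^2$-factors of the Lagrangian fibers over the codimension-two faces $g_1,g_n$ (via Theorem~\ref{theorem_Bouloc}). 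These spheres are \emph{isotropic}, hence have zero symplectic area, and ambient monotonicity then forces their Chern number to vanish as well. So the exceptional hyperplanes enter only through the topology of the fibers sitting over them, not through any gradient-disk construction. The detour through $\mcal{O}_\lambda$ for the area computation is likewise unnecessary, since $\ell_{i,j}(\mathbf{u}_0)=r_1$ is immediate from~\eqref{equ_elli0i1i2} and~\eqref{equ_centerofdeltamathbfr}. (Also, a minor slip: with $\mu(\beta_F)=2$ and $\omega_\mathbf{r}(\beta_F)=r_1$ the monotonicity constant is $c=r_1/2$, not $r_1$.)
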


Later on, we apply Proposition~\ref{proposition_maslovformula} to prove Proposition~\ref{lemma_monotonecbs} in Section~\ref{sec_computationofhomotopygroups} after computing the relative homotopy group and describing its basis as a set of gradient disks.

Let us briefly recall the disk potential function of a monotone Lagrangian torus. Let $X$ be a closed symplectic manifold and $L$ a Lagrangian submanifold of $X$. For a compatible almost complex structure $J$ and a class $\beta \in \pi_2(X,L)$, we denote by $\mcal{M}_1(X, L, \beta; J)$ the moduli space of stable $J$-holomorphic maps from a bordered genus $0$ Riemann surface into $X$ with boundary condition $L$ in $\beta$ with one boundary marked point $z_0$. Let 
$$
\mathrm{ev}_0 \colon \mcal{M}_1(X, L, \beta; J) \to L, \quad \varphi \mapsto \varphi(z_0)
$$
be the evaluation map at the marked point $z_0$.

Assume that the Lagrangian submanifold $L$ is orientable, (relative) spin, and monotone. To orient the moduli spaces, we choose an orientation and a spin structure of $L$. Under the monotonicity assumption on $L$, every disk class of Maslov index two is Fredholm regular for a \emph{generic} compatible almost complex structure $J$.  Moreover, $L$ does not bound any non-constant holomorphic disks of Maslov index zero. Then, for each class $\beta$ of Maslov index two, the moduli space $\mcal{M}_1(X, L, \beta; J)$ is an oriented manifold \emph{without} boundary. The counting $n(\beta; L, J)$ is defined to be the degree of the evaluation map $\mathrm{ev}_0$. This number $n(\beta; L, J)$ is called the \emph{open Gromov--Witten invariant} of $\beta$.

In this monotone case, the counting number $n(\beta; L, J)$ is \emph{invariant} under different choices of a generic compatible almost complex structure. The parametrized moduli space in the class $\beta$ of Maslov index two arising from a generic path from $J$ to $J^\prime$ in the space of compatible almost complex structures does not bubble off because of the monotonicity condition. It produces an oriented cobordism between $\mcal{M}_1(X, L,\beta;J)$ and $\mcal{M}_1(X, L,\beta;J^\prime)$.

\begin{lemma}[\cite{EliashbergPolterovich}] \label{lemma_invarianceofcounting}
Suppose that $L$ is a monotone Lagrangian submanifold. For any class $\beta \in \pi_2(X,L)$ of Maslov index two and generic compatible almost complex structures $J$ and $J^\prime$, the counting invariants $n(\beta; L,J)$ and $n(\beta; L,J^\prime)$ are equal. 
\end{lemma}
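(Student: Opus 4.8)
The plan is to realize the two counts $n(\beta;L,J)$ and $n(\beta;L,J^\prime)$ as degrees of evaluation maps that are cobordant, via the usual parametrized moduli space argument of \cite{EliashbergPolterovich} (compare the monotone discussion in \cite{FOOOToric1}). Choose a generic smooth path $\{J_s\}_{s\in[0,1]}$ of $\omega$-compatible almost complex structures with $J_0=J$ and $J_1=J^\prime$, and set
$$
\mcal{P} \coloneqq \bigcup_{s\in[0,1]} \{s\}\times \mcal{M}_1(X,L,\beta;J_s),
$$
with total evaluation map $\mathrm{ev}_0\colon\mcal{P}\to L$. Since $\mu(\beta)=2$, one has $\dim\mcal{M}_1(X,L,\beta;J_s)=n+\mu(\beta)-2=n=\dim L$, so on each slice $\mathrm{ev}_0$ is a map of closed $n$-manifolds whose degree is $n(\beta;L,J_s)$, while $\mcal{P}$ is $(n+1)$-dimensional. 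The aim is to show that the (compactified) $\mcal{P}$ is a compact oriented cobordism whose only codimension-one boundary is $\mcal{M}_1(X,L,\beta;J_0)\sqcup\mcal{M}_1(X,L,\beta;J_1)$; then $\mathrm{ev}_0$ restricts on the two ends to the two evaluation maps, and invariance of degree under oriented cobordism gives the claim.

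The monotonicity hypothesis on $L$ recorded before the statement, together with the transversality input from \cite{ChoKimMONO}, furnishes the analytic set-up: for generic $J_s$ every Maslov-index-two disk class is Fredholm regular and, having minimal Maslov number two, is represented only by simple disks, so $\mcal{M}_1(X,L,\beta;J_s)$ is a smooth compact oriented $n$-manifold without boundary (the orientation coming from the chosen orientation and relative spin structure on $L$). Applying the Sard--Smale theorem to the universal moduli space over the path, one sees that for generic $\{J_s\}$ the space $\mcal{P}$ is a smooth oriented manifold whose boundary contains $\mcal{M}_1(X,L,\beta;J_0)\sqcup\mcal{M}_1(X,L,\beta;J_1)$ with compatible (opposite) induced orientations.

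The step I expect to require the most care is the Gromov-compactness analysis showing that \emph{no further} codimension-one boundary appears along the family. A degenerating sequence in $\mcal{P}$ (with $s_k\to s_\infty$) bubbles off a non-constant $J_{s_\infty}$-holomorphic disk or sphere. Disk bubbling is impossible: it would split $\beta=\beta_1+\beta_2$ with each $\beta_i$ carrying a non-constant disk, forcing $\mu(\beta_i)\ge 2$ and contradicting $\mu(\beta_1)+\mu(\beta_2)=2$ (constant disk bubbles being unstable, hence absent). Sphere bubbling in a class $A\ne 0$ leaves a disk class $\beta^\prime$ with $\mu(\beta^\prime)=2-2\langle c_1,A\rangle$; since the ambient space ($\mcal{M}_\mathbf{r}$ in our application) is monotone, $\langle c_1,A\rangle\ge 1$, so $\mu(\beta^\prime)\le 0$ and hence $\beta^\prime=0$. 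The limiting configuration is then a constant disk at a point $q\in L$ with a $J_{s_\infty}$-sphere attached at the node and passing through $q$; a dimension count shows that the space of such configurations, together with the parameter $s$, has dimension $n-1$, two less than $\dim\mcal{P}=n+1$, so it forms a codimension-two stratum which a generic path meets only as a corner, never as a codimension-one boundary.

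Assembling these facts, the compactified parametrized moduli space is a compact oriented cobordism whose codimension-one boundary is precisely $\mcal{M}_1(X,L,\beta;J_0)\sqcup\mcal{M}_1(X,L,\beta;J_1)$ with opposite induced orientations, and $\mathrm{ev}_0$ extends over it. Since the degree of a map into the closed oriented manifold $L$ is an oriented-cobordism invariant, we conclude $n(\beta;L,J)=n(\beta;L,J^\prime)$. The only genuinely technical points are the transversality for the one-parameter family and the codimension bookkeeping that demotes sphere bubbling to codimension two; the rest is formal.
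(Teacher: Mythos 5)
Your proposal is correct and matches the paper's own (brief) sketch: a generic path of compatible almost complex structures gives a parametrized moduli space, monotonicity rules out disk bubbling by Maslov-index arithmetic and pushes sphere bubbling to codimension two, so the two slices are oriented-cobordant and their evaluation degrees agree. The one small inaccuracy is that at the sphere-bubbling step you invoke monotonicity of the ambient space $\mcal{M}_\mathbf{r}$, whereas the lemma is stated for a general monotone Lagrangian $L$ and the needed bound $\langle c_1, A\rangle \geq 1$ for a non-constant sphere class $A$ already follows from monotonicity of $L$ itself (since the Maslov index of a spherical class is $2\langle c_1, A\rangle$).
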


For the sake of notational simplicity, we write $n_\beta = n(\beta; L,J)$ and $\mcal{M}_1(L, \beta) = \mcal{M}_1(X, L, \beta; J)$ if $X$ and $L$ are clear in the context.

A Lagrangian submanifold $L$ might bound a $J$-holomorphic disk in general, which causes an obstruction on Floer cohomology. When $L$ is a monotone Lagrangian torus of a compact non-symplectically aspherical manifold $X$, the obstruction does not cause any trouble for defining Floer cohomology. As we have observed above, for any class $\beta$ of Maslov index two, the moduli space $\mcal{M}_1(\beta)$ represents a cycle and the push-forward by $\mathrm{ev}_0$ is a multiple of the fundamental cycle $[L]$. It in turn implies that the composition of consecutive differentials vanishes so that Floer cohomology is well-defined since $[L]$ is a unit of the Fukaya $A_\infty$-algebra associated to $L$. 

Assume in addition that the monotone Lagrangian $L$ is a torus. To implement deformations of Lagrangian Floer theory, we equip the Lagrangian torus $L$ with the complex trivial line bundle $\scr{L}$ and we take a flat non-unitary $\C^*$-connection $\nabla$ on the line bundle, see \cite{ChoNonunitary}. Choose a set of oriented loops $\{ \vartheta_1, \dots, \vartheta_n \}$ which forms a basis for $\pi_1(L) \simeq \Z^n$ as a $\Z$-module. Two connections $\nabla$ and $\nabla^\prime$ are called \emph{gauge equivalent} if $\mathrm{hol}_\nabla (\vartheta_j) =  \mathrm{hol}_{\nabla^\prime} (\vartheta_j)$ for each $j = 1, 2, \cdots, n$. We denote by $\mcal{M}(\scr{L})$ the moduli space of flat $\C^*$-connections modulo the gauge equivalence. The choice of oriented loops gives rise to a coordinate system on $\mcal{M}(\scr{L})$ consisting of the holonomy variables
\begin{equation}\label{equ_zjvariable}
z_j \coloneqq \mathrm{hol}_\nabla (\vartheta_j) \in \C^* \quad \mbox{ ($j = 1, 2, \cdots, n$)}.
\end{equation} 
In other words, the moduli space $\mcal{M}(\scr{L})$ is identified with an algebraic torus $(\C^*)^n$. 

Let
$$
\Lambda_0 =\left\{\sum_{i=0}^\infty a_i T^{A_i} \mid A_i \geq 0 \textrm{ increase to } +\infty, a_i \in \C \right\}
$$ 
be the Novikov ring. Here, the variable $T$ is the formal parameter which will be used to record symplectic energy of $J$-holomorphic curves. Then the obstruction of Lagrangian Floer theory is expressed as
\begin{equation}\label{equ_obstrLFT}
\frak{m}^\nabla_0(1) = \sum_{\beta} \mathrm{hol}_\nabla (\partial \beta) \cdot (\mathrm{ev}_0)_* [\mcal{M}_1(\beta)] \cdot T^{\omega(\beta)}
= \left( \sum_{\beta} \mathrm{hol}_\nabla (\partial \beta) \cdot n_\beta \cdot T^{\omega(\beta)} \right) [L] .
\end{equation}
Since the counting invariants of $L$ can be non-empty only when the Maslov index is two, the symplectic areas of the disk classes contributing to~\eqref{equ_obstrLFT} are all equal. We omit the $T^{\omega(\beta)}$ for simplicity in this case.

\begin{definition}\label{def_diskpotentfunct}
The \emph{disk potential function} of a monotone Lagrangian torus $L$ is defined by
$$
W_L \colon \mcal{M}(\scr{L}) \simeq (\C^*)^n \to \C, \quad [\nabla] \simeq (z_1, \dots, z_n) \mapsto \sum_{\beta; \mu(\beta) = 2}  n_\beta \cdot z^{\partial \beta}.
$$
\end{definition}

Turning to the caterpillar bending system $\Phi_\mathbf{r}$, the disk potential function of the monotone Lagrangian torus $\Phi^{-1}_{\mathbf{r}}(\mathbf{u}_0)$ can be written as follows.

\begin{theorem}\label{theorem_main}
Suppose that $\mathbf{r}$ is equilateral and generic. Then the disk potential function of the monotone Lagrangian torus fiber $\Phi^{-1}_{\mathbf{r}}(\mathbf{u}_0)$ of the caterpillar bending system is
\begin{equation}\label{equ_theorem_main}
W_\mathbf{r} (\mathbf{z}) = \left(z_1 + \frac{2}{z_1} + z_n + \frac{2}{z_n} \right) + \sum_{j=1}^{n-1} \left( \frac{z_j}{z_{j+1}} + \frac{z_{j+1}}{z_j} + \frac{1}{z_{j}z_{j+1}}  \right).
\end{equation}
\end{theorem}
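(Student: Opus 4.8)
The plan is to identify the Maslov index two disks bounded by the monotone fiber $L = \Phi^{-1}_{\mathbf{r}}(\mathbf{u}_0)$, compute their open Gromov--Witten invariants, and assemble them into $W_\mathbf{r}$ using Definition~\ref{def_diskpotentfunct}. The key structural fact I would exploit is that the caterpillar bending system arises, via Theorem~\ref{Proposition_HK} and Theorem~\ref{proposition_NNUgzbe}, as a reduction of the Gelfand--Zeitlin system on the Grassmannian $\mcal{O}_\lambda \simeq \mathrm{Gr}(2,\C^{n+3})$, and in particular carries a fiberwise collection of Hamiltonian $\bS^1$-actions generated by the components $\Phi_j$. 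This puts us in exactly the setting of Proposition~\ref{proposition_maslovformula}: the relevant holomorphic disks are \emph{gradient disks} swept out by gradient flow lines of the $\Phi_j$ (and of the additional auxiliary GZ Hamiltonians), so their Maslov indices are read off as real codimensions of the fixed components they hit. The first step, therefore, is to carry out the computation of $\pi_2(\mcal{M}_\mathbf{r}, L)$ (equivalently $\pi_1(L)\simeq\Z^n$ together with the area/Maslov data), exhibit an explicit generating set of gradient disks, and simultaneously deduce monotonicity of $L$ (Proposition~\ref{lemma_monotonecbs}); this is the content promised for Section~\ref{sec_computationofhomotopygroups}.

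Second, I would classify which homotopy classes $\beta\in\pi_2(\mcal{M}_\mathbf{r},L)$ with $\mu(\beta)=2$ can actually support a non-constant holomorphic disk. Here the non-redundant description of the caterpillar bending polytope $\Delta_\mathbf{r}$ from Proposition~\ref{proposition_monotone} is essential: of the $3n+3$ a priori inequalities $\ell_{i,j}\ge 0$, four of them ($(1,0),(1,1),(n,2),(n+1,0)$) cut out only codimension-two faces, so no \emph{basic} disk is associated to those walls. The naive Givental--Hori--Vafa-type count over all facets would then produce the terms $z_1+z_n$ together with $n+1$ ``chain'' monomials $z_{j+1}/z_j$ and the two reciprocals coming from the genuine facets, matching most of \eqref{equ_theorem_main}. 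The remaining terms --- the coefficients $2$ in front of $2/z_1$ and $2/z_n$, and the extra monomials $1/(z_jz_{j+1})$ --- must come from \emph{non-basic} disks, i.e. disks that intersect the strata over the codimension-two faces. Because the bending system has vanishing $\bS^2$ (and $\mathrm{SO}(3)$) factors over those faces (Theorem~\ref{theorem_Bouloc}), the preimage of such a face has extra dimension, which is precisely what allows a disk meeting it to still have Maslov index two; identifying these classes via the gradient-disk dictionary, together with the degree/area constraints, pins down the finite list of monomials appearing.

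Third, I would compute the open Gromov--Witten invariants $n_\beta$ for the classes surviving the classification. For the basic disks one computes the count directly from the gradient-disk picture (it is $1$, as for toric Fano facets). For the non-basic disks I would pass to the local models of Bouloc: near $L$ the polygon space looks like a product involving $\bS^2$ and $\mathrm{SO}(3)$ factors, and each candidate disk class lives in one such local model; embedding the local model into a simpler global monotone model (e.g.\ a product of projective spaces / quadrics, or using the relation to closed mirror symmetry for $\mathrm{Gr}(2,\C^{n+3})$ à la Marsh--Rietsch) one evaluates $n_\beta$ there by invariance (Lemma~\ref{lemma_invarianceofcounting}). This is where the coefficient $2$ (two halves of a vanishing $\bS^2$ contributing, reminiscent of the $\CP^1\times\CP^1$ potential $z+2/z+w+2/w$... type phenomenon) and the genuinely new monomials $1/(z_jz_{j+1})$ are produced. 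Summing $\sum_{\mu(\beta)=2} n_\beta\, z^{\partial\beta}$ yields \eqref{equ_theorem_main}.

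The main obstacle I expect is the third step: controlling the non-basic disk counts. The difficulty is twofold --- first, proving that the classification of step two is \emph{complete}, i.e.\ that no further exotic Maslov-two class with a nonzero count has been overlooked (one must rule out disks passing through higher-codimension strata, using the genericity of $\mathbf{r}$ and the dimension bookkeeping of Theorem~\ref{theorem_Bouloc}); and second, rigorously transporting the invariants through the local-to-global embeddings, since a priori such an embedding need not be an isomorphism on the relevant moduli spaces. I would handle the latter by the cobordism/invariance argument of Lemma~\ref{lemma_invarianceofcounting} applied after choosing the almost complex structure adapted to the local model, checking that no disk bubbling escapes the local model by an energy estimate. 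The closed-string mirror symmetry input (the known superpotential of $\mathrm{Gr}(2,\C^{n+3})$ restricted to the torus-fixed locus corresponding to the equilateral condition) provides an independent consistency check on the final answer~\eqref{equ_theorem_main}, and in fact is how Theorem~A's mirror $W_{\mathrm{Bend}}$ is matched to it.
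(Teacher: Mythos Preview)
Your overall strategy matches the paper's: compute $\pi_2(\mcal{M}_\mathbf{r},L)$ via gradient disks, classify Maslov-two classes through the toric degeneration of Corollary~\ref{theorem_toricdegepolygon} (the paper does this by pushing to the orbifold resolution of the central fiber and a case analysis on the boundary lattice vectors, Proposition~\ref{proposition_effectiveclasses}), then compute the basic counts directly and the non-basic ones via local models around the $\bS^2$- and $\mathrm{SO}(3)$-type fibers (Lemmas~\ref{lemma_diskpotus2} and~\ref{lemma_diskpotus3}, using symplectic cuts to $\CP^1\times\CP^1$ and $\CP^3$ and Sheridan's structural theorem rather than a direct Marsh--Rietsch comparison).

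One bookkeeping correction: the monomials $1/(z_j z_{j+1})$ are \emph{basic}, not non-basic. They come from the facets $\ell_{i,0}(\mathbf{u})=0$ for $i=2,\dots,n$, which by Proposition~\ref{proposition_monotone} are genuine facets of $\Delta_\mathbf{r}$; the corresponding normal vectors are $-v_{i-1}-v_i$, giving exactly $1/(z_{i-1}z_i)$. The only non-basic contributions are the terms $\kappa_j/z_j$, arising from the classes $(\alpha_{j,0}+\alpha_{j,1})/2$ (together with possible $\alpha_1,\alpha_n$ sphere attachments at the ends). The computation then reduces to showing $\kappa_1=\kappa_n=2$ (from the $\bS^2$ local model) and $\kappa_2=\cdots=\kappa_{n-1}=0$ (from the $\mathrm{SO}(3)$ local model), which is exactly what the paper establishes in Section~\ref{Section_computationofopenGWinv}.
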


\begin{remark}
The generic and equilateral condition yields that $n$ is \emph{even}. If $n$ is odd, then $\mcal{M}_\mathbf{r}$ is a singular variety. It would be interesting to make sense of and compute the disk potential function in this case.
\end{remark}

The proof of Theorem~\ref{theorem_main} will be given at the end of Section~\ref{Section_computationofopenGWinv}.

\subsection{Equivariant Floer theory and disk potential functions of reduced spaces}\label{sec_equiFlertheory}

In this section, we discuss Theorem~\ref{theorem_main} from the aspect of equivariant Lagrangian Floer theory in an informal way.

We first recall some facts on Lie group quotients.  The reader is referred to \cite[Section 4]{GuilleminSternbergQuanmulti} for details and proofs. Let $G$ be a compact connected Lie group. Suppose that we have a K\"{a}hler manifold $(X, \omega, J)$ satisfying 
\begin{enumerate}
\item the $G$-action on $(X, \omega)$ is Hamiltonian and
\item the integrable almost complex structure $J$ is invariant under the $G$-action. 
\end{enumerate} 
Let $\mathbb{G}$ be the complexification of $G$, a connected complex Lie group which is uniquely characterized by the properties that
the Lie algebra of $\mathbb{G}$ is the complexified Lie algebra $\frak{g} \oplus \sqrt{-1} \frak{g}$ and
$G$ is a maximal compact subgroup of $\mathbb{G}$.
Under this circumstance, the $G$-action naturally extends to a holomorphic $\mathbb{G}$-action on $X$.

Let $\Phi_G \colon X \to \frak{g}^*$ be a moment map of the $G$-action.
Fix any regular level $\mu \in \frak{g}^*$ such that $G$ acts freely on the level set $Z \coloneqq \Phi_G^{-1}(\mu)$. The complexification $\mathbb{G}$ freely acts on the $\mathbb{G}$-orbit $Z_\mathbb{G}$ of $Z$. Moreover, the inclusion $\iota \colon Z \to Z_\mathbb{G}$ descends to a diffeomorphism
$$
\underline{\iota} \colon Z/G \to Z_\mathbb{G}/\mathbb{G}
$$
Then $Z/G \simeq  Z_\mathbb{G}/\mathbb{G}$ is also a K\"{a}hler manifold with the reduced symplectic form $\omega_\mathrm{red}$ on $Z/G$ and the complex structure ${J}_\mathrm{red}$ on $Z_\mathbb{G}/\mathbb{G}$. Let $L$ be a $G$-invariant Lagrangian submanifold.  Suppose further that
\begin{enumerate}
\item[(3)] the group $G$ acts freely on $L$.
\end{enumerate}
The quotient map
\begin{equation}\label{equ_principlegbundle}
\pi \colon (Z_\mathbb{G}, {L}) \to (Z_\mathbb{G}/\mathbb{G} , {L} / G).
\end{equation}
is a holomorphic principal $\mathbb{G}$-bundle.  
For each holomorphic map $\varphi \colon (\mathbb{D}, \partial \mathbb{D}) \to (Z_\mathbb{G}, {L})$, we have a holomorphic map $\pi \circ \varphi \colon (\mathbb{D}, \partial \mathbb{D}) \to (Z_\mathbb{G} / \mathbb{G}, {L}/G)$. 
For a homotopy class $\beta$ in $\pi_2 (Z_\mathbb{G}, L)$, the correspondence induces the map   
\begin{equation}\label{equ_principleGbundlemoduli}
\pi_* \colon \mcal{M}(Z_\mathbb{G}, L, \beta; J) \to \mcal{M}(Z_\mathbb{G}/\mathbb{G}, L/G, \pi_* \beta; {J}_\mathrm{red}) \quad \varphi \mapsto \pi \circ \varphi.
\end{equation}
The correspondence helps us to understand holomorphic disks in the quotient from those in the pre-quotient.

A particular nice class of examples in such a setting is provided by toric manifolds, in which $G = T$ and $X=\C^N$. Givental \cite{Givental} derived the $T$-equivariant quantum cohomology on a smooth toric manifold and expressed it in terms of a $T$-equivariant superpotential. In \cite{KimLauZheng}, for a semi-Fano toric manifold, the present authors defined the $T$-equivariant disk potential of a toric fiber using $T$-equivariant Lagrangian Floer theory in Morse model.  They computed the equivariant terms in the model, and verified that it agrees with Givental's equivariant superpotential via the mirror map. 

In a similar fashion, let $T$ be the quotient of the maximal torus $T_{\mathrm{U}(n+3)}$ by the diagonal $\mathbb{S}^1$-action and consider the torus $T$ action on $X \coloneqq \mathrm{Gr}(2,\C^{n+3}) \simeq \mcal{O}_{\lambda}$, we have the following.

\begin{proposition}
Let $L$ be a Gelfand--Zeitlin torus fiber of $\mcal{O}_\lambda \simeq \mathrm{Gr}(2,\C^{n+3})$. Let $\vartheta_{i,j}$ be an $\mathbb{S}^1$-orbit contained in $L$ and generated by the action variable $\Phi_{i,j}$ in~\eqref{equ_GZsystem} and $y_{i,j}$ the holonomy variable associated with $\vartheta_{i,j}$. Set $y_{0,1} = y_{0,2} = y_{-1,2} = 1$. Then the $T$-equivariant disk potential of $L$ is equal to
	\begin{equation} \label{eqn_equiv_disk_potential}
	W_T (\mathbf{y}) = W (\mathbf{y}) + \sum_{i=1}^{n+2} \lambda_i \log \left( \frac{y_{i,1}y_{i-1,2}}{y_{i-1,1}y_{i-2,2}} \right)
	\end{equation}
	where 
	\begin{enumerate}
	\item $W(\mathbf{y})$ is the disk potential function of $L \subset \mcal{O}_\lambda$, which was computed in \cite{NishinouNoharaUeda},
	\item $\lambda = (\lambda_1, \lambda_2, \cdots, \lambda_{n+2})$ is the equivariant parameters for the $T$-action, and 
	\item ${y_{i,1}y_{i-1,2}}/{y_{i-1,1}y_{i-2,2}}$ is the holonomy variable associated to an orbit generated by the $i$-th factor of the torus $T$.
	\end{enumerate}
\end{proposition}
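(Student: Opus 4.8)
The plan is to derive~\eqref{eqn_equiv_disk_potential} from the general structure of the $T$-equivariant disk potential introduced in \cite{KimLauZheng}, together with a computation of the holonomies of the orbits of $T$ on $L$. First I would recall the Morse (Cartan) model construction of $W_T$ from \cite{KimLauZheng}: the $T$-equivariant obstruction cocycle splits into disk contributions, weighted by $\mathrm{hol}_\nabla(\partial\beta)$ and by the open Gromov--Witten numbers $n_\beta$ exactly as in Definition~\ref{def_diskpotentfunct}, together with a ``classical'' contribution produced by the part of the equivariant differential pairing the equivariant generator $\lambda_i$ with the fundamental vector field of the $i$-th circle factor of $T$. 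Setting all $\lambda_i=0$ recovers the ordinary disk potential, so the $\lambda$-independent part of $W_T$ is the disk potential $W(\mathbf{y})$ of $L\subset\mcal{O}_\lambda$, which is the one computed by Nishinou--Nohara--Ueda \cite{NishinouNoharaUeda} via the Gelfand--Zeitlin toric degeneration. The remaining part is linear in the $\lambda_i$: for each of the $n+2$ circle factors of $T$ it contributes a term $\lambda_i\cdot\log\mathrm{hol}_\nabla(\tau_i)$, where $\tau_i\subset L$ is an orbit of the $i$-th factor of $T$; this is the pairing between $\lambda_i\in\mathrm{Lie}(T)^*$ and the logarithmic holonomy coordinate dual to the $i$-th circle.

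Next I would identify $\mathrm{hol}_\nabla(\tau_i)$ explicitly. Since $L$ is a torus fiber over the interior of the Gelfand--Zeitlin polytope $\Delta_\lambda$, the orbits $\vartheta_{i,j}$ of the action variables $\Phi_{i,j}$ form a $\Z$-basis of $\pi_1(L)\simeq\Z^{2(n+1)}$, so the holonomy variables $y_{i,j}$ are coordinates on $\mcal{M}(\scr{L})$. The $i$-th circle of $T$ is the image in $T=T_{\mathrm{U}(n+3)}/\bS^1_{\mathrm{diag}}$ of the $i$-th coordinate circle of $T_{\mathrm{U}(n+3)}$, whose Hamiltonian on $\mcal{O}_\lambda$ equals, in Gelfand--Zeitlin coordinates, the combination $\Phi_{i,1}+\Phi_{i-1,2}-\Phi_{i-1,1}-\Phi_{i-2,2}$ up to an additive constant --- compare~\eqref{equ_rjuu}. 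Consequently $\tau_i=\vartheta_{i,1}+\vartheta_{i-1,2}-\vartheta_{i-1,1}-\vartheta_{i-2,2}$ in $\pi_1(L)$, and with the conventions $y_{0,1}=y_{0,2}=y_{-1,2}=1$ (the loops $\vartheta_{0,1},\vartheta_{0,2},\vartheta_{-1,2}$ being trivial) and $y_{n+2,1}=1$ (as $\Phi_{n+2,1}\equiv|\mathbf{r}|$ is constant), this gives $\mathrm{hol}_\nabla(\tau_i)=y_{i,1}y_{i-1,2}/(y_{i-1,1}y_{i-2,2})$. Substituting into the linear part and adding $W(\mathbf{y})$ yields~\eqref{eqn_equiv_disk_potential}.

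The hard part is the first step. Unlike the toric manifolds treated in \cite{KimLauZheng}, the Grassmannian $\mcal{O}_\lambda\simeq\mathrm{Gr}(2,\C^{n+3})$ is not toric and $L$ is not a $T$-orbit, so the structural computation of $W_T$ there does not apply verbatim. What is needed is, on one hand, that the Maslov index two disks bounded by $L$ are ``toric-like'', which is exactly the content of Nishinou--Nohara--Ueda's analysis through the Gelfand--Zeitlin toric degeneration, and, on the other hand, that the $T$-equivariant enhancement adds only the linear-in-$\lambda$ correction above, with one term per circle factor of $T$ and no higher-order equivariant corrections. Both points are reasonable because $T$ acts globally on $\mcal{O}_\lambda$ and its moment map restricted to $L$ is a constant governed by Gelfand--Zeitlin coordinates. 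Since the present section is an informal discussion, I would only sketch this reduction and refer to \cite{KimLauZheng, NishinouNoharaUeda} for the underlying analytic input; granting it, the holonomy computation of the second paragraph is elementary, amounting to the bookkeeping in~\eqref{equ_rjuu}.
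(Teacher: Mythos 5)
Your proposal is correct and follows essentially the same approach as the paper's sketch: invoke the structural formula $W_T(\mathbf{x}) = W(\mathbf{x}) + \sum_i \lambda_i x_i$ from \cite{KimLauZheng} and then rewrite the $\log$-holonomy coordinate $x_i$ of the $i$-th circle factor of $T$ in terms of the Gelfand--Zeitlin holonomy variables via the moment map combination $\Phi_{i,1}+\Phi_{i-1,2}-\Phi_{i-1,1}-\Phi_{i-2,2}$, obtaining $y_i = y_{i,1}y_{i-1,2}/(y_{i-1,1}y_{i-2,2})$. The paper carries out the first step with an explicit Morse--Borel setup (perfect Morse function on $L$ with degree-one critical points transverse to the orbits $\vartheta_i$, $\vartheta_i'$, plus critical points $\lambda_i$ on $BT$), but the logic is identical; your explicit caveat that the toric analysis of \cite{KimLauZheng} does not apply verbatim to the non-toric $\mathrm{Gr}(2,\C^{n+3})$ is a fair observation that the paper's sketch leaves implicit.
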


\begin{proof}[Sketch of Proof]
Let us first briefly recall the definition of $T$-equivariant disk potential introduced in \cite{KimLauZheng}. Consider the $\mathbb{S}^1$-orbits generated by
\begin{enumerate}
\item $\Phi_{i,1} + \Phi_{i-1,2} - \Phi_{i-1,1} - \Phi_{i-2, 2}$ for $i = 1, 2, \cdots, n+2$,
\item $\Phi_{i,2}$ for $i = 1, 2, \cdots, n$.  
\end{enumerate}
We denote them by $\vartheta_i$ for $i = 1, 2, \cdots, n+2$ and $\vartheta_i^\prime$ for $i = 1, 2, \cdots, n$, respectively. The action generated by $(1)$ can be identified with the $T$-action on $\mcal{O}_\lambda$. We identify $T$ with $(\bS^1)^{n+2}$ and denote by $L_T$ the Borel construction $L\times_T (\mathbb{S}^{\infty})^{n+2}$. Let us choose a perfect Morse function $f \colon L \to \R$ in such a way that the unstable manifold of a critical point $p_i$ (resp. $p_i^\prime$) of $f$ intersects transversally to the orbit $\vartheta_{i}$ (resp. $\vartheta_i^\prime$) where $\{ p_i \mid i = 1,2, \cdots, n+2 \} \cup \{p_i^\prime \mid i =1, 2, \cdots, n\}$ are degree $1$ critical points of $f$.

We fix a family Morse function $f_T$ on the approximation spaces of $L_T$ whose critical points are of the form $(p,q)$ where $p$ is a critical point of the chosen perfect Morse function $f$ on $L$ and $q$ is a critical point of a perfect Morse function $f_B$ on the classifying space $BT=(\CP^{\infty})^{n+2}$.
We denote by $\mathbf{1}_{L}$ and $\mathbf{1}_{BT}$ the minimal points of $f$ and $f_B$, respectively. Then $\{ (p_{i},\mathbf{1}_{BT}) \mid i = 1, 2, \cdots, n+2 \}$ and $\{ (p_{i}^\prime,\mathbf{1}_{BT}) \mid i = 1, 2, \cdots, n \}$ are the degree $1$ critical points of $f_T$. Let $\lambda_1, \lambda_2, \cdots , \lambda_{n+2}$ be the equivariant parameters of the $T$-action, which are the degree $2$ critical points of $f_B$. We will abuse notation and view $\lambda_i $ as a degree $2$ critical point of $f_T$ of the form $(\mathbf{1}_{L},\lambda_i)$.

Then the $T$-equivariant disk potential function 
$$
W_T \colon \Lambda_0^{\times 2n +2} \to \Lambda_0 [ \lambda_1, \cdots, \lambda_{n+2} ]
$$ 
in \cite[Definition 3.11]{KimLauZheng} of $L$ is defined by counting pearly trees in $L_T$ emanated from the the degree $1$ critical points  $(p_{i},\mathbf{1}_{BT})$, and weighted by coefficients $x_{i}, x^\prime_{i} \in \Lambda_{0}^{\times 2n+2}$. The weight is given by a weak bounding cochain $b = \sum_{i=1}^{n+2} x_{i} p_{i} + \sum_{i=1}^n x_i^\prime p_i^\prime$. By \cite[Proposition 4.3 and Lemma 4.4]{KimLauZheng}, $W_T$ is of the form
\begin{equation}
W_T(\mathbf{x})=W(\mathbf{x})+ \sum_{i=1}^{n+2} \lambda_i x_i.
\end{equation}
The function $W$ agrees with the disk potential function of $L$ and a unique gradient flow line from $p_i$ to $\lambda_i$ contributes to the term $\lambda_i x_i$. 

We set $y_i \coloneqq \exp (x_i)$ and $y_i^\prime \coloneqq \exp (x^\prime_i)$, which play a role of the holonomy variables. Also, we consider the holonomy variable $y_{i,j}$ corresponding to the orbit generated by $\Phi_{i,j}$. We then have the following coordinate change formulas
\begin{enumerate}
\item $y_i = \frac{y_{i,1}y_{i-1,2}}{y_{i-1,1}y_{i-2,2}}$ for $i = 1, 2, \cdots, n+2$ and 
\item $y_i^\prime = y_{i,2}$ for  $i = 1, 2, \cdots, n$.
\end{enumerate}
We then obtain the expression \eqref{eqn_equiv_disk_potential}. In particular, $W (\mathbf{y})$ (in terms of variables $y_{i,j}$'s) agrees with the disk potential defined in Definition \ref{def_diskpotentfunct} that was computed in \cite{NishinouNoharaUeda}. 
 \end{proof}

Via the Lagrangian correspondence between $L \subset X$ and $L/T \subset Z/G$ in \cite{Fukayaunobstructed,WoodwardXu}, assuming monotonicity and that the correspondence does not involve disk bubbling by pseudo-holomorphic disks of Maslov index zero, the disk potential of $L/T$ should be obtained by restricting the $T$-equivariant disk potential of $L$ to $\log \left( \frac{y_{i,1}y_{i-1,2}}{y_{i-1,1}y_{i-2,2}} \right)=0$, that is, 
\begin{equation}\label{equ_lgpotentialinduced}
W (\mathbf{y}) \big{|}_{{y_{i+1,1}y_{i,2}} = {y_{i,1}y_{i-1,2}}}.
\end{equation}
This is well-known as the Givental--Hori--Vafa's recipe \cite{Givental, HoriVafa}.

By Theorem~\ref{Proposition_HK} and~\ref{proposition_NNUgzbe}, the quotient $L/T$, a fiber of the caterpillar bending system, fits into the above situation. Then \eqref{equ_theorem_main} can be obtained by expressing~\eqref{equ_lgpotentialinduced} in terms of $z_i \coloneqq y_{i,2}$.

\begin{example}
Let $n = 2$ and $\mathbf{r}= (1,1,1,1,1)$. The disk potential function for the Gelfand--Zeitlin torus fiber is
\begin{equation}\label{diskpotentialGZgr25}
W(\mathbf{y}) = \frac{y_{1,1}}{y_{1,2}} + \frac{y_{2,1}}{y_{2,2}} + \frac{y_{3,1}}{y_{3,2}} + \frac{y_{2,1}}{y_{1,1}} + \frac{y_{3,1}}{y_{2,1}} + \frac{1}{y_{3,1}} + y_{1,2} + \frac{y_{2,2}}{y_{1,2}} + \frac{y_{3,2}}{y_{2,2}}
\end{equation}
Since the equivariant disk potential function has four additional logarithmic terms
$$
\lambda_1 \log y_{1,1} + 
\lambda_2 \log \frac{y_{2,1}y_{1,2}}{y_{1,1}} +  \lambda_3 \log \frac{y_{3,1}y_{2,2}}{y_{2,1}y_{1,2}} + \lambda_4 \log \frac{y_{3,2}}{y_{3,1}y_{2,2}},
$$
we then have 
\begin{equation}\label{equ_equfromlogterms}
1 = y_{1,1} = y_{2,1}y_{1,2} = y_{3,1}y_{2,2} = y_{3,2}.
\end{equation}
Respecting our choice of components in the induced completely integrable system in Theorem~\ref{proposition_NNUgzbe}, we set 
\begin{equation}\label{equ_y12y1y22y2}
z_1 \coloneqq y_{1,2} \mbox{ and } z_2 \coloneqq y_{2,2}.
\end{equation}
Combining~\eqref{diskpotentialGZgr25},~\eqref{equ_equfromlogterms}, and~\eqref{equ_y12y1y22y2}, we obtain
$$
W(\mathbf{z}) = \frac{1}{z_1} + \frac{1}{z_1 z_2} + \frac{1}{z_2} + \frac{1}{z_1} + \frac{z_1}{z_2} + z_2 + z_1 + \frac{z_2}{z_1} + \frac{1}{z_2}.
$$
It is exactly same as the expression in~\eqref{equ_theorem_main} for $n =2$.
\end{example}

However, we do not derive Theorem~\ref{theorem_main} by using the correspondence $\pi_*$ in~\eqref{equ_principleGbundlemoduli} because of the following reasons. First of all, depending on a choice of a stability condition (or a level set), the orbit $Z_\mathbb{G}$ changes. Notice that the holomorphic disk bounded by $L$ in $Z_\mathbb{G}$ (not $\mcal{O}_{\lambda}$) gives rise to a holomorphic disk in the quotient. Thus, to see which holomorphic disks in the pre-quotient survives in the disk potential function in the quotient, one needs to check which stratum intersects with the level set $Z$. Also, after taking the reduction, some additional disks not coming from the pre-quotient may occur. For instance, the disk with a sphere bubble can occur in the quotient. For these reasons, the equivariant disk potential function of a Gelfand--Zeitlin torus fiber does \emph{not} directly lead to the disk potential function of a caterpillar bending torus fiber through the process in~\eqref{equ_lgpotentialinduced}.

\subsection{Realization of cluster varieties via SYZ mirror symmetry}\label{section_realizcluster}

In this section, we derive an SYZ mirror of the polygon space $\mcal{M}_\mathbf{r}$ for an equilateral and  generic tuple $\mathbf{r}$ arising from bending systems and describe its cluster structure. The main statement is stated below.

\begin{theorem}\label{theorem_SYZmirrorpolygon}
Suppose that $\mathbf{r}$ is equilateral and generic, and hence $n$ is {even}. Let 
\begin{equation}\label{equcheckYMR}
\check{Y} \coloneqq \mathrm{Gr}(2, \C^{n+3}) \backslash \{ p_{1,2} p_{2,3} \cdots p_{n+2,n+3} p_{1,n+3} = 0\},
\end{equation} 
regarded as an embedded subvariety of $\CP(\wedge^2 \C^{n+3})$ via the Pl\"{u}cker embedding. Then an SYZ mirror of the bending systems on a polygon space $\mcal{M}_\mathbf{r}$ is a Landau--Ginzburg model consisting of
\begin{enumerate}
\item $\check{X} = \check{Y} \cap \{ p_{1,2} = p_{2,3} = \cdots = p_{n+2,n+3} = p_{1,n+3} \}$,
\item $W_{\mathrm{Bend}} = \sum_{i=1}^{n+3} p_{i, i+2} /p_{1,2}$ where indices are taken modulo $n+3$,
\end{enumerate}
where $p_{i,j}$ is the Pl\"{u}cker variables of $\CP(\wedge^2 \C^{n+3})$.  The double indices for $W_{\mathrm{Bend}}$ are read off from labels of the diagonals on the left of Figure~\ref{fig_bendpotential}.
\end{theorem}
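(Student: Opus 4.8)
The strategy is to construct the mirror one cluster chart at a time, one chart per triangulation of the regular $(n+3)$-gon, to compute the disk potential on the caterpillar chart from Theorem~\ref{theorem_main}, and then to transport it to every other chart by the type-$A_n$ cluster mutations carried by the Pl\"{u}cker coordinates of $\Gr(2,\C^{n+3})$. First I would fix, for each triangulation $\mcal{T}$, the associated bending system $\Psi_{\mcal{T}}$ on $\mcal{M}_\mathbf{r}$ and argue, exactly as for the caterpillar case in Proposition~\ref{lemma_monotonecbs} via the Maslov index formula of Proposition~\ref{proposition_maslovformula}, that its central fiber $L_{\mcal{T}}$ is a monotone Lagrangian torus. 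Its deformation space $\mcal{M}(\scr{L}_{\mcal{T}})\simeq(\C^*)^n$, coordinatized by holonomies $p_{i,j}$ of loops dual to the diagonals $(i,j)\in\mcal{T}$, is the chart $\mcal{U}_{\mcal{T}}$, and $W_{L_{\mcal{T}}}$ is a Laurent polynomial on it. For the caterpillar triangulation $\mcal{T}_0$, Theorem~\ref{theorem_main} with the substitution $z_j=p_{1,j+2}$ gives the explicit Laurent polynomial~\eqref{equ_diskpotentialcaterpillar}, and the first genuine computation is to verify that this equals the restriction of $\sum_{i=1}^{n+3} p_{i,i+2}/p_{1,2}$ to $\mcal{U}_{\mcal{T}_0}$: the equilateral slice $p_{1,2}=p_{2,3}=\cdots=p_{n+2,n+3}=p_{1,n+3}$ together with the three-term Pl\"{u}cker relations of $\Gr(2,\C^{n+3})$ writes each short-diagonal coordinate $p_{i,i+2}$ in terms of $p_{1,3},\dots,p_{1,n+2}$, and one checks the outcome is~\eqref{equ_diskpotentialcaterpillar}. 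This pins down $W_{\mathrm{Bend}}$ on the caterpillar chart.

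Next I would extend to all triangulations. For two triangulations $\mcal{T},\mcal{T}'$ related by a single diagonal flip, the central tori $L_{\mcal{T}},L_{\mcal{T}'}$ lie in adjacent chambers and are related by a wall-crossing (Lagrangian mutation); combining this with Nohara--Ueda's toric degenerations of bending systems \cite{NoharaUedaGrasPoly, NoharaUedaGraclus} identifies the birational gluing map $\mcal{U}_{\mcal{T}}\dashrightarrow\mcal{U}_{\mcal{T}'}$ with the cluster mutation of $\Gr(2,\C^{n+3})$ in the variables $p_{i,j}$, i.e. the classical type-$A_n$ cluster structure whose cluster variables are the diagonals, whose frozen variables are the edges $p_{i,i+1}$, and whose seeds are the triangulations (Marsh--Rietsch \cite{MarshRietsch}). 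Since $\sum_{i=1}^{n+3} p_{i,i+2}/p_{1,2}$ is a single regular function on the union of these charts and restricts correctly on $\mcal{U}_{\mcal{T}_0}$, it restricts correctly on every $\mcal{U}_{\mcal{T}}$, so the local Landau--Ginzburg models glue to $(\check X, W_{\mathrm{Bend}})$. Identifying the total space is then bookkeeping: $\bigcup_{\mcal{T}}\mcal{U}_{\mcal{T}}$ is the locus where all frozen edge variables are invertible, namely $\check Y = \Gr(2,\C^{n+3})\setminus\{p_{1,2}p_{2,3}\cdots p_{n+2,n+3}p_{1,n+3}=0\}$, while the equilateral condition $r_1=\cdots=r_{n+3}$, which equalizes the symplectic areas of all edge classes in the reduction of Theorem~\ref{Proposition_HK}, becomes on the mirror the linear slice $\{p_{1,2}=\cdots=p_{1,n+3}\}$ cutting out $\check X$; passing to the affine chart $p_{1,2}\neq0$ and rescaling the frozen variables to $1$ turns the type-$A_n$ structure with frozen variables into one without.

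The main obstacle is the extension step: proving that the bending-torus wall-crossing transformations genuinely \emph{are} the Pl\"{u}cker cluster mutations, and not merely of the same combinatorial shape. This amounts to controlling the open Gromov--Witten invariants of $L_{\mcal{T}}$ for every triangulation, which is where the local-to-global model constructions and the toric-degeneration arguments of the main body, together with Nishinou--Nohara--Ueda's computation of the Grassmannian Gelfand--Zeitlin potential, do the work. A secondary subtlety is to verify that the Lagrangian correspondence between fibers of $\mcal{M}_\mathbf{r}$ and fibers of $\Gr(2,\C^{n+3})$ along which this dictionary is read off involves no Maslov-index-zero disk bubbling, so that the comparison of potentials is valid.
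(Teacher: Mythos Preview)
Your proposal is correct and follows essentially the same route as the paper: compute the disk potential on the caterpillar chart via Theorem~\ref{theorem_main}, identify it with $W_{\mathrm{Bend}}|_{\mcal{U}_{\mcal{T}_0}}$ through the substitution $z_j\mapsto p_{1,j+2}/p_{1,2}$, and then propagate to all other triangulations by descending Nohara--Ueda's cluster mutations on the Grassmannian (Theorem~\ref{theorem_GGZcluster}) to the polygon space. One point of emphasis worth adjusting: what you call a ``secondary subtlety'' --- the compatibility of the Lagrangian correspondence with wall-crossing --- is in fact the paper's central mechanism for the extension step; rather than controlling the open Gromov--Witten invariants of $L_{\mcal{T}}$ for every $\mcal{T}$ directly, the paper argues that because the GGZ systems and their interpolations are $T_{\mathrm{U}(n+3)}$-invariant, the moduli spaces of holomorphic strips between GGZ tori split as $\mcal{M}(L_1/T,L_2/T;\beta)\times T$, so the Pl\"{u}cker mutation relations on $\mcal{O}_\lambda$ descend verbatim to $\mcal{M}_{\mathbf{r}}$, and no separate disk-counting on the other bending tori is needed.
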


\begin{remark}
In general, the union of cluster charts arising from bending systems does not cover the whole mirror space $\check{X}$. It is expected that the mirror charts of immersed Lagrangian submanifolds can cover the missing piece of information as in \cite{HongKimLau}.  The bending system may have a Lagrangian fiber that has an $\R P^3 (\simeq \mathrm{SO}(3))$-factor and one needs to produce the corresponding immersed Lagrangians.
\end{remark}

To motive this result, we take the following natural question into account. 

\begin{question}\label{question_gluedcluster}
Find a symplectic manifold and Lagrangian torus fibrations whose SYZ mirror is a given cluster variety.
\end{question}

A Lagrangian torus fibration naturally produces an algebraic torus as a mirror Maurer--Cartan deformation space. Moreover, if two Lagrangian torus fibrations are related by wall-crossing or a certain type of surgery, the mirror charts are glued by wall-crossing formula, which can be interpreted as a cluster transformation, see \cite{AurouxSpecial, PascaleffTonkonog}. The glued mirror arising from Lagrangian torus fibrations is naturally endowed with a cluster structure. Question~\ref{question_gluedcluster} asks whether one can produce a symplectic model for the given complex cluster mirror variety. 

Perhaps, the most basic step toward to answering this question is to find a symplectic manifold for a cluster variety of \emph{finite type}. Theorem~\ref{theorem_SYZmirrorpolygon} implies that a polygon space with bending systems provides a symplectic model for the cluster variety of $A$ type \emph{without} frozen variables as a mirror.

\begin{corollary}\label{corollary_SYZmirrorpolygon}
The polygon space with the bending systems produces a cluster variety of $A$ type without frozen variables.
\end{corollary}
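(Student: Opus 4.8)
The plan is to read off the cluster structure directly from the description of the mirror $\check{X}$ given in Theorem~\ref{theorem_SYZmirrorpolygon}, and to match it against the standard cluster structure on the Grassmannian. The key point established earlier is that the mirror LG model has underlying space $\check{X} = \check{Y} \cap \{p_{1,2} = p_{2,3} = \cdots = p_{n+2,n+3} = p_{1,n+3}\}$, where $\check{Y} = \mathrm{Gr}(2,\C^{n+3}) \setminus \{p_{1,2}p_{2,3}\cdots p_{n+2,n+3}p_{1,n+3} = 0\}$, and that for each triangulation $\mcal{T}$ of the regular $(n+3)$-gon the restriction $W_{\mathrm{Bend}}|_{\mcal{U}_\mcal{T}}$ lives on the algebraic torus $\mcal{U}_\mcal{T} = \mathrm{Spec}(\C[\{p_{i,j}^\pm : (i,j)\in\mcal{T}\}])$. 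First I would recall the classical fact (Scott, \cite{MarshRietsch}) that the homogeneous coordinate ring of $\mathrm{Gr}(2,\C^{n+3})$ is a cluster algebra of type $A_n$: the cluster variables are the Pl\"ucker coordinates $p_{i,j}$ indexed by diagonals of the $(n+3)$-gon, the frozen variables are the $p_{i,i+1}$ indexed by the sides, and the clusters correspond exactly to triangulations, with mutation given by the Ptolemy relation $p_{i,k}p_{j,l} = p_{i,j}p_{k,l} + p_{i,l}p_{j,k}$.

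Next I would explain that passing to $\check{Y}$ (inverting the product of frozen Pl\"ucker coordinates) and then imposing the linear slice $p_{1,2} = \cdots = p_{n+2,n+3} = p_{1,n+3}$ has the effect of \emph{freezing-out}, i.e. setting all frozen variables equal to a single unit which, after passing to the affine chart $p_{1,2}\neq 0$ and normalizing $p_{i,j} \coloneqq p_{i,j}/p_{1,2}$, becomes $1$. Specializing all frozen variables of an $A_n$ cluster algebra to $1$ yields precisely the cluster algebra of type $A_n$ \emph{without frozen variables}: the exchange matrix is obtained from the extended exchange matrix by deleting the frozen rows, the remaining exchange relations are the Ptolemy relations with the side-variables set to $1$, and the seeds are still parametrized by triangulations of the $(n+3)$-gon. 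This is exactly the content of the ``Moreover'' clause of Theorem~\ref{theorem_SYZmirrorpolygon}: the charts $\mcal{U}_\mcal{T}$ form an atlas glued by cluster transformations, so $\check{X}$ (or rather the union of these charts inside it) is an $A$-type cluster variety without frozen variables. I would then invoke Theorem~\ref{theorem_SYZmirrorpolygon} to identify this cluster variety with the SYZ mirror of $\mcal{M}_\mathbf{r}$ produced by the bending systems, which proves the corollary.

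The main obstacle — though for a corollary it is more of a bookkeeping point than a genuine difficulty — is to be careful about \emph{which} cluster-theoretic notion is being claimed: the charts $\mcal{U}_\mcal{T}$ are the torus charts of seeds, and the statement is that their union (with the wall-crossing/mutation gluing coming from Lagrangian surgery, cf. \cite{PascaleffTonkonog, AurouxSpecial}) \emph{is} a cluster $\mcal{A}$-variety of type $A_n$ with no frozen data, not that $\check{X}$ itself is literally equal to the cluster variety on the nose (the two may differ by lower-dimensional loci, as flagged in the Remark after Theorem~\ref{theorem_SYZmirrorpolygon}). So the proof amounts to: (i) cite Scott's $A_n$ cluster structure on $\mathrm{Gr}(2,n+3)$ via triangulations; (ii) observe that the slice and normalization in Theorem~\ref{theorem_SYZmirrorpolygon} is exactly the operation of setting all frozen variables to $1$; (iii) observe that this operation turns an $\mcal{A}_{A_n}$ cluster algebra with frozen variables into the $\mcal{A}_{A_n}$ cluster algebra without frozen variables, with seeds still indexed by triangulations and mutations still given by Ptolemy; (iv) conclude via Theorem~\ref{theorem_SYZmirrorpolygon} that the bending-system SYZ mirror of $\mcal{M}_\mathbf{r}$ carries this cluster structure.
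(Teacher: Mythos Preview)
Your proposal is correct and follows essentially the same route as the paper: recall the $A_n$ cluster structure on $\mathrm{Gr}(2,\C^{n+3})$ with frozen variables $p_{i,i+1}$ and unfrozen variables $p_{i,j}$ indexed by diagonals, observe that the slice $p_{1,2}=\cdots=p_{1,n+3}$ in Theorem~\ref{theorem_SYZmirrorpolygon} specializes all frozen variables to a unit (hence to $1$ after normalization), and conclude that the resulting structure on $\check{X}$ is the $A_n$ cluster variety without frozen variables, with seeds indexed by triangulations and mutations given by Ptolemy relations. The paper additionally explains the symplectic counterpart of this specialization---the frozen variables are precisely the holonomy variables of the global $T_{\mathrm{U}(n+3)}$-action, which become trivial upon symplectic reduction to $\mcal{M}_\mathbf{r}$---and argues that the mutation relations descend from the Grassmannian via the $T$-equivariance of the GGZ interpolations in \cite{NoharaUedaGraclus}; but the cluster-algebraic core of the argument is exactly what you wrote.
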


To explain Theorem~\ref{theorem_SYZmirrorpolygon} and Corollary~\ref{corollary_SYZmirrorpolygon}, we begin by reviewing a construction of Marsh--Rietsch (MR) mirror, restricting to the Grassmannian of two planes in \cite{MarshRietsch}.
The MR mirror of $Y \coloneqq \mathrm{Gr}(2, \C^{n+3})$ is a Landau--Ginzburg (LG) model $\left(\check{Y}, W_\mathrm{MR} \colon \check{Y} \to \C\right)$, whose construction will be recalled below.

The mirror space $\check{Y}$ is given by the complement of an anti-canonical divisor of the dual Grassmannian $\mathrm{Gr}(n+1, \C^{n+3})$. Identifying $\mathrm{Gr}(n+1, \C^{n+3})$ with $\mathrm{Gr}(2, \C^{n+3})$, the anti-canonical divisor is chosen as $p_{1,2} \,  p_{2,3}  \cdots  p_{n+2,n+3} \,  p_{1, n+3} = 0$. We then have the mirror space $\check{Y}$ in~\eqref{equcheckYMR}.
To describe $W_\mathrm{MR}$, we need to review some quantum Schubert calculus. Let $\mcal{I}$ be the set of indices for the Pl\"{u}cker variables of $\check{Y}$. Then $\mcal{I}$ is grouped into the set of indices for the frozen variables and the set of indices for the unfrozen variables$\colon$
\begin{enumerate}
\item (frozen variables) $\mcal{I}_{\mathrm{fr}} = \{(1,2), (2,3), \cdots, (n+2,n+3), (1, n+3) \}$,
\item (unfrozen variables)  $\mcal{I}_{\mathrm{uf}} = \mcal{I} \backslash \mcal{I}_{\mathrm{fr}}$.
\end{enumerate}
Each index $I$ can be regarded as a Young diagram by the following rule. Consider the rectangular box consisting of $(n+1) \times 2$ unit boxes. To each ${I}$, we associate a (unique) shortest path from the vertex at the upper-right corner to the vertex at the lower-left corner in the rectangular box in such a way that its horizontal steps correspond to the components of $I$. The path divides the rectangular box into two pieces. We then obtain a Young diagram corresponding to ${I}$ by taking the upper-left part above the path. Conversely, we can associate a shortest path in the rectangular box to each Young diagram by locating the diagram at the upper-left corner in the rectangular box. We have a double index by reading its horizontal step. 

For each index $I \in \mcal{I}_{\mathrm{fr}}$ we associate the index $I^\prime \in \mcal{I}$ as follows. Consider the action of the first Chern class  $c_1 (T \mathrm{Gr}(2, \C^{n+3}))$ on a Schubert class via the quantum cup product, which leads to a sum of Schubert classes. The output class can be described in terms of combinatorics of Young diagrams via the quantum Monk's rule. The quantum cup product of the first Chern class and  a Schubert class corresponding to each \emph{frozen} variable corresponds to a \emph{single} Young diagram. We denote by $I^\prime$ the index for this output Young diagram. Specifically, for $I \neq (n+2, n+3)$, the Young diagram of ${I}^\prime$ is obtained by adding a single box to the Young diagram associated to ${I}$. For $I = (n+2, n+3)$, the Young diagram of ${I}^\prime$ is obtained by removing a rim hook from the Young diagram associated to ${I}$. Then the superpotential $W_\mathrm{MR}$ is a regular function on $\check{Y}$ determined by
\begin{equation}\label{equation_MRmirror}
W_\mathrm{MR} (\mathbf{p})  \coloneqq \sum \frac{p_{{I}^\prime}}{p_{{I}}} \colon \check{Y} \to \C
\end{equation}
where the summation runs over all $I \in \mcal{I}_{\mathrm{fr}}$. The above combinatorial rule yields that $I^\prime = (j, j +2)$ for $I = (j, j+1)$. Therefore, the superpotential~\eqref{equation_MRmirror} becomes
$$
W_\mathrm{MR} (\mathbf{p}) \coloneqq \sum \frac{p_{{j,j+2}}}{p_{{j,j+1}}} \colon \check{Y} \to \C.
$$

The MR mirror $(\check{Y}, W_\mathrm{MR})$ has a cluster structure. For each triangulation $\mcal{T}$ of an $(n+3)$-gon obtained by adding diagonals, let  $\mcal{I}_\mcal{T}$ be the set of indices associated to a diagonal in $\mcal{T}$. Then the cluster chart associated to $\mcal{T}$ is  
$$
\mcal{U}_\mcal{T} \coloneqq (\C^*)^{2n+2} \simeq \mathrm{Spec} \left( \left( \frac{p_{I}}{p_{1, 2}} \right)^{\pm} \mid I \in \mcal{I}_{\mcal{T}} \cup \mcal{I}_{\mathrm{fr}}  \right)
$$
By restricting $W_\mathrm{MR}$ to the chart $\mcal{U}_\mcal{T}$ and expressing $W_\mathrm{MR}$ in terms of Pl\"{u}cker variables associated to $\mcal{I}_{\mcal{T}} \cup \mcal{I}_{\mathrm{fr}}$, we obtain a Laurent polynomial $W_\mcal{T} \coloneqq W_\mathrm{MR} |_{\mcal{U}_\mcal{T}} \colon \mcal{U}_\mcal{T}  \to \C$. The pair $(\mcal{U}_\mcal{T}, W_\mcal{T})$ forms a cluster chart for the MR mirror. Furthermore, suppose that two triangulations $\mcal{T}$ and $\mcal{T}^\prime$ are related by Whitehead move so that
$$
\mcal{I}_{\mcal{T}^\prime} = \left( \mcal{I}_{\mcal{T}} - \{(a,c)\}  \right) \cup \{ (b, d) \}
$$ 
where $\mcal{I} - \mcal{I} \cap \mcal{I}^\prime = \{ (a,c) \}$ and $\mcal{I}^\prime - \mcal{I} \cap \mcal{I}^\prime = \{ (b,d) \}$ as in Figure~\ref{fig_whiteheadmove}. These cluster charts are related by the Pl\"{u}cker relation $p_{a,b} p_{c,d} - p_{a,c} p_{b,d} + p_{a,d} p_{b,c} = 0$.

\begin{figure}[h]
	\scalebox{0.85}{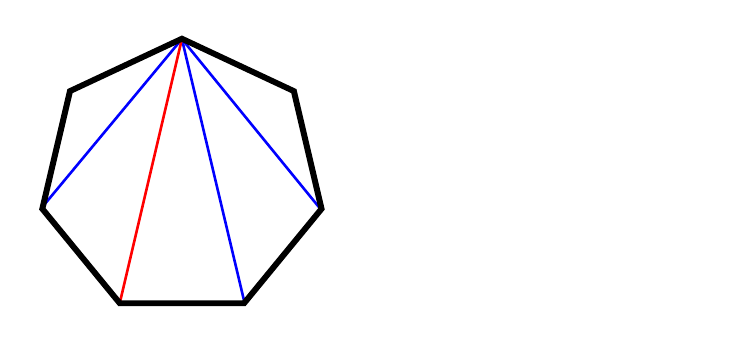}
	\caption{\label{fig_whiteheadmove} Whitehead move}	
\end{figure}

We now take the triangulation with the diagonals $d_1, d_2, \cdots, d_n$ in~\eqref{equ_djdj1} as an initial seed. By considering its dual graph and giving an orientation in a suitable manner, the triangulation can be turned into a quiver. The mutation relation given by the Pl\"{u}cker relation is compatible with the quiver mutation. By computing the exchange matrix of the initial seed and its Cartan counterpart, the cluster structure is of $A$ type. There is a one-to-one correspondence between the set of triangulations and the set of seeds for the cluster structure.

We discuss a symplectic model for the MR mirror $(\check{Y}, W_\mathrm{MR})$ respecting the cluster structure. By a series of works by Nohara--Ueda, its symplectic model is a Grassmannian with generalized Gelfand--Zeitlin (GGZ) systems in the following sense. For each triangulation $\mcal{T}$ of the $(n+3)$-gon, Nohara--Ueda \cite{NoharaUedaGrasPoly} constructed a generalized Gelfand--Zeitlin (GGZ) system $\Phi_{\lambda, \mcal{T}}$ on $\mcal{O}_{\lambda}$ and computed the disk potential function of the GGZ system. In \cite{NoharaUedaGraclus}, they showed that the Floer theoretical disk potential function on each cluster chart agrees with the superpotential $W_\mathrm{MR}$ restricted to the chart (up to a coordinate change). Moreover, the wall-crossing formulas of the torus fibers of GGZ arising are equal to the cluster mutations of the MR mirror $(\check{Y}, W_\mathrm{MR})$. Therefore, in this case, the SYZ mirror of the Grassmannian agrees with the Lie theoretical MR mirror (up to a codimension two).

What we have discussed can be summarized below.

\begin{theorem}[\cite{NoharaUedaGraclus}]\label{theorem_GGZcluster}
Let $\mcal{T}$ be a triangulation of $(n+3)$-gon by inserting diagonals.
We denote by ${\Phi}_{\lambda, \mcal{T}}$ the generalized Gelfand--Zeitlin system associated to $\mcal{T}$. 
Let $\mathbf{u}_0$ be the center of the polytope $\Phi_{\lambda, \mcal{T}}(\mcal{O}_\lambda)$. Then the disk potential function of the torus fiber ${\Phi}_{\lambda, \mcal{T}}^{-1}(\mathbf{u}_0)$ is
$$
W_\mathrm{MR} (\mathbf{p}) |_{\mcal{U}_{\mcal{T}}}
$$
in terms of the Pl\"{u}cker variables corresponding to edges in $\mcal{T}$.
\end{theorem}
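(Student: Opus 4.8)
The plan is to compute the disk potential of $\Phi_{\lambda,\mcal{T}}^{-1}(\mathbf{u}_0)$ from the toric degeneration of $\mathrm{Gr}(2,\C^{n+3})$ attached to the triangulation $\mcal{T}$, and then to identify the resulting Laurent polynomial with $W_\mathrm{MR}|_{\mcal{U}_\mcal{T}}$ via the Young diagram / Pl\"{u}cker dictionary. The essential point --- in contrast with the bending systems on $\mcal{M}_\mathbf{r}$ treated in the rest of the paper --- is that the generalized Gelfand--Zeitlin systems \emph{on the Grassmannian} do fit into the Nishinou--Nohara--Ueda framework \cite{NishinouNoharaUeda}, so that the open Gromov--Witten invariants entering the potential can be read off from tropical disks in the GGZ polytope $\Delta_{\lambda,\mcal{T}}$.

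First I would invoke, following \cite{NoharaUedaGrasPoly}, the toric degeneration of $\mathrm{Gr}(2,\C^{n+3})$ to the toric variety $X_{\Delta_{\lambda,\mcal{T}}}$ whose moment polytope is the GGZ polytope, i.e.\ the polytope cut out by the triangle inequalities associated to the triangles of $\mcal{T}$. Since the central fiber $\Phi_{\lambda,\mcal{T}}^{-1}(\mathbf{u}_0)$ is a monotone Lagrangian torus, the counts $n_\beta$ of Maslov index two disks are invariant under the choice of compatible almost complex structure (Lemma~\ref{lemma_invarianceofcounting}), and by a Nishinou--Nohara--Ueda type argument one may compute them after degenerating. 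The Maslov index two disk classes are then matched with tropical disks in $\Delta_{\lambda,\mcal{T}}$: the basic ones, one per facet of $\Delta_{\lambda,\mcal{T}}$, each contributing a monomial with coefficient $1$, together with the finitely many non-basic ones issuing from the deeper strata of $X_{\Delta_{\lambda,\mcal{T}}}$ (corresponding to the non-torus fibers of $\Phi_{\lambda,\mcal{T}}$), whose coefficients would be pinned down as in \cite{NishinouNoharaUeda}. Summing these contributions produces an explicit Laurent polynomial $W_{\Phi_{\lambda,\mcal{T}}^{-1}(\mathbf{u}_0)}$ in the GGZ action variables.

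Next I would match this with $W_\mathrm{MR}|_{\mcal{U}_\mcal{T}}$. The facets of $\Delta_{\lambda,\mcal{T}}$ are indexed by the edges and diagonals of $\mcal{T}$, hence by the Pl\"{u}cker variables $p_{i,j}$ with $(i,j)\in\mcal{I}_\mcal{T}\cup\mcal{I}_\mathrm{fr}$; under the combinatorial dictionary recalled before \eqref{equation_MRmirror}, which sends a diagonal to its Pl\"{u}cker coordinate and a quantum Monk step $I\mapsto I'$ to the ratio $p_{I'}/p_I$, each Laurent monomial of $W_\mathrm{MR}|_{\mcal{U}_\mcal{T}}$ --- obtained from $W_\mathrm{MR}(\mathbf{p})=\sum_{I\in\mcal{I}_\mathrm{fr}}p_{I'}/p_I$ by expanding the $p_{I'}$ with $I'\notin\mcal{I}_\mcal{T}\cup\mcal{I}_\mathrm{fr}$ through Pl\"{u}cker relations --- gets identified with one of the disk contributions above. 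For the caterpillar triangulation $\mcal{T}_0$ the GGZ system recovers the classical Gelfand--Zeitlin system of \eqref{equ_GZsystem}, whose potential was computed in \cite{NishinouNoharaUeda} (cf.\ \eqref{diskpotentialGZgr25} for $\mathrm{Gr}(2,\C^5)$), and this anchors the identification. It then extends to every $\mcal{T}$ either by running the combinatorial check uniformly, or by flip connectivity: any two triangulations are joined by finitely many Whitehead moves, and the passage between the corresponding GGZ systems induces, on disk potentials, exactly the substitution governed by the Pl\"{u}cker relation $p_{a,b}p_{c,d}-p_{a,c}p_{b,d}+p_{a,d}p_{b,c}=0$ --- the very relation that glues $W_\mathrm{MR}|_{\mcal{U}_\mcal{T}}$ to $W_\mathrm{MR}|_{\mcal{U}_{\mcal{T}'}}$.

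The main obstacle is the analysis of the non-basic holomorphic disks and the transfer across the degeneration: one must verify that the disk count in the smooth $\mathrm{Gr}(2,\C^{n+3})$ is faithfully computed by the tropical recipe in $\Delta_{\lambda,\mcal{T}}$ despite the singularities of the central toric fiber $X_{\Delta_{\lambda,\mcal{T}}}$ --- that the relevant moduli spaces persist under smoothing and that no Maslov index zero bubbling intervenes --- and that the central GGZ torus fiber is genuinely monotone. Carrying out the toric degeneration for an arbitrary $\mcal{T}$ and controlling these correction terms is exactly the technical heart of \cite{NoharaUedaGrasPoly, NoharaUedaGraclus}, and it is the part that does not reduce to a formal argument.
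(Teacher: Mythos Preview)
This theorem is quoted from \cite{NoharaUedaGraclus} and is not proved in the present paper, so there is no in-paper argument to compare against. Your sketch is a fair outline of the strategy actually used in \cite{NoharaUedaGrasPoly, NoharaUedaGraclus}: build the GGZ toric degeneration of $\mathrm{Gr}(2,\C^{n+3})$ attached to $\mcal{T}$, invoke the Nishinou--Nohara--Ueda machinery to compute the disk potential of the monotone GGZ torus, and then identify the answer with $W_\mathrm{MR}|_{\mcal{U}_\mcal{T}}$ under the Pl\"{u}cker/Young-diagram dictionary, propagating from the caterpillar case by Whitehead moves and Pl\"{u}cker relations.

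One correction of emphasis. You write that, beyond the basic disks, there are ``finitely many non-basic ones issuing from the deeper strata \ldots\ whose coefficients would be pinned down as in \cite{NishinouNoharaUeda}.'' For the Grassmannian this overstates the difficulty and slightly misidentifies the mechanism. The decisive structural fact established in the Nohara--Ueda papers is that each GGZ central fiber $X_{\Delta_{\lambda,\mcal{T}}}$ admits a \emph{small} toric resolution; this is exactly the hypothesis under which the argument of \cite{NishinouNoharaUeda} shows that the only Maslov index two classes with nonzero invariant are the basic ones, one per facet, each with $n_\beta=1$. So the potential is literally the sum of the facet monomials --- there are no extra correction terms to chase. (Contrast this with the bending systems on $\mcal{M}_\mathbf{r}$ treated in this paper, where the small-resolution hypothesis fails and genuine non-basic contributions do appear; that is precisely why Sections~\ref{sec_classificationofeffec}--\ref{Section_computationofopenGWinv} are needed.) With the small-resolution input in hand, the identification with $W_\mathrm{MR}|_{\mcal{U}_\mcal{T}}$ is the combinatorial matching you describe.
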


We discuss a symplectic model for the cluster variety of type $A$ \emph{without} frozen variables. According to Theorem~\ref{proposition_NNUgzbe}, the Gelfand--Zeitlin system on $\mcal{O}_{\lambda}$ descends to the caterpillar bending system on $\mcal{M}_\mathbf{r}$. Moreover, on the polygon space $\mcal{M}_\mathbf{r}$, a different choice of triangulation of the $(n+3)$-gon leads to a different bending system. More generally, a GGZ system on $\mcal{O}_{\lambda}$ descends to a bending integrable system on $\mcal{M}_{\mathbf{r}}$. Using this relation, we want to understand mirror symmetry of $\mcal{M}_\mathbf{r}$.

On the complex mirror side, let us produce a LG model for a polygon space $\mcal{M}_\mathbf{r}$ from the MR mirror for $\mcal{O}_{\lambda}$. The mirror space for the LG model is obtained by the intersection $\check{Y}$ with the hyperplanes obtained by equating all frozen variables. The superpotential is obtained by clearing out all denominators in~\eqref{equation_MRmirror}. That is, the LG model $(\check{X}, W_{\mathrm{Bend}})$ consists of
\begin{enumerate}
\item $\check{X} = \check{Y} \cap \{ p_{1,2} = p_{2,3} = \cdots = p_{n+2,n+3} = p_{1,n+3} \}$ and
\item $W_{\mathrm{Bend}}  (\mathbf{p}) = \sum p_{I^\prime} / p_{1,2}$ where the summation is runs over all $I \in \mcal{I}_\mathrm{frozen}$.
\end{enumerate}
The Pl\"{u}cker relations on $\mcal{O}_{\lambda}$ induce the mutation relations on $\check{X}$. Since all frozen variables are eliminated, we realize the cluster variety of type $A$ without frozen variable on the complex side.

Now, we explain why the above LG model $(\check{X}, W_{\mathrm{Bend}})$ is an SYZ mirror for $\mcal{M}_\mathbf{r}$. By \cite[Proposition 6.10]{MarshRietsch}, recall that the GZ cluster chart $\mcal{U}_{\mcal{T}_0}$ is embedded into the MR mirror via
$$
y_{j,1} \mapsto \frac{p_{j+1,j+2}}{p_{1,j+1}},\, y_{j,2} \mapsto \frac{p_{1,j+2}}{p_{1,2}}.
$$
Regarding $\mcal{U}_{\mcal{T}_0}$ as a Maurer--Cartan deformation space of the torus fiber of $\Phi_{\lambda, \mcal{T}_0}$, $y_{i,j}$ is the holonomy variable of a loop of the $\mathbb{S}^1$-action generated by $\Phi_{i,j}$ in~\eqref{equ_GZsystem}.

The moment map of $T_{\mathrm{U}(n+3)}$-action on $\mcal{O}_{\lambda}$  can be taken as
\begin{equation}\label{equ_components}
\Phi_{1,1}, \Phi_{1,2} + \Phi_{2,1} - \Phi_{1,1}, \cdots, \Phi_{n+1,2} + | \mathbf{r} | - \Phi_{n,2} - \Phi_{n+1,1}, 2| \mathbf{r} | - \Phi_{n+1,2} - |\mathbf{r}|.
\end{equation}
Each component generates an $\mathbb{S}^1$-action on $\mcal{O}_{\lambda}$. We denote by the loops
$$
\vartheta_1, \vartheta_2, \cdots, \vartheta_{n+2},  \vartheta_{n+3} 
$$
generated by the components~\eqref{equ_components}, respectively. Note that $[\vartheta_{n+3}] = [- \vartheta_1 - \cdots - \vartheta_{n+2}]$ as an element of the fundamental group of each bending torus fiber.

In terms of the holonomy variables on $\mcal{U}_{\mathcal{T}_0}$, the holonomy variables are equal to
$$
\mathrm{hol}_{\nabla} (\vartheta_j) = \frac{y_{j,1} y_{j-1,2}}{y_{j-1,1} y_{j-2,2}} \quad \mbox{ for $j = 1, 2, \cdots, n+3$}.
$$
Under the identification $\mathrm{Gr}(n+1, \C^{n+3}) \simeq \mathrm{Gr}(2, \C^{n+3})$, the holonomy variables can be expressed
\begin{align*}
&\mathrm{hol}_{\nabla} (\vartheta_j) = \frac{p_{j+1, j+2}}{p_{j,j+1}} \quad \mbox{ for $j = 1, 2, \cdots, n+1$}, \\
&\mathrm{hol}_{\nabla} (\vartheta_{n+2}) = \frac{p_{1,n+3}}{p_{n+2, n+3}},  \, \displaystyle \mathrm{hol}_{\nabla} (\vartheta_{n+3}) = \frac{p_{1,2}}{p_{1,n+3}}
\end{align*} 
in terms of the Pl\"{u}cker variables on $\mathrm{Gr}(2, \C^{n+3})$. All the variables for those holonomy variables arising from the the $T_{\mathrm{U}(n+3)}$-action are frozen variables. Therefore, by taking the symplectic reduction of the $T_{\mathrm{U}(n+3)}$-action, those corresponding holonomy variables are set to be trivial, which leads to the mirror space $\check{X}$. In particular, for each triangulation $\mcal{T}$, its charts are described as 
$$
{\mcal{V}}_\mcal{T} \coloneqq (\C^*)^{n} \simeq \mathrm{Spec} \left( \left( \frac{p_{I}}{p_{1, 2}} \right)^{\pm 1} \mid I \in \mcal{I}_{\mcal{T}}  \right).
$$

The residual torus action defined on a dense subset of $\mcal{M}_\mathbf{r}$ is generated by $\{ \Phi_{1}, \Phi_2, \cdots, \Phi_n \}$ in~\eqref{equ_caterpillarbendingshifted}. Via the following map 
$$
z_i \mapsto \frac{p_{1, i+2}}{p_{1,2}}
$$
from the Maurer--Cartan space of the caterpillar bending system to the cluster chart ${\mcal{V}}_{\mcal{T}_0} \subset \check{X}$, the disk potential in~\eqref{equ_theorem_main} agrees with $W_{\mathrm{Bend}} |_{{\mcal{V}}_{\mcal{T}_0}}$.

To recover the mutation relation on the charts of $\mcal{M}_\mathbf{r}$, we recall that the wall-crossing formula is derived from Floer cohomology between two Lagrangians. Namely, the relation obtained from moduli spaces of holomorphic strips making two objects quasi-isomorphic gives rise to the wall-crossing formula, see \cite{Seidellec, PascaleffTonkonog}. In our situation, take two triangulations $\mcal{T}_1$ and $\mcal{T}_2$ of $(n+3)$-gon by inserting diagonals that are related by  Whitehead move. For $i = 1, 2$, the triangulation $\mcal{T}_i$ generates the GGZ system $\Phi_{\lambda, \mcal{T}_i}$ on $\mcal{O}_\lambda$. The fiber at the center of the polytope $\Phi_{\lambda, \mcal{T}_i}(\mcal{O}_\lambda)$ is a monotone GGZ torus fiber, which is denoted by $L_i$. All GGZ systems on $\mcal{O}_\lambda$ are all invariant under the maximal torus action and moreover so are their interpolations, see \cite[(10.4)]{NoharaUedaGraclus}. 
We then have the moduli space $\mcal{M}({L}_1, {L}_2; \beta)$ of pseudo-holomorphic strips bounded by $L_1$ and $L_2$ is orientation-preserving diffeomorphic to $\mcal{M}({L}_1 / T, {L}_2 / T; \beta)  \times T$. The relation in the pre-quotient gets down that in the quotient, the polygon space $\mcal{M}_\mathbf{r}$ with the set of bending systems. 

\begin{proof}[Proof of Theorem~\ref{theorem_SYZmirrorpolygon}]
Theorem~\ref{theorem_main} computes the disk potential function of one chamber. The mutation relation induced from Theorem~\ref{theorem_GGZcluster} computes the disk potential functions of the other chambers. It completes the proof.
\end{proof}

Let $\mcal{T}$ be a triangulation of $(n+3)$-gon by inserting diagonals. We denote by ${\Phi}_{\mathbf{r}, \mcal{T}}$ the bending system associated to $\mcal{T}$. Each component is a moment map for the $\mathbb{S}^1$-action bending the polygon along a diagonal in $\mcal{T}$. Let $\mathbf{u}_{0,\mcal{T}}$ be the center of the polytope ${\Phi}_{\mathbf{r}, \mcal{T}}(\mcal{M}_\mathbf{r})$. Regard the variable $p_{i,j}$ in the affine chart $p_{1, 2} \neq 0$ as affine coordinate $p_{i,j}/p_{1,2}$ by setting $p_{i,j} \coloneqq p_{i,j}/p_{1,2}$. Then the disk potential function of a monotone torus fiber ${\Phi}_{\mathbf{r}, \mcal{T}}^{-1}(\mathbf{u}_{0,\mcal{T}})$ is obtained from
$$
W_{\mathbf{r}, \mcal{T}} \coloneqq W_\mathrm{Bend} (\mathbf{p}) |_{\mcal{U}_{\mcal{T}}}
$$
by expressing it in terms of the Pl\"{u}cker variables corresponding to diagonals in $\mcal{T}$.

\begin{example}
For the tuple $\mathbf{r} = (1,1,1,1,1)$, the LG model for $\mcal{M}_\mathbf{r}$ is 
\begin{align*}
\check{X} = \check{Y} \cap \{ p_{12} = p_{23} = p_{34} = p_{45} = p_{15} \} \\
W_{\mathrm{Bend}}  (\mathbf{p}) = {p}_{13} + {p}_{24} + {p}_{35} + {p}_{14} + {p}_{25}.
\end{align*}
For $\mcal{T}_0 = \{(1,3), (1,4), (1,5), (1,6)\}$, the disk potential function of ${\Phi}_{\mathbf{r}, \mcal{T}_0}^{-1}(\mathbf{u}_{0,\mcal{T}_0} )$ is
$$
W_{\mathbf{r}, \mcal{T}_0} = p_{13} + \frac{2}{p_{13}} + p_{16} + \frac{2}{p_{16}} + \frac{p_{13}}{p_{14}} + \frac{p_{14}}{p_{13}} + \frac{1}{p_{13}p_{14}} + \frac{p_{14}}{p_{15}} + \frac{p_{15}}{p_{14}} + \frac{1}{p_{14}p_{15}} + \frac{p_{15}}{p_{16}} + \frac{p_{16}}{p_{15}} + \frac{1}{p_{15}p_{16}}.  
$$
For $\mcal{T} = \{(1,3), (1,4), (4,6), (1,6)\}$, the disk potential function of ${\Phi}_{\mathbf{r}, \mcal{T}}^{-1}(\mathbf{u}_{0,\mcal{T}})$ is
$$
W_{\mathbf{r}, \mcal{T}} = p_{13} + \frac{2}{p_{13}} + p_{16} + \frac{2}{p_{16}} + \frac{p_{14}}{p_{13}} + \frac{p_{13}}{p_{14}} + \frac{1}{p_{13}p_{14}} + p_{46} + \frac{2}{p_{46}} + \frac{p_{16}}{p_{14}p_{46}}+  \frac{p_{14}}{p_{16}p_{46}} +  \frac{p_{46}}{p_{14}p_{16}}.
$$
\end{example}

The correspondence we have discussed is summarized as follows. 

\begin{center}
\begin{tabular}{| c || c |} 
\hline
Symplectic side & Complex side\\ 
 \hline \hline
Grassmannian $\mcal{O}_{\lambda}$  & Marsh--Rietsch mirror $(\check{Y}, W_\mathrm{MR})$   \\ 
 \hline
Generalized GZ system $\Phi_{\lambda, \mcal{T}}$ & Cluster chart $\mcal{U}_\mcal{T} \simeq (\C^*)^{2n+2}$   \\ 
 \hline
holonomy variables of global $\mathbb{S}^1$-action &  frozen variables  \\
  \hline
holonomy variables of local $\mathbb{S}^1$-action &  unfrozen variables  \\
 \hline \hline
 Polygon space $\mcal{M}_\mathbf{r}$ & LG model  $(\check{X}, W_{\mathrm{Bend}})$ \\
 \hline
 Bending system ${\Phi}_{\mathbf{r}, \mcal{T}}$ & Cluster chart ${\mcal{V}}_\mcal{T} \simeq (\C^*)^n$ \\
 \hline
 holonomy variables of residual $\mathbb{S}^1$-action &  unfrozen variables  \\
  \hline
\end{tabular}
\end{center}

\section{Caterpillar bending toric varieties}\label{Toricgeometry}

The goal of this section is to compute the second rational relative homology group of the pair consisting of the toric variety associated to the caterpillar bending polytope and its toric fiber. For the purpose of classifying effective disk classes later on, as preliminaries, we shall describe explicit generators for the relative homology group.

\subsection{The toric variety associated to the Gelfand--Zeitlin polytope}

To begin with, we briefly recall the second relative homology group of a compact symplectic toric manifold $X$ as a warm-up. Suppose that the toric manifold $X$ is associated to a Delzant polytope $\Delta$. Let $L$ be a Lagrangian toric fiber of a point in the interior of the polytope $\Delta$ under the moment map $\Phi \colon X \to \Delta$. The second relative homotopy group $\pi_2(X,L) \simeq H_2(X,L; \Z)$ of the pair $(X,L)$ is isomorphic to $\Z^m$  where $m$ is the number of facets of $\Delta$, see \cite[Chapter 1]{FOOOToric2} for instance.

In order to describe a basis for $\pi_2(X,L)$ explicitly, enumerate the facets $f_1, f_2, \cdots, f_m$ of $\Delta$. We denote by $F_i$ the irreducible component of  the toric anti-canonical divisor corresponding to the $1$-cone generated by an inward normal vector to $f_i$. The outward primitive vector normal to $f_i$ generates a Hamiltonian $\bS^1$-action, which is a subtorus action of the global torus action on $X$. A moment map on $X$ for this $\bS^1$-action generates a gradient vector field with respect to the Riemannian metric induced by the toric complex structure and the toric symplectic form. By flowing a chosen free $\bS^1$-orbit in $L$ along the gradient vector field, we produce a gradient holomorphic disk having the chosen orbit as its boundary, intersecting $F_i$ exactly once, and not intersecting the other irreducible components. It has Maslov index two by Proposition~\ref{proposition_maslovformula}. Such a gradient holomorphic disk will be called a \emph{basic disk}. Also, a homotopy class represented by a basic disk is said to be \emph{basic}. We denote by $\beta_i$ the basic class intersecting the divisor $F_i$. Note that there are $m$ many basic classes $\beta_1, \beta_2, \dots, \beta_m$. Since each facet of $\Delta$ can be translated without changing the normal fan structure, the symplectic area of $\beta_i$ can be independently controlled without changing the diffeomorphism type of $X$. It implies that the set of basic classes is linearly independent in $\pi_2(X,L)$.  

We now turn to the Gelfand--Zeitlin polytope $\Delta_{\lambda}$ determined by~\eqref{equation_gzpattern} with $3(n+1)$ facets. We denote by $X$ the Gelfand--Zeitlin toric variety, the toric variety associated to the normal fan of $\Delta_{\lambda}$. By \cite[Section 9]{NishinouNoharaUeda}, we have $\pi_2(X) \simeq \Z$ and $\pi_2(X, L) \simeq \Z^{2n + 3}$. In order to describe a generating set for $\pi_2(X,L)$ explicitly, we take a look at the fan structure of the toric variety $X$. The fan structure for $X$ can be described by the ladder diagram 
\begin{equation}\label{equ_ladderdiagramgr2n}
\Gamma_{\lambda} = \{ (a,b) \in (\Z \times \R) \cup (\R \times \Z) \mid \, 0 \leq a \leq n+1, \, 0 \leq b \leq 2 \}.
\end{equation}
The diagram $\Gamma_{\lambda}$ consists of $2 \times (n+1)$ unit boxes. Note that the number of unit boxes agrees with the complex dimension of $X$. 

Let $\square^{(i,j)}$ be the unit box such that it is contained in $(\Z \times \R) \cup (\R \times \Z)$ and its top-right vertex is located at $(i,j)$. The center of $\square^{(i,j)}$ is denoted by $v_{i,j}$, that is, $v_{i,j} = (i-1/2, j-1/2)$. Then the dual ladder diagram $\Gamma^\vee_{\lambda}$ is defined by the induced subgraph of the grid graph $((\Z + 1/2) \times \R) \cup (\R \times (\Z + 1/2))$ whose vertex set is $ \{{v}_{n+2,1}, {v}_{0,2}\} \cup \{ {v}_{i,j} \mid \square^{(i,j)} \subset \Gamma_{\lambda} \}$. Each edge of $\Gamma^\vee_{\lambda}$ turns into an arrow by orienting a vertical (resp. horizontal) edge from north to south (resp. from west to east). Then the dual ladder diagram $\Gamma^\vee_{\lambda}$ becomes a quiver consisting of $m = 3(n + 1)$ oriented arrows. The first diagram in Figure~\ref{fig_ladderanddualladder} depicts the ladder diagram $\Gamma_{\lambda}$ and the dual ladder diagram $\Gamma^\vee_{\lambda}$ when $n= 2$.

\begin{figure}[h]
	\scalebox{0.85}{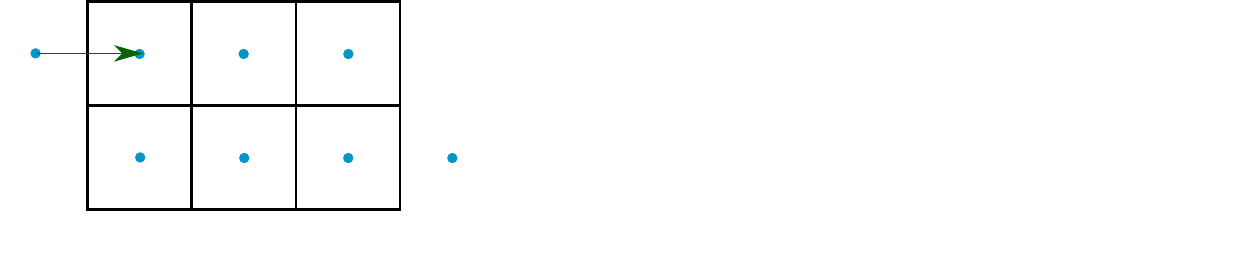}
	\caption{\label{fig_ladderanddualladder} The ladder diagram and dual ladder diagram}	
\end{figure}

Let $N \simeq \Z^{2(n+1)}$ be the free abelian group with the basis $\{ {v}_{i,j} \mid \square^{(i,j)} \subset \Gamma_{\lambda} \}$. 
Let $M$ be the dual lattice of $N$. The dual basis $\{ v_{i,j}^* \mid  \square^{(i,j)} \subset \Gamma_{\lambda} \}$ of the basis $\{ v_{i,j} \mid  \square^{(i,j)} \subset \Gamma_{\lambda} \}$ is a basis for the dual lattice $M$ and it gives rise to an identification $\R^{2(n+1)} \simeq M_\R \coloneqq M \otimes \R$. We denote the coordinate system with respect to the basis $\{ {v}^*_{i,j} \mid \square^{(i,j)} \subset \Gamma_{\lambda} \}$ on $\R^{2(n+1)} \simeq M_\R \coloneqq M \otimes \R$ by $\{ {u}_{i,j} \mid \square^{(i,j)} \subset \Gamma_{\lambda}  \}$. Then the GZ polytope defined by the system~\eqref{equ_halfspacesgzpoly} of inequalities is regarded as a polytope in $M_\R \simeq \R^{2(n+1)}$.


The arrows in the quiver $\Gamma^\vee_{\lambda}$ are in one-to-one correspondence with the inequalities in~\eqref{equ_halfspacesgzpoly} as follows. Setting ${v}_{n+2, 1}$ and ${v}_{0,2}$ to be the zero vectors, we associate to each arrow $\overrightarrow{e}$ the integral vector
\begin{equation}\label{equ_normalvectorthgz}
\mathbf{v}_{\overrightarrow{e}} \coloneqq t_{\overrightarrow{e}} - h_{\overrightarrow{e}} \in N
\end{equation}
where $t_{\overrightarrow{e}}$ (resp. $h_{\overrightarrow{e}}$) is the vector associated to the vertex at the tail (resp. head) of ${\overrightarrow{e}}$. Then each arrow $\overrightarrow{e}$ corresponds to one of the above inequalities whose normal vector is $\mathbf{v}_{\overrightarrow{e}}$. To record the double index, the arrow corresponding to $k_{i,j} (\mathbf{u}) \geq 0$ is denoted by $\overrightarrow{e_{i,j}}$. The integral vector $\mathbf{v}_{\overrightarrow{e}}$ is also denoted by $\mathbf{v}_{i,j}$, which is an outward normal vector to $k_{i,j}(\mathbf{u}) \geq 0$. Observe that the normal vectors satisfy the following relation
\begin{equation}\label{equ_relationvijv}
\mathbf{v}_{i,0} + \mathbf{v}_{i,1} -  \mathbf{v}_{i+1,0} - \mathbf{v}_{i,2} = \mathbf{0} \quad \mbox{for $i = 1, 2, \cdots, n$}.
\end{equation}

Each vector $\mathbf{v}_{i,j}$ generates a Hamiltonian $\bS^1$-action on the smooth loci of $X$. Using this $\bS^1$-action, we produce a gradient holomorphic disk intersecting a toric divisor. We denote by $\beta_{\overrightarrow{e}}$ the basic class intersecting the toric divisor component corresponding to the $1$-cone generated by $\mathbf{v}_{\overrightarrow{e}}$. Namely, the class $\beta_{\overrightarrow{e}}$ is represented by a gradient holomorphic disk generated by the $\bS^1$-action of $\mathbf{v}_{\overrightarrow{e}}$. Again, $\beta_{\overrightarrow{e}}$ is also denoted by $\beta_{i,j}$ if $\mathbf{v}_{\overrightarrow{e}} = \mathbf{v}_{i,j}$. In total, we produce $3(n+1)$ basic disks in $3(n+1)$ basic classes.

We set $m \coloneqq 3(n + 1)$, the number of facets of $\Delta_{\lambda}$. Consider the free abelian group $\Z^m$ generated by the arrows in $\Gamma^\vee_{\lambda}$. We take two group homomorphisms 
\begin{itemize}
\item $\pi \colon \Z^m \to N$ given by $\overrightarrow{e} \mapsto - \mathbf{v}_{\overrightarrow{e}}$,
\item $\pi_\beta \colon \Z^m \to \pi_2(X, L)$ given by $\overrightarrow{e} \mapsto \beta_{\overrightarrow{e}}$.
\end{itemize}
Since the normal vectors of the form~\eqref{equ_normalvectorthgz} generate the lattice $N$, the toric variety $X$ is simply connected by~\cite[Theorem 12.1.10]{CoxLittleSchenckToric} for instance.
We then have the following commuting diagram with exact rows$\colon$  
\begin{equation}\label{equ_exactlattice}
\xymatrix{
0 \ar[r] & {\ker \pi} \ar@{->>}[d]^{\pi_\beta |_{\ker}} \ar[r] & \Z^m  \ar@{->>}[d]^{\pi_\beta}  \ar[r]^{\pi \,\,\,\,\,\,\,\,\,\,\,\,} & N \simeq \Z^{2(n+1)} \ar[d]^\simeq \ar[r] & 0 \\
0 \ar[r] & \pi_2(X)  \ar[r] & \pi_2(X, L)   \ar[r] & \pi_1(L)  \ar[r] & 0.}
\end{equation}

The map $\pi_\beta$ is surjective, but not injective when $n > 1$ unlike the smooth toric case. In other words, the basic classes generate $\pi_2({X},{L})$ where ${L}$ is a Lagrangian toric fiber in ${X}$, but the basic classes are \emph{not} linearly independent in $\pi_2(X,L)$. The relative homotopy group $\pi_2(X,L)$, the image of abelian group, is abelian. Because of~\eqref{equ_relationvijv}, the kernel of $\pi_\beta$ is a free abelian group with a basis consisting of 
\begin{equation}\label{equ_blacksquarejj}
\overrightarrow{e_{i,0}} + \overrightarrow{e_{i,1}} -  \overrightarrow{e_{i+1,0}} - \overrightarrow{e_{i,2}}  \quad \mbox{for $i = 1, 2, \cdots, n$}.
\end{equation} 
The kernel $\ker \pi$ contains $\ker \pi_\beta$ because of the commutativity. The relations~\eqref{equ_blacksquarejj} together with 
$$
\overrightarrow{e_{1,1}} + \overrightarrow{e_{2,1}} + \dots + \overrightarrow{e_{n+1,1}} + \overrightarrow{e_{1,0}} + \overrightarrow{e_{0,2}}
$$
generate the subgroup $\ker \pi$. In sum, we have
\begin{itemize}
\item $\pi_2(X) \simeq \Z \langle \beta_{1,1} + \beta_{2,1} + \dots + \beta_{n+1,1} + \beta_{1,0} + \beta_{0,2} \rangle$.
\item $\pi_2(X, L) \simeq \Z \langle \beta_{1,0}, \cdots,  \beta_{n+1,0}, \beta_{1,1}, \cdots, \beta_{n+1,1}, \beta_{0,2}, \cdots, \beta_{n,2} \rangle / \langle \blacksquare_1, \blacksquare_2, \cdots,  \blacksquare_n \rangle$
where
\begin{equation}\label{equ_blacksquarbetajj}
\blacksquare_i \coloneqq {\beta_{i,0}} + {\beta_{i,1}} -  {\beta_{i+1,0}} - {\beta_{i,2}}  \quad \mbox{for $i = 1, 2, \cdots, n$}.
\end{equation} 
\end{itemize}

\subsection{The toric variety associated to the caterpillar bending polytope}\label{subsectionthetoricvarcater}

Let $X$ be the toric variety associated to the caterpillar bending polytope $\Delta_\mathbf{r}$, the image of the caterpillar bending system (up to a translation) with the choice $\mathbf{r}$ in~\eqref{equ_equilateral111}. Let $L$ be a Lagrangian toric fiber, located at a point in the interior of  $\Delta_\mathbf{r}$. In order to compute the second rational relative homology group of $(X, L)$, let us describe the normal fan of $\Delta_\mathbf{r}$.

For $a, b, c$, and $d \in \Z$, we denote by $\overline{(a,b), (c,d)}$ the line segment connecting $(a,b)$ to $(c,d)$ in $\Z^2$. The ladder diagram $\Gamma_\mathbf{r}$ is defined as
$$
\Gamma_\mathbf{r} \coloneqq \Gamma_{\lambda} - \left\{ \overline{(0,0), (1,0)}, \overline{(0,0), (0,1)}, \overline{(n+1,1), (n+1,2)}, \overline{(n,2), (n+1,2)} \right\}
$$
where $\lambda$ is defined in~\eqref{equ_lambda}. The diagram $\Gamma_\mathbf{r}$ consists of $2n$ unit boxes. Namely, $\Gamma_\mathbf{r}$ is obtained by removing the unit box at the left bottom corner and the unit box at the right top corner from $\Gamma_{\lambda}$ as in the second diagram of Figure~\ref{fig_ladderanddualladder}.

Recall the description of $\Delta_\mathbf{r}$ in~\eqref{equation_polypattern}, set ${w}_{n+2,1} \coloneqq | \mathbf{r}|, {w}_{n+1,2} \coloneqq | \mathbf{r}| - 2 r_{n+3}$, ${w}_{0,2} \coloneqq 0$, $w_{1,1} \coloneqq 2 | \mathbf{r} |_1$, and 
$$
\begin{cases}
w_{i,2} \coloneqq u_i &\mbox{ for $i = 1, 2, \dots, n$,} \\
w_{i,1} \coloneqq 2 | \mathbf{r} |_{i} - {u}_{i-1}  &\mbox{ for $i = 2, 3, \dots, n+1$.}
\end{cases}
$$ 
By Proposition~\ref{proposition_monotone}, recall that the polytope $\Delta_\mathbf{r}$ is defined by the list of inequalities$\colon$
\begin{equation}\label{equ_definingequellijpoly}
\begin{cases}
\ell_{i,0} ({\mathbf{u}}) = {w}_{i,1} - {w}_{i,2} \geq 0 &\mbox{ for $i = \mathbf{2}, \dots, \mathbf{n}$,}\\
\ell_{i,1} ({\mathbf{u}}) = {w}_{i+1,1} - {w}_{i,1} \geq 0 &\mbox{ for $i = \mathbf{2}, \dots, n, n+1$,}\\
\ell_{i,2} ({\mathbf{u}}) = {w}_{i+1,2} - {w}_{i,2} \geq 0 &\mbox{ for $i = 0, 1, \dots, \mathbf{n-1}$,}
\end{cases}
\end{equation}
and each hyperplane contains a facet. Set $m \coloneqq  (3n + 3 - 4)$, the number of facets of $\Delta_\mathbf{r}$.

Let $v_{i,j} \coloneqq (i - 1/2, j - 1/2)$ and $\square^{(i,j)}$ be the unit box whose top-right vertex is located at $(i,j)$. Let $\Gamma_\mathbf{r}^\vee$ be the dual ladder diagram, the induced subgraph of the gird graph $((\Z + 1/2) \times \R) \cup (\R \times (\Z + 1/2))$ whose vertex set is $\{{v}_{1, n+2}, {v}_{2,0}\} \cup \{ {v}_{i,j} \mid \square^{(i,j)} \subset \Gamma_\mathbf{r} \}$. Each edge of $\Gamma_\mathbf{r}^\vee$ turns into an arrow by orienting a vertical (resp. horizontal) edge from north to south (resp. from west to east).

Let $N \simeq \Z^n$ be the free abelian group with the basis 
\begin{equation}\label{equ_abasisforzn}
\{ v_i \coloneqq v_{i,2} \mid i = 1, 2, \cdots, n \}.
\end{equation}
Let $M$ be the dual lattice of $N$. The dual basis $\{ v_{i}^* \mid i = 1, 2, \cdots, n \}$ of the basis $\{ v_{i} \mid  i = 1, 2, \cdots, n \}$ is a basis for the dual lattice $M$ and the dual basis gives rise to the identification $\R^{n} \simeq M_\R \coloneqq M \otimes \R$. We denote the coordinate system associated with the basis $\{ {v}^*_{i} \mid  i = 1, 2, \cdots, n \}$ on $\R^{n} \simeq M_\R \coloneqq M \otimes \R$ by $\{ {u}_{i} \mid  i = 1, 2, \cdots, n \}$. The caterpillar bending polytope defined by the system~\eqref{equ_elli0i1i2} of inequalities is regarded as a polytope in $M_\R \simeq \R^{n}$.

The arrows in $\Gamma^\vee_{\mathbf{r}}$ are in one-to-one correspondence with the inequalities in~\eqref{equ_definingequellijpoly} as follows. To reflect the description of $\Delta_\mathbf{r}$ in~\eqref{equ_definingequellijpoly}, set $v_{i+1,1} \coloneqq - v_{i}$ for $i = 1, 2, \cdots, n$ and set $v_{n+2,1}$ and $v_{0,2}$ to be the zero vectors. We associate to each arrow $\overrightarrow{e}$ the integral vector 
\begin{equation}\label{equ_normalvectorth}
\mathbf{v}_{\overrightarrow{e}} \coloneqq t_{\overrightarrow{e}} - h_{\overrightarrow{e}}\in N
\end{equation}
as in~\eqref{equ_normalvectorthgz}
where $t_{\overrightarrow{e}}$ (resp. $h_{\overrightarrow{e}}$) is the vector associated to the vertex at the tail (resp. head) of ${\overrightarrow{e}}$. Namely, each arrow $\overrightarrow{e}$ corresponds to one of the above inequalities whose normal vector is $\mathbf{v}_{\overrightarrow{e}}$. To record the double index, the arrow corresponding to $\ell_{i,j} (\mathbf{u}) \geq 0$ is denoted by $\overrightarrow{e_{i,j}}$. The integral vector $\mathbf{v}_{\overrightarrow{e}}$ is also denoted by $\mathbf{v}_{i,j}$ if $\overrightarrow{e} = \overrightarrow{e_{i,j}}$.

Each vector $\mathbf{v}_{i,j}$ generates a gradient holomorphic disk $\varphi_{i,j} \colon (\mathbb{D}, \partial \mathbb{D}) \to (X, L)$ generated by the Hamiltonian $\bS^1$-action corresponding to $\mathbf{v}_{i,j}$. We call the class generated by a gradient disk $\varphi_{i,j}$ \emph{basic} in $\pi_2(X, L)$. The class represented by a gradient disk $\varphi_{i,j}$ via the Hurewicz map in $H_2(X,L)$ is also called \emph{basic}. This class is denoted by $\beta_{i,j}$.

Then the second relative homology group of the pair $(X, L)$ is characterized as follows.

\begin{proposition}\label{proposition_pi2xl}
Let $X$ be the toric variety associated to the caterpillar bending polytope $\Delta_\mathbf{r}$ and $L$ a Lagrangian toric fiber. Then we have 
$$
H_2(X,L;\Q) \simeq \Q^{2n+1}.
$$ 
Moreover, the homology group $H_2(X,L;\Q)$ is generated by the basic gradient disks.
\end{proposition}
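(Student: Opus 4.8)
The plan is to mimic, with the smaller index ranges dictated by Proposition~\ref{proposition_monotone} and the list~\eqref{equ_definingequellijpoly}, the computation carried out for the Gelfand--Zeitlin toric variety in the previous subsection. First I would check that $X$ is simply connected. Among the outward normal vectors $\mathbf{v}_{\overrightarrow{e}}$ attached by~\eqref{equ_normalvectorth} to the arrows of $\Gamma^\vee_\mathbf{r}$, the $n$ vectors $\mathbf{v}_{0,2},\mathbf{v}_{1,2},\dots,\mathbf{v}_{n-1,2}$ are, in the basis~\eqref{equ_abasisforzn} and up to sign, $v_1,\ v_2-v_1,\ \dots,\ v_n-v_{n-1}$, hence already form a $\Z$-basis of $N$. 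So the $\mathbf{v}_{\overrightarrow{e}}$ generate $N$ and $X$ is simply connected by \cite[Theorem~12.1.10]{CoxLittleSchenckToric}; by Hurewicz $\pi_2(X)=H_2(X;\Z)$. Exactly as in~\eqref{equ_exactlattice} one then has a commutative diagram with exact rows: the top row is $0\to\ker\pi\to\Z^m\xrightarrow{\pi}N\to0$ with $\pi(\overrightarrow{e})=-\mathbf{v}_{\overrightarrow{e}}$, surjective by the above; the bottom row is the homotopy exact sequence $0\to\pi_2(X)\to\pi_2(X,L)\xrightarrow{\partial}\pi_1(L)\to0$ of the pair $(X,L)$, exact because $\pi_2(L)=0$ and $\pi_1(X)=0$; and the middle vertical map is $\pi_\beta\colon\overrightarrow{e}\mapsto\beta_{\overrightarrow{e}}$, which makes the right square commute since $\partial\beta_{\overrightarrow{e}}$ corresponds to $\mathbf{v}_{\overrightarrow{e}}\in N\simeq\pi_1(L)$, as in the discussion of the smooth toric case at the start of this section.

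Next I would prove the ``moreover'' part, i.e.\ that $\pi_\beta$ is surjective. Its image already surjects onto $\pi_1(L)$ because the $\mathbf{v}_{\overrightarrow{e}}$ span $N$, so it suffices to see $\pi_2(X)\subset\operatorname{im}\pi_\beta$; but for the complete toric variety $X$ the group $\pi_2(X)=H_2(X;\Z)$ is generated by the classes of the torus-invariant rational curves, and the class of such a curve equals the integral combination of basic classes prescribed by its intersection numbers with the toric prime divisors, hence lies in $\operatorname{im}\pi_\beta$. Consequently $\pi_2(X,L)$ is a quotient of $\Z^m$ on which $\pi_1(L)$ acts trivially, so it is abelian and the relative Hurewicz theorem identifies it with $H_2(X,L;\Z)$; it is generated by the basic classes $\beta_{\overrightarrow{e}}$, and a fortiori $H_2(X,L;\Q)$ is generated by the basic gradient disks. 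This settles the last assertion of Proposition~\ref{proposition_pi2xl}.

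It then remains to compute $\dim_\Q\ker(\pi_\beta\otimes\Q)$, or equivalently, from the diagram, $b_2(X)=\operatorname{rank}\pi_2(X)=(m-n)-\dim_\Q\ker(\pi_\beta\otimes\Q)$. The linear relations~\eqref{equ_relationvijv} among the normal vectors carry over to $\Delta_\mathbf{r}$ only for those $i$ for which all four facets they involve occur in~\eqref{equ_definingequellijpoly}, that is for $i=2,\dots,n-1$: translating these four facets simultaneously (equivalently, perturbing the polytope within its normal fan, which only changes the toric Kähler form on $X$) does not change the topological class, so one obtains $n-2$ relations $\blacksquare_i\coloneqq\beta_{i,0}+\beta_{i,1}-\beta_{i+1,0}-\beta_{i,2}=0$ in $H_2(X,L;\Q)$, as in~\eqref{equ_blacksquarbetajj}, and they are linearly independent since each $\blacksquare_i$ involves $\beta_{i,1}$, which appears in no other. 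For the converse, a class $\sum a_{\overrightarrow{e}}\beta_{\overrightarrow{e}}$ vanishing in $H_2(X,L;\Q)$ is killed by the symplectic-area pairing for every toric Kähler form on $X$, so its coefficient vector is orthogonal to the span of all facet-distance vectors of polytopes with the normal fan of $\Delta_\mathbf{r}$; and conversely, from the fan description of $H_2(X,L;\Q)$, every vector in that orthogonal complement gives a vanishing class. Since the only linear constraints on those facet positions are the $n-2$ vertex relations producing the $\blacksquare_i$ — this is the combinatorial content of Proposition~\ref{proposition_monotone}, which lists all the codimension-two degeneracies of $\Delta_\mathbf{r}$ — one gets $\ker(\pi_\beta\otimes\Q)=\langle\blacksquare_2,\dots,\blacksquare_{n-1}\rangle_\Q$ of dimension $n-2$, hence $H_2(X,L;\Q)\simeq\Q^m/\langle\blacksquare_i\rangle_\Q\simeq\Q^{(3n-1)-(n-2)}=\Q^{2n+1}$ (consistently, $b_2(X)=n+1$).

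The main obstacle is precisely the last step: for $n\ge4$ the variety $X$ is genuinely singular and non-simplicial, so the basic classes really are dependent, and one cannot quote the smooth-toric independence of basic disks. One must instead verify by hand that the facet positions of $\Delta_\mathbf{r}$ satisfy no linear relations beyond the $n-2$ vertex relations coming from~\eqref{equ_relationvijv} — equivalently that $b_2(X)=n+1$ — which amounts to a direct inspection of the vertices of $\Delta_\mathbf{r}$ at the two ``chopped corners'' of the ladder diagram $\Gamma_\mathbf{r}$, guided by Proposition~\ref{proposition_monotone}; alternatively this can be extracted from Nohara--Ueda's analysis of the bending toric degeneration in \cite{NoharaUedaGrasPoly}.
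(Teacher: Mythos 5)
Your overall scaffolding — the map $\pi_\beta\colon\Q^m\to H_2(X,L;\Q)$, the $n-2$ relations $\blacksquare_i$ giving the upper bound $\dim_\Q H_2(X,L;\Q)\le 2n+1$, and the identification $H_2(X,L;\Q)\simeq H_2(X;\Q)\oplus H_1(L;\Q)$ — agrees with the paper. But you diverge at the two technical crossings, and the second divergence is where your argument does not close.

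For surjectivity of $\pi_\beta$, you invoke directly that torus-invariant curves in $X$ satisfy $C=\sum_i(C\cdot D_i)\beta_i$. That formula is standard for smooth (or simplicial) toric varieties, but $X$ is non-simplicial for $n\ge 3$, so the intersection numbers $C\cdot D_i$ are not automatically well defined in $\Z$ or $\Q$, and the identity is not free. The paper sidesteps this by passing to a simplicialization $\Pi\colon\widetilde X\to X$ (a toric morphism, same $1$-cones), where Cho--Poddar's classification in the orbifold setting gives that every class of $H_2(\widetilde X,\widetilde L;\Q)$ is a non-negative rational combination of basic classes, and then pushes forward by $\Pi_*$. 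Your route could probably be made rigorous, but only by re-proving something equivalent to that lift.

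The real gap is the lower bound. You try to argue that $\ker(\pi_\beta\otimes\Q)$ is \emph{exactly} $\langle\blacksquare_2,\dots,\blacksquare_{n-1}\rangle_\Q$ by a duality with "facet-distance vectors" of polytopes sharing the normal fan, i.e.\ implicitly via $H^2(X;\Q)\simeq\operatorname{Pic}(X)\otimes\Q$. That identification is valid for simplicial toric varieties but is not automatic for a singular non-simplicial $X$, so the argument is circular precisely at the step you flag as "the main obstacle." Proposition~\ref{proposition_monotone} tells you which hyperplanes cut out facets versus codimension-two faces; it does not by itself certify that the vertex relations are exhausted by the $\blacksquare_i$, and "direct inspection of vertices" is what remains to be done. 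The paper instead produces the lower bound constructively: it defines $n+1$ spherical classes
\[
\square_i\coloneqq \beta_{i,1}+\beta_{i-1,2}\quad(i=1,\dots,n+1),\quad\text{with }\ \beta_{1,1}\coloneqq\tfrac12(\beta_{2,0}+\beta_{1,2}),\ \beta_{n,2}\coloneqq\tfrac12(\beta_{n,1}+\beta_{n,0}),
\]
realizes each $\square_i$ (for interior $i$) as a gradient sphere obtained by flowing an $\bS^1$-orbit in both directions under the actions of $\pm\mathbf v_{i-1,2}$, shows $2\square_1$ and $2\square_{n+1}$ are also sums of basic classes, and then argues that translating individual genuine facets of $\Delta_\mathbf{r}$ (which preserves the normal fan) lets one vary the $n+1$ areas $\omega(\square_i)$ independently, forcing $\dim_\Q H_2(X;\Q)\ge n+1$. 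Combined with the upper bound this yields equality. You would need to supply something equivalent to this explicit construction to finish; as written, your third paragraph establishes $\dim_\Q H_2(X,L;\Q)\le 2n+1$ but not the reverse inequality.

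One small improvement you do make: observing that $\mathbf v_{0,2},\dots,\mathbf v_{n-1,2}$ already form a $\Z$-basis of $N$ gives simple connectivity of $X$ outright, slightly sharper than the paper's appeal to torsion fundamental group (which is all it needs after tensoring with $\Q$).
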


\begin{proof}
By~\cite[Theorem 12.1.10]{CoxLittleSchenckToric}, the fundamental group of the toric variety $X$ is a torsion subgroup. Then, by the universal coefficient theorem, $H_1(X; \Q)$ is trivial. By the long exact sequence of homology groups of the pair $(X,L)$, we then have
$$
H_2(X,L;\Q) \simeq H_2(X;\Q) \oplus H_1(L;\Q) \simeq H_2(X;\Q) \oplus \Q^{n}.
$$

To verify that $H_2(X;\Q) \simeq \Q^{n+1}$, consider a simplicialization $\widetilde{X}$ of $X$, obtained by refining the fan associated to $X$ \emph{without} adding any $1$-cones. It comes with the toric morphism $\Pi \colon \widetilde{X} \to X$, see \cite[Proposition 11.1.7]{CoxLittleSchenckToric}. In other words, $X$ is covered by the toric orbifold $\widetilde{X}$. Although the toric morphism $\Pi$ does not preserve symplectic structures, the inverse image $\Pi^{-1}(L) \eqqcolon \widetilde{L}$ is a Lagrangian toric fiber as $\Pi$ sends the algebraic torus of $\widetilde{X}$ to that of $X$. Moreover, the basic classes of $(\widetilde{X}, \widetilde{L})$ are in one-to-one correspondence to those of $(X, L)$ via the induced map $\Pi_*$.

Note that $H_2(X;\Q)$ is generated by torus invariant two cycles of $X$. They can be lifted to two cycles in $\widetilde{X}$. Each two cycle in the toric orbifold $\widetilde{X}$ can be expressed as a non-negative rational linear combination of basic classes of $H_2( \widetilde{X}, \widetilde{L}; \Q)$ by \cite[Section 9.1]{ChoPoddar}. Every two cycle of $X$ can be also expressed as a non-negative rational linear combination of basic classes of $H_2( {X}, {L}; \Q)$ by taking the image under $\Pi_*$. In other words, the linear map  
\begin{equation}\label{equ_compositionqm}
\pi_\beta \colon \Q^m \to H_2(X,L;\Q) \quad \mbox{ $\overrightarrow{e_{i,j}} \mapsto \beta_{i,j}$}
\end{equation}
is surjective.  If $n \geq 3$, this map is \emph{not} injective so that the map has a non-trivial kernel. As the kernel contains the subspace generated by
\begin{equation}\label{equ_blacksquarbetajjcbs}
\blacksquare_i \coloneqq {\beta_{i,0}} + {\beta_{i,1}} -  {\beta_{i+1,0}} - {\beta_{i,2}}  \quad \mbox{for $i = 2, 3, \cdots, n-1$},
\end{equation} 
we then have
\begin{equation}\label{equletdim}
\dim_\Q H_2(X,L;\Q) \leq m - (n-2) = 2n + 1 \mbox{ and hence } \dim_\Q H_2(X; \Q) \leq n+1.
\end{equation}
Thus, it remains to show that $\dim_\Q H_2(X; \Q) = n+1$.

To show that the dimension of $H_2(X,L;\Q)$ is exactly ${2n+1}$, setting $\beta_{1,1} \coloneqq (\beta_{2,0} + \beta_{1,2})/2$ and $\beta_{n,2} \coloneqq (\beta_{n,1} + \beta_{n,0})/ 2$, let 
\begin{equation}\label{equclassholosphere}
\square_i \coloneqq \beta_{i,1} + \beta_{i-1,2}  \quad  \mbox{ for $i = 1, 2, \cdots, n+1$}.
\end{equation}
For $i = 2, 3, \cdots, n$, consider Hamiltonian $\bS^1$-actions corresponding to $\textbf{v}_{i,1} = - \textbf{v}_{i-1,2}$ and to $\textbf{v}_{i-1,2}$, respectively. We take an $\bS^1$-orbit in $L$ and then flow the orbit into two directions via the gradient flows obtained by the moment maps for the above actions. It generates a gradient sphere intersecting the inverse image of a facet given by $\ell_{i,1}(\mathbf{u}) = 0$ and that by $\ell_{i-1,2}(\mathbf{u}) = 0$. In particular, the class is the sum of basic classes. When $i = 1$ or $n+1$, the classes $2 \square_1$ and $2 \square_{n+1}$ can be expressed as follows$\colon$
$$
\begin{cases}
2 \square_1 = 2 \beta_{1,1} + 2 \beta_{0,2} = \beta_{2,0} + \beta_{1,2} + 2 \beta_{0,2} \\
2 \square_{n+1} = 2 \beta_{n+1,1} + 2 \beta_{n+1,2} = \beta_{n,1} + \beta_{n,0} + 2 \beta_{n+1,2}.
\end{cases}
$$
Thus, it can be represented as the sum of basic classes. By translating a single facet of $\Delta_\mathbf{r}$ slightly, we obtain the same toric variety as the translating does not change the normal fan structure. Also, by adjusting $\mathbf{r}$, one can control the area of the above spherical classes linearly independently. Therefore, 
\begin{equation}\label{equgetdim}
\dim_\Q H_2(X; \Q) \geq n+1.
\end{equation}
In conclusion, by~\eqref{equletdim} and~\eqref{equgetdim}, we have shown that $H_2(X; \Q) \simeq \Q^{n+1}$.
\end{proof}

In sum, we have the following commutative diagram$\colon$
\begin{equation}
\xymatrix{
0 \ar[r] & {\ker \pi_\Q} \simeq \Q^{m-n} \ar[r] \ar[d]^{\pi_\beta |_\mathrm{ker}} & \Q^m \ar[r]^{\pi_\Q \quad} \ar[d]^{\pi_\beta} & N \otimes \Q \simeq \Q^n \ar[r] \ar[d] & 0 \\
0 \ar[r] & {H_2(X;\Q)} \simeq \Q^{n+1} \ar[r] & H_2(X,L; \Q) \simeq \Q^{2n+1} \ar[r]_{\partial } & H_1(L; \Q) \simeq \Q^n \ar[r] & 0
} 
\end{equation}
where $\pi_\Q \colon \Q^m \to N \otimes \Q$ is determined by $\overrightarrow{e} \mapsto - \mathbf{v}_{\overrightarrow{e}}$. Moreover,
$$
H_2(X, L; \Q) \simeq \Q \langle \beta_{i,j} \mid \overrightarrow{e_{i,j}} \in \Gamma^\vee_{\mathbf{r}}  \rangle / \Q \langle \blacksquare_i \mid i = 2, 3, \cdots, n-1\rangle.
$$

\section{Classification of effective disk classes}\label{sec_classificationofeffec}

The aim of this section is to classify homotopy classes that can be realized as a holomorphic disk bounded by a monotone Lagrangian torus fiber of the caterpillar bending system on the polygon space. Each Laurent monomial corresponds to a lattice point in $N$ and the lattice points that can be realized as a holomorphic disk of Maslov index two will be classified.

\subsection{Toric degenerations of caterpillar bending systems}

We relate the toric completely integrable system on the toric variety of the caterpillar bending polytope in Section~\ref{subsectionthetoricvarcater} with the caterpillar bending system on the polygon space via a toric degeneration.  

We begin by reviewing a notion of a toric degeneration of completely integrable system in \cite{NishinouNoharaUeda}. Consider a smooth projective variety $X$ equipped with a K\"{a}hler form $\omega$. Suppose that $\Phi \colon X \to \R^N$ be a completely integrable system on $(X, \omega)$ such that $\Phi(X)$ is a polytope $\Delta$. A \emph{toric degeneration} of the system $\Phi$ consists of the following data$\colon$ 
\begin{enumerate}
\item a flat family $\frak{X} \coloneqq \{ {X}^\tau \}_{\tau \in \C}$ of algebraic varieties over $\C$,
\item a K\"{a}hler form $\omega_\frak{X}$ on $\frak{X}$,
\item a piecewise smooth path $\gamma \colon [0,1] \to \C$ from $\gamma(0) = 0$ to $\gamma(1) = 1$  such that
\begin{itemize}
\item for each $t \in [0,1]$, $({X}^t, \omega^t) \coloneqq ({X}^{\gamma(t)}, \omega_\frak{X} |_{{X}^{\gamma(t)}})$ is a K\"{a}hler variety,
\item $ ({X}^1, \omega^1)$ is isomorphic to the given projective manifold $({X}, \omega)$,
\item $({X}^0, \omega^0)$ is the toric variety associated to the polytope $\Delta$,
\end{itemize}
\item a family $\Phi^{t} \colon {X}^t \to \R^N$ of completely integrable systems such that
\begin{itemize}
\item for each $t$, the image of $X^t$ under the map $\Phi^t$ is $\Delta$,
\item $\Phi^1$ agrees with $\Phi$ under the isomorphism $X \simeq X^1$,
\item $\Phi^0$ is the moment for the toric action on $X^0$ with the moment polytope $\Delta$.
\end{itemize}
\item a map $\psi_{t,s} \colon {{X}}^t \to {{X}}^s$ satisfying $\Phi^t = \Phi^s \circ \psi_{t,s}$.
\end{enumerate}

Based on toric degenerations of partial flag manifolds constructed in \cite{GonciuleaLakshmibai, KoganMiller}, Nishinou--Nohara--Ueda constructed a toric degeneration of Gelfand--Zeitlin system.

\begin{theorem}[Theorem 1.2 in \cite{NishinouNoharaUeda}]\label{theorem_toricdegegz} 
The Gelfand--Zeitlin system on $\mcal{O}_{\lambda}$ admits a toric degeneration. In particular, for each pair $(t,s)$ of real numbers in $[0,1]$ with $t > s$, we have the following commutative diagram
\begin{equation}\label{equ_commutativitiyofcompletelyinteGZ}
\xymatrix{
{{X}}^t \ar[rr]^{\psi_{t,s}} \ar[rd]_{\Phi^t} & & {{X}}^s \ar[ld]^{\Phi^s}  \\
& {\Delta}_{\lambda} &  }
\end{equation} 
\begin{itemize}
\item $\Phi^1$ agrees with the Gelfand--Zeitlin system $\Phi_{\lambda}$ in~\eqref{equ_GZsystem} via $X^1 \simeq \mcal{O}_{\lambda}$,
\item $\Delta_{\lambda}$ is the Gelfand--Zeitlin polytope,
\item $\Phi^0$ is the moment map associated to ${\Delta}_{\lambda}$ for the torus action on ${X}^0$,
\item $\psi_{t,s}$ is a globally continuous map and is a densely symplectomorphism.
\end{itemize}
\end{theorem}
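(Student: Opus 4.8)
The plan is to first produce a flat degeneration of $\mcal{O}_\lambda$ as a polarized projective variety, and then to upgrade it to a degeneration of integrable systems by running the gradient-Hamiltonian flow and checking that this flow carries the toric moment map on the central fiber to the Gelfand--Zeitlin system on $\mcal{O}_\lambda$. For the first step, I would realize $\mcal{O}_\lambda\simeq\mathrm{Gr}(2,\C^{n+3})$ with its KKS polarization inside a projective space using a Khovanskii (SAGBI) basis of the Plücker algebra adapted to the chain of leading principal submatrices that defines the Gelfand--Zeitlin functions; the degenerations of partial flag varieties of Gonciulea--Lakshmibai and Kogan--Miller provide a $\C^*$-action on the ambient space under which the defining ideal specializes flatly to the toric ideal of the Gelfand--Zeitlin polytope $\Delta_\lambda$. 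This yields a flat family $\mathfrak{X}=\{X^\tau\}_{\tau\in\C}\subset\bP^N\times\C$ with generic fiber $\mcal{O}_\lambda$ and special fiber $X^0$ the projective toric variety of $\Delta_\lambda$, together with a relative Kähler form $\omega_\mathfrak{X}$ obtained by restricting and rescaling a Fubini--Study form so that $\omega^1=\omega_\lambda$ and $\omega^0$ is a toric Kähler form with moment polytope $\Delta_\lambda$.

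The second step is Ruan's gradient-Hamiltonian construction. Let $\pi\colon\mathfrak{X}\to\C$ be the projection, choose a compatible almost complex structure, and let $V$ be the vector field defined away from $\mathrm{Crit}(\pi)$ by $\iota_V\omega_\mathfrak{X}=-d(\mathrm{Re}\,\pi)$. Since each fiber $X^\tau$ is projective, hence compact, and $\mathrm{Crit}(\pi)$ is a proper analytic subset, the downward flow of $V$ along the path $\gamma$ is complete on a dense open subset and, for $t>s$ in $[0,1]$, gives a map $\psi_{t,s}\colon X^t\to X^s$. Where defined it is a symplectomorphism, and it extends continuously across the singular locus, so it is globally continuous and a symplectomorphism on a dense open set. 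In particular $\psi_{1,0}\colon\mcal{O}_\lambda\to X^0$ collapses the excluded locus onto $X^0$, and the cocycle property $\psi_{t,0}=\psi_{s,0}\circ\psi_{t,s}$ holds on the locus where all maps are defined.

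The third and main step is to identify $\Phi^t\coloneqq\Phi^0\circ\psi_{t,0}$ at $t=1$ with the Gelfand--Zeitlin system $\Phi_\lambda$. The key point is that the functions $\Phi_{i,j}$ (the ordered eigenvalues of the leading principal submatrices) extend to functions on all of $\mathfrak{X}$ that are constant along the fibers of $\pi$ and Poisson-commute with $\mathrm{Re}\,\pi$; equivalently, the torus $(\bS^1)^{2(n+1)}$ of Gelfand--Zeitlin Hamiltonian circle actions, defined on the dense locus where $\Phi_\lambda$ is smooth, extends to a Hamiltonian action on the smooth locus of $\mathfrak{X}$ commuting with the $\C^*$-action of the degeneration and restricting on $X^0$ to the residual toric torus. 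This is precisely what the adapted choice of Khovanskii basis in step one buys. Granting it, the gradient-Hamiltonian flow preserves the level sets of the moment map of this torus action, so it transports $\Phi_\lambda$ on $X^1$ to the toric moment map on $X^0$; hence $\Phi^0\circ\psi_{1,0}=\Phi_\lambda$, each $\Phi^t$ has image $\Delta_\lambda$, and the commutativity $\Phi^t=\Phi^s\circ\psi_{t,s}$ follows from the cocycle property.

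I expect the third step to be the real obstacle: one must verify that the Gelfand--Zeitlin Hamiltonians genuinely extend over the degeneration and are preserved by the gradient-Hamiltonian flow, while controlling the analysis near $\mathrm{Crit}(\pi)$ and near the non-smooth locus of $\Phi_\lambda$ (where eigenvalues collide). Concretely this means choosing the degeneration so that its $\C^*$-weights are compatible with the Gelfand--Zeitlin filtration on the coordinate ring, and then a careful continuity argument to propagate the identities from the open dense locus to all of $\mcal{O}_\lambda$. Alternatively, one could phrase the whole argument in the Harada--Kaveh framework of toric degenerations from Newton--Okounkov bodies, where $\Delta_\lambda$ is the Okounkov body of the valuation given by the submatrix filtration; the content of the obstacle is the same, namely matching the integrable system rather than just the variety.
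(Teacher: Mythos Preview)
The paper does not prove this statement at all: it is quoted verbatim as Theorem~1.2 of Nishinou--Nohara--Ueda \cite{NishinouNoharaUeda}, with only the one-line remark that the construction is ``based on toric degenerations of partial flag manifolds constructed in \cite{GonciuleaLakshmibai, KoganMiller}.'' Your outline is in fact a reasonable sketch of the argument in \cite{NishinouNoharaUeda} itself: the flat family comes from a SAGBI/Gonciulea--Lakshmibai--Kogan--Miller degeneration, the maps $\psi_{t,s}$ are produced by the gradient-Hamiltonian flow, and the identification $\Phi^0\circ\psi_{1,0}=\Phi_\lambda$ is the substantive step. So you are not reproducing the paper's proof (there is none), but rather reconstructing the cited source's proof, and your identification of the third step as the genuine obstacle is accurate.
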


The toric degeneration of~\eqref{equ_commutativitiyofcompletelyinteGZ} is ${T}_{\mathrm{U}(n+3)}$ invariant. It induces the toric degeneration of the caterpillar bending system on $\mcal{M}_\mathbf{r}$ by Theorem~\ref{Proposition_HK} and~\ref{proposition_NNUgzbe} as in \cite{NoharaUedaGrasPoly}. By abuse of notation, we employ the same notations for the toric degeneration~\eqref{equ_commutativitiyofcompletelyinteGZ} to denote the toric degeneration  of the caterpillar bending system on $\mcal{M}_\mathbf{r}$ in~\eqref{equ_commutativitiyofcompletelyinte}.

\begin{corollary}\label{theorem_toricdegepolygon}
The caterpillar bending system on $\mcal{M}_\mathbf{r}$ admits a toric degeneration. In particular, for each pair $(t,s)$ of real numbers in $[0,1]$ with $t > s$, we have the following commutative diagram
\begin{equation}\label{equ_commutativitiyofcompletelyinte}
\xymatrix{
{{X}}^t \ar[rr]^{\psi_{t,s}} \ar[rd]_{\Phi^t} & & {{X}}^s \ar[ld]^{\Phi^s}  \\
& {\Delta}_{\mathbf{r}} &  }
\end{equation} 
\begin{itemize}
\item $\Phi^1$ agrees with the caterpillar bending system $\Phi_{\mathbf{r}}$ in~\eqref{equ_caterpillarbendingshifted} via $X^1 \simeq {T} \bbslash_{2\mathbf{r}} \mcal{O}_{\lambda} $,
\item $\Delta_\mathbf{r}$ is the caterpillar bending polytope.
\item $\Phi^0$ is the moment map associated to ${\Delta}_{\mathbf{r}}$ for the torus action on ${X}^0$.
\item $\psi_{t,s}$ is a globally continuous map and is a densely symplectomorphism.
\end{itemize}
\end{corollary}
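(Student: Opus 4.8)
The statement is a corollary of Theorem~\ref{theorem_toricdegegz} together with the symplectic reduction picture developed in Theorems~\ref{Proposition_HK} and~\ref{proposition_NNUgzbe}. The strategy is to push the toric degeneration of the Gelfand--Zeitlin system on $\mcal{O}_\lambda$ through the symplectic quotient by the residual torus $T$ (the quotient of $T_{\mathrm{U}(n+3)}$ by the diagonal $\bS^1$). The key observation is that the entire family $\frak{X} = \{X^\tau\}_{\tau\in\C}$, the K\"ahler form $\omega_\frak{X}$, the integrable systems $\Phi^t$, and the maps $\psi_{t,s}$ appearing in Theorem~\ref{theorem_toricdegegz} are all $T_{\mathrm{U}(n+3)}$-equivariant: this is built into the Gonciulea--Lakshmibai / Kogan--Miller construction, since the degenerating one-parameter subgroups used there commute with the maximal torus. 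Hence one may perform a fiberwise symplectic (and GIT) reduction at the level $2\mathbf{r}$.

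First I would record that $T$ acts on each $X^t$ in a Hamiltonian fashion with moment map given by the components of $\Phi^t$ coming from the torus factors (equivalently, by the linear combinations of GZ components $\Phi_{i,1}+\Phi_{i-1,2}-\Phi_{i-1,1}-\Phi_{i-2,2}$ as in~\eqref{equ_components}), because $\Phi^t$ restricts to a toric moment map on $X^0$ and agrees with $\Phi_\lambda$ on $X^1$. Second, I would form $\widehat{X}^\tau \coloneqq (X^\tau)\bbslash_{2\mathbf{r}} T$ for the reduced space over each $\tau$, with the reduced K\"ahler form $\widehat\omega^\tau$; flatness of the reduced family $\widehat{\frak{X}} \to \C$ follows from flatness of $\frak{X}$ together with the fact that GIT quotients commute with flat base change (the generic $\mathbf{r}$ hypothesis guarantees $2\mathbf{r}$ is a regular value for all $t$, so the quotients are geometric and the dimensions are constant). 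By Theorem~\ref{Proposition_HK}, $\widehat{X}^1 \simeq \mcal{M}_\mathbf{r}$ symplectomorphically, and since $T_{\mathrm{U}(n+3)}$-reducing the GZ toric variety $X^0$ associated to $\Delta_\lambda$ yields exactly the toric variety associated to the fiber polytope $\Delta_\mathbf{r}$ (this is the content of~\eqref{equation_polypattern}--\eqref{equ_elli0i1i2} and Theorem~\ref{proposition_NNUgzbe}, using the affine identification $u_{j,2}=|\mathbf{r}|_{j+1}-u_j$), we get $\widehat{X}^0 \simeq X^0_{\mathrm{cat}}$, the toric variety of $\Delta_\mathbf{r}$. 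Third, the remaining GZ components $\Phi_{1,2},\dots,\Phi_{n,2}$ descend to integrable systems $\widehat\Phi^t$ on $\widehat{X}^t$ with image $\Delta_\mathbf{r}$ (up to the stated affine change), and the $T_{\mathrm{U}(n+3)}$-equivariant maps $\psi_{t,s}$ descend to maps $\widehat\psi_{t,s}\colon \widehat{X}^t\to\widehat{X}^s$ satisfying $\widehat\Phi^t = \widehat\Phi^s\circ\widehat\psi_{t,s}$; global continuity and the dense-symplectomorphism property survive the quotient because $\psi_{t,s}$ has these properties $T_{\mathrm{U}(n+3)}$-equivariantly and the reduction is geometric. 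Composing with the symplectomorphism $\Upsilon$ of Theorem~\ref{proposition_NNUgzbe} identifies $\widehat\Phi^1$ with the caterpillar bending system $\Phi_\mathbf{r}$, which gives all the asserted properties of diagram~\eqref{equ_commutativitiyofcompletelyinte}.

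The main obstacle is the verification that the degeneration family and the maps $\psi_{t,s}$ are genuinely $T_{\mathrm{U}(n+3)}$-equivariant and that reduction at the fixed level $2\mathbf{r}$ behaves well in the family — in particular, that $2\mathbf{r}$ remains a regular value of the $T$-moment map on every $X^t$ (so that no new singularities are introduced in the reduced family) and that the reduced K\"ahler structures fit into a global K\"ahler form $\omega_{\widehat{\frak{X}}}$. For the first point one invokes that the generic condition on $\mathbf{r}$ forces the $T$-action on the relevant level set to be locally free uniformly in $t$, which is preserved under the continuous family; for the second, one uses that $\omega_\frak{X}$ is $T$-invariant so the reduction-in-stages/Kirwan--Kempf--Ness machinery applies fiberwise and glues. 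Granting Theorem~\ref{theorem_toricdegegz} and the Hausmann--Knutson identification, these are checks of compatibility rather than new phenomena, which is why the statement is phrased as a corollary; indeed this is precisely the reduction argument carried out by Nohara--Ueda in \cite{NoharaUedaGrasPoly}, to which the proof can ultimately defer.
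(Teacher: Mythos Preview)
Your argument is correct and follows essentially the same route as the paper: the paragraph preceding the corollary simply notes that the toric degeneration of Theorem~\ref{theorem_toricdegegz} is $T_{\mathrm{U}(n+3)}$-invariant and then invokes Theorems~\ref{Proposition_HK} and~\ref{proposition_NNUgzbe} together with \cite{NoharaUedaGrasPoly} to obtain the reduced degeneration. You have spelled out in more detail the compatibility checks (regularity of the level, flatness of the reduced family, descent of $\psi_{t,s}$) that the paper leaves implicit by deferring to Nohara--Ueda, but the underlying idea is identical.
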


\subsection{Computation of homotopy groups}\label{sec_computationofhomotopygroups}

Fix an equilateral and generic tuple $\mathbf{r}$ so that the polygon space $\mathcal{M}_ \mathbf{r}$ associated to $\mathbf{r}$ is a monotone symplectic manifold. Recall that $\mcal{M}_\mathbf{r}$ is isomorphic to ${T} \bbslash_{2\mathbf{r}} \mcal{O}_{\lambda}$. Let $L$ be any torus fiber of the caterpillar bending system $\Phi_\mathbf{r}$.

We first compute the relative homotopy group of the pair $(\mathcal{M}_ \mathbf{r}, L)$.

\begin{lemma}\label{lemma_secondhomotopygpz2n3}
If a tuple $\mathbf{r}$ is equilateral and generic, then the second homotopy group of the pair $\left( \mathcal{M}_\mathbf{r}, L \right)$ is isomorphic to $\Z^{2n+3}$, i.e., 
\begin{equation}\label{equ_pi2x1l1}
\pi_2 \left( \mathcal{M}_\mathbf{r}, L \right) \simeq \pi_1\left(L \right) \oplus \pi_2 \left(  \mathcal{M}_\mathbf{r} \right) \simeq \Z^n \oplus \Z^{n+3}.
\end{equation}
\end{lemma}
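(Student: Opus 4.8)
The goal is to compute $\pi_2(\mathcal{M}_\mathbf{r}, L)$ where $L$ is a caterpillar bending torus fiber. The strategy is to transport the computation from the toric degeneration. By Corollary~\ref{theorem_toricdegepolygon}, the caterpillar bending system on $\mathcal{M}_\mathbf{r}$ degenerates to the toric variety $X^0 = X$ associated to the caterpillar bending polytope $\Delta_\mathbf{r}$, with a map $\psi = \psi_{1,0}\colon X^1 \simeq \mathcal{M}_\mathbf{r} \to X^0$ that is a globally continuous map, a dense symplectomorphism, and intertwines the integrable systems. First I would use $\psi$ to identify $L$ with a Lagrangian toric fiber $L^0$ of $X^0$: since $\psi$ carries the open dense algebraic torus of $\mathcal{M}_\mathbf{r}$ (where the system is toric) isomorphically to that of $X^0$, and both fibers over an interior point are honest Lagrangian tori, $\psi$ restricts to a diffeomorphism near $L$, so $L \cong L^0 \cong T^n$. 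Then the plan is to show $\psi$ induces an isomorphism $\pi_2(\mathcal{M}_\mathbf{r}, L) \to \pi_2(X^0, L^0) = H_2(X^0, L^0;\Z)$, or at least to compute both sides and match them.

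The cleanest route is via the long exact sequence of the pair. We have $\pi_1(L) \simeq \Z^n$ from $L \cong T^n$. For $\pi_2(\mathcal{M}_\mathbf{r})$, I would argue $\pi_2(\mathcal{M}_\mathbf{r}) \simeq \pi_2(X^0) \simeq \mathbb{Z}^{n+3}$: the polygon space $\mathcal{M}_\mathbf{r}$ is simply connected (it is a smooth projective rational variety, being a GIT quotient of $(\mathbb{CP}^1)^{n+3}$; alternatively $H_1 = 0$ and one checks simple-connectivity directly), and $\pi_2$ of a simply connected space equals $H_2$, which is a topological/deformation invariant under the flat degeneration — concretely $H_2(\mathcal{M}_\mathbf{r};\Z) \simeq H_2(X^0;\Z)$ because $X^0$ is a small (partial) resolution situation or because the degeneration is through Kähler varieties with $\psi$ a dense symplectomorphism, so Gromov–Witten/Betti numbers agree. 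Here I should be careful: $X^0$ is singular, so I would instead pass to the simplicialization $\widetilde{X}^0$ as in the proof of Proposition~\ref{proposition_pi2xl}, where $H_2$ is generated by basic classes, and transport the rank count; Proposition~\ref{proposition_pi2xl} gives $\dim_\Q H_2(X^0, L^0;\Q) = 2n+1$ and $\dim_\Q H_2(X^0;\Q) = n+1$. But the Lemma claims rank $2n+3$, with $\pi_2(\mathcal{M}_\mathbf{r}) \simeq \Z^{n+3}$ — so the toric degeneration $X^0$ of $\Delta_\mathbf{r}$ has \emph{smaller} $b_2$ than $\mathcal{M}_\mathbf{r}$ itself. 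This discrepancy (the toric fiber only sees $2n+1$ while the genuine polygon-space fiber sees $2n+3$) is exactly the point of the Maslov-index-two vanishing cycles mentioned in the introduction, and it must be accounted for.

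Therefore the correct approach is \emph{not} to use $X^0$ for $\pi_2$ but to compute $\pi_2(\mathcal{M}_\mathbf{r})$ directly. I would use the Gelfand–MacPherson / Hausmann–Knutson picture: $\mathcal{M}_\mathbf{r} = \mathrm{Gr}(2,\C^{n+3}) /\!\!/ T$ where $T = T_{\mathrm{U}(n+3)}/\bS^1$ has rank $n+2$. From the principal-bundle-type fibration (on the semistable locus with finite stabilizers, rationally a $T$-bundle over $\mathcal{M}_\mathbf{r}$), $H^2(\mathcal{M}_\mathbf{r};\Q) \cong H^2_T(\mathrm{Gr}(2,\C^{n+3})^{ss};\Q)$, and Kirwan surjectivity plus the known $H^*_T(\mathrm{Gr}(2,\C^{n+3}))$ gives $b_2(\mathcal{M}_\mathbf{r}) = b_2(\mathrm{Gr}) + \operatorname{rk} T - 1 = 1 + (n+2) = n+3$ (for generic $\mathbf{r}$, all $T$-weights on the normal bundle to the unstable strata are nonzero, so no further relations). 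Simple connectivity follows since $\mathrm{Gr}$ is simply connected and $T$ is connected acting with connected stabilizers on the relevant locus. Hence $\pi_2(\mathcal{M}_\mathbf{r}) \cong H_2(\mathcal{M}_\mathbf{r};\Z) \cong \Z^{n+3}$. Finally, for the long exact sequence
$$
\pi_2(L) \to \pi_2(\mathcal{M}_\mathbf{r}) \to \pi_2(\mathcal{M}_\mathbf{r}, L) \xrightarrow{\partial} \pi_1(L) \to \pi_1(\mathcal{M}_\mathbf{r}) = 0,
$$
I would show $\partial$ is surjective (each $\bS^1$-orbit loop in $L$ generated by a component of the bending system bounds a gradient disk in $\mathcal{M}_\mathbf{r}$ — constructed via Proposition~\ref{proposition_maslovformula}, or pulled back through $\psi$ from a basic disk in $X^0$, since $\psi$ is a dense symplectomorphism and the gradient disks for the edge-$\bS^1$-actions lie generically in the algebraic torus), and that $\pi_2(L) = 0$, so the sequence splits into $0 \to \pi_2(\mathcal{M}_\mathbf{r}) \to \pi_2(\mathcal{M}_\mathbf{r},L) \to \pi_1(L) \to 0$; since $\pi_1(L) \cong \Z^n$ is free the sequence splits, yielding $\pi_2(\mathcal{M}_\mathbf{r}, L) \cong \Z^{n+3} \oplus \Z^n = \Z^{2n+3}$.

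The main obstacle is reconciling the rank $2n+3$ with the toric model, which only produces $2n+1$: the extra rank $2$ comes from the two "collapsed" pairs of facets $(1,0)\!=\!(1,1)$ and $(n,2)\!=\!(n+1,0)$ identified in Proposition~\ref{proposition_monotone}, i.e. from the $\bS^2$-vanishing-cycle directions that the toric degeneration does not see as separate homology. Concretely I expect the delicate step to be justifying that $\psi^{-1}$ of a basic gradient sphere in the toric $X^0$ at one of these degenerate walls splits, in $\mathcal{M}_\mathbf{r}$, into \emph{two} independent classes — equivalently, that $\psi_*\colon H_2(\mathcal{M}_\mathbf{r}) \to H_2(X^0)$ has a $2$-dimensional kernel supported on these vanishing cycles. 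The Hausmann–Knutson / Kirwan computation sidesteps this by computing $b_2(\mathcal{M}_\mathbf{r})$ intrinsically, so I would organize the proof to rely on that rather than on the degeneration for the $\pi_2(\mathcal{M}_\mathbf{r})$ factor, and use the degeneration only to pin down $\pi_1(L)$ and the surjectivity of $\partial$.
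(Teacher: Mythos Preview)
Your proposal reaches the right conclusion, but by a much longer route than the paper takes, and with a couple of unnecessary detours.

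The paper's proof is essentially three lines: (i) $\mathcal{M}_\mathbf{r}$ is Fano (being a monotone symplectic reduction of a monotone Grassmannian), hence simply connected; (ii) the odd cohomology of $\mathcal{M}_\mathbf{r}$ vanishes by Klyachko and Hausmann--Knutson, so $H_2(\mathcal{M}_\mathbf{r};\Z)$ is torsion-free, and the known Poincar\'e polynomial computations for equilateral polygon spaces give $b_2 = n+3$ (the number of edges); (iii) Hurewicz plus the long exact sequence of the pair, which splits since $\pi_1(L)\cong\Z^n$ is free. No toric degeneration, no Kirwan surjectivity, no explicit gradient disks are needed here.

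Your long opening discussion of the toric degeneration is a red herring for \emph{this} lemma: you correctly diagnose that $X^0$ has $b_2 = n+1$, not $n+3$, so $\psi_{1,0,*}$ cannot be an isomorphism on $H_2$, but this observation is relevant only for the later analysis (the ``missing'' rank $2$ is exactly the content of the vanishing spheres $\alpha_1,\alpha_n$ in Section~\ref{sec_computationofhomotopygroups}), not for the present computation. Your eventual pivot to computing $b_2(\mathcal{M}_\mathbf{r})$ intrinsically is the right move; the Kirwan/equivariant-cohomology argument you sketch would work, though note your displayed formula ``$b_2(\mathrm{Gr}) + \operatorname{rk}T - 1 = 1 + (n+2) = n+3$'' is internally inconsistent (the left side as written equals $n+2$); the correct statement is that $H^2_T(\mathrm{Gr}) \cong H^2(\mathrm{Gr}) \oplus H^2(BT)$ has rank $1 + (n+2) = n+3$, with no ``$-1$''. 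Finally, you do not need to construct gradient disks to show $\partial$ is surjective: this is immediate from $\pi_1(\mathcal{M}_\mathbf{r}) = 0$ and exactness.
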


\begin{proof}
The symplectic reduction of the Grassmannian $\mathrm{Gr}(2, \C^{n + 3}) \simeq \mcal{O}_{\lambda}$, which is a monotone symplectic manifold, at the equilateral level is also monotone. The polygon space $\mathcal{M}_\mathbf{r}$ is monotone and hence it is a Fano manifold. Since every Fano manifold is simply connected, the polygon space $\mathcal{M}_\mathbf{r}$ is simply connected. Then the long exact sequence of the pair $\left( \mathcal{M}_\mathbf{r}, L \right)$ yields that $\pi_2 \left( \mathcal{M}_\mathbf{r}, L \right) \simeq \pi_1\left(L \right) \oplus \pi_2 \left( \mathcal{M}_\mathbf{r} \right)$. 

The cohomology groups $H^\bullet \left( \mathcal{M}_\mathbf{r} ; \Z \right)$ vanish for all odd degrees, see \cite[Corollary 2.2.2]{Klyachko} \cite[Lemma 4.1]{HausmannKnutson2} for instance. In particular, by the universal coefficient theorem, the vanishing theorem yields that $H_2 (\mathcal{M}_\mathbf{r} ; \Z)$ has no torsion. By the calculation of Poincar\'{e} polynomials in \cite[Corollary 4.3]{HausmannKnutson2} and \cite[Theorem 2.2.4]{Klyachko}, in the equilateral case, the rank of the second cohomology group of the polygon space $\mathcal{M}_\mathbf{r}$ is equal to the number of edges, $n+3$. From the Poincar\'{e} duality and the Hurewicz theorem, \eqref{equ_pi2x1l1} then follows. 
\end{proof}

To analyze the factors of~\eqref{equ_pi2x1l1}, we produce gradient disks as in Section~\ref{subsectionthetoricvarcater} to figure out explicit generators for $\pi_2 \left( \mathcal{M}_\mathbf{r}, L \right)$. By using the caterpillar bending system $\Phi_\mathbf{r}$ in~\eqref{equ_caterpillarbendingshifted} and the defining equations for the caterpillar bending polytope in~\eqref{equ_definingequellijpoly}, we define
$$
\widetilde{\ell}_{i,j} \colon \mcal{M}_\mathbf{r} \to \R,  \quad \widetilde{\ell}_{i,j} \coloneqq \ell_{i,j} \circ \Phi_\mathbf{r}.
$$

Although each component $\widetilde{\ell}_{i,j}$ is \emph{not} smooth on the whole space $\mcal{M}_\mathbf{r}$ in general, it is smooth on a dense open subset of $\mcal{M}_\mathbf{r}$. On its smooth locus, moreover, the function $\widetilde{\ell}_{i,j}$ is a moment map for a Hamiltonian $\bS^1$-action, see \cite[Section 4]{KapovichMillson}. Let $f_{i,j}$ be the facet of $\Delta_\mathbf{r}$ contained in the hyperplane given by $\ell_{i,j} ({\mathbf{u}}) = 0$ in $\R^n$ by Proposition~\ref{proposition_monotone}. Let ${V}_{i,j}$ be the inverse image of the relative interior of $f_{i,j}$ under the system $\Phi_{\mathbf{r}}$. To confirm that the family of $\bS^1$-orbits obtained by flowing an $\bS^1$-orbit contained in $L^1$ generates a \emph{disk}, we need the following lemma.

\begin{lemma}\label{lemma_smoothstratavij}
The map $\widetilde{\ell}_{i,j}$ is smooth on each point of ${V}_{i,j}$. 
\end{lemma}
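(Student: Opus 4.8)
The statement to prove is that the composite function $\widetilde{\ell}_{i,j} = \ell_{i,j} \circ \Phi_{\mathbf{r}}$ is smooth at every point of the open stratum $V_{i,j}$, the preimage under $\Phi_{\mathbf{r}}$ of the relative interior of the facet $f_{i,j}$. The plan is to reduce this to a statement about the Gelfand--Zeitlin system on $\mcal{O}_\lambda$ via Theorems~\ref{Proposition_HK} and~\ref{proposition_NNUgzbe}, and then to exploit the description of the fibers given by Bouloc's Theorem~\ref{theorem_Bouloc} together with the eigenvalue interpretation of the GZ components.

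First I would recall that, under the symplectomorphism $\Upsilon \colon \mcal{M}_\mathbf{r} \to T\bbslash_{2\mathbf{r}}\mcal{O}_\lambda$ and the affine change of coordinates $u_{j,2} = |\mathbf{r}|_{j+1} - u_j$ of Theorem~\ref{proposition_NNUgzbe}, each $\widetilde{\ell}_{i,j}$ is, up to sign and an additive constant, one of the induced GZ components $\Phi_{i,j}$ (equivalently $\Psi_{i,j}$) or a difference/sum of two consecutive ones. Concretely, $\ell_{i,0}$ corresponds to $k_{i,0} = u_{i,1} - u_{i,2}$, which on $\mcal{O}_\lambda$ is the gap between the two eigenvalues $\Phi_{i,1}$ and $\Phi_{i-1,2}$ of the principal submatrix $A^{(i)}$, while the $\ell_{i,1}$ and $\ell_{i,2}$ correspond to gaps between eigenvalues of consecutive submatrices. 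By the implicit function theorem, an eigenvalue function $\Phi_{i,1}$ (resp.\ the difference of the two eigenvalues of $A^{(i)}$) is smooth precisely where that eigenvalue is simple in $A^{(i)}$ (resp.\ where the two relevant eigenvalues do not collide), i.e.\ exactly on the locus where the corresponding inequality in~\eqref{equation_gzpattern}, hence in~\eqref{equ_halfspacesgzpoly}, is \emph{strict}. So the task becomes: show that for $[\mathbf{v}] \in V_{i,j}$, every inequality \emph{other than} $\ell_{i,j} = 0$ that bears on the smoothness of $\widetilde{\ell}_{i,j}$ is strict.

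The key step is then the following. A point in the relative interior of the facet $f_{i,j}$ satisfies $\ell_{i,j}(\mathbf{u}) = 0$ but $\ell_{i',j'}(\mathbf{u}) > 0$ for all other $(i',j')$ indexing facets of $\Delta_\mathbf{r}$; this is the defining property of the relative interior of a facet. By Proposition~\ref{proposition_monotone}, for $(i,j) \notin \{(1,0),(1,1),(n,2),(n+1,0)\}$ the hyperplane $\ell_{i,j} = 0$ does carry a facet, and for the four exceptional indices the corresponding face has codimension two but — crucially — in the equilateral case these four cases coincide in pairs ($\ell_{1,0} = \ell_{1,1}$ and $\ell_{n,2} = \ell_{n+1,0}$) and the associated geometric degeneration of the polygon (a collinearity forcing $\ell_{1,2} = \ell_{2,0} = 0$ as well, as in the proof of Proposition~\ref{proposition_monotone}) still leaves the relevant eigenvalue \emph{simple}: at such a configuration $A^{(1)}$ has the repeated eigenvalue but $\Phi_{i,j}$ near it is governed by a different, non-colliding eigenvalue. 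I would therefore check, case by case through the three families $\ell_{i,0}, \ell_{i,1}, \ell_{i,2}$ and the four boundary exceptions, that the only eigenvalue-collision that can occur on $\overline{V_{i,j}}$ relevant to $\widetilde{\ell}_{i,j}$ is the one defining $V_{i,j}$ itself, which does not obstruct smoothness of the \emph{function} $\widetilde{\ell}_{i,j}$ (a simple eigenvalue, or a simple eigenvalue-gap that is allowed to reach zero transversally, remains a smooth function of the matrix entries). Equivalently and perhaps more cleanly, I would argue directly in polygon-space terms: by Theorem~\ref{theorem_Bouloc} a representative $\mathbf{v}$ of a point of $V_{i,j}$ is a polygon in which exactly the diagonal associated to $\ell_{i,j}$ degenerates (the corresponding sub-polygon becomes a line segment or a triangle collapses), the bending Hamiltonian $\bS^1$-action generated by $\widetilde{\ell}_{i,j}$ is still well defined and smooth near such $\mathbf{v}$ because the bending axis $d_{i,j}$ varies smoothly and the length function is smooth as long as no \emph{other} diagonal among those defining $\Delta_\mathbf{r}$ simultaneously degenerates — which is exactly excluded on the relative interior of $f_{i,j}$.

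The main obstacle is the bookkeeping around the four exceptional indices $(1,0),(1,1),(n,2),(n+1,0)$ from Proposition~\ref{proposition_monotone}: there the "facet" $V_{i,j}$ sits inside a codimension-two face of $\Delta_\mathbf{r}$, so naively several inequalities vanish simultaneously on $V_{i,j}$, and one must verify that those simultaneous vanishings are forced linear coincidences ($\ell_{1,0} \equiv \ell_{1,1}$ in the equilateral case, and $\ell_{1,2} = 0$ forced by $\ell_{1,0}=0$ via the relation $u_1 = 2|\mathbf{r}|_2 - u_1$ in~\eqref{equation_polypattern}) rather than genuinely transverse degenerations that would make the relevant submatrix have a higher-multiplicity eigenvalue \emph{at the eigenvalue computing $\widetilde\ell_{i,j}$}. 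Handling this requires tracking which of the three ambient GZ inequalities $k_{1,0}, k_{1,1}, k_{0,2}$ around the top-left corner of the ladder are active and confirming that the one-dimensional eigenvalue $\Phi_{1,1}$ (or $\Phi_{1,2}$, for the $(n,2)$ end) stays simple, so that $\widetilde{\ell}_{i,j}$ is still given near $V_{i,j}$ by a smooth eigenvalue function. Once this is settled, smoothness on $V_{i,j}$ and the moment-map property needed in the sequel follow from the implicit function theorem exactly as in \cite[Section 4]{KapovichMillson} and \cite{GuilleminSternbergGC}.
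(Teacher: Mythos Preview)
Your overall strategy---lift to the Gelfand--Zeitlin system on $\mcal{O}_\lambda$ via Theorems~\ref{Proposition_HK} and~\ref{proposition_NNUgzbe}, invoke the implicit function theorem for the eigenvalue functions, and descend through the reduction---is exactly the paper's approach, and the argument is correct. The paper's proof is simply a streamlined version: for each facet $f_{i,j}$ of $\Delta_\mathbf{r}$ there is a corresponding \emph{facet} $\widehat{f}_{i,j}$ of the GZ polytope $\Delta_\lambda$ contained in $k_{i,j}(\mathbf{u})=0$; on the preimage $\widehat{V}_{i,j}$ of its relative interior the function $\widetilde{k}_{i,j}=k_{i,j}\circ\Phi_\lambda$ is smooth by \cite[Corollary~4.5]{ChoKimMONO}; restricting to the level set at $2\mathbf{r}$ and using that $T$ acts freely there (by genericity of $\mathbf{r}$) gives smoothness of $\widetilde{\ell}_{i,j}$ on $V_{i,j}$.

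The ``main obstacle'' you identify is a phantom. The four exceptional indices $(1,0),(1,1),(n,2),(n+1,0)$ of Proposition~\ref{proposition_monotone} do \emph{not} index facets of $\Delta_\mathbf{r}$---for those $(i,j)$ the set $f_{i,j}$ is not a facet, so $V_{i,j}$ is not defined and the lemma says nothing about them. For a genuine facet index $(i,j)$, every other defining inequality (including the redundant ones $\ell_{1,0}=\ell_{1,1}$ and $\ell_{n,2}=\ell_{n+1,0}$) is strictly positive on the relative interior of $f_{i,j}$, since equality would force the point into a higher-codimension face. So there is no case analysis to do at the boundary exceptions, and your polygon-space/Bouloc alternative, while plausible, is unnecessary: the direct lift to $\mcal{O}_\lambda$ already handles everything uniformly because every hyperplane $k_{i,j}=0$ carries a facet of the GZ polytope.
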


\begin{proof}
Comparing~\eqref{equation_gzpattern} with~\eqref{equation_polypattern}, for each facet $f_{i,j}$ of the caterpillar bending polytope $\Delta_\mathbf{r}$, there is a unique corresponding facet $\widehat{f}_{i,j}$ of the GZ polytope $\Delta_{\lambda}$, which is contained in $k_{i,j}(\textbf{u}) = 0$. On the inverse image $\widehat{{V}}_{i,j}$ of the relative interior of $\widehat{f}_{i,j}$ under the GZ system $\Phi_{\lambda}$, 
We define $\widetilde{k}_{i,j} \colon \mcal{O}_{\lambda} \to \R$ by $\widetilde{k}_{i,j} \coloneqq k_{i,j} \circ \Phi_\lambda$. 

It is a consequence of the implicit function theorem that $\widetilde{k}_{i,j}$ is smooth at each point of $\widehat{{V}}_{i,j}$, see \cite[Corollary 4.5]{ChoKimMONO} for details. Let $T$ be the quotient of the maximal torus $T_{\mathrm{U}(n+3)}$ of $\mathrm{U}(n+3)$ consisting of diagonal matrices by the diagonal $\bS^1$-action. The map $\widetilde{k}_{i,j}$ restricted to the level set $2 \mathbf{r}$ of the moment map of $T$ is still smooth. Since $\mathbf{r}$ is generic, the torus $T$ freely acts on this level set and the induced map $\widetilde{\ell}_{i,j}$ on each point of ${V}_{i,j}$ is also smooth.
\end{proof}

Consider the $\bS^1$-action corresponding to the lattice point $\mathbf{v}_{i,j} \in N$ defined in~\eqref{equ_normalvectorth} in the toric variety ${X}^0$ of the toric degeneration $\{ {X}^{t} \}$ in Corollary~\ref{theorem_toricdegepolygon}. Its moment map can be chosen as the composition $- \ell_{i,j} \circ \Phi^0$ where $\Phi^0 \colon {X}^0 \to \Delta_\mathbf{r}$ is the toric moment map. Let $L^1 \coloneqq L$. We denote by $L^0$ a toric fiber $\psi_{1,0}(L^1)$ where $\psi_{1,0}$ is defined in~\eqref{equ_commutativitiyofcompletelyinte}. Flow an $\bS^1$-orbit $\vartheta_{i,j}$ contained in the toric fiber $L^0$ along the gradient Hamiltonian flow generated by $- \ell_{i,j} \circ \Phi^0$. The maximal component occurs at $f_{i,j}$. Then the family of $\bS^1$-orbits forms a gradient holomorphic disk bounded by $L^0$ in ${X}^0$, which intersects the toric divisor over $f_{i,j}$ exactly once. 

We turn to the polygon space ${X}^1 \simeq \mcal{M}_\mathbf{r}$ and take the $\bS^1$-action generated by $- \widetilde{\ell}_{i,j}$ into account. By the commutativity of~\eqref{equ_commutativitiyofcompletelyinte}, $\psi_{1,0}^{-1} (\vartheta_{i,j})$ becomes an $\bS^1$-orbit contained in $L^1 \coloneqq \psi_{1,0}^{-1} (L^0)$. Let us retake a particular almost complex structure on ${X}^1$ in order to produce a gradient disk that is compatible with the constructed gradient disk bounded by $L^0$. Let ${U}^0$ be the smooth loci of the singular toric variety ${X}^0$, which contains the inverse image of the relative interior of each facet $f_{i,j}$. First, we take a  complex structure by pulling the complex structure on ${U}^0$ back to $\psi_{1,0}^{-1}({U}^0)$. Next, we extend the pull-backed complex structure into an almost complex structure on the whole space ${X}^1$. 

The maximal component of the function $- \widetilde{\ell}_{i,j}$ occurs at $f_{i,j}$. By Lemma~\ref{lemma_smoothstratavij}, the $\bS^1$-action generated by $- \widetilde{\ell}_{i,j}$ makes sense on ${V}_{i,j}$, which is a fixed component. By taking an interpolation region sufficiently small, we may assume that a gradient disk associated to $- \widetilde{\ell}_{i,j}$ is fully contained in an open set (say ${V}^1$) on which the interpolated almost complex structure agrees with the pull-backed complex structure. Again by the commutativity of the diagram~\eqref{equ_commutativitiyofcompletelyinte} and our choice of almost complex structure, the image of the gradient disk in ${X}^1$ under $\psi_{1,0}$ agrees with the one generated by $\mathbf{v}_{i,j}$ in ${X}^0$. Thus, each gradient disk is contained in ${V}^1$.

We call the class generated by a gradient disk \emph{basic}. We denote by $\alpha_{i,j}$ the class represented by the gradient disk generated by the $\bS^1$-action corresponding to $- \widetilde{\ell}_{i,j}$. 

The following lemma explores a relation on the basic classes.

\begin{lemma}\label{lemma_relationonblacksq}
Let
\begin{equation}\label{equ_blacksquarbetajjcbs2}
\blacksquare_i \coloneqq {\alpha_{i,0}} + {\alpha_{i,1}} -  {\alpha_{i+1,0}} - {\alpha_{i,2}} \,\, \mbox{ for $i = 2, 3, \cdots, n-1$}.
\end{equation}
Then the class $\blacksquare_i$ is trivial.
\end{lemma}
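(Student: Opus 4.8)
The plan is to exhibit one gradient disk that simultaneously represents $\alpha_{i,0}+\alpha_{i,1}$ and $\alpha_{i+1,0}+\alpha_{i,2}$ in $\pi_2(\mcal{M}_\mathbf{r},L)$; this forces $\blacksquare_i=0$. Note first that pushing the relation forward under $\psi_{1,0}$ only gives it in $H_2(X^0,L^0;\Q)$ (where it is the relation~\eqref{equ_blacksquarbetajjcbs} of Section~\ref{Toricgeometry}), and $(\psi_{1,0})_*$ is not injective, so a genuinely homotopy-theoretic argument in $\mcal{M}_\mathbf{r}$ is needed. The starting point is the affine identity
\[
\ell_{i,0}(\mathbf{u})+\ell_{i,1}(\mathbf{u})=\ell_{i+1,0}(\mathbf{u})+\ell_{i,2}(\mathbf{u}) \qquad (2\le i\le n-1),
\]
which one reads off directly from~\eqref{equ_definingequellijpoly}: both sides equal $w_{i+1,1}-w_{i,2}$. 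Composing with $\Phi_\mathbf{r}$ gives $\widetilde{\ell}_{i,0}+\widetilde{\ell}_{i,1}=\widetilde{\ell}_{i+1,0}+\widetilde{\ell}_{i,2}$ on $\mcal{M}_\mathbf{r}$. Since the caterpillar bending system is completely integrable, $\widetilde{\ell}_{i,0}$ and $\widetilde{\ell}_{i,1}$ Poisson-commute on the dense locus where they are smooth, so their Hamiltonian $\bS^1$-actions commute there; the diagonal circle $\rho'$ of the resulting $T^2$-action has moment map $-(\widetilde{\ell}_{i,0}+\widetilde{\ell}_{i,1})$, and by the identity above it is also the diagonal circle of the commuting pair generated by $-\widetilde{\ell}_{i+1,0}$ and $-\widetilde{\ell}_{i,2}$.

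Next I would form, exactly as in the construction of the $\alpha_{i,j}$ (using the same interpolated almost complex structure pulled back from $X^0$), the gradient disk $\alpha'$ of $\rho'$: its boundary is a $\rho'$-orbit in $L$ and it flows into the fixed component over the codimension-two face $f_{i,0}\cap f_{i,1}=f_{i+1,0}\cap f_{i,2}$ of $\Delta_\mathbf{r}$. The key claim is that $[\alpha']=\alpha_{i,0}+\alpha_{i,1}$ in $\pi_2(\mcal{M}_\mathbf{r},L)$, and likewise $[\alpha']=\alpha_{i+1,0}+\alpha_{i,2}$. For the first equality: the gradient trajectory of $-(\widetilde{\ell}_{i,0}+\widetilde{\ell}_{i,1})$ issuing from a point of the $\rho'$-orbit is homotopic, rel endpoints and within the smooth region $V^1$, to the concatenation of the gradient trajectories of $-\widetilde{\ell}_{i,0}$ and $-\widetilde{\ell}_{i,1}$, because both paths remain in the contractible ``corner'' neighborhood of $f_{i,0}\cap f_{i,1}$ in $\Delta_\mathbf{r}$; sweeping this homotopy of paths by the $T^2$-orbits turns it into a homotopy of disks realizing $[\alpha']=\alpha_{i,0}+\alpha_{i,1}$ as a class. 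Equivalently, under $\psi_{1,0}$ — which by Corollary~\ref{theorem_toricdegepolygon} and the construction sends $\alpha_{i,j}$ and $\alpha'$ to the toric gradient disks $\beta_{i,j}$ and $\beta'$ of $X^0$, diffeomorphically onto an open subset of the smooth locus $U^0$ of $X^0$ — this is the toric fact (already used for the gradient spheres $\square_i$ in Section~\ref{Toricgeometry}, cf.~\cite{ChoPoddar}) that the gradient disk of the sum of two ray generators spanning a $2$-cone represents the sum of the corresponding basic classes. The same argument with $\widetilde{\ell}_{i+1,0},\widetilde{\ell}_{i,2}$ gives $[\alpha']=\alpha_{i+1,0}+\alpha_{i,2}$.

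Granting the claim, we conclude $\alpha_{i,0}+\alpha_{i,1}=[\alpha']=\alpha_{i+1,0}+\alpha_{i,2}$, i.e.\ $\blacksquare_i=0$. I expect the only delicate point to be the bookkeeping for the homotopy in the middle paragraph: one has to guarantee that the disks $\alpha_{i,0},\alpha_{i,1},\alpha_{i+1,0},\alpha_{i,2},\alpha'$ and the connecting homotopies all lie inside the region $V^1$ on which the interpolated almost complex structure agrees with the pulled-back toric one and on which $\psi_{1,0}$ restricts to a diffeomorphism onto $U^0$. This is arranged exactly as in the construction preceding Lemma~\ref{lemma_relationonblacksq} by choosing the interpolation region of the toric degeneration small enough, so that the whole relation among gradient disks can be checked in the toric model $X^0$ and transported back to $\mcal{M}_\mathbf{r}$.
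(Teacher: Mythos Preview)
Your approach has a genuine gap, and it is exactly at the point you flag as ``the only delicate point''. The face $f_{i,0}\cap f_{i,1}=f_{i+1,0}\cap f_{i,2}$ is the face $g_i$ of codimension \emph{three} (not two): the four facet equations satisfy one linear relation, and the paper shows the fiber over its relative interior is $\mathrm{SO}(3)\times T^{n-3}$. This face lies over the singular locus of $X^0$; it is \emph{not} contained in the smooth locus $U^0$, and hence not in the region $V^1$ on which $\psi_{1,0}$ was arranged to be a diffeomorphism and on which the interpolated almost complex structure is the pulled-back toric one. Your gradient disk $\alpha'$, and the homotopies you propose, all terminate over $g_i$, so they cannot be transported to $X^0$ by $\psi_{1,0}$ as you claim. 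In fact $\psi_{1,0}$ collapses the entire $\mathrm{SO}(3)$ factor over $g_i$, which is precisely why the relation $\blacksquare_i=0$ is automatic in $H_2(X^0,L^0;\Q)$ yet has content in $\pi_2(\mcal{M}_\mathbf{r},L)$. Relatedly, the moment map $-(\widetilde{\ell}_{i,0}+\widetilde{\ell}_{i,1})=2\Phi_i-2|\mathbf{r}|_{i+1}$ is (up to a constant) $-2\Psi_i$, the negative length of the diagonal $d_i$; it fails to be smooth exactly at its maximum $\Psi_i=0$, so the gradient-disk construction for $\alpha'$ is not available in $\mcal{M}_\mathbf{r}$ either. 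The ``toric fact'' you cite about sums of basic classes is valid at a smooth corner, but here the relevant cone is the non-simplicial singular cone of $X^0$, so it does not apply.

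The paper's argument sidesteps all of this by using the topology of the fiber over $g_i$ directly: one isotopes $L$ toward the center of $g_i$, so that the four gradient disks $\alpha_{i,0},\alpha_{i,1},\alpha_{i+1,0},\alpha_{i,2}$ have arbitrarily small area and (by the monotonicity lemma) fit into a Darboux neighborhood of the fiber $\mathrm{SO}(3)\times T^{n-3}$. Since $\partial\blacksquare_i=0$, the class $\blacksquare_i$ is spherical, and $\pi_2(\mathrm{SO}(3)\times T^{n-3})=0$ forces it to vanish in a disk bundle of the cotangent, hence in $\mcal{M}_\mathbf{r}$. The vanishing of $\pi_2$ of the $\mathrm{SO}(3)$ fiber is the essential input your approach is missing.
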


\begin{proof}
Let $g_{i}$ be the face of codimension three contained in the hyperplane given by $u_i - |\mathbf{r}|_{i+1} = 0$. Namely, $g_{i}$ is the intersection of $\Delta_\mathbf{r}$ and the four hyperplanes given by $\ell_{i,0}(\mathbf{u}) = \ell_{i+1,0}(\mathbf{u}) = \ell_{i,1}(\mathbf{u}) = \ell_{i,2}(\mathbf{u})$. 
Let $\mathbf{u}_i$ be the center of the face $g_i$. By Theorem~\ref{theorem_Bouloc}, the fiber over $\mathbf{u}_i$ is a Lagrangian submanifold diffeomorphic to $\mathrm{SO}(3) \times T^{n-3}$. 

Let $[\mathbf{u}_{i}, \mathbf{u}_0]$ be the line segment from the center $\mathbf{u}_{i}$ of $g_i$ to the center $\mathbf{u}_0$ of $\Delta_\mathbf{r}$. We choose a point $\mathbf{u}$ in the interior of $[\mathbf{u}_{i}, \mathbf{u}_0]$. The choice leads to a Lagrangian isotopy from $\Phi_\mathbf{r}^{-1}(\mathbf{u}_0)$ to $\Phi_\mathbf{r}^{-1}(\mathbf{u})$ such that
\begin{equation}\label{equation_pi2mr}
\pi_2(\mcal{M}_\mathbf{r}, \Phi_\mathbf{r}^{-1}(\mathbf{u}_0)) \simeq \pi_2(\mcal{M}_\mathbf{r}, \Phi_\mathbf{r}^{-1}(\mathrm{Int}(\Delta_\mathbf{r})) \simeq \pi_2(\mcal{M}_\mathbf{r}, \Phi_\mathbf{r}^{-1}(\mathbf{u})).
\end{equation}

Note that each basic class can be represented by a gradient holomorphic disk. As the chosen point $\mathbf{u}$ gets closer to $\mathbf{u}_i$, the areas of the gradient disks in ${\alpha_{i,0}}, {\alpha_{i,1}},{\alpha_{i+1,0}}, {\alpha_{i,2}}$ converge to zero. By taking $\mathbf{u}$ sufficiently close to $\mathbf{u}_i$, we may choose a Darboux neighborhood of the fiber over $\mathbf{u}_i$ containing the four disks by the monotonicity lemma. Thus it can be embedded into $T^* (\mathrm{SO}(3) \times T^{n-3})$. The class $\blacksquare_i$ in~\eqref{equ_blacksquarbetajjcbs2} represents a spherical class because $\partial \blacksquare_i = 0$. Since $\pi_2(\mathrm{SO(3)} \times T^{n-3}) = 0$, this class $\blacksquare_i$ is trivial in a disk bundle of $T^* (\mathrm{SO}(3) \times T^{n-3})$. So is in $\mcal{M}_\mathbf{r}$.
\end{proof}

As the $1$-cones of the fan associated to the toric variety ${X}^0$ do not generate the lattice $N$, it is convenient to work over the field $\Q$ of rational numbers in order to nullify the torsion of its fundamental group. Let us consider $\pi_2 \left({X}^{t}, L^{t}\right) \otimes_\Z \Q \simeq H_2({X}^{t}, L^{t}; \Q)$ instead of $\pi_2 \left({X}^{t}, L^{t}\right)$ where $L^t \coloneqq \psi_{1,t}(L^1)$ and $\psi_{1,t}$ is given in Corollary~\ref{theorem_toricdegepolygon}. 

Let $\mcal{G}$ be the subspace of $H_2({X}^1, L^1; \Q)$ generated by the set $\{ \alpha_{i,j} \}$ of basic classes. Lemma~\ref{lemma_relationonblacksq} implies that  
\begin{equation}\label{eqjdimqmcalg}
\dim_\Q \mcal{G} \leq m - (n-2) = (3n - 1) - (n-2) = 2n+1.
\end{equation}
By Proposition~\ref{proposition_pi2xl}, $H_2({X}^0, L^0; \Q) \simeq \Q^{2n+1}$ is generated by the gradient disks. There is a natural one-to-one correspondence between the basic disks in ${X}^1$ and the basic disks in ${X}^0$ via the map $\psi_{1,0}$ as observed above. It leads to the map from $\alpha_{i,j}$ to $\beta_{i,j}$. By extending the correspondence between basic classes linearly, we obtain a homomorphism 
\begin{equation}\label{equpsi10*}
\psi_{1,0,*} \colon \mcal{G} \to H_2({X}^0, L^0; \Q) \quad \alpha_{i,j} \mapsto \beta_{i,j}.
\end{equation}
since $\blacksquare_i$ in~\eqref{equ_blacksquarbetajjcbs2} maps into $\blacksquare_i$ in~\eqref{equ_blacksquarbetajjcbs}. Moreover, because of~\eqref{eqjdimqmcalg} and Proposition~\ref{proposition_pi2xl}, $\psi_{1,0,*}$  is isomorphic. In particular, $\dim_\Q \mcal{G} = 2n + 1$. 

Let $g_{i}$ be the face of codimension two contained in the hyperplane given by $u_i - | \mathbf{r} |_{i+1} = 0$ for $i =1$ and $n$. Let $\mathbf{u}_i$ be the center of the face $g_i$. By Theorem~\ref{theorem_Bouloc},  the fiber over $\mathbf{u}_i$ is a Lagrangian submanifold diffeomorphic to $\bS^2 \times T^{n-2}$. Note that the fiber of $\Phi^0$ at $\mathbf{u}_i$ is $T^{n-2}$ and hence the spherical factors collapse through the degeneration. The classes represented by  spherical factors over $\mathbf{u}_1$ and $\mathbf{u}_n$ are denoted by $\alpha_1 \coloneqq [\bS^2 \times \{pt\} ]$ and $\alpha_n \coloneqq [\bS^2 \times \{ pt\}]$, respectively. 

Another way of expressing those spherical classes $\alpha_1$ and $\alpha_n$ is in order. Recall that the diffeomorphism type of $\mathcal{M}_\mathbf{r}$ depends on a choice of $\mathbf{r}$. The parameter space for $\mathbf{r}$ has a chamber structure given by the equations of the form~\eqref{equ_generic}. Thus, the diffeomorphism type of $\mathcal{M}_\mathbf{r}$ does not change as long as $\mathbf{r}$ stays in a chamber. In other words, the diffeomorphism type persists under a small perturbation of $\mathbf{r}$. Since $\mathbf{r}$ is generic, the equilateral tuple is the interior of the chamber containing the tuple. We now make a different choice $\mathbf{r}$ within the chamber. For instance, making $r_1$ and $r_{n+3}$ smaller introduces two additional facets, say $g_{\mathrm{new},1}$ and $g_{\mathrm{new},n}$. There are two basic classes $\alpha_{\mathrm{new},1}$ and $\alpha_{\mathrm{new}, n}$ generated by a gradient disk emanating from $g_{\mathrm{new},1}$ and $g_{\mathrm{new},n}$, respectively. 

Let $\alpha_1^\prime \coloneqq \alpha_{1,2} + \alpha_{2,0} - 2 \alpha_{\mathrm{new}, 1}$ and $\alpha_n^\prime \coloneqq \alpha_{n,0} + \alpha_{n,1} - 2 \alpha_{\mathrm{new}, n}.$ We claim that the spherical classes $\alpha_1$ and $\alpha_n$ correspond to $\alpha^\prime_1$ and $\alpha^\prime_n$ under the diffeomorphism, respectively. We only deal with the case where $n = 1$ because the other case is similar. Note that the area of $\alpha^\prime_1$ can be computed from the areas of classes represented by a gradient disk. Also, the area of gradient disk generated by a Hamiltonian $\mathbb{S}^1$-action can be computed from the difference of its moment map. Note that the area $\omega(\alpha^\prime_1) > 0$ is positive and hence $\alpha^\prime_1$ is non-trivial. Since $\partial \alpha^\prime_1 = 0$, the class $\alpha^\prime_1$ is spherical, that is, $\alpha^\prime_1 \in \pi_2 (X^1) \otimes_\Z \Q \simeq H_2(X^1; \Q)$. By isotoping the sum of disk classes to the new facet $g_{\mathrm{new},1}$, we may take a representative of the non-trivial class $\alpha^\prime_1$ supported on the inverse image of $g_{\mathrm{new},1}$. Since the area of the class $\alpha_1^\prime$ is equal to the area of the sphere class corresponding to $\alpha_1$ over $g_{\mathrm{new},1}$, by changing the sign if necessary, we conclude that $\alpha^\prime_1$ corresponds to the spherical class $\alpha_1$.

Since $r_1$ and $r_{n+3}$ can be adjusted independently, the areas of $\alpha^\prime_1$ and $\alpha^\prime_n$ are independent, which in turn implies that two classes are independent. The classes $\alpha_1$ and $\alpha_n$ are vanishing through the induced homomorphism $\psi_{1,0,*}$. Thus, the classes $[\alpha_1]$ and $[\alpha_n]$ form a basis for the quotient space $H_2({X}^1, L^1; \Q) / \mcal{G} \simeq \Q^2$. In sum, we have the induced homomorphism 
\begin{equation}\label{equpsi101}
\psi_{1,0,*} \colon H_2({X}^1, L^1; \Q) \simeq \mcal{G} \oplus \Q^2  \to H_2({X}^0, L^0; \Q)
\end{equation}
maps the spherical classes $\alpha_1$ and $\alpha_n$ to zero. 

We now give the proof of Proposition~\ref{lemma_monotonecbs}.

\begin{proof}[Proof of Proposition~\ref{lemma_monotonecbs}]
For each basic class $\beta_{i,j}$, we compute
\begin{enumerate}
\item (Maslov index) $\mu (\beta_{i,j}) = 2$ because of Proposition~\ref{proposition_maslovformula},
\item (Symplectic area) $\omega_\mathbf{r} (\beta_{i,j}) = r_1$. The area of gradient disk generated by a Hamiltonian $\mathbb{S}^1$-action can be computed from the difference of its moment map due to Archimedes. Recall that $\mathbf{u}_0$ is the center in~\eqref{equ_centerofdeltamathbfr} and it follows from the expression~\eqref{equ_elli0i1i2} that
$$
\omega_\mathbf{r} (\beta_{i,j}) = \ell_{i,j} (\mathbf{u}_0) = r_1.
$$
\end{enumerate}
Thus, for each $\alpha \in \mcal{G}$, we have
\begin{equation}\label{equ_monotonequeg}
\mu (\alpha) = \frac{2}{r_1}  \omega_\mathbf{r} (\alpha).
\end{equation}
Since the spherical classes are represented by an isotropic submanifold and the ambient symplectic manifold is positively monotone, the area of each class is zero and hence its Chern number is zero. For each $\alpha \in H_2({X}^1, L^1; \Q) / \mcal{G} \simeq \Q^2$, we have
$$
2 c_1 (T \mcal{M}_\mathbf{r})( \alpha ) = \frac{2}{ r_1} \omega_\mathbf{r} (\alpha).
$$ 
Hence,~\eqref{equ_monotonequeg} holds for all $\alpha \in H_2({X}^1, L^1; \Q)$ as desired.
\end{proof}

\subsection{Effective disk classes}

Throughout this subsection, we take 
$$
r_1 = r_2 = \dots = r_{n+3} = 1
$$
and let $L^1$ be the monotone Lagrangian torus in Proposition~\ref{lemma_monotonecbs}. Suppose that $\alpha \in H_2({X}^1, L^1; \Q)$ is realized as a holomorphic disk of Maslov index two. Consider the image $\beta \coloneqq \psi_{1,0,*}( \alpha) \in H_2({X}^0, L^0; \Q)$ under the map~\eqref{equpsi101}. In this subsection, we shall characterize such homology classes.

By Proposition~\ref{proposition_pi2xl}, the class $\beta$ can be expressed as a linear combination
$$
\sum_{i=2}^n a_{i,0} \beta_{i,0} + \sum_{i=2}^{n+1} a_{i,1} \beta_{i,1} + \sum_{i=0}^{n-1} a_{i,2} \beta_{i,2}.
$$
However, the expression is \emph{not} unique due to the relation~\eqref{equ_blacksquarbetajjcbs}.
The following lemma claims that there exists a representative of $\beta$ such that all $a_{i,j}$ are \emph{non-negative}. 

\begin{proposition}\label{proposition_nonnegativeness}
Suppose that $\alpha \in H_2({X}^1, L^1; \Q)$ is a class represented by a holomorphic disk of Maslov index two. Then the class $\beta = \psi_{1,0,*}( \alpha)$ can be expressed as 
 $$
\sum_{i=2}^n a_{i,0} \beta_{i,0} + \sum_{i=2}^{n+1} a_{i,1} \beta_{i,1} + \sum_{i=0}^{n-1} a_{i,2} \beta_{i,2}
$$
such that each $a_{i,j}$ is a non-negative rational number. 
\end{proposition}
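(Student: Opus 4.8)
The plan is to push the problem forward along the toric degeneration, from the polygon space $X^1\simeq \mcal{M}_\mathbf{r}$ to the toric variety $X^0$ of the caterpillar bending polytope, where non-negativity of disk classes is dictated purely by the fan combinatorics and is already essentially contained in the proof of Proposition~\ref{proposition_pi2xl}. Recall that the almost complex structure $J$ on $X^1$ was chosen so that, over $\psi_{1,0}^{-1}(U^0)$ (with $U^0$ the smooth locus of $X^0$, which contains the relative interior of every facet of $\Delta_\mathbf{r}$), it is the pull-back $\psi_{1,0}^{\ast}J^0$ of the toric complex structure $J^0$, and is an arbitrary compatible extension on a small interpolation region elsewhere. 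For each facet $f_{i,j}$ of $\Delta_\mathbf{r}$ let $D_{i,j}\subset X^0$ be the corresponding toric prime divisor, and let $\tilde D_{i,j}\subset X^1$ be the closure of $\psi_{1,0}^{-1}(D_{i,j}\cap U^0)$; over $\psi_{1,0}^{-1}(U^0)$ this is a $J$-complex hypersurface, and it is disjoint from $L^1=\Phi_\mathbf{r}^{-1}(\mathbf{u}_0)$ since $\mathbf{u}_0$ is an interior point of $\Delta_\mathbf{r}$.

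First I would take a $J$-holomorphic disk $u$ of Maslov index two with $[u]=\alpha$ and $\partial u\subset L^1$, and show that $\alpha\cdot[\tilde D_{i,j}]\ge 0$ for every $(i,j)$. Since $\partial u$ lies on $L^1$, which misses $\tilde D_{i,j}$, the disk $u$ is not contained in $\tilde D_{i,j}$; so, once one knows that the intersection points of $u$ with $\tilde D_{i,j}$ all occur in the region where $J$ is integrable and $\tilde D_{i,j}$ is a genuine complex hypersurface, positivity of intersections of pseudo-holomorphic curves with pseudo-holomorphic hypersurfaces gives $\alpha\cdot[\tilde D_{i,j}]\ge 0$. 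Because $\psi_{1,0}$ restricts to a biholomorphism $\psi_{1,0}^{-1}(U^0)\to U^0$, these local intersection numbers are preserved under push-forward, so $\beta\cdot D_{i,j}=\psi_{1,0,*}(\alpha)\cdot D_{i,j}\ge 0$ for every toric prime divisor of $X^0$. (Equivalently, in the language of the dual ladder diagram $\Gamma^\vee_{\mathbf{r}}$, these are the statements that the "flow values" of $\beta$ along the arrows of the quiver are non-negative.)

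It then remains to feed these inequalities into the toric combinatorics of $X^0$, which I would do exactly as in the proof of Proposition~\ref{proposition_pi2xl}: pass to the simplicialization $\Pi\colon\widetilde{X^0}\to X^0$ (which adds no rays), lift $\beta$ and $L^0$ to the toric orbifold $\widetilde{X^0}$ with unchanged intersection numbers against the toric prime divisors, invoke \cite[Section 9.1]{ChoPoddar} (see also \cite{NishinouNoharaUeda}) to write the lifted class as a non-negative rational combination of basic orbifold disk classes, and push the expression back down by $\Pi_*$, which carries the basic classes bijectively onto the $\beta_{i,j}$. This yields $\beta=\sum a_{i,j}\beta_{i,j}$ with all $a_{i,j}\ge 0$; the relations $\blacksquare_i$ are invisible at this stage since they pair trivially with every divisor. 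The main obstacle is precisely the italicized hypothesis in the second paragraph, namely controlling where $u$ meets $\tilde D_{i,j}$: a priori $u$ need not remain inside $\psi_{1,0}^{-1}(U^0)$, and the bad locus where $J$ fails to be integrable can meet $\tilde D_{i,j}$ along strata that are not of high codimension because bending fibers over faces can carry extra $\mathbb{S}^2$-factors. I would handle this by the same device already used to place the basic gradient disks inside $V^1$: take the interpolation region where $J$ differs from $\psi_{1,0}^{\ast}J^0$ to have arbitrarily small symplectic size, while $\omega_\mathbf{r}(u)$ is pinned by monotonicity, and arrange the extension of $J$ so that the bending divisors remain $J$-complex in a fixed neighborhood of each other's intersections, so that positivity of intersections applies to every contribution $\alpha\cdot[\tilde D_{i,j}]$.
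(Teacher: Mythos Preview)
Your strategy differs from the paper's, and the difference is exactly where the gap lies. The paper does \emph{not} try to intersect the disk with pulled-back toric divisors in $X^1$. Instead it runs the toric degeneration as an honest family: for each $t>0$ one has a monotone torus $L^t\subset X^t$ and (by the usual cobordism argument) a $J^t$-holomorphic disk in the class $\psi_{1,t,*}(\alpha)$; then, using the Nishinou--Nohara--Ueda trick of realizing the family $\{X^t\}$ as subvarieties of a fixed product of projective spaces and invoking Ye's compactness for totally real boundary conditions, one extracts a limiting holomorphic curve in $X^0$ bounded by $L^0$. Its strict transform in the simplicialization $\widetilde{X}^0$ is then a genuine holomorphic (orbi-)disk, and Cho--Poddar's classification (their Proposition~9.4, not Section~9.1) applies to \emph{that} curve to give the non-negative expression, which one pushes down by $\Pi_*$. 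The point is that one produces an actual holomorphic object in the toric central fiber, rather than trying to import toric divisors into $X^1$.

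Your route breaks at the italicized step you flagged, and the proposed fix does not close it. The map $\psi_{1,0}$ is only a symplectomorphism on a dense open set; over the faces of $\Delta_\mathbf{r}$ where non-toric fibers live (e.g.\ the $\bS^2\times T^{n-2}$ or $\mathrm{SO}(3)\times T^{n-3}$ fibers) it genuinely collapses topology, so the closures $\tilde D_{i,j}$ need not be smooth, let alone $J$-complex, hypersurfaces for any compatible $J$ on $X^1$. Making the interpolation region ``symplectically small'' does not help: the holomorphic disks that the paper is ultimately after (those in $(\alpha_{2,0}+\alpha_{1,2})/2 + c\,\alpha_1$, etc.) have fixed area $r_1$ and \emph{must} meet the non-toric locus, so you cannot arrange that every Maslov-two disk stays in the region where your positivity argument is valid. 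Moreover, the map $\psi_{1,0,*}$ used in the paper is defined algebraically on generators (sending $\alpha_{i,j}\mapsto\beta_{i,j}$ and killing the vanishing spheres $\alpha_1,\alpha_n$); it is not a topological push-forward, so the identity $\alpha\cdot[\tilde D_{i,j}]=\psi_{1,0,*}(\alpha)\cdot D_{i,j}$ would itself need justification. Finally, the citation you want from Cho--Poddar is the classification of holomorphic orbi-disk classes, which takes as input an actual holomorphic disk in the orbifold; absent the limiting curve, you have only a homology class in $\widetilde{X}^0$ with no holomorphic representative, and the conclusion does not follow from intersection numbers alone without further argument.
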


Let us extend the Maslov homomorphism $\pi_2({X}^1, L^1) \to \Z$ to 
\begin{equation}\label{equ_pi21maslovhomo}
\mu \colon H_2({X}^1, L^1; \Q) \simeq \pi_2({X}^1, L^1) \otimes \Q \to \Q.
\end{equation}
Recall that ${X}^0$ can be resolved by a toric orbifold $\widetilde{{X}}^0$ \emph{without} adding any $1$-cones and there is a toric morphism $\Pi \colon \widetilde{{X}}^0 \to {X}^0$, see \cite[Proposition 11.1.7]{CoxLittleSchenckToric}. The inverse image $\widetilde{L}^0 \coloneqq \Pi^{-1}(L^0)$ is a Lagrangian toric fiber. Note that $H_2(\widetilde{{X}}^0, \widetilde{L}^0; \Q) \simeq \Q^{3n-1}$ since the toric orbifold has $(3n-1)$ many $1$-cones. Let $\widetilde{\beta}_{i,j}$ be the basic class in $H_2(\widetilde{{X}}^0, \widetilde{L}^0; \Q)$ corresponding to the $1$-cone $\mathbf{v}_{i,j}$. Note that the class $\widetilde{\beta}_{i,j}$ maps into the class ${\beta}_{i,j}$ via the induced homomorphism of $\Pi$. 

Take the quotient of $H_2(\widetilde{{X}}^0, \widetilde{L}^0; \Q)$ by the subspace generated by 
\begin{equation}\label{equ_tildeblacksquarbetajj}
\blacksquare_i \coloneqq \widetilde{\beta}_{i,0} + \widetilde{\beta}_{i,1} -  \widetilde{\beta}_{i+1,0} - \widetilde{\beta}_{i,2} \mbox{ for $i =2, \cdots, n-1$}.
\end{equation} 
We denote the resulting quotient space by $\overline{H}_2(\widetilde{{X}}^0, \widetilde{L}^0; \Q)$. 
We have the induced map 
\begin{equation}\label{equ_piinq}
\Pi_* \colon \overline{H}_2(\widetilde{{X}}^0, \widetilde{L}^0; \Q) \to  H_2({{X}}^0, {L}^0; \Q),
\end{equation}
an isomorphism of vector spaces. The Maslov index on $\overline{H}_2(\widetilde{{X}}^0, \widetilde{L}^0; \Q)$ still makes sense because the Maslov index of $\blacksquare_i$ is zero. Moreover, the induced map preserves the Maslov index since the Maslov indices of $\beta_{i,j}$ and $\widetilde{\beta}_{i,j}$ are same. 

Note that the subspace $\mcal{G}$ is isomorphic to the quotient space of $H_2({{X}}^1, {L}^1; \Q)$ by the two dimensional vector spaces generated by $\alpha_1$ and  $\alpha_n$. Since the spherical classes are represented by an isotropic submanifold, the area of each class is zero and hence its Chern number is zero because of the monotonicity of the ambient symplectic manifold ${X}^1$. Therefore, the Maslov index homomorphism in~\eqref{equ_piinq} factors through the quotient space $\simeq \mcal{G}$ as follows.
\begin{equation}\label{equ_maslovfactorthruocomm}
\xymatrix{
H_2({X}^1, L^1; \Q) \ar[rd] \ar[rr]^{\quad \,\,\, \mu} & & \Q \\
& \mathcal{G} \ar[ru]_{\mu} &
}.
\end{equation}
By abuse of notation, the induced Maslov homomorphism on $\mcal{G}$ is still denoted by $\mu$. Also, we denote by $\mu$ the Maslov homomorphism over $\Q$ on $\overline{H}_2(\widetilde{{X}}^0, \widetilde{L}^0; \Q)$.

\begin{lemma}\label{lemma_maslovpreserving}
The composition $\Pi_*^{-1} \circ {\psi}_{1,0,*}$ preserves the Maslov index. Namely, 
\begin{equation}\label{lemma_maslovpresereq}
{\mu} (\alpha) = {\mu} ((\Pi_*^{-1}  \circ {\psi}_{1,0,*}) (\alpha))
\end{equation}
for all $\alpha \in H_2({X}^1, L^1; \Q) \simeq \pi_2({X}^1, L^1) \otimes \Q$.
\end{lemma}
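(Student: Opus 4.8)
The plan is to deduce \eqref{lemma_maslovpresereq} from a check on a spanning set of basic classes, exploiting that both sides are $\Q$-linear functionals on $H_2({X}^1, L^1; \Q)$ that annihilate the spherical summand. The left-hand side is the Maslov homomorphism \eqref{equ_pi21maslovhomo}, which by the commuting triangle \eqref{equ_maslovfactorthruocomm} factors through the quotient isomorphic to $\mcal{G}$; the right-hand side is a composite of the $\Q$-linear maps $\psi_{1,0,*}$ of \eqref{equpsi101}, the isomorphism $\Pi_*^{-1}$ of \eqref{equ_piinq}, and the Maslov homomorphism on $\overline{H}_2(\widetilde{{X}}^0, \widetilde{L}^0; \Q)$, and since $\psi_{1,0,*}$ sends the spherical classes $\alpha_1,\alpha_n$ to $0$ it too factors through $\mcal{G}$. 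Using the splitting $H_2({X}^1, L^1; \Q) \simeq \mcal{G} \oplus \Q\langle \alpha_1\rangle \oplus \Q\langle \alpha_n\rangle$ from \eqref{equpsi101}, I would first record that both sides vanish on $\alpha_1$ and $\alpha_n$: on the left because these are spherical classes admitting isotropic representatives, hence of zero symplectic area, whence zero Chern number by monotonicity of ${X}^1$ and thus zero Maslov index; on the right because $\psi_{1,0,*}(\alpha_1)=\psi_{1,0,*}(\alpha_n)=0$. Since $\mcal{G}$ is spanned by the basic classes $\alpha_{i,j}$, it then suffices to verify \eqref{lemma_maslovpresereq} for $\alpha = \alpha_{i,j}$.

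For such an $\alpha_{i,j}$ the left-hand side is $2$: by construction $\alpha_{i,j}$ is represented by the gradient holomorphic disk generated by the Hamiltonian $\bS^1$-action of $-\widetilde{\ell}_{i,j}$, whose maximal fixed component is the inverse image ${V}_{i,j}$ of the relative interior of the facet $f_{i,j}$. By Lemma~\ref{lemma_smoothstratavij} the function $\widetilde{\ell}_{i,j}$ is smooth along ${V}_{i,j}$, and by Proposition~\ref{proposition_monotone} the hyperplane $\ell_{i,j}(\mathbf{u})=0$ cuts out a genuine facet, so ${V}_{i,j}$ has real codimension two; Proposition~\ref{proposition_maslovformula} then gives $\mu(\alpha_{i,j})=2$, which is exactly the computation already performed in the proof of Proposition~\ref{lemma_monotonecbs}. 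For the right-hand side, $\psi_{1,0,*}(\alpha_{i,j})=\beta_{i,j}$ by \eqref{equpsi10*}, and $\Pi_*^{-1}(\beta_{i,j})$ is the class of the basic class $\widetilde{\beta}_{i,j}$ in $\overline{H}_2(\widetilde{{X}}^0, \widetilde{L}^0; \Q)$, well-defined because each $\blacksquare_i$ of \eqref{equ_tildeblacksquarbetajj} has Maslov index zero; since $\widetilde{\beta}_{i,j}$ is the basic disk class of the toric fiber $\widetilde{L}^0$ in the toric orbifold $\widetilde{{X}}^0$ attached to the $1$-cone $\mathbf{v}_{i,j}$, it is represented by a gradient disk of Maslov index two, so $\mu(\Pi_*^{-1}\psi_{1,0,*}(\alpha_{i,j})) = \mu(\widetilde{\beta}_{i,j}) = 2 = \mu(\alpha_{i,j})$. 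Combining this with the linearity reduction of the first paragraph proves \eqref{lemma_maslovpresereq} on all of $H_2({X}^1, L^1; \Q)$.

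The only genuinely delicate point is the bookkeeping of which ambient space and which Lagrangian fiber each class and each Maslov computation refers to, and making sure the two "basic disk has Maslov index two" statements are legitimate: the disk in ${X}^1 = \mcal{M}_\mathbf{r}$, produced via the toric degeneration of Corollary~\ref{theorem_toricdegepolygon} and the interpolated almost complex structure, has Maslov index two by Proposition~\ref{proposition_maslovformula} thanks to Lemma~\ref{lemma_smoothstratavij} and the facet description of Proposition~\ref{proposition_monotone}, while the disk in $\widetilde{{X}}^0$ has Maslov index two by the standard toric (orbifold) count; once these are in hand, the rest of the argument is purely formal diagram-chasing through \eqref{equpsi101}, \eqref{equ_piinq} and \eqref{equ_maslovfactorthruocomm}.
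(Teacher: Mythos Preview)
Your proposal is correct and follows essentially the same route as the paper: reduce to $\mcal{G}$ via the factorization \eqref{equ_maslovfactorthruocomm} (equivalently, check that both sides vanish on the spherical classes $\alpha_1,\alpha_n$), and then verify the identity on the spanning basic classes $\alpha_{i,j}$, each of which has Maslov index two on both sides. The paper's proof is terser but structurally identical; your write-up simply unpacks the Maslov-index-two computations more explicitly.
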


\begin{proof}
By~\eqref{equ_maslovfactorthruocomm}, it suffices to prove~\eqref{lemma_maslovpresereq} for all $\alpha \in \mcal{G}$. The composition of two homomorphisms~\eqref{equpsi10*} and the inverse of~\eqref{equ_piinq} is 
\begin{equation}\label{equ_compositionpipsi}
\Pi_*^{-1} \circ {\psi}_{1,0,*} \colon \mcal{G}  \simeq \Q^{2n+1} \to \overline{H}_2(\widetilde{{X}}^0, \widetilde{L}^0; \Q)  \simeq \Q^{2n+1}
\end{equation}
is an isomorphism.
It is enough to show that the Maslov index of each basic class is preserved. The basic classes of $({X}^1, L^1)$ are in one-to-one correspondence with those of $(\widetilde{{X}}^0, \widetilde{L}^0)$. Since every basic class is of Maslov index two,~\eqref{lemma_maslovpresereq} follows.
\end{proof}

\begin{remark}
The map merely sending basic classes in $\mcal{G} \simeq {H}_2({{X}}^0, {L}^0; \Q)$ to basic classes in ${H}_2(\widetilde{{X}}^0, \widetilde{L}^0; \Q)$  does not induce a well-defined homomorphism. For instance, the class $\widetilde{\beta}$ represented by a strict transformation of $\beta \coloneqq {\beta}_{i,0} + {\beta}_{i,1}  =  {\beta}_{i+1,0} + {\beta}_{i,2}$ in ${H}_2({{X}}^0, {L}^0; \Q)$ is either $ \widetilde{\beta}_{i,0} + \widetilde{\beta}_{i,1}$ or $\widetilde{\beta}_{i+1,0} + \widetilde{\beta}_{i,2}$ depending on a choice of simplicialization, while $\widetilde{\beta}_{i,0} + \widetilde{\beta}_{i,1} \neq  \widetilde{\beta}_{i+1,0} + \widetilde{\beta}_{i,2}$. The map in~\eqref{equ_compositionpipsi} is a homomorphism if replacing ${H}_2(\widetilde{{X}}^0, \widetilde{L}^0; \Q)$ with $\overline{H}_2(\widetilde{{X}}^0, \widetilde{L}^0; \Q)$.
\end{remark}

\begin{proof}[Proof of Proposition~\ref{proposition_nonnegativeness}]
For each nonzero $t$, $\psi_{1,t}(L^1) \eqqcolon L^t$ is monotone since $\psi_{1,t}$ is a symplectomorphism. A generic path of two generic almost complex structures between $J^1$ and $J^t$ leads to an oriented cobordism between the moduli space of $J^1$-holomorphic disks in $\alpha^1 \coloneqq \alpha$ and  the moduli space of $J^t$-holomorphic disks in $\alpha^t \coloneqq \psi_{1,t,*}(\alpha^1)$. In particular, we have a family $\{\varphi^t \}$ of $J^t$-holomorphic disks bounded by $L^t$ of Maslov index two and area one. 

The limit of the holomorphic disks is a holomorphic map having the area one bounded by the toric fiber $L^0$ at the center in the toric variety ${X}^0$. Since ${X}^0$ is \emph{neither} a symplectic manifold nor a symplectic orbifold, we cannot directly apply Gromov's compactness theorem to a sequence of pseudo-holomorphic curves. Following a trick of Nishinou--Nohara--Ueda in \cite{NishinouNoharaUeda}, regard ${X}^t$ as a family of subvarieties in a product of projective spaces and extend a Lagrangian submanifold into a totally real submanifold. Applying a family version of the compactness theorem in \cite{Ye} to the sequence, we then obtain a holomorphic curve bounded by $L^0$ at the limit. We denote by $\widetilde{\beta}$ the class of the strict transformation bounded by $\widetilde{L}^0$.

By Lemma~\ref{lemma_maslovpreserving}, the Maslov index is preserved under the above procedure. By Cho--Poddar \cite[Proposition 9.4]{ChoPoddar}, every class in $H_2(\widetilde{{X}}^0, \widetilde{L}^0; \Q)$ represented by a holomorphic (orbi-)disk can be expressed as a non-negative rational combination of basic classes. By pushing forward of $\widetilde{\beta}$ via $\Pi_*$, we obtain a desired expression.
\end{proof}

\begin{lemma}\label{lemma_nontrivialofboundary}
Suppose that a homology class $\alpha \in H_2({X}^1, L^1; \Q)$ of Maslov index two is represented by a holomorphic disk. Then the boundary of the class $\alpha$ is non-trivial, that is, $\partial \alpha \neq 0$. 
\end{lemma}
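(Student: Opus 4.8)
The plan is to argue by contradiction: suppose $\partial \alpha = 0$, so that $\alpha$ is a spherical class in $H_2(\mathcal{M}_\mathbf{r};\Q)$ represented by a holomorphic disk $\varphi$ of Maslov index two. The idea is to combine the monotonicity of $\mathcal{M}_\mathbf{r}$ with the fact that a nonconstant holomorphic disk has strictly positive symplectic area. Since $\mathcal{M}_\mathbf{r}$ is monotone with $c_1(T\mathcal{M}_\mathbf{r}) = \tfrac{1}{r_1}[\omega_\mathbf{r}]$, for a spherical class $\alpha$ we have the relation $\mu(\alpha) = 2 c_1(T\mathcal{M}_\mathbf{r})(\alpha) = \tfrac{2}{r_1}\omega_\mathbf{r}(\alpha)$, which is exactly the content of Proposition~\ref{lemma_monotonecbs} (or rather its proof). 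Thus $\mu(\alpha) = 2$ forces $\omega_\mathbf{r}(\alpha) = r_1 > 0$, so $\varphi$ is nonconstant and no contradiction arises purely from areas.

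The refinement I would use is the observation already built into the preceding discussion: the spherical classes in $H_2(\mathcal{M}_\mathbf{r};\Q)$ are represented by isotropic submanifolds (the $\bS^2$- and $\mathrm{SO}(3)$-factors of singular bending fibers coming from Theorem~\ref{theorem_Bouloc}), and in the proof of Proposition~\ref{lemma_monotonecbs} it is noted that ``since the spherical classes are represented by an isotropic submanifold and the ambient symplectic manifold is positively monotone, the area of each class is zero and hence its Chern number is zero.'' In other words, for $\alpha \in \ker \partial \subset H_2(\mathcal{M}_\mathbf{r};\Q)$ one has $\omega_\mathbf{r}(\alpha) = 0$ and $c_1(T\mathcal{M}_\mathbf{r})(\alpha) = 0$, hence $\mu(\alpha) = 0$. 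This directly contradicts $\mu(\alpha) = 2$. So if $\alpha$ has Maslov index two and $\partial \alpha = 0$, then $\alpha$ cannot exist as a class, let alone be represented by a holomorphic disk; therefore any $\alpha$ of Maslov index two represented by a holomorphic disk must satisfy $\partial \alpha \neq 0$.

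The one point that requires care — and which I expect to be the main (if modest) obstacle — is justifying that every spherical class in $H_2(\mathcal{M}_\mathbf{r};\Q)$ has zero symplectic area. This is where the structure of $H_2(\mathcal{M}_\mathbf{r};\Q)$ from Section~\ref{sec_computationofhomotopygroups} is used: the kernel of $\partial$ is exactly $\pi_2(\mathcal{M}_\mathbf{r})\otimes\Q$, and the generators produced there — the classes $\alpha_1, \alpha_n$ (and their analogues for the other faces of codimension $\geq 2$) — are represented by the $\bS^2$-factors of singular fibers, which are isotropic by Theorem~\ref{theorem_Bouloc}. An isotropic sphere has zero area, so all generators of $\ker\partial$ have zero area, hence so does every element of $\ker\partial$. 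Equivalently, one can invoke that through the toric degeneration these spherical factors collapse (as recorded in the sentence ``the spherical factors collapse through the degeneration''), so their classes map to zero under $\psi_{1,0,*}$ and, being Lagrangian-fibred over a point in $X^0$, have vanishing $\omega_\mathbf{r}$-area. Either way, one concludes $\mu|_{\ker\partial} = 0$.

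Putting this together: assume for contradiction $\partial\alpha = 0$. Then $\alpha \in \ker\partial \simeq \pi_2(\mathcal{M}_\mathbf{r})\otimes\Q$, so $\omega_\mathbf{r}(\alpha) = 0$ and by monotonicity $\mu(\alpha) = \tfrac{2}{r_1}\omega_\mathbf{r}(\alpha) = 0$. But $\alpha$ was assumed to have Maslov index two, a contradiction. Hence $\partial\alpha \neq 0$, as claimed. (As a sanity check, this is consistent with the final form of the disk potential~\eqref{equ_theorem_main}: every Laurent monomial appearing there is genuinely nonconstant in the $z_i$, i.e. corresponds to a nonzero lattice point $\partial\beta \in N$, which is precisely the statement of this lemma at the level of effective classes.)
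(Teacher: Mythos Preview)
Your argument has a genuine gap: the claim that every class in $\ker\partial \simeq \pi_2(\mathcal{M}_\mathbf{r})\otimes\Q$ has zero symplectic area is false. By Lemma~\ref{lemma_secondhomotopygpz2n3} this kernel is $(n+3)$-dimensional, whereas the isotropic sphere classes $\alpha_1,\alpha_n$ coming from Theorem~\ref{theorem_Bouloc} span only a $2$-dimensional subspace (the $\mathrm{SO}(3)$-factors contribute nothing to $\pi_2$). The remaining $(n+1)$ dimensions of $H_2(\mathcal{M}_\mathbf{r};\Q)$ are spanned by classes of the form $\square_i$ in~\eqref{equclassholosphere}, which are represented by genuine holomorphic spheres of positive area. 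The sentence you quote from the proof of Proposition~\ref{lemma_monotonecbs} about ``the spherical classes'' having zero area refers specifically to $\alpha_1,\alpha_n$ (the generators of $H_2(X^1,L^1;\Q)/\mathcal{G}\simeq\Q^2$), not to all of $\ker\partial$. Indeed, if every spherical class had zero area then $[\omega_\mathbf{r}]$ would vanish in $H^2(\mathcal{M}_\mathbf{r};\Q)$, which is absurd for a Fano manifold.

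Monotonicity alone cannot rule out a spherical class of Maslov index two: such a class would simply have $c_1=1$ and positive area, with no contradiction. The paper's proof instead passes through the toric degeneration: the limit of the holomorphic disk in $X^0$, after strict transform to the simplicial orbifold $\widetilde{X}^0$, is a holomorphic disk of Maslov index two (by Lemma~\ref{lemma_maslovpreserving}), and Cho--Poddar's classification \cite[Proposition~9.4]{ChoPoddar} forces any such class to be a nonnegative combination of basic classes, hence to have nontrivial boundary. That degeneration step is essential and is what your argument is missing.
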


\begin{proof}
Suppose that there exists a holomorphic disk of Maslov index two in $\alpha$ such that $\partial \alpha = 0$. Then we have the strict transformation of the limit of disks, which also has Maslov index two. Again by the classification result of \cite[Proposition 9.4]{ChoPoddar}, the boundary cannot be non-zero.
\end{proof}

Suppose that $\alpha \in H_2({X}^1, L^1; \Q) \simeq \pi_2({X}^1, L^1) \otimes \Q$ is a class represented by a  $J^1$-holomorphic disk of Maslov index two. By Lemma~\ref{lemma_secondhomotopygpz2n3}, we have
\begin{equation}\label{equ_alphasum}
\alpha = \sum_{i=2}^n a_{i,0} \alpha_{i,0} + \sum_{i=2}^{n+1} a_{i,1} \alpha_{i,1} + \sum_{i=0}^{n-1} a_{i,2} \alpha_{i,2} + a_1  \alpha_1 + a_2 \alpha_n.
\end{equation}
One can show that the Maslov index of $\alpha_{i,j}$ is two by applying the Maslov index formula in Proposition~\ref{proposition_maslovformula} to the gradient disk $\varphi_{i,j}$. The monotonicity of $L^1$ yields that the area of each $\alpha_{i,j}$ is one. Hence, 
\begin{equation}\label{equ_identitiyareaone}
1 = \omega(\alpha) = \sum_{i=2}^n a_{i,0} + \sum_{i=1}^{n+1} a_{i,1} + \sum_{i=0}^n a_{i,2} .
\end{equation}
Set $\psi_{1,0,*}(\alpha) \eqqcolon \beta$. Since $\psi_{1,0,*}(\alpha_{i,j}) = \beta_{i,j}$, the boundary of a disk must be a loop and hence
\begin{equation}\label{equ_aaaN}
\sum_{i=2}^n a_{i,0} \cdot \partial \beta_{i,0} + \sum_{i=1}^{n+1} a_{i,1} \cdot \partial \beta_{i,1} + \sum_{i=0}^n a_{i,2} \cdot \partial \beta_{i,2} \in N.
\end{equation}
By Proposition~\ref{proposition_nonnegativeness}, we may assume that every coefficient $a_{i,j}$ is non-negative. Also, Lemma~\ref{lemma_nontrivialofboundary} yields that the expression~\eqref{equ_aaaN} is \emph{non-zero}.

Recall that $\{v_1, v_2, \cdots, v_n\}$ forms a basis for $N$ in~\eqref{equ_abasisforzn}. The basis gives rise the identification $N_\Q \coloneqq N \otimes \Q \simeq \Q^n$ given by
$$
\mathbf{v} = \sum_{j=1}^n {a_j} v_j \in N_\Q \mapsto (a_1, a_2, \cdots, a_n)
$$
For each $k = 1, 2, \cdots, n$, we define the map 
$$
\delta_k \colon N_\Q \simeq \Q^n \to \Q \quad \mbox{ $\delta_k (\sum_{j=1}^n {a_j} v_j) = a_k$},
$$ 
the projection onto the $k$-th component.

We now explain how to compute the map $\delta_k$ using the ladder diagram. As each lattice point $N$ generates a torus action on $X^0$, the fundamental group of the torus fiber $L^0$ can be identified with $N$. Each basic disk $\beta_{i,j}$ corresponds to the arrow $\overrightarrow{e_{i,j}}$ in $\Gamma^\vee_\mathbf{r}$. The boundary of $\beta_{i,j}$ specifies an element of $\pi_1(L^0)$ and lies in $N$. Thus, $\delta_k(\partial \beta_{i,j})$ makes sense. The output $\delta_k(\partial \beta_{i,j})$ can be read off from the ladder diagram as follows. The class $\partial \beta_{i,j}$ can contribute to the output only when the $\overrightarrow{e_{i,j}}$ intersects with the diagonal line passing through $(k - 1/2, 3/2)$ and $(k + 1/2, 1/2)$. More precisely,
$$
\delta_k(\partial \beta_{i,j}) = 
\begin{cases}
1 &\begin{split}\mbox{The head of $\overrightarrow{e_{i,j}}$ intersects at $(k-1/2,3/2)$ or} \\ \mbox{The tail of $\overrightarrow{e_{i,j}}$ intersects at $(k+1/2,1/2)$}\end{split} \\ \\
-1 &\begin{split}\mbox{The head of $\overrightarrow{e_{i,j}}$ intersects at $(k+1/2,1/2)$ or} \\ \mbox{The tail of $\overrightarrow{e_{i,j}}$ intersects at $(k-1/2,3/2)$.}\end{split}
\end{cases}
$$
As an example, let us consider Figure~\ref{Fig_deltakmap} in the case where $n = 5$ and $\beta = \frac{1}{2} \beta_{2,0} + \frac{1}{2} \beta_{2,1} + 3 \beta_{2,2}$. As 
$\partial \beta_{0,2}, \partial \beta_{2,1}, \partial \beta_{2,2}$ corresponds to $(-1, -1, 0, 0, 0), (1, -1, 0, 0, 0), (0, -1, 1, 0, 0)$ resp., we have 
$$
\delta_{1} (\partial \beta) = \delta_{4} (\partial \beta) = \delta_5 (\partial \beta) = 0, \delta_{2} (\partial \beta) = -4, \delta_{3} (\partial \beta) = 3.
$$
\begin{figure}[h]
	\scalebox{0.85}{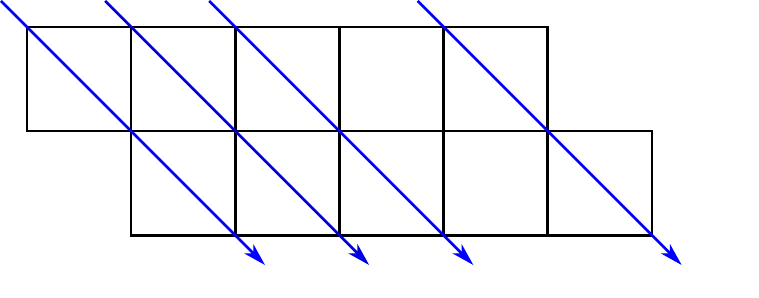}
	\caption{\label{Fig_deltakmap} The map $\delta_k$}	
\end{figure}

For a general $\beta \in \pi_2 (X^0, L^0) \otimes_\Z \Q$, the boundary of $\beta$ lies in $N_\Q$ and express $\partial \beta$ as a linear combination of $\beta_{i,j}$'s over $\Q$. The expression is \emph{not} unique, but the map $\delta_k (\partial \beta)$ is well-defined because $\partial  \blacksquare_i= 0$ in~\eqref{equ_tildeblacksquarbetajj}.

From now on, we assume that for each effective class $\alpha \in \pi_2({X}^1, L^1)$ of Maslov index two, $\beta \coloneqq \psi_{1,0,*}( \alpha)$ and there are non-negative rational numbers $a_{i,j}$'s satisfying
\begin{enumerate}
\item $\sum_{i=2}^n a_{i,0} + \sum_{i=1}^{n+1} a_{i,1} + \sum_{i=0}^n a_{i,2} = 1$,
\item $\sum_{i=2}^n a_{i,0} \cdot \partial \beta_{i,0} + \sum_{i=1}^{n+1} a_{i,1} \cdot \partial \beta_{i,1} + \sum_{i=0}^n a_{i,2} \cdot \partial \beta_{i,2} \in N - \{0\}$.
\end{enumerate}
Under this circumstance, we characterize the \emph{possible} list of the image of $\partial \beta$ under $(\delta_1, \delta_2, \dots, \delta_n)$. 

\begin{lemma}\label{lemma_possibledeltaouput}
For each $k$ with $1 \leq k \leq n$, the output of $\partial \beta$ under $\delta_k$ is one of $1, 0$, or $-1$.
\end{lemma}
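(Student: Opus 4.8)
The plan is to reduce the statement to two elementary observations: that $\delta_k(\partial\beta)$ is a convex combination of the numbers $\delta_k(\partial\beta_{i,j})$, each of absolute value at most $1$; and that $\partial\beta$ is a lattice vector, so $\delta_k(\partial\beta)$ is an integer. An integer in $[-1,1]$ is $-1$, $0$, or $1$. Concretely, I would start from the standing hypotheses preceding the lemma together with Proposition~\ref{proposition_nonnegativeness}: they allow me to write $\beta=\sum a_{i,j}\beta_{i,j}$ with all $a_{i,j}\ge 0$ and $\sum a_{i,j}=1$ (hypothesis~(1)), and $\partial\beta=\sum a_{i,j}\,\partial\beta_{i,j}\in N\setminus\{0\}$ (hypothesis~(2), whose non-triviality is Lemma~\ref{lemma_nontrivialofboundary}). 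Since $\delta_k$ is linear and well-defined on $\pi_2(X^0,L^0)\otimes\Q$ (because $\partial\blacksquare_i=0$), we get $\delta_k(\partial\beta)=\sum a_{i,j}\,\delta_k(\partial\beta_{i,j})$.

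The one computational input is the uniform bound $|\delta_k(\partial\beta_{i,j})|\le 1$ for every index $(i,j)$ occurring in the sum. For the genuine basic classes indexed by an arrow $\overrightarrow{e_{i,j}}$ of $\Gamma^\vee_\mathbf{r}$ this is immediate from the explicit rule for $\delta_k$ just recorded: $\delta_k(\partial\beta_{i,j})$ equals $\pm 1$ precisely when exactly one endpoint of the unit arrow $\overrightarrow{e_{i,j}}$ is one of the two points $(k-\tfrac12,\tfrac32)$, $(k+\tfrac12,\tfrac12)$ defining the relevant diagonal line, and is $0$ otherwise; since an arrow has distinct head and tail, the two endpoints can never force opposite signs, so the value is in $\{-1,0,1\}$. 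For the two auxiliary classes $\beta_{1,1}=(\beta_{2,0}+\beta_{1,2})/2$ and $\beta_{n,2}=(\beta_{n,1}+\beta_{n,0})/2$, the quantity $\delta_k(\partial\beta_{i,j})$ is the average of two numbers in $\{-1,0,1\}$, hence still lies in $[-1,1]$.

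Assembling: by the triangle inequality together with $a_{i,j}\ge 0$ and $\sum a_{i,j}=1$, we obtain $|\delta_k(\partial\beta)|\le\sum a_{i,j}\,|\delta_k(\partial\beta_{i,j})|\le\sum a_{i,j}=1$. On the other hand $\partial\beta\in N$, and with respect to the basis $\{v_1,\dots,v_n\}$ of $N$ the map $\delta_k$ is the $k$-th coordinate projection $N\to\Z$, so $\delta_k(\partial\beta)\in\Z$. An integer of absolute value at most $1$ is $-1$, $0$, or $1$, which is the assertion. I do not expect a genuine obstacle here: the lemma is essentially bookkeeping once the three prior inputs (non-negativity of the coefficients, the area-one normalization, and integrality of the boundary) are in hand. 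The only place requiring a moment's care is the uniform bound $|\delta_k(\partial\beta_{i,j})|\le 1$, and in particular the two half-integral auxiliary classes $\beta_{1,1},\beta_{n,2}$, where one must note that averaging $\{-1,0,1\}$-valued quantities keeps one inside $[-1,1]$; everything else is the convexity-plus-integrality argument above.
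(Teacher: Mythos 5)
Your proof is correct and takes essentially the same approach as the paper's: write $\delta_k(\partial\beta)$ as the convex combination $\sum a_{i,j}\,\delta_k(\partial\beta_{i,j})$, bound each $\delta_k(\partial\beta_{i,j})$ by $1$ in absolute value, conclude $|\delta_k(\partial\beta)|\le 1$ via $\sum a_{i,j}=1$ and $a_{i,j}\ge 0$, and finish with integrality of $\partial\beta\in N$. You spell out the uniform bound slightly more carefully (in particular noting that the auxiliary half-integral classes $\beta_{1,1}$, $\beta_{n,2}$ are averages of two terms each bounded by $1$), but the substance is identical to what the paper records.
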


\begin{proof}
For every double index $(i,j)$, each component of $\partial \beta_{i,j} \in N \simeq \Z^{n}$ is one of $0$, $1$, and $-1$, that is, $\delta_k (\partial \beta_{i,j}) \in  \{ 0, 1, -1 \}$. Thus, 
$$
\delta_k (\partial \beta) = \sum_{i=2}^n c_{i,0} a_{i,0} + \sum_{i=1}^{n+1} c_{i,1} a_{i,1} + \sum_{i=0}^n c_{i,2} a_{i,2} 
$$
for some $c_{i,j} \in \{ 0, 1, -1 \}$.
By Proposition~\ref{proposition_nonnegativeness} and~\eqref{equ_identitiyareaone}, $- 1 \leq \delta_k (\partial \beta) \leq 1$ and thus the lemma follows.
\end{proof}

The condition $(2)$ implies that at least one of $\delta_k (\partial \beta)$'s with $k = 1, 2, \dots, n$ is non-zero. Moreover, we have the following.

\begin{lemma}\label{lemma_threecannot}
For two indices $k, \ell \in \{1, 2, \dots, n\}$ with $| k - \ell | > 1$, at least one of $\delta_{k}(\partial \beta)$ and $\delta_{\ell}(\partial \beta)$ must be zero. 
\end{lemma}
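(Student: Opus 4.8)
The plan is to play the sharp integrality information supplied by Lemma~\ref{lemma_possibledeltaouput} against a crude $\ell^1$-type estimate and the normalization $\sum a_{i,j}=1$ from standing assumption~(1). The first thing I would do is record, for each basic class $\beta_{i,j}$, the support of its boundary $\partial\beta_{i,j}\in N$ with respect to the basis $v_1,\dots,v_n$ of~\eqref{equ_abasisforzn}. Reading $\partial\beta_{i,j}=-\mathbf{v}_{\overrightarrow{e_{i,j}}}$ off the dual ladder diagram $\Gamma^\vee_\mathbf{r}$ via~\eqref{equ_normalvectorth} (with the boundary conventions $v_{n+2,1}=v_{0,2}=0$, $v_{i+1,1}=-v_i$, together with the two derived classes $\beta_{1,1}=(\beta_{2,0}+\beta_{1,2})/2$ and $\beta_{n,2}=(\beta_{n,1}+\beta_{n,0})/2$), one finds that every $\partial\beta_{i,j}$ has all coordinates in $\{0,\pm1\}$ and is supported on at most two \emph{consecutive} basis vectors: on $\{v_{i-1},v_i\}$ when $j\in\{0,1\}$ and on $\{v_i,v_{i+1}\}$ when $j=2$. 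Consequently $\delta_k(\partial\beta_{i,j})\neq 0$ forces $k\in\{i-1,i\}$ (for $j\in\{0,1\}$) or $k\in\{i,i+1\}$ (for $j=2\}$), so a single basic class $\beta_{i,j}$ can make $\delta_k$ and $\delta_\ell$ both nonzero only when $|k-\ell|\le 1$.

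Next I would run the counting argument. For $k\in\{1,\dots,n\}$ put $T_k:=\{(i,j):\delta_k(\partial\beta_{i,j})\neq 0\}$ and $A_k:=\sum_{(i,j)\in T_k}a_{i,j}$. Writing $\partial\beta=\sum a_{i,j}\,\partial\beta_{i,j}$ as in standing assumption~(2) and using linearity of $\delta_k$ together with $a_{i,j}\ge 0$ (Proposition~\ref{proposition_nonnegativeness}) and $|\delta_k(\partial\beta_{i,j})|\le 1$, we get $|\delta_k(\partial\beta)|\le A_k$. Now if $\delta_k(\partial\beta)\neq 0$, Lemma~\ref{lemma_possibledeltaouput} forces $|\delta_k(\partial\beta)|=1$, hence $A_k\ge 1$; likewise $\delta_\ell(\partial\beta)\neq 0$ gives $A_\ell\ge 1$. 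But when $|k-\ell|>1$ the first step shows $T_k\cap T_\ell=\varnothing$, so $A_k+A_\ell=\sum_{(i,j)\in T_k\cup T_\ell}a_{i,j}\le \sum_{i,j}a_{i,j}=1$ by standing assumption~(1). Since $A_k+A_\ell\ge 2>1$, we cannot have both $\delta_k(\partial\beta)$ and $\delta_\ell(\partial\beta)$ nonzero, which is exactly the assertion (vacuously true already if $n\le 2$).

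\textbf{Main obstacle.} The only genuinely fussy point is the first step: one must check that the ``localization near $k$'' property holds \emph{uniformly}, i.e.\ also for the boundary arrows $\beta_{0,2}$ and $\beta_{n+1,1}$ and, more dangerously, for the two derived corner classes $\beta_{1,1}$ and $\beta_{n,2}$. A priori these live at the corners of the ladder and one might worry their boundaries spread over non-consecutive basis vectors; in fact $\partial\beta_{1,1}=-v_1$ and $\partial\beta_{n,2}=-v_n$ after the telescoping in $(\beta_{2,0}+\beta_{1,2})/2$ and $(\beta_{n,1}+\beta_{n,0})/2$, so the pattern survives, but this is where an off-by-one error or a sign slip in the ladder diagram bookkeeping would hide. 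Everything after that is a one-line inequality.
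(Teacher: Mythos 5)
Your proof is correct and follows essentially the same route as the paper's: the key input in both is the observation that each $\partial\beta_{i,j}$ is supported on at most two consecutive basis vectors, combined with the nonnegativity of the $a_{i,j}$ and the normalization $\sum a_{i,j}=1$. The paper's version picks one index $(r,s)$ with $a_{r,s}>0$ contributing to $\delta_k$, notes $\delta_\ell(\partial\beta_{r,s})=0$, and then bounds $|\delta_\ell(\partial\beta)|\le 1-a_{r,s}<1$ and invokes integrality; your version organizes the same estimate symmetrically via the disjoint index sets $T_k,T_\ell$ and Lemma~\ref{lemma_possibledeltaouput} directly, which is a minor stylistic variant of the same argument.
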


\begin{proof}
To derive a contradiction, suppose that both $\delta_{\ell}(\partial \beta)$ and $\delta_{k}(\partial \beta)$ are nonzero. For $\delta_{k}$ to be non-zero, there exists a double index $(r,s)$ such that $a_{r,s} > 0$ and $\delta_{k} (\partial \beta_{r,s})\neq 0$. Then
\begin{equation}\label{lemmaell1ell2ell3}
\left( \sum_{i=2}^n a_{i,0} + \sum_{i=1}^{n+1} a_{i,1} + \sum_{i=0}^n a_{i,2} \right) - a_{r,s} < 1
\end{equation}
Since each $\partial \beta_{i,j}$ can contribute only \emph{two consecutive} diagonals, we have $\delta_{\ell} (\partial \beta_{r,s}) = 0$. Because of~\eqref{lemmaell1ell2ell3}, $-1 < \delta_{\ell}(\partial \beta) < 1$. Since $\delta_\ell(\partial \beta) \in \Z$, we have $\delta_{\ell}(\partial \beta) = 0$, which leads to a contradiction. 
\end{proof}

Here is one corollary of Lemma~\ref{lemma_possibledeltaouput} and~\ref{lemma_threecannot}.

\begin{corollary}\label{corollary_lemmathreecannot}
Either one of the following holds.
\begin{enumerate}
\item there exists $\ell \in \{1, 2, \dots, n\}$ such that $\delta_\ell(\partial \beta) = \pm 1$ and $\delta_k (\partial \beta) = 0$ for all $k (\neq \ell)$.
\item there exists $\ell \in \{1, 2, \dots, n-1\}$ such that $\delta_\ell(\partial \beta) = \pm 1$, $\delta_{\ell+1}(\partial \beta) = \pm 1$, and $\delta_k (\partial \beta) = 0$ for all $k (\neq \ell, \ell+1)$.
\end{enumerate}
\end{corollary}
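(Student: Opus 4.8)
The plan is to combine Lemma~\ref{lemma_possibledeltaouput} and Lemma~\ref{lemma_threecannot} in an essentially bookkeeping manner. By the condition $(2)$ imposed just before Lemma~\ref{lemma_possibledeltaouput}, the vector $\partial\beta \in N_\Q$ actually lies in $N\setminus\{0\}$, so the tuple $(\delta_1(\partial\beta),\dots,\delta_n(\partial\beta))\in\Z^n$ is nonzero. By Lemma~\ref{lemma_possibledeltaouput}, each entry belongs to $\{-1,0,1\}$; in particular the support $S\coloneqq\{k : \delta_k(\partial\beta)\neq 0\}$ is nonempty and on $S$ the value is $\pm 1$. By Lemma~\ref{lemma_threecannot}, any two elements of $S$ differ by at most $1$. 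Hence $S$ is a set of pairwise-``adjacent'' integers in $\{1,\dots,n\}$, which forces $|S|\le 2$ (three or more distinct integers always contain a pair differing by more than one). If $|S|=1$, writing $S=\{\ell\}$ gives case $(1)$; if $|S|=2$, the two elements are consecutive, say $\ell$ and $\ell+1$ with $\ell\in\{1,\dots,n-1\}$, which gives case $(2)$. Since $S\neq\emptyset$, these exhaust all possibilities.

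The only point needing a sentence of care is the elementary combinatorial claim that a subset of $\{1,\dots,n\}$ in which every two elements differ by at most $1$ has at most two elements and, if it has two, they are consecutive. This is immediate: if $a<b<c$ all lie in such a set, then $c-a\ge 2$, contradicting the hypothesis; and two distinct elements $a<b$ with $b-a\le 1$ must satisfy $b=a+1$.

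I do not expect any real obstacle here: the corollary is purely a logical repackaging of the two preceding lemmas together with the standing assumptions $(1)$ and $(2)$ on $\beta$, and the proof is two or three lines once one observes that $S$ is nonempty (from $(2)$), has entries in $\{\pm1\}$ on its support (from Lemma~\ref{lemma_possibledeltaouput}), and is an ``interval'' of size at most two (from Lemma~\ref{lemma_threecannot}).
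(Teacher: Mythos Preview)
Your proof is correct and matches the paper's intended argument: the paper states the corollary immediately after Lemma~\ref{lemma_possibledeltaouput} and Lemma~\ref{lemma_threecannot} without proof, treating it as a direct consequence of those two lemmas together with the standing nonvanishing condition $(2)$, and your write-up simply spells out that deduction.
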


We are ready to make a list of possible effective disk classes. 

\begin{proposition}\label{proposition_effectiveclasses}
The set of homotopy classes which can be represented by a holomorphic disk bounded by $L^1$ of Maslov index two in ${X}^1$ are contained in the set consisting of the following classes$\colon$
\begin{itemize}
\item $\alpha_{i,0}$ for $i= 2, 3, \cdots, n$,
\item $\alpha_{i,1}$ for $i= 2, 3, \cdots, n+1$,
\item $\alpha_{i,2}$ for $i= 0, 1, \cdots, n-1$,
\item $( \alpha_{i,0} + \alpha_{i,1} ) / 2 $  for $i= 2, \cdots, n-1$,
\item $( \alpha_{2,0} + \alpha_{1,2} ) / 2 + c_{1} \alpha_1$, $( \alpha_{n,0} + \alpha_{n,1} ) / 2 + c_{n} \alpha_n$ for $c_1$ and $c_n \in \Z$.
\end{itemize}
\end{proposition}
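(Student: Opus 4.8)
The plan is to combine the non-negativity statement of Proposition~\ref{proposition_nonnegativeness}, the area constraint~\eqref{equ_identitiyareaone}, and the combinatorial restriction on $(\delta_1,\dots,\delta_n)(\partial\beta)$ furnished by Corollary~\ref{corollary_lemmathreecannot}, and then read off the short list of surviving classes directly from the ladder diagram $\Gamma^\vee_\mathbf{r}$. Concretely, let $\alpha\in H_2({X}^1,L^1;\Q)$ be represented by a $J^1$-holomorphic disk of Maslov index two, write $\alpha$ as in~\eqref{equ_alphasum} with all $a_{i,j}\ge 0$ (Proposition~\ref{proposition_nonnegativeness}, transported back from $\beta=\psi_{1,0,*}(\alpha)$), and set $\beta=\psi_{1,0,*}(\alpha)$ so that $\sum_{i}a_{i,0}+\sum_i a_{i,1}+\sum_i a_{i,2}=1$ by~\eqref{equ_identitiyareaone}. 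Since the Maslov index is preserved by $\Pi_*^{-1}\circ\psi_{1,0,*}$ (Lemma~\ref{lemma_maslovpreserving}) and every basic class has Maslov index two, the Maslov-two condition is equivalent to $\sum a_{i,j}=1$, so no further arithmetic is needed there; what controls the answer is the requirement that $\partial\beta$ be an \emph{integral} lattice vector in $N$, which is exactly where the $\delta_k$-computation bites.

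First I would dispose of Case~(1) of Corollary~\ref{corollary_lemmathreecannot}: there is a single index $\ell$ with $\delta_\ell(\partial\beta)=\pm1$ and all other $\delta_k(\partial\beta)=0$. Reading the ladder diagram, $\delta_\ell$ is a sum $\sum c_{i,j}a_{i,j}$ with $c_{i,j}\in\{0,\pm1\}$, and only the arrows $\overrightarrow{e_{i,j}}$ meeting the diagonal line through $(\ell-1/2,3/2)$ and $(\ell+1/2,1/2)$ contribute. For a sum of non-negative numbers totalling $1$ to land on exactly $\pm1$ in one coordinate and $0$ in all others, essentially all the weight must sit on a single arrow with the correct sign (any admixture of an arrow contributing to an adjacent diagonal, or of opposite sign, forces a non-integral or too-small value, by the same argument as in Lemma~\ref{lemma_threecannot}). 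Chasing which arrows have support confined to a single diagonal gives precisely the basic classes $\alpha_{i,0}$ ($2\le i\le n$), $\alpha_{i,1}$ ($2\le i\le n+1$), $\alpha_{i,2}$ ($0\le i\le n-1$); near the truncated corners, where the diagram $\Gamma^\vee_\mathbf{r}$ differs from $\Gamma^\vee_\lambda$, the only way to achieve an integral boundary using the available arrows is the half-combinations $(\alpha_{2,0}+\alpha_{1,2})/2$ and $(\alpha_{n,0}+\alpha_{n,1})/2$ (recall $\alpha_{1,1}$ and $\alpha_{n,2}$ were defined in Section~\ref{sec_computationofhomotopygroups} as exactly these averages), which may be modified by a multiple of the spherical classes $\alpha_1,\alpha_n$ since those have zero boundary and zero Maslov index — giving the last bullet with $c_1,c_n\in\Z$.

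Next I would handle Case~(2): there is an $\ell$ with $\delta_\ell(\partial\beta)=\pm1$, $\delta_{\ell+1}(\partial\beta)=\pm1$, and all other $\delta_k=0$. Again by the ladder-diagram bookkeeping, an arrow contributes simultaneously to two consecutive diagonals only when it is a diagonal-type edge of $\Gamma^\vee_\mathbf{r}$, and among the weight-$1$ data compatible with $\sum a_{i,j}=1$ the only admissible configuration is $a_{i,0}=a_{i,1}=\tfrac12$ for a single $i$ with $2\le i\le n-1$ (so that $\partial\beta_{i,0}+\partial\beta_{i,1}$ is an integral vector supported on two consecutive $\delta_k$'s), which yields $(\alpha_{i,0}+\alpha_{i,1})/2$. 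One must check that the corner half-combinations of the previous paragraph, rather than appearing again here, are already covered, and that no configuration with three or more nonzero $\delta_k$'s survives — but that is precisely Corollary~\ref{corollary_lemmathreecannot}. Finally, Lemma~\ref{lemma_nontrivialofboundary} rules out $\partial\beta=0$, so no genuinely spherical class appears on its own, and assembling the two cases gives exactly the list in the statement.

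\textbf{Main obstacle.} I expect the bookkeeping at the two truncated corners of $\Gamma^\vee_\mathbf{r}$ — where the diagram is obtained from $\Gamma^\vee_\lambda$ by deleting the extremal unit boxes, so the naive arrows $\overrightarrow{e_{1,1}}$ and $\overrightarrow{e_{n,2}}$ are not present and one instead works with $\alpha_{1,1}:=(\alpha_{2,0}+\alpha_{1,2})/2$, $\alpha_{n,2}:=(\alpha_{n,1}+\alpha_{n,0})/2$ and the spherical corrections $\alpha_1,\alpha_n$ — to be the delicate part: one has to verify carefully that these half-integral and spherically-shifted combinations are the \emph{only} extra classes produced and that their Maslov index and area are correctly accounted for under $\psi_{1,0,*}$, using that $\alpha_1,\alpha_n$ are isotropic hence area- and Chern-zero. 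The interior arrows, by contrast, are handled uniformly by the $\delta_k$-argument with no case analysis beyond Lemmas~\ref{lemma_possibledeltaouput} and~\ref{lemma_threecannot}.
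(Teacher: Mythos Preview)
Your overall strategy matches the paper's --- split by Corollary~\ref{corollary_lemmathreecannot} into the single-index and two-index cases and classify each --- but you have the case assignments reversed, and this reflects a concrete miscomputation of the $\delta_k$-values of the basic classes.

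Compute $\partial\beta_{i,j}$ directly: using $v_{i,2}=v_i$, $v_{i+1,1}=-v_i$ one finds
\[
\partial\beta_{i,0}=-v_{i-1}-v_i,\qquad \partial\beta_{i,1}=v_{i-1}-v_i,\qquad \partial\beta_{i,2}=-v_i+v_{i+1}.
\]
Each of these is supported on \emph{two} consecutive coordinates, so almost every basic class lives in Case~(2), not Case~(1); only the two end arrows $\beta_{0,2}$ and $\beta_{n+1,1}$ have boundary $v_1$ and $v_n$ supported on a single coordinate. Conversely,
\[
\partial\bigl((\beta_{i,0}+\beta_{i,1})/2\bigr)=-v_i
\]
is supported on a \emph{single} coordinate, so all the half-combinations (interior and corner alike) arise in Case~(1). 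In the paper's analysis, Case~(1) with $\delta_\ell=+1$ yields only $\beta_{0,2}$ and $\beta_{n+1,1}$ (and is empty for interior $\ell$), while Case~(1) with $\delta_\ell=-1$ yields the half-classes; Case~(2) with the four sign patterns $(\pm1,\pm1)$ yields respectively nothing, $\beta_{\ell+1,0}$, $\beta_{\ell+1,1}$, $\beta_{\ell,2}$, each with coefficient exactly~$1$.

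In particular your heuristic ``essentially all the weight must sit on a single arrow'' in Case~(1) is not right: when $\delta_\ell(\partial\beta)=-1$ and the neighbours vanish, the constraints $\delta_{\ell\pm1}=0$ force the pairings $a_{\ell,0}=a_{\ell,1}$ and $a_{\ell+1,0}=a_{\ell,2}$, and modulo $\blacksquare_\ell$ this is precisely $(\beta_{\ell,0}+\beta_{\ell,1})/2$, not a single arrow. And your claim that in Case~(2) ``the only admissible configuration is $a_{i,0}=a_{i,1}=\tfrac12$'' is false, since $a_{\ell+1,0}=1$ already gives $(\delta_\ell,\delta_{\ell+1})=(-1,-1)$.

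Your treatment of the spherical shifts at the corners is essentially the paper's: once the $\beta$-level list is settled, a disk degenerating to $(\beta_{2,0}+\beta_{1,2})/2$ can meet the stratum $g_1$ but not $g_n$, so its preimage is $(\alpha_{2,0}+\alpha_{1,2})/2+c_1\alpha_1$, while interior half-classes and basic classes miss both $g_1$ and $g_n$ and hence carry no spherical correction. So the ``main obstacle'' you flag is not really the hard part; the actual work is the honest case-by-case computation of $\delta_\ell(\partial\beta)$, which you have inverted.
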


\begin{proof} The proof is simply a case-by-case analysis. Suppose that $\alpha$ is represented by a holomorphic disk bounded by $L^1$ of Maslov index two. Set $\beta = \psi_{1,0,*}(\alpha)$. By Corollary~\ref{corollary_lemmathreecannot}, it suffices to deal with the the following two cases.

\bigskip

\noindent \textbf{Case 1}. Assume that (1) in Corollary~\ref{corollary_lemmathreecannot} occurs. Then $\delta_{\ell} (\partial \beta)$ is either $1$ or $-1$.
\begin{itemize}
\item \textbf{Case 1-1}. Suppose that $\delta_{\ell} (\partial \beta) = 1$. 
\begin{itemize}
\item \textbf{Case 1-1-1}. Assume that $\ell = 1$. The only four classes $\alpha_{0,2}, \alpha_{1,2}, \alpha_{2,0}$, and $\alpha_{2,1}$ can contribute to $\delta_1$. Thus,
\begin{equation}\label{equ_delta111}
\delta_1 (\partial \beta) = - a_{2,0} + a_{2,1} + a_{0,2} - a_{1,2}. 
\end{equation}
It implies that 
$$
\begin{cases}
a_{2,1} + a_{0,2} = 1 \mbox{ and } \\
a_{i,j} = 0 \mbox{ for all $(i,j) \neq (2,1), (0,2)$}.
\end{cases}
$$
Furthermore, $a_{0,2} = 1$ and $a_{2,1} = 0$ since $\delta_2(\partial \beta) = - a_{2,1}$. In this case, we have $\beta_{0,2}$.
\item \textbf{Case 1-1-2}. Assume that $\ell = n$. In this case, by applying a similar argument as in Case 1-1-1, we have $\beta_{n+1,1}$. 
\item \textbf{Case 1-1-3}. Assume that $1 < \ell < n$. The only six classes $\alpha_{\ell,0}, \alpha_{\ell+1,0}, \alpha_{\ell,1}$, $\alpha_{\ell+1,1}$, $\alpha_{\ell-1,2}$, and $\alpha_{\ell,2}$ can contribute to $\delta_\ell$. Thus,  
\begin{equation}\label{equ_delta113}
\delta_\ell (\partial \beta) = - a_{\ell,0} - a_{\ell+1,0} - a_{\ell,1} + a_{\ell+1,1} + a_{\ell-1,2} - a_{\ell,2}. 
\end{equation}
It implies that 
$$
\begin{cases}
a_{\ell+1,1} + a_{\ell-1,2} = 1 \mbox{ and } \\
a_{i,j} = 0 \mbox{ for all $(i,j) \neq (\ell+1,1), (\ell-1,2)$}.
\end{cases}
$$
Since $\delta_{\ell+1}(\partial \beta) = - a_{\ell+1,1}$ and $\delta_{\ell-1}(\partial \beta) = - a_{\ell-1,2}$, either $\delta_{\ell+1}(\partial \beta) \neq 0 $ or $\delta_{
\ell-1}(\partial \beta) \neq 0$ holds. There are no corresponding classes in this case. 

\end{itemize}
\item \textbf{Case 1-2}. Suppose that $\delta_{\ell} (\partial \beta) = -1$.
\begin{itemize}
\item \textbf{Case 1-2-1}. Assume that $\ell = 1$. By~\eqref{equ_delta111}, we have 
$$
\begin{cases}
a_{2,0} + a_{1,2} = 1 \mbox{ and } \\
a_{i,j} = 0 \mbox{ for all $(i,j) \neq (2,0), (1,2)$.}
\end{cases} 
$$
Since $\delta_2(\partial \beta) = - a_{2,0} + a_{1,2}$, we have $a_{2,0} = a_{1,2} = 1/2$. Thus, we have $(\beta_{2,0} + \beta_{1,2})/2$. 
\item \textbf{Case 1-2-2}. Assume that $\ell = n$. In this case, by applying a similar argument as in Case 1-2-1, we have $(\beta_{n,0} + \beta_{n,1})/2$.
\item \textbf{Case 1-2-3}. Assume that $1 < \ell < n$. By~\eqref{equ_delta113}, 
$$
\begin{cases}
a_{\ell,0} + a_{\ell+1,0} + a_{\ell,1} + a_{\ell,2}= 1 \mbox{ and } \\
a_{i,j} = 0 \mbox{ for all $(i,j) \neq (\ell, 0), (\ell+1,0), (\ell, 1), (\ell, 2)$}.
\end{cases}
$$ 
Since $\delta_{\ell -1} (\partial \beta) = 0$ and $\delta_{\ell+1} (\partial \beta) = 0$, we have $a_{\ell, 0} = a_{\ell, 1}$ and $a_{\ell+1, 0} = a_{\ell, 2}$. As $\beta_{\ell, 0} + \beta_{\ell, 1} - \beta_{\ell+1, 0} - \beta_{\ell, 2} = 0$, we may assume that $2 (a_{\ell+1, 0} + a_{\ell, 2}) = 1$. Therefore, in this case, the corresponding class is
$( \beta_{\ell+1, 0} + \beta_{\ell, 2} ) / 2 = ( \beta_{\ell, 0} + \beta_{\ell, 1} ) / 2$.
\end{itemize}
\end{itemize}
\textbf{Case 2.} Assume that (2) in Corollary~\ref{corollary_lemmathreecannot} occurs.
\begin{itemize} 
\item \textbf{Case 2-1}. Suppose that $\delta_{\ell} (\partial \beta) = 1$ and $\delta_{\ell+1} (\partial \beta) = 1$. By~\eqref{equ_delta113}, 
$$
\begin{cases}
a_{\ell+1, 1} + a_{\ell-1, 2} = 1 \mbox{ and } \\
a_{\ell+2, 1} + a_{\ell, 2} = 1.
\end{cases}
$$ 
We cannot make both $a_{\ell+1, 1} + a_{\ell-1, 2}$ and $a_{\ell+2, 1} + a_{\ell, 2}$ one. 
\item \textbf{Case 2-2}. Suppose that $\delta_{\ell} (\partial \beta) = -1$ and $\delta_{\ell+1} (\partial \beta) = -1$. By~\eqref{equ_delta113},  
$$
\begin{cases}
a_{\ell, 0} + a_{\ell+1, 0} + a_{\ell, 1} + a_{\ell, 2}= 1 \mbox{ and } \\
a_{\ell+1, 0} + a_{\ell+2, 0} + a_{\ell+1, 1} + a_{\ell+1, 2}= 1.
\end{cases}
$$
Thus, $a_{\ell+1, 0} = 1$ and $a_{i,j} = 0$ for all $(i,j) \neq (\ell+1, 0)$. The corresponding class is $\beta_{\ell+1, 0}$.
\item \textbf{Case 2-3}. Suppose that $\delta_{\ell} (\partial \beta) = 1$ and $\delta_{\ell+1} (\partial \beta) = -1$. By~\eqref{equ_delta113},  
$$
\begin{cases}
a_{\ell+1, 1} + a_{\ell-1, 2} = 1 \mbox{ and } \\
a_{\ell+1, 0} + a_{\ell+2, 0} + a_{\ell+1, 1} + a_{\ell+1, 2}= 1.
\end{cases}
$$
Thus, $a_{\ell+1, 1} = 1$ and $a_{i,j} = 0$ for all $(i,j) \neq (\ell+1, 1)$. The corresponding class is $\beta_{\ell+1, 1}$.
\item \textbf{Case 2-4}. Suppose that $\delta_{\ell} (\partial \beta) = -1$ and $\delta_{\ell+1} (\partial \beta) = 1$. By~\eqref{equ_delta113},  
$$
\begin{cases}
a_{\ell,0} + a_{\ell+1,0} + a_{\ell,1} + a_{\ell,2}= 1 \mbox{ and } \\
a_{\ell+2,1} + a_{\ell,2} = 1.
\end{cases}
$$
Thus, $a_{\ell,2} = 1$ and $a_{i,j} = 0$ for all $(i,j) \neq (\ell,2)$. The corresponding class is $\beta_{\ell,2}$.
\end{itemize}

Each basic class $\beta_{i,j}$ intersects the inverse image of a facet and does not intersect the inverse image of the stratum $g_1$ and $g_n$. Thus, each basic class $\alpha_{i,j}$ intersects neither the vanishing $\alpha_1$ nor the vanishing $\alpha_n$. An effective homology class that degenerates into $\beta_{i,j}$ is of the form $\alpha_{i,j}$.

Disks in the non-basic classes can intersect with $\alpha_1$ or $\alpha_n$. For instance, a disk in the class $( \beta_{1,0} + \beta_{1,1} ) / 2$ interacts with (the inverse image of the relative interior of) $g_1$, while any disk in the class $( \beta_{1,0} + \beta_{1,1} ) / 2$ cannot intersect with  (the inverse image of the relative interior of) $g_n$. Thus, an effective homology class degenerating into  $( \beta_{1,0} + \beta_{1,1} ) / 2$ is of the form $( \alpha_{1,0} + \alpha_{1,1} ) / 2 + c_{1} \alpha_1$ for some $c_1 \in \Z$. Similarly, the other cases can be dealt with. It completes the proof.
\end{proof}

\section{Computation of open Gromov--Witten invariants}\label{Section_computationofopenGWinv}

In this section, we shall compute the counting invariant for each class in the list of Proposition~\ref{proposition_effectiveclasses} and hence it completes the proof of Theorem~\ref{theorem_main}. To compute the counting invariants, we calculate the counting invariants of lower dimensional local models and then calculate the counting invariants of higher dimensional examples via the counting invariants of lower dimensional local models.

\subsection{Counting invariants for basic classes}

We begin by computing counting invariants for basic disks. Let $L^1$ be a monotone Lagrangian torus in $\mathcal{M}_\mathbf{r} = {X}^1$ and let $\psi_{1,0}(L^1) = L^0$. Recall the notions of relative homotopy classes $\alpha_{i,j}$'s in Proposition~\ref{proposition_effectiveclasses} and $\beta_{i,j} =  \psi_{1,0,*}(\alpha_{i,j})$, the image under the map~\eqref{equpsi101}.

\begin{proposition}\label{proposition_countinginvarinatone}
The counting invariant for each basic class is one. That is, the counting invariant for each class in the following list is one.
\begin{itemize}
\item $\alpha_{i,0}$ for $i= 2, 3, \cdots, n$,
\item $\alpha_{i,1}$ for $i= 2, 3, \cdots, n+1$, and
\item $\alpha_{i,2}$ for $i= 0, 1, \cdots, n-1$.
\end{itemize}
\end{proposition}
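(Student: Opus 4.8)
The plan is to prove that each basic class has counting invariant one by transporting the computation to the toric degeneration $X^0$, where such counts are governed by the toric combinatorics. First I would recall from Section~\ref{sec_computationofhomotopygroups} that for each basic class $\alpha_{i,j}$ there is, by construction, an actual $J^1$-holomorphic \emph{gradient} disk $\varphi_{i,j}$ produced by the Hamiltonian $\bS^1$-action generated by $-\widetilde{\ell}_{i,j}$; this disk lives in the region $V^1$ where the interpolated almost complex structure coincides with the complex structure pulled back from the smooth locus $U^0 \subset X^0$, and it maps under $\psi_{1,0}$ to the toric gradient disk in the class $\beta_{i,j}$. Thus the moduli space $\mcal{M}_1(X^1, L^1, \alpha_{i,j}; J^1)$ is nonempty, and the task is to show that the degree of the evaluation map $\mathrm{ev}_0$ is exactly $1$.

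Next I would argue regularity and the count itself via the degeneration. By Proposition~\ref{proposition_effectiveclasses}, the only effective classes of Maslov index two are the $\alpha_{i,j}$, the half-sums $(\alpha_{i,0}+\alpha_{i,1})/2$, and $(\alpha_{2,0}+\alpha_{1,2})/2 + c_1\alpha_1$, $(\alpha_{n,0}+\alpha_{n,1})/2 + c_n\alpha_n$; in particular no class of Maslov index two decomposes as a sum of two classes of positive Maslov index plus sphere bubbles in a way that would destroy compactness, because $L^1$ is monotone and bounds no Maslov-zero disks. The strategy is the family compactness argument already used in the proof of Proposition~\ref{proposition_nonnegativeness}: a generic path of almost complex structures from $J^1$ to $J^0$ (after embedding the family $X^t$ into a product of projective spaces and using Ye's family Gromov compactness \cite{Ye}) gives a cobordism between $\mcal{M}_1(X^1, L^1, \alpha_{i,j}; J^1)$ and the moduli space of strict transforms in $X^0$, hence equality of the evaluation degrees $n_{\alpha_{i,j}} = n(\beta_{i,j}; L^0)$. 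For the toric side I would invoke the classification of holomorphic (orbi-)disks of Cho--Poddar \cite[Proposition 9.4]{ChoPoddar} on the simplicialization $\widetilde{X}^0$: a basic class on a (possibly singular) toric variety is represented by a unique holomorphic disk up to reparametrization and the torus action, and this disk passes through a generic point of $L^0$ exactly once. Since $\beta_{i,j}$ corresponds to a single arrow $\overrightarrow{e_{i,j}}$ in the dual ladder diagram $\Gamma^\vee_\mathbf{r}$ and no relation among basic classes (the $\blacksquare_i$) can express $\beta_{i,j}$ as a sum of other effective classes, the count localizes to this single disk and yields $n(\beta_{i,j}; L^0) = 1$.

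The main obstacle I anticipate is a careful treatment of the almost complex structure and compactness at the singular fiber $X^0$: $X^0$ is neither a symplectic manifold nor an orbifold, so one cannot apply Gromov compactness directly, and one must make sure (i) the interpolation region can be chosen small enough that each gradient disk lies entirely in $V^1$ where $J^1$ is the pulled-back integrable structure, so that the disk is automatically regular there, and (ii) that the limit of a sequence of $J^t$-holomorphic disks in $\alpha_{i,j}$ is exactly the strict transform of the toric basic disk and does not acquire extra components supported on the singular strata --- this is where the non-negativity from Proposition~\ref{proposition_nonnegativeness} together with the area and boundary constraints $\omega(\alpha_{i,j}) = 1$, $\partial\beta_{i,j}\neq 0$ are used to rule out bubbling. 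Once these points are in place, the orientation and degree bookkeeping is routine: the evaluation map on the unique disk (modulo automorphisms) has degree $\pm 1$, and positivity of the open Gromov--Witten invariant for a basic disk --- or a direct check of the induced orientation from the chosen spin structure as in \cite{ChoPoddar} --- pins the sign to $+1$. Finally, by Lemma~\ref{lemma_invarianceofcounting} the count is independent of the generic $J^1$, so $n_{\alpha_{i,j}} = 1$ as claimed.
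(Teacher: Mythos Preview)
Your overall strategy is correct and leads to the same conclusion, but you are working harder than necessary. You propose a cobordism argument through the family $\{X^t\}$ using Ye's compactness, and then worry (rightly) about what happens at the singular fiber $X^0$. The paper sidesteps this entirely: rather than running a one-parameter cobordism, it fixes a \emph{single} almost complex structure $J^1$ on $X^1$ which, on the open set $V^1\supset$ (image of every basic gradient disk), is the pullback of the integrable toric complex structure on the smooth locus $U^0\subset X^0$ via the symplectomorphism $\psi_{1,0}$. Because every basic disk in $\alpha_{i,j}$ lies in $V^1$ and maps under $\psi_{1,0}$ to a basic toric disk in $\beta_{i,j}$ that avoids the singular locus of $X^0$, the map $\psi_{1,0,*}$ is an honest orientation-preserving diffeomorphism of moduli spaces $\mcal{M}_1(L^1,\alpha_{i,j};J^1)\to\mcal{M}_1(L^0,\beta_{i,j})$, not merely a cobordism. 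Then one passes to the simplicialization $\widetilde X^0$ via $\Pi$, where Cho--Poddar gives regularity and $n_{\tilde\beta_{i,j}}=1$, and the commuting square of evaluation maps transfers the degree back. Finally, monotonicity of $L^1$ (Lemma~\ref{lemma_invarianceofcounting}) makes the count independent of the choice of $J^1$, so one may replace this special $J^1$ by a generic one.

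What the paper's route buys is that the obstacle you flag in your third paragraph --- compactness at the singular fiber, exclusion of components supported on singular strata --- simply does not arise: there is no limit to take. What your route buys is that it does not require checking that \emph{every} $J^1$-holomorphic disk in $\alpha_{i,j}$ (not just the gradient one) lies in $V^1$; but this is immediate once one knows the moduli space on the toric side consists of a single torus-orbit of disks avoiding the singular locus, so the two approaches converge at that step anyway.
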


\begin{proof}
Let $\Pi$ be the toric morphism from a simplicialization $\widetilde{{X}}^0$ of ${X}^0$ to the toric variety ${X}^0$ of $\Delta_\mathbf{r}$. We denote by $\widetilde{L}^0$ the Lagrangian toric fiber such that $\Pi(\widetilde{L}^0) = L^0$.

For each class $\alpha_{i,j}$ in the above list, recall that we have a gradient  holomorphic disk in the class $\beta_{i,j}$ by Proposition~\ref{proposition_pi2xl}, which emanates from the smooth loci of the toric anti-canonical divisor. Since the gradient disk intersects the toric anti-canonical divisor in ${X}^0$ once, its strict transformation also intersects the toric anti-canonical divisor of $\widetilde{{X}}^0$ once. By the classification result of Cho--Poddar \cite{ChoPoddar}, the class $\tilde{\beta}_{i,j}$ represented by its strict transformation and the toric complex structure are Fredholm regular. Moreover, the counting invariant of $\tilde{\beta}_{i,j}$ is one if taking the standard spin structure and orientation from the torus action on $\widetilde{{X}}^0$. We then have an orientation preserving  diffeomorphism
$$
\Pi_* \colon \mcal{M}_1(\widetilde{L}^0, \widetilde{\beta}_{i,j}) \to \mcal{M}_1({L}^0, {\beta}_{i,j}).
$$
Also, the holomorphic disks in the moduli space do not intersect the singular loci. By pulling back the toric complex structure to an open subset containing the image of disks and extending it to an almost complex structure on ${X}^1$, we then have an isomorphism
$$
\psi_{1,0,*} \colon \mcal{M}_1({L}^1, {\alpha}_{i,j}) \to \mcal{M}_1({L}^0, {\beta}_{i,j}).
$$
Since $L^1$ is monotone, the invariant remains one for a generic choice of almost complex structures.
We obtain the following commutative diagram of orientation preserving diffeomorphisms$\colon$
$$
\xymatrix{
 \mcal{M}_1({L}^1, {\alpha}_{i,j})  \ar[r]^{\psi_{1,0,*}} \ar[d]_{\mathrm{ev}_0} & \mcal{M}_1({L}^0, {\beta}_{i,j}) \ar[d]_{\mathrm{ev}_0} &  \mcal{M}_1(\widetilde{L}^0, \widetilde{\beta}_{i,j}) \ar[l]_{\Pi_*} \ar[d]_{\mathrm{ev}_0}  \\
L^1 \ar[r]^{\psi_{1,0}} & L^0 & \widetilde{L}^0 \ar[l]_{\Pi}}.
$$
In summary, the degree of $\mathrm{ev}_0 \colon  \mcal{M}_1({L}^1, {\alpha}_{i,j})  \to L^1$ agrees with that of $\mathrm{ev}_0 \colon \mcal{M}_1(\widetilde{L}^0, \widetilde{\beta}_{i,j})  \to \widetilde{L}^0$. Therefore, the counting invariant of every basic class is one.
\end{proof}

For each $i = 1, 2, \cdots, n$, we have 
$$
\partial \left( ( \alpha_{i,0} + \alpha_{i,1} ) / 2\right) = \partial \left( ( \alpha_{i,0} + \alpha_{i,1} ) / 2 + c_{i,1} \alpha_1 + c_{i,n} \alpha_n \right)
$$
for all $c_{i,1}, c_{i,2} \in \Z$. Note that all $c_{i,1}$ and $c_{i,2}$ are zero except finitely many by the Gromov's compactness theorem. Therefore, for a generic choice of compatible almost complex structure, the disk potential function is then of the form$\colon$
 \begin{equation}\label{equ_potentialexp1}
 W_\mathbf{r}(\mathbf{z}) = \left( z_1 + z_n \right) + \left( \frac{\kappa_1}{z_1} + \frac{\kappa_2}{z_2} + \dots + \frac{\kappa_n}{z_n} \right) + \sum_{j=1}^{n-1} \left( \frac{z_j}{z_{j+1}} + \frac{z_{j+1}}{z_j} + \frac{1}{z_{j}z_{j+1}}  \right)
 \end{equation}
by Proposition~\ref{proposition_effectiveclasses} and~\ref{proposition_countinginvarinatone}. The remaining task is to determine $\kappa_1, \kappa_2, \dots, \kappa_n$. The goal of this section is to show that
\begin{equation}\label{equ_computationk1k2}
\begin{cases}
\kappa_1 = \kappa_n = 2, \\
\kappa_2 = \kappa_3 = \dots = \kappa_{n-1} = 0.
\end{cases}
\end{equation}

The computation~\eqref{equ_computationk1k2} leads to Theorem~\ref{theorem_main}, that is, when $\mathbf{r}$ is equilateral and generic, the disk potential function of the monotone caterpillar bending torus fiber is
$$
W_\mathbf{r} (\textbf{z}) = \left(z_1 + \frac{2}{z_1} + z_n + \frac{2}{z_n} \right) + \sum_{j=1}^{n-1} \left( \frac{z_j}{z_{j+1}} + \frac{z_{j+1}}{z_j} + \frac{1}{z_{j}z_{j+1}}  \right).
$$

\subsection{Local models}

Now, we build two local models, which will be employed to compute the above counting invariants~\eqref{equ_computationk1k2} in Section~\ref{section_Globalmodel}. 

The first local model is contained in the caterpillar bending system $\Phi_\mathbf{r}$ on the polygon space $\mcal{M}_\mathbf{r}$ of complex dimension two (i.e., $n = 2$) for $\mathbf{r} = (r_1,r_2, \cdots ,r_{n+3 = 5})$ with $r_1 = r_2 = \cdots = r_{n+3} = 1$. Set $\mathbf{u}_{\bS^2} \coloneqq (2,2)$. We now focus on the local model around $\mathbf{u}_{\bS^2} $ at which a Lagrangian two sphere $\bS^2$ is located.  
\begin{enumerate}
\item Let $\Delta_{\bS^2}$ be the subset of the caterpillar bending polytope $\Delta_\mathbf{r} \subset \R^2$ given by
\begin{equation}\label{equ_deltas1}
\ell_{2,0}(\mathbf{u}) = 4 - u_{1} - u_{2} \geq 0, \, \ell_{1,2}(\mathbf{u}) = - u_{1} + u_{2} \geq 0, \mbox{ and } -1 + u_{1} > 0
\end{equation}
as depicted in the first figure of Figure~\ref{fig_local}.
\item Consider the toric degeneration $\{\Phi^t\}$ of $\Phi_\mathbf{r}$ restricted to $(\Phi^t)^{-1} \left( \Delta_{\bS^2} \right)$ in Corollary~\ref{theorem_toricdegepolygon}. 
Let ${U}_{\bS^2} \coloneqq \Phi_\mathbf{r}^{-1} \left( \Delta_{\bS^2} \right) \subset \mathcal{M}_\mathbf{r}$ and ${U}^t_{\bS^2} \coloneqq \psi_{1,t}({U}_{\bS^2})$, that is, ${U}^t_{\bS^2} = (\Phi^t)^{-1} \left( \Delta_{\bS^2} \right)$.
\end{enumerate}
\begin{figure}[h]
	\scalebox{0.85}{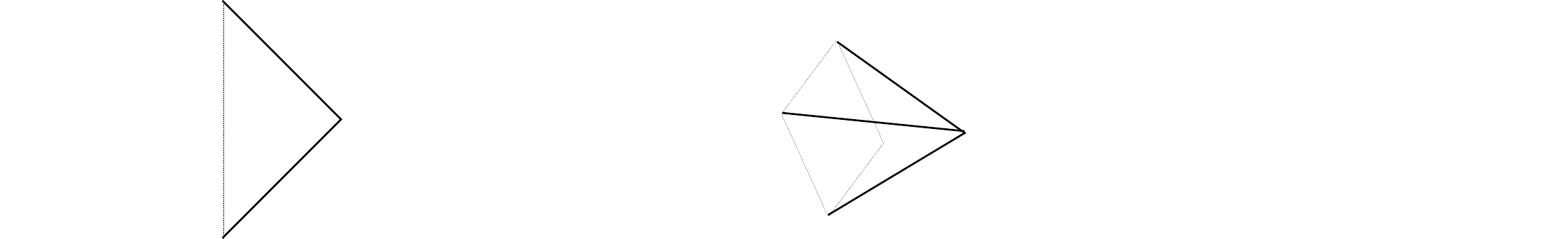}
	\caption{\label{fig_local} Two local models.}	
\end{figure}

For a point $\mathbf{u} = (u_{1}, u_{2}) \in \Delta_{\bS^2}$, let $L(\mathbf{u}) \coloneqq \Phi_\mathbf{r}^{-1}(\mathbf{u})$ and $L^t(\mathbf{u}) \coloneqq \psi_{1,t}(L(\mathbf{u}))$. By Theorem~\ref{theorem_Bouloc}, the fiber $L(\mathbf{u}_{\bS^2})$ is a Lagrangian two sphere and the subset ${U}_{\bS^2}$ is symplectically embedded into the cotangent bundle $(T^* \bS^2, \omega_{\mathrm{std}})$. 
Fix $\mathbf{w} = (w_1, 2) \in \Delta_{\bS^2}$ with $1.5 < w_1 < 2$. Consider the inverse image 
\begin{equation}\label{equ_cutboundary}
U_{w_1} \coloneqq \Phi_{\mathbf{r}}^{-1} \left( \{  \mathbf{u} \in \Delta_{\bS^2} \mid {u}_1 \geq - 2 +  2 w_1 \} \right) \subset {U}_{\bS^2} \mbox{ and } U^t_{w_1} \coloneqq \psi_{1,t}(U_{w_1}).
\end{equation}
Take the symplectic cut of $U^t_{w_1}$ along its boundary with respect to the $\bS^1$-action generated by $\Phi^t_{1}$. 
The resulting cut space is denoted by $\overline{{U}}_{w_1}^{t}$ and is a symplectic manifold equipped with the induced symplectic form from ${U}_{\bS^2}^t$. For a point $\mathbf{u} \in \{  \mathbf{u} \in \Delta_{\bS^2} \mid {u}_1 > - 2 +  2 w_1 \}$, we denote the Lagrangian torus in $\overline{{U}}^t_{w_1}$ corresponding to $L^t(\mathbf{u})$ by $\overline{L}^t(\mathbf{u})$.

\begin{lemma}\label{lemma_monotoneutw1}
For each $t$ with $t \in (0,1]$, the Lagrangian torus $\overline{L}^t(\mathbf{w})$ of $\overline{{U}}^t_{w_1}$ is monotone. In particular, the cut space $\overline{{U}}^t_{w_1}$ is monotone.
\end{lemma}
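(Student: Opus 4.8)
The plan is to verify monotonicity of $\overline{L}^t(\mathbf{w})$ by exhibiting it as a monotone Lagrangian torus fiber in a toric surface, after the symplectic cut. First I would observe that for $t = 0$ everything is toric: $U^0_{\bS^2}$ sits inside the toric variety $X^0$ and $\overline{U}^0_{w_1}$ is obtained from the moment-polytope picture by cutting the region $\{\mathbf{u} \in \Delta_{\bS^2} : u_1 \geq -2 + 2w_1\}$ along the line $u_1 = -2 + 2w_1$ using the $\bS^1$-action generated by $\Phi^0_1$. The resulting polytope $\overline{\Delta}_{w_1}$ is a bounded rational polygon in $\R^2$ with three edges: the two old edges $\ell_{2,0}(\mathbf{u}) = 0$ and $\ell_{1,2}(\mathbf{u}) = 0$, and the new cutting edge $u_1 = -2 + 2w_1$. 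So $\overline{U}^0_{w_1}$ is a (possibly singular, but with a simplicialization having no new rays) toric surface, whose Lagrangian toric fiber over a point $\mathbf{u}$ in the interior is monotone precisely when $\mathbf{u}$ is the unique point equidistant (in the sense of affine lattice distance) from all three edges. A direct computation with the three defining linear functionals shows that $\mathbf{w} = (w_1, 2)$ is that center when $w_1$ takes the appropriate value; since we are free to choose $w_1$ (we only required $1.5 < w_1 < 2$), I would fix $w_1$ to be this balancing value, and then $\overline{L}^0(\mathbf{w})$ is a monotone toric fiber, which in particular forces $\overline{U}^0_{w_1}$ itself to be monotone (a compact toric surface with a monotone toric fiber is monotone, as the fiber's basic disk areas and Chern numbers determine the area/Chern-class comparison on all of $H_2$).

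Next I would transport this to $t > 0$ using the toric degeneration $\psi_{1,t}$. By construction $\psi_{1,t}$ restricts to a densely defined symplectomorphism from $\overline{U}^0_{w_1}$ to $\overline{U}^t_{w_1}$ carrying $\overline{L}^0(\mathbf{u})$ to $\overline{L}^t(\mathbf{u})$, and in particular it induces an isomorphism on the relevant relative homology $H_2(\overline{U}^t_{w_1}, \overline{L}^t(\mathbf{w}); \Q)$ sending basic gradient-disk classes to basic gradient-disk classes (this is exactly the mechanism already used for the big caterpillar system in Section~\ref{sec_computationofhomotopygroups}, and the same argument applies verbatim to the local piece). Since $\psi_{1,t}$ is a symplectomorphism it preserves symplectic areas, and by Proposition~\ref{proposition_maslovformula} the Maslov indices of the basic classes are all two on both sides; hence the monotonicity relation $\mu(\beta) = c \cdot \omega(\beta)$ that holds on $\overline{U}^0_{w_1}$ transfers to all of $H_2(\overline{U}^t_{w_1}, \overline{L}^t(\mathbf{w}); \Q)$. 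For the spherical part of the homology, the argument is the same as in the proof of Proposition~\ref{lemma_monotonecbs}: any class in the cokernel of the span of basic relative classes is represented by an isotropic (here, Lagrangian $\bS^2$ or its vanishing-cycle degenerations) submanifold, so has zero symplectic area and zero Chern number, and the relation holds trivially. Combining these, $\omega(\alpha) = c\,\mu(\alpha)$ for all $\alpha \in H_2(\overline{U}^t_{w_1}, \overline{L}^t(\mathbf{w}); \Q)$, which is the definition of monotonicity; and as in the closed case this upgrades to monotonicity of $\overline{U}^t_{w_1}$.

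The main obstacle I anticipate is not the monotonicity bookkeeping but making the symplectic cut rigorous on the degenerate total space: $X^0$ is neither a symplectic manifold nor an orbifold near its deep strata, so "take the symplectic cut of $U^t_{w_1}$ along its boundary" has to be interpreted with care for $t = 0$, and one must check that the cut commutes with $\psi_{1,t}$ in the appropriate dense sense. I would handle this exactly as Nishinou--Nohara--Ueda and the present authors do elsewhere: perform the cut on the smooth locus (which contains the inverse images of the relative interiors of all facets involved, including the new cutting facet, by the analogue of Lemma~\ref{lemma_smoothstratavij}), note that the discarded region $u_1 < -2 + 2w_1$ stays away from the $\bS^2$-fiber over $\mathbf{u}_{\bS^2}$ and from the problematic strata, and observe that since $1.5 < w_1 < 2$ the cut locus sits in the toric (smooth) part of $U^t_{\bS^2}$, so the cut is an honest symplectic-cut construction there. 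Once $w_1$ is fixed at the balancing value — which still lies in $(1.5, 2)$, as one checks from the three linear functionals in~\eqref{equ_deltas1} — all the steps above go through, and the lemma follows.
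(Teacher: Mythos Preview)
Your route is workable but takes an unnecessary detour through $t=0$, whereas the paper reduces to $t=1$ instead. Since $\psi_{1,t}$ is a genuine (not merely dense) symplectomorphism for every $t\in(0,1]$ and is equivariant for the $\bS^1$-action generated by $\Phi^t_1$, it descends to a symplectomorphism $\overline{\psi}_{1,t}\colon \overline{U}_{w_1}\to\overline{U}^t_{w_1}$ carrying $\overline{L}(\mathbf{w})$ to $\overline{L}^t(\mathbf{w})$; hence it suffices to check monotonicity at $t=1$. There one simply introduces the third gradient disk class $\overline{\alpha}$ coming from the new cut facet (moment map $(2w_1-2)-\Phi_1$), observes via Proposition~\ref{proposition_maslovformula} that $\mu(\overline{\alpha})=\mu(\alpha_{2,0})=\mu(\alpha_{1,2})=2$, and computes all three areas to be $2-w_1$; the spherical class $\alpha_1$ is handled exactly as in Proposition~\ref{lemma_monotonecbs}. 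This sidesteps every difficulty you anticipate about cutting the singular space $X^0$ and about $\psi_{1,0}$ being only a dense symplectomorphism.

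One concrete error in your write-up: there is no ``balancing value'' of $w_1$ to be chosen. The cut level $-2+2w_1$ was engineered so that $\mathbf{w}=(w_1,2)$ is automatically the center for \emph{every} $w_1\in(1.5,2)$: one checks $\ell_{2,0}(\mathbf{w})=\ell_{1,2}(\mathbf{w})=w_1-(-2+2w_1)=2-w_1$. Also, your claim that $\psi_{1,t}$ induces an isomorphism on relative $H_2$ is not quite right (compare~\eqref{equpsi101}, where the map to $t=0$ has kernel generated by the vanishing spherical classes); you implicitly correct this by treating the spherical part separately, but the initial statement is misleading. None of this is fatal, but the paper's reduction to $t=1$ is both shorter and avoids these issues entirely.
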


\begin{proof}
Note that ${U}_{\bS^2} = {U}^1_{\bS^2}$ and ${U}^{t}_{\bS^2}$ are symplectomorphic via the map $\psi_{1,t}$ restricted to ${{U}_{\bS^2}}$ in Corollary~\ref{theorem_toricdegepolygon}. Since the map $\psi_{1,t}$ is equivariant under the $\mathbb{S}^1$-action generated by the first component $\Phi^t_{1}$, it induces a symplectomorphism $\overline{\psi}_{1,t}$ between two cut spaces $\overline{U}_{w_1}$ and $\overline{U}^t_{w_1}$. Since $\overline{\psi}_{1,t}$ maps $\overline{L}(\mathbf{w})$ to $\overline{L}^t(\mathbf{w})$, it suffices to prove the statement for $t = 1$. 

To show that $\overline{L}(\mathbf{w})$ is monotone, besides the classes $\alpha_{2,0}$ and $\alpha_{1,2}$, one more gradient disk is necessary. Let $\overline{\alpha}$ be the class in a gradient disk generated by the $\mathbb{S}^1$-action associated with the moment map $(2w_1 - 2) - \Phi_1$. By Proposition~\ref{proposition_maslovformula}, the Maslov index of $\overline{\alpha}$ is two. The area of $\overline{\alpha}$ is $w_1 - (2w_1 - 2)  = 2 - w_1$. By the same argument in Proposition~\ref{lemma_monotonecbs}, we see that $\overline{L}(\mathbf{w})$ is monotone and hence the ambient space $\overline{{U}}_{w_1}$ is also monotone. 
\end{proof}

\begin{lemma}\label{lemma_sympleccp1cp1}
For each $t$ with $t \in (0,1]$,  the cut space $\overline{{U}}_{w_1}^{t}$ is symplectomorphic to $\CP^1 \times \CP^1$ equipped with a multiple of the product of Fubini--Study forms.
\end{lemma}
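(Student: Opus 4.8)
The plan is to identify $\overline{U}_{w_1}^{t}$ by passing to the toric limit $t=0$, running the toric degeneration backwards, and then invoking the classification of monotone symplectic four--manifolds to pin down the symplectomorphism type. First I would record that, since $\psi_{1,t}$ is equivariant for the $\bS^1$--action generated by $\Phi^t_1$ (as noted in the proof of Lemma~\ref{lemma_monotoneutw1}), the induced symplectomorphisms $\overline{\psi}_{1,t}\colon\overline{U}_{w_1}^{t}\to\overline{U}_{w_1}^{s}$ reduce the claim to a single cut space (say for small $t>0$) together with its degeneration to the singular cut space $\overline{U}_{w_1}^{0}$. The latter is the compact toric surface attached to the triangle
\[
P_{w_1} = \{\, \mathbf{u}\in\R^2 \;:\; u_1 \geq 2w_1-2,\ \ u_1+u_2\leq 4,\ \ u_2\geq u_1 \,\},
\]
whose corners at $(2w_1-2,\,2w_1-2)$ and $(2w_1-2,\,6-2w_1)$ are Delzant and whose remaining corner $(2,2)$ is an $A_1$--corner, the two edge normals $(-1,1)$ and $(-1,-1)$ spanning a sublattice of index two. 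Hence $\overline{U}_{w_1}^{0}$ is the weighted projective space $\CP^2(1,1,2)$, with a single $A_1$ quotient singularity lying over $(2,2)$ --- precisely the point over which the caterpillar fibre of $\mcal{M}_\mathbf{r}$ is the Lagrangian sphere $\bS^2$.

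Next I would invoke the toric degeneration of Corollary~\ref{theorem_toricdegepolygon}, restricted to $\Phi_\mathbf{r}^{-1}(P_{w_1})$; being $T_{\mathrm{U}(n+3)}$--invariant it is in particular $\Phi_1$--equivariant, so symplectic cutting along $u_1=2w_1-2$ turns it into a flat family $\{\overline{U}_{w_1}^{t}\}$ with central fibre $\overline{U}_{w_1}^{0}=\CP^2(1,1,2)$ and smooth nearby fibres. The only $\Q$--Gorenstein smoothing of $\CP^2(1,1,2)$ is $\CP^1\times\CP^1$; equivalently, the Milnor fibre of the $A_1$ singularity is diffeomorphic to the disk bundle $\mathcal{O}(-2)\to\bS^2$, so the smooth fibre is diffeomorphic to the minimal resolution $\mathbb{F}_2$ of the triangle's bad corner (which one computes directly: the new ray $(-1,0)$ gives a $(-2)$--curve), and $\mathbb{F}_2\cong\bS^2\times\bS^2$ diffeomorphically. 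In particular $\overline{U}_{w_1}^{t}$ has $b_2=2$ and even intersection form: a vanishing $(-2)$--sphere (realised by the Lagrangian $\bS^2$ over $(2,2)$) together with a complementary section class give the hyperbolic form $\left(\begin{smallmatrix}0&1\\1&-2\end{smallmatrix}\right)$. Thus $\overline{U}_{w_1}^{t}$ is diffeomorphic to $\bS^2\times\bS^2$ and is not $\CP^2\#\overline{\CP^2}$.

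Finally, combining this with Lemma~\ref{lemma_monotoneutw1}, $\overline{U}_{w_1}^{t}$ is a monotone symplectic four--manifold diffeomorphic to $\bS^2\times\bS^2$. By the classification of monotone closed symplectic four--manifolds (a monotone symplectic form on a rational surface is unique up to symplectomorphism), it is symplectomorphic to $\CP^1\times\CP^1$ with its monotone symplectic form; since monotonicity forces the two ruling classes to have equal area, this form is a multiple of the product of Fubini--Study forms, and transporting the result back through $\overline{\psi}_{1,t}$ yields the statement for every $t\in(0,1]$.

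The main obstacle I anticipate is the middle step: making rigorous that the Nishinou--Nohara--Ueda degeneration, after restriction to $\Phi_\mathbf{r}^{-1}(P_{w_1})$ and symplectic cut, is genuinely a degeneration of \emph{closed} manifolds whose generic fibre is the $\Q$--Gorenstein smoothing of $\CP^2(1,1,2)$ (and not some unexpected partial smoothing), and that the symplectic cut commutes with the family near $t=0$. A cleaner alternative, bypassing the singular limit entirely, is to use Bouloc's description (Theorem~\ref{theorem_Bouloc}): $U_{w_1}$ embeds symplectically in $T^*\bS^2$, the affine quadric in $\C^3$; one then checks that the $\bS^1$--action generated by $\Phi_1$ extends the Hopf--type action whose symplectic cut compactifies the affine quadric to the smooth projective quadric $\CP^1\times\CP^1$, with the level of the cut chosen so that $\overline{U}_{w_1}^{t}$ is exactly that compactification, the size of the divisor at infinity being then fixed by monotonicity.
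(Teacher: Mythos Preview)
Your primary route (toric degeneration $\to$ $\CP^2(1,1,2)$ central fibre $\to$ $\Q$--Gorenstein smoothing $\to$ classification of monotone four--manifolds) is genuinely different from the paper's, and the obstacle you flag is real. The Nishinou--Nohara--Ueda family is algebraic, but once you restrict to the open set and perform a \emph{symplectic} cut, you no longer have an algebraic flat family to which the deformation theory of $\CP^2(1,1,2)$ applies; the map $\psi_{1,0}$ is only continuous, so you have not established that $\overline{U}_{w_1}^{t}$ sits in a smoothing of the central fibre in the required sense. Your topological workaround (vanishing $(-2)$--sphere forces even intersection form, hence $\bS^2\times\bS^2$ diffeomorphically) is plausible but would need a clean argument that the Lagrangian $\bS^2$ over $(2,2)$ survives the cut and that $b_2=2$.

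The ``cleaner alternative'' you sketch at the end is exactly the paper's proof. The paper makes it precise as follows: after reducing to $t=1$, it writes down an explicit map $\phi\colon U_{w_1}\to D_{r_1}(T^{}\bS^2)$ using the polygon description --- projecting by shrinking the diagonal $d_1$ to zero and recording the limiting direction of $v_1$, then taking the tangent vector from the limiting direction of $d_1/|d_1|$ with magnitude $|d_1|/2$. It then checks directly that the bending $\bS^1$--action along $d_1$ corresponds under $\phi$ to the geodesic flow on $T\bS^2$, so that the symplectic cut is the standard compactification $T^*\bS^2\rightsquigarrow\CP^1\times\CP^1$ of Albers--Frauenfelder and FOOO. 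Finally, since $U_{\bS^2}$ is K\"ahler, the cut form is K\"ahler; monotonicity (Lemma~\ref{lemma_monotoneutw1}) pins down its cohomology class, and Moser finishes. Your approach would save the explicit bundle map at the cost of importing heavier four--manifold classification machinery; the paper's approach is more hands--on but self--contained and avoids the smoothing issue entirely.
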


\begin{proof}
As in Lemma~\ref{lemma_monotoneutw1}, we may assume that $t = 1$. 
We briefly recall a well-known construction of $\CP^1 \times \CP^1$ from a disk bundle of the tangent bundle of $\mathbb{S}^2$. Identifying $T^{ }\mathbb{S}^2$ with $T^*\mathbb{S}^2$ via the round metric, we regard $T^{ }\mathbb{S}^2$ as a symplectic manifold. Then $\CP^1 \times \CP^1$ is obtained by the symplectic cut on $T^{ }\mathbb{S}^2$ along a level set of a moment map $\mu_{\mathbb{S}^1}$ of the Hamiltonian $\mathbb{S}^1$-action generated by the geodesic flow, see \cite{AF08, FukayaOhOhtaOnoS2S2} for more details.

We shall explain how our situation fits into the above well-known situation. Let us keep in mind that the choice of a point in $\Delta_{\mathbb{S}^2}$ determines the lengths of diagonals. Namely, the fiber $L(\mathbf{u})$ over a point $\mathbf{u} = (u_{1}, u_{2}) \in \Delta_{\mathbb{S}^2}$ is the set of points $[\mathbf{v}] = [(v_1, v_2, \cdots, v_5)]$ of $\mcal{M}_\mathbf{r}$ such that 
$$
|d_{1}| = | v_1 + v_2 | = 2 - {u}_{1} \mbox{ and } |d_{2}| = | v_1 + v_2 + v_3 | = 3 - {u}_{2}
$$ 
by Theorem~\ref{proposition_NNUgzbe}. 

First, we describe the projection map $\pi \colon U_{w_1} \to \mathbb{S}^2 (\subset \R^3)$ in~\eqref{equ_cutboundary} of the disk bundle. For an element $[\mathbf{v}] \in U_{w_1} \backslash L(\mathbf{u}_{\mathbb{S}^2})$, we denote by $\mathbf{u}_\mathbf{v}$ the position of $\mathbf{v}$, i.e. $\Phi_\mathbf{r}([\mathbf{v}]) = \mathbf{u}_{\mathbf{v}}$. We take the line segment connecting $\mathbf{u}_{\mathbb{S}^2}$ and $\mathbf{u}_\mathbf{v}$ as
$$
\mathbf{u}_s \coloneqq (1-s) \mathbf{u}_{\mathbb{S}^2} + s \mathbf{u}_\mathbf{v}.
$$
Note that the diagonal vector $d_{1}$ shrinks to the zero vector as $s \to 0$. Note that every representative $\mathbf{v}$ in $\R^3$ consists of three triangles
$$
(v_1, v_2, -d_1), (d_1, v_3, -d_2), \mbox{ and } (d_2, v_4, v_5).
$$
To specify a representative of an equivalence class $[\mathbf{v}]$ (modulo the orientation preserving isometries on $\R^3$), we take a particular slice of $[\mathbf{v}]$. By applying an orientation preserving isometry, we locate the vector $d_2$ in the positive $x$-axis and the second triangle $(d_1, v_3, - d_2)$ in $\{ (x,y,z) \in \R^3 \mid y \leq 0, z =0 \}$. Note that such a representative is uniquely determined because the triangle $(d_1, v_3, -d_2)$ is not degenerate provided $[\mathbf{v}] \in U_{w_1} \backslash L(\mathbf{u}_{\mathbb{S}^2})$. For simplicity, an equivalence class $[\mathbf{v}]$ will be identified with a representative $\mathbf{v}$ satisfying the above condition and the class $[\mathbf{v}]$ is denoted by $\mathbf{v}$.

To describe $\pi(\mathbf{v})$ under the projection map $\pi \colon U_{w_1} \to \mathbb{S}^2$ in~\eqref{equ_cutboundary} of the disk bundle, we take a family 
$$
\{ \mathbf{v}_s = (v_{s,1}, v_{s,2}, \cdots, v_{s,5}) \mid s \in (0,1]\}
$$ 
of points (representatives) in $U_{w_1}$ such that 
\begin{enumerate}
\item $\mathbf{v}_s$ is contained in the fiber over $\mathbf{u}_s$, that is, $\Phi_\mathbf{r}(\mathbf{v}_s) = \mathbf{u}_s$,
\item the given point $\mathbf{v}$ occurs at $s=1$, that is, $\mathbf{v}_{s=1} = \mathbf{v}$,
\item every $\mathbf{v}_s$ is contained in $\mathcal{M}_\mathbf{r}$, that is, $|v_{s,i}| = r_i = 1$ for all $s \in (0,1]$ and $i =1, 2, \cdots, 5$,
\item the angle from $(v_{s,1}, v_{s,2}, -d_{s,1})$ to $(d_{s,1}, v_{s,3}, -d_{s,2})$ is equal to that from $(v_1, v_2, -d_1)$ to $(d_1, v_3, -d_2)$.
\item the angle from $(d_{s,2}, v_{s,4}, v_{s,5})$ to $(d_{s,1}, v_{s,3}, -d_{s,2})$ is equal to that from $(d_2, v_4, v_5)$ to $(d_1, v_3, -d_2)$. 
\end{enumerate}
The conditions $(4)$ and $(5)$ uniquely determine the planes in which the triangles are respectively contained. Since the side lengths of three edges completely determine a triangle in a plane, $\mathbf{v}_s$ is uniquely determined from the above conditions. 

We now take a limit 
$$
\mathbf{v}_0 \coloneqq (v_{0,1}, v_{0,2}, \cdots, v_{0,5}) \coloneqq \lim_{s \to 0} (v_{s,1}, v_{s,2}, \cdots, v_{s,5}).
$$
The limit configuration consists of one line segment $v_{0,1} = - v_{0,2}$ and one triangle $(v_{0,3}, v_{0,4}, v_{0,5})$ since $d_{0,1}$ is the zero vector. We have located the vector $v_{0,3}$ at the positive $x$-axis by using (a part of) the $\mathrm{SO}(3)$-action. To take a \emph{unique} representative of $[\mathbf{v}_0]$, we additionally fix a slice of the rotational $\mathbb{S}^1$-action along the positive $x$-axis (because the triangle $(d_{0,1}, v_{0,3}, -d_{0,2})$ is degenerate). Let $\rho_\mathbf{v}$ be a rotation along the positive $x$-axis sending $(v_{0,3}, v_{0,4}, v_{0,5})$ to a congruent triangle contained in $\{ (x,y,z) \mid y \geq 0, z =0 \}$. Then the projection map $\pi$ is given by
$$
\pi (\mathbf{v}) = \rho_\mathbf{v} (v_{0,1}) \in \mathbb{S}^2 (\subset \R^3).
$$

Next, we define a bundle map between $U_{w_1}$ and a disk bundle of $\mathbb{S}^2$. Consider the vector
$$
{n}_{0,1} \coloneqq \lim_{\|d_{s,1}\| \to 0} \frac{{d}_{s,1}}{\|{d}_{s,1}\|}.
$$
Note that $v_{0,1}$ and $n_{0,1}$ are always perpendicular regardless of the initial position $\mathbf{u}_\mathbf{v}$ since $\lim_{s \to 0} |v_{s,1} - v_{s,2}| = 0$, while the angle between $n_{0,1}$ and $v_{0,3}$  varies depending on the starting position $\mathbf{u}_\mathbf{v}$. Specifically, the triangle $(d_{s,1}, v_{s,3}, -d_{s,2})$ has three edges of lengths $2- u_{s,1}, 1$, and $a (2- u_{s,1}) +1$ respectively where the slope $a$ is determined by the position of $\mathbf{u}_\mathbf{v}$ from $-1$ to $1$ and $\mathbf{u}_s = (u_{s,1}, u_{s,2})$. By the cosine law, it determines the angle between $n_{0,1}$ and $v_{0,3}$. In particular, if $a = 0$, then $n_{0,1}$ and $v_{0,3}$ are perpendicular. By rotating $\pi (\mathbf{v}) = \rho_\mathbf{v} (v_{0,1})$ along the axis $n_{0,1}$, we have an $\mathbb{S}^1$-family $\gamma_\mathbf{v}(\theta)$ of elements in $\mathbb{S}^2$ such that $\gamma_\mathbf{v}(0) = \pi (\mathbf{v})$. By differentiating the curve, we obtain a tangent vector to $\mathbb{S}^2$ at $\pi (\mathbf{v})$. The magnitude will be determined by the difference of the first component of $\mathbf{u}_{\mathbb{S}^2} - \mathbf{u}_\mathbf{v}$. Taking $r_1 = (2 - w_1)$, the bundle map is given by
$$
\phi \colon U_{w_1} \to D_{r_1}(T^{}\mathbb{S}^2), \quad \phi (\mathbf{v}) = \left(\pi (\mathbf{v}), \frac{|d_{1}|}{2} \cdot \frac{d}{d \theta} \Big{|}_{\theta = 0} \gamma_\mathbf{v} (\theta) \right).
$$ 
Note that $r_1$ and $|{d}_1| / 2$ are chosen to make $\phi$ symplectic. Regarding $\mathbb{S}^2$ as a submanifold of $\R^3$, the disk bundle $D_{r_1}(T^{}S^2)$ is 
$$
D_{r_1}(T^{}S^2) \simeq \{ (a_1, a_2) \in \R^3 \times \R^3 \mid \| a_1 \| = 1, \langle a_1, a_2 \rangle = 0 \mbox{ and } \| a_2 \| \leq r_1\}
$$
and $\phi$ can be also described as
$$
\phi \colon U_{w_1} \to D_{r_1}(T^{}\mathbb{S}^2), \quad \phi (\mathbf{v}) = \left(\pi (\mathbf{v}), \frac{|d_{1}|}{2} \, (n_{0,1} \times \pi (\mathbf{v})) \right).
$$ 

Finally, the map $\phi$ is $\mathbb{S}^1$-equivariant with respect to the bending $\mathbb{S}^1$-action along $d_{s,1}$ on $U_{w_1} \backslash L(\mathbf{u}_{\mathbb{S}^2})$ and the $\mathbb{S}^1$-action generated by the geodesic flow on $D_{r_1}(T^{}\mathbb{S}^2) \backslash \{ o_{\mathbb{S}^2} \}$. To see this, observe the bending $\mathbb{S}^1$-action rotates the triangle $(v_{s,1}, v_{s,2}, -d_{s,1})$ along the direction of $d_{s,1}$. It corresponds to the geodesic flow in the direction of $n_{0,1} \times \pi (\mathbf{v})$ because the direction of $d_{s,1}$ converges to the direction of $n_{0,1}$ as $s \to 0$. Hence, the symplectic cut of the bending $\mathbb{S}^1$-action is symplectomorphic to the symplectic cut of the $\mathbb{S}^1$-action arising from geodesic flow. 

Moreover, the induced symplectic form on the cut space $\overline{{U}}_{w_1}$ is K\"{a}hler since ${U}_{\bS^2}$ is K\"{a}hler. By Lemma~\ref{lemma_monotoneutw1}, the cut space $\overline{{U}}_{w_1}$ is monotone, the K\"{a}hler form associated to the symplectic cut is in the same de Rham cohomology class $k \cdot [ (\omega_{FS} \times \omega_{FS} ) ]$ for $k = 4(2 - w_1)$. By applying the Moser argument to the linear interpolation of the K\"{a}hler forms, the space $\overline{{U}}_{w_1}$ is symplectomorphic to $\CP^1 \times \CP^1$ equipped with a multiple of the product of Fubini--Study forms. 
\end{proof}

By applying the same arguments in Proposition~\ref{proposition_effectiveclasses} and~\ref{proposition_countinginvarinatone}, one can verify that the following homotopy classes
$$
\overline{\alpha}, \alpha_{2,0}, \alpha_{1,2}, \alpha{(c)} \coloneqq c \cdot \alpha_1 + (\alpha_{2,0} + \alpha_{1,2}) / 2  \mbox{ for $c \in \Z$} 
$$
can contribute the disk potential of $\overline{L}(\mathbf{w})$ and the counting invariants of $\overline{\alpha}, \alpha_{2,0}$, and $\alpha_{1,2}$ are all one, see Lemma~\ref{lemma_monotoneutw1} for the definition of the class $\overline{\alpha}$. Note that $\partial \overline{\alpha}, \partial \alpha_{2,0}, \partial  \alpha_{1,2}$, and $\partial \alpha{(c)}$  correspond to $(1, 0), (-1, -1), (-1, 1)$, and $(-1,0)$ in $N$, respectively. For $j = 1, 2$, let $y_j$ be the holonomy variable associated to the $\mathbb{S}^1$-orbit generated by the component $\Phi_j$.  Since the class in $\partial \alpha{(c)}$ is same regardless of the value of $c$, those classes contribute to the term of the form $1/y_1$. By Gromov's compactness theorem, the counting invariants $n_{\alpha{(c)}}$ are all zero except finitely many. Then the disk potential function of $\overline{L}(\mathbf{w})$ is of the form
$$
W_{\overline{L}(\mathbf{w})} (\mathbf{y})  = y_{1} + \frac{1}{y_{1}y_{2}} + \frac{y_{2}}{y_{1}} + \frac{\kappa}{y_{1}}
$$
for some $\kappa \in \Z$. The following lemma computes the number $\kappa$.

\begin{lemma}\label{lemma_diskpotus2}
For $\mathbf{w} = (w_1, 2) \in \Delta_{\bS^2}$ with $1.5 < w_1 < 2$, the sum of the counting invariants is
\begin{equation}\label{equ_countinginvariantssumell}
\kappa = \sum_{c  \in \Z} n_{ \alpha{(c)}} = 2.
\end{equation}
\end{lemma}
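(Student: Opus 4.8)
The plan is to exploit the identification of the cut space $\overline{U}_{w_1}$ with $\CP^1\times\CP^1$ equipped with a multiple of the product Fubini--Study form, established in Lemma~\ref{lemma_sympleccp1cp1}, and to use the fact that the disk potential of a monotone Lagrangian torus fiber of a toric Fano surface is already known. Once we know $\overline{U}_{w_1}\simeq \CP^1\times\CP^1$, the monotone Lagrangian $\overline{L}(\mathbf{w})$ is the monotone torus fiber (the barycenter fiber), whose disk potential in appropriate coordinates is the standard Hori--Vafa potential $t_1+\frac{1}{t_1}+t_2+\frac{1}{t_2}$ for $\CP^1\times\CP^1$ by Cho--Oh \cite{ChoOh}. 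The task is then purely bookkeeping: match the four Maslov-index-two classes $\overline{\alpha},\alpha_{2,0},\alpha_{1,2},\alpha(c)$ against the four basic classes of $\CP^1\times\CP^1$ via the symplectomorphism, track the boundary classes in $N\simeq\Z^2$, and read off that the total coefficient of the monomial $1/y_1$ must equal the coefficient of the corresponding monomial in the toric potential.

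\textbf{Key steps.} First I would record that, under the symplectomorphism $\overline{U}_{w_1}\simeq\CP^1\times\CP^1$, the monotone fiber $\overline{L}(\mathbf{w})$ corresponds to the Clifford-type monotone torus fiber, and hence $H_2(\overline{U}_{w_1},\overline{L}(\mathbf{w});\Z)\simeq\Z^4$ is generated by four basic classes $\gamma_1,\gamma_1',\gamma_2,\gamma_2'$ (two opposite pairs of the square moment polytope), with disk potential $y_1+\frac{1}{y_1y_2}+\frac{y_2}{y_1}+\frac{\kappa}{y_1}$ in \emph{our} coordinates being a reparametrization of $t_1+t_1^{-1}+t_2+t_2^{-1}$. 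Second, I would verify via the explicit boundary computation (the vectors $(1,0),(-1,-1),(-1,1),(-1,0)$ listed just before the lemma) that $\overline{\alpha},\alpha_{2,0},\alpha_{1,2}$ are the three distinct basic classes whose counting invariants are $1$ (already established in the text), and that the fourth basic class of $\CP^1\times\CP^1$ has boundary $(-1,0)$, which is exactly $\partial\alpha(c)$ for every $c$. Third, since all the $\alpha(c)$ have the same boundary, they all contribute to the single Laurent monomial $1/y_1$; the coefficient $\kappa=\sum_c n_{\alpha(c)}$ is forced to be the coefficient of that monomial in the toric potential, which after the coordinate change is $1$... but here one must be careful: the monomial $1/y_1$ is hit \emph{both} by the fourth basic class and by the class $\overline\alpha$ in a pairing — rather, it is $\overline\alpha$ that gives $y_1$ and the remaining three basic classes give $1/y_1$, $y_2/y_1$, $1/(y_1y_2)$; equating coefficients of $1/y_1$ between $W_{\overline L(\mathbf w)}$ and the standard $\CP^1\times\CP^1$ potential then pins down $\kappa$. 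Concretely, one expects the standard $\CP^1\times\CP^1$ potential to appear, in the coordinates adapted to the bending $\mathbb{S}^1$-action, with \emph{two} monomials of the shape $1/y_1$ once the spherical class $\alpha_1$ is accounted for, giving $\kappa=2$.

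\textbf{Alternative and the main obstacle.} An alternative, which avoids relying on the precise form of the coordinate change, is a direct open Gromov--Witten computation on $\CP^1\times\CP^1$: the class $\alpha(0)=(\alpha_{2,0}+\alpha_{1,2})/2$ is represented by a holomorphic sphere-plus-disk configuration (a disk in one of the basic classes together with the exceptional/fiber sphere $\alpha_1$ glued at an interior point), and one counts such configurations using the known genus-zero Gromov--Witten invariant of the line class of $\CP^1$ together with the basic disk count; the factor $2$ then arises as the degree-$2$ contribution from attaching the sphere bubble, exactly as in the $S^2\times S^2$ computations of Fukaya--Oh--Ohta--Ono and Auroux. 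The main obstacle is precisely this last identification: one must rule out additional contributions to the class $1/y_1$ from the infinite family $\{\alpha(c)\}_{c\in\Z}$ with $c\neq 0$, i.e., show $n_{\alpha(c)}=0$ for $c\neq 0$, which follows from the monotonicity-forced area constraint (a disk in $\alpha(c)$ with $c\neq 0$ has area $\neq$ the monotone value unless $c=0$, hence cannot appear in the index-two potential) — but this requires carefully checking the area of $\alpha_1$ against the normalization, and checking that the symplectomorphism of Lemma~\ref{lemma_sympleccp1cp1} indeed sends the monotone fiber to the monotone fiber so that all surviving disk classes have the common area. Once that is in place, $\kappa=2$ is immediate from matching with the $\CP^1\times\CP^1$ potential.
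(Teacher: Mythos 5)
Your central identification is wrong, and it is exactly the wrong step that makes the answer $2$ rather than $1$. Under the symplectomorphism $\overline{U}_{w_1}\simeq\CP^1\times\CP^1$ of Lemma~\ref{lemma_sympleccp1cp1}, the fiber $\overline{L}(\mathbf{w})$ is \emph{not} the Clifford/barycenter toric fiber; it is the Chekanov torus (this is precisely the content of the remark following the lemma). A monomial coordinate change applied to the Clifford potential $t_1+t_1^{-1}+t_2+t_2^{-1}$ just permutes the Newton polytope and keeps every coefficient equal to $1$, so your ``bookkeeping'' route cannot produce the coefficient $\kappa=2$; the $2$ is intrinsic to the Chekanov potential $y_1+\tfrac{(1+y_2)^2}{y_1y_2}$, not an artifact of change of variables. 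Moreover, your proposed area argument for showing $n_{\alpha(c)}=0$ for $c\neq 0$ fails: $\alpha_1$ is a vanishing spherical class of symplectic area zero and Chern number zero, so every $\alpha(c)$ has the same area and Maslov index, and monotonicity alone does not single out $c=0$. (What is actually true, and used in the text, is only that finitely many $n_{\alpha(c)}$ are nonzero, by Gromov compactness; the lemma asserts only the total $\sum_c n_{\alpha(c)}=2$.) Your ``alternative'' via a disk-plus-sphere-bubble degeneration is the right intuition behind why the Chekanov potential carries a $2$, but as written it is a sketch, not a proof.

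The paper's actual argument is of a different nature and sidesteps identifying the torus entirely: it applies Sheridan's structural theorem for the monotone Fukaya category. The critical values of the deformed potential $W(\mathbf{y})=y_1+\tfrac{1}{y_1y_2}+\tfrac{y_2}{y_1}+\tfrac{\kappa}{y_1}$ must lie in the eigenvalue spectrum $\{0,4,-4\}$ of $c_1\star(-)$ on $QH(\CP^1\times\CP^1)$. Computing the critical points (they occur at $y_2=\pm1$, giving critical values $\pm2\sqrt{\kappa+2}$ and $\pm2\sqrt{\kappa-2}$) shows that this forces $\kappa=2$ uniquely. If you prefer to use the identification with $\CP^1\times\CP^1$ directly, the correct route is to recognize $\overline{L}(\mathbf{w})$ as the Chekanov torus and quote the Auroux/FOOO computation of its potential, as the referee's remark indicates.
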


\begin{proof}
Regrading the quantum cohomology as a vector space, it has the eigenvalues $\{0, 4, -4\}$ of the linear transformation induced by the quantum cup product of the first Chern class of the tangent bundle of $\CP^1 \times \CP^1$. Then the object in the monotone Fukaya category can be non-zero only when the critical value of a deformation datum is $\{0, 4, -4\}$ by the structural theorem of the monotone Fukaya category by Sheridan \cite[Corollary 1.5]{SheridanFano}. It determines $\kappa = 2$, to see more details, the reader is referred to \cite[Section 6]{KimQuad}. Thus,~\eqref{equ_countinginvariantssumell} is derived. 
\end{proof}

\begin{remark}
As pointed out by the referee, by Lemma~\ref{lemma_sympleccp1cp1}, one can see that the monotone Lagrangian torus ${\overline{L}(\mathbf{w})}$ maps into the Chekanov torus under the symplectomorphism from $\overline{U}_{w_1}$ to $\CP^1 \times \CP^1$. Then~\eqref{equ_countinginvariantssumell} immediately follows from \cite{AurouxSpecial, FukayaOhOhtaOnoS2S2}.
\end{remark}

The second local model is contained in the caterpillar bending system $\Phi_\mathbf{r}$ on the polygon space $\mcal{M}_\mathbf{r}$ of complex dimension three (i.e., $n = 3$) for $\mathbf{r} = (r_1,r_2,\cdots,r_{n+3=6})$ with $r_1 = r_2 = \dots = r_{n+3} = 1$. Set $\mathbf{u}_{\mathrm{SO}(3)} \coloneqq (1,3,3)$. We now focus on the local model around $\mathbf{u}_{\mathrm{SO}(3)}$ at which a Lagrangian special orthogonal group $\mathrm{SO}(3)$ is located.
\begin{enumerate}
\item Let $\Delta_{\mathrm{SO}(3)}$ be the subset of the caterpillar bending polytope $\Delta_\mathbf{r} \subset \R^3$ given by
\begin{equation}
\begin{split}
&\ell_{2,2}(\mathbf{u}) = -u_{2} + u_{3} \geq 0, \,  \ell_{2,1}(\mathbf{u}) =  2+ u_{1} - u_{2} \geq 0, \, \\
&\ell_{2,0}(\mathbf{u}) = 4 - u_{1} - u_{2} \geq 0, \, \ell_{3,0}(\mathbf{u}) = 6 - u_{2} - u_{3} \geq 0, \mbox{ and } -2 + u_{2} > 0
\end{split}
\end{equation}
as depicted in the second figure of Figure~\ref{fig_local}.
\item Consider the toric degeneration $\{\Phi^t\}$ of $\Phi_\mathbf{r}$ restricted to the inverse image $(\Phi^t)^{-1} \left( \Delta_{\mathrm{SO}(3)} \right)$ in Corollary~\ref{theorem_toricdegepolygon}. Let ${U}_{\mathrm{SO}(3)} \coloneqq \Phi_\mathbf{r}^{-1}(\Delta_{\mathrm{SO}(3)}) \subset \mcal{M}_\mathbf{r}$ and ${U}^t_{\mathrm{SO}(3)} \coloneqq \psi_{1,t}({U}_{\mathrm{SO}(3)})$, that is, ${U}^t_{\mathrm{SO}(3)} = (\Phi^t)^{-1}(\Delta_{\mathrm{SO}(3)})$.
\end{enumerate}

\begin{remark}
Additional explanation on $(2)$ is in order. Since $\mathbf{r}$ is not generic in this case, $\mcal{M}_\mathbf{r}$ is a singular variety. Restricting to $u_2 > 2$, the inverse image ${U}^t_{\mathrm{SO}(3)}$ is smooth. The map $\psi_{t,s} \colon {U}^t_{\mathrm{SO}(3)} \to {U}^s_{\mathrm{SO}(3)}$ and $\Phi^t \colon {U}^t_{\mathrm{SO}(3)}  \to \Delta_{\mathrm{SO}(3)}$  are obtained from the toric degeneration of $\mcal{O}_\lambda$ restricted to the subset given by $u_{2,2} > 2$.
\end{remark}

For a point $\mathbf{u} = (u_{1}, u_{2}, u_{3}) \in \Delta_{\mathrm{SO}(3)}$, let $L(\mathbf{u}) \coloneqq \Phi_\mathbf{r}^{-1}(\mathbf{u})$ and $L^t(\mathbf{u}) \coloneqq \psi_{1,t}(L(\mathbf{u}))$. 
By Theorem~\ref{theorem_Bouloc}, the fiber $L(\mathbf{u}_{\mathrm{SO}(3)})$ is a Lagrangian special orthogonal group $\mathrm{SO}(3)$, which is diffeomorphic to the real projective space $\R P^3$ and the subset ${U}_{\mathrm{SO}(3)}$ is symplectically embedded into to the cotangent bundle $(T^* \R P^3, \omega_{\mathrm{std}})$. Fix $\mathbf{w} = (1, w_2, 3) \in \Delta_{\mathrm{SO}(3)}$ with $2.5 < w_2 < 3$. Consider the inverse image
\begin{equation}\label{equ_cutboundary2}
U_{w_2} \coloneqq \Phi_\mathbf{r}^{-1} \left( \{ \mathbf{u}  \in \Delta_{\mathrm{SO}(3)} \mid u_2 \geq -3 + 2 w_2 \} \right) \subset {U}_{\mathrm{SO}(3)} \mbox{ and } U^t_{w_2} \coloneqq \psi_{1,t}(U_{w_2}).
\end{equation}
Take the symplectic cut of $U^t_{w_2}$ along its boundary with respect to the $\mathbb{S}^1$-action generated by $\Phi_2^t$. The resulting cut space is denoted by $\overline{U}^t_{w_2}$ and is a symplectic manifold equipped with the induced symplectic from ${U}^t_{\mathrm{SO}(3)}$. For a point $\mathbf{u} \in \{ \mathbf{u}  \in \Delta_{\mathrm{SO}(3)} \mid u_2 > -3 + 2 w_2 \}$, we denote the Lagrangian torus in $\overline{U}^t_{w_2}$ corresponding to $L^t(\mathbf{u})$ by $\overline{L}^t(\mathbf{u})$. 

\begin{lemma}\label{lemma_monotoneutw2}
For each $t$ with  $t \in (0,1]$, the Lagrangian torus $\overline{L}^t(\mathbf{w})$ of $\overline{{U}}^t_{w_2}$ is monotone. In particular, the cut space $\overline{{U}}^t_{w_2}$ is monotone.
\end{lemma}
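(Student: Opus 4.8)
The plan is to run the argument of Lemma~\ref{lemma_monotoneutw1}, now with $\Phi_1$ replaced by the bending component $\Phi_2$ and the $\bS^2$-fiber replaced by the $\mathrm{SO}(3)$-fiber. First I would reduce to $t=1$. Since $w_2>2.5$, the region $\Delta_{\mathrm{SO}(3)}\cap\{u_2\geq 2w_2-3\}$ lies entirely in $\{u_2>2\}$, where (as in the remark above) the degeneration is smooth and, for $t\in(0,1]$, the map $\psi_{1,t}$ of Corollary~\ref{theorem_toricdegepolygon} restricts to a genuine symplectomorphism $U_{w_2}\to U^t_{w_2}$. Because that toric degeneration is equivariant for the $\bS^1$-action generated by $\Phi_2^t$, this symplectomorphism descends to a symplectomorphism $\overline{\psi}_{1,t}\colon\overline{U}_{w_2}\to\overline{U}^t_{w_2}$ of the cut spaces carrying $\overline{L}(\mathbf{w})$ to $\overline{L}^t(\mathbf{w})$. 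Hence it suffices to show that $\overline{L}(\mathbf{w})$ is monotone in $\overline{U}_{w_2}$.

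Next I would exhibit the basic gradient disks bounded by $\overline{L}(\mathbf{w})$. As in Section~\ref{sec_computationofhomotopygroups}, the four inequalities $\ell_{2,2},\ell_{2,1},\ell_{2,0},\ell_{3,0}\geq 0$ cutting out $\Delta_{\mathrm{SO}(3)}$ produce basic classes $\alpha_{2,2},\alpha_{2,1},\alpha_{2,0},\alpha_{3,0}$, and the symplectic cut along $\{u_2=2w_2-3\}$ adds one further basic class $\overline{\alpha}$, represented by a gradient disk of the $\bS^1$-action with moment map $(2w_2-3)-\Phi_2$. Each has Maslov index two by Proposition~\ref{proposition_maslovformula}, and evaluating the defining functions at $\mathbf{w}=(1,w_2,3)$ together with the Archimedes area computation used in the proof of Proposition~\ref{lemma_monotonecbs} gives
$$
\omega\big(\alpha_{2,2}\big)=\omega\big(\alpha_{2,1}\big)=\omega\big(\alpha_{2,0}\big)=\omega\big(\alpha_{3,0}\big)=\omega\big(\overline{\alpha}\big)=3-w_2,
$$
so $\mu(\alpha)=\tfrac{2}{3-w_2}\,\omega(\alpha)$ for every basic class $\alpha$.

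It then remains to see that these five classes span $\pi_2\big(\overline{U}_{w_2},\overline{L}(\mathbf{w})\big)\otimes\Q$. I would apply the symplectic cut of the restriction of the toric degeneration of Corollary~\ref{theorem_toricdegepolygon} to $\overline{U}_{w_2}$, degenerating it to the toric variety $\overline{X}^0$ of the polytope $\Delta_{\mathrm{SO}(3)}\cap\{u_2\geq 2w_2-3\}$; the argument of Proposition~\ref{proposition_pi2xl} shows $H_2\big(\overline{X}^0,\overline{L}^0;\Q\big)$ is generated by the five basic classes subject only to the relation $\blacksquare_2=\alpha_{2,0}+\alpha_{2,1}-\alpha_{3,0}-\alpha_{2,2}$, and pulling this back through $\psi_{1,0,*}$ as in Section~\ref{sec_computationofhomotopygroups} transports the statement to $\overline{U}_{w_2}$ once $\blacksquare_2$ is shown to vanish there. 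This last point is exactly the argument of Lemma~\ref{lemma_relationonblacksq}: the face $\{\ell_{2,2}=\ell_{2,1}=\ell_{2,0}=\ell_{3,0}=0\}$ of $\Delta_{\mathrm{SO}(3)}$ is the single point $\mathbf{u}_{\mathrm{SO}(3)}=(1,3,3)$, which lies in $\overline{U}_{w_2}$ since $3\geq 2w_2-3$, and whose fiber is the Lagrangian $\mathrm{SO}(3)\cong\R P^3$; isotoping $\overline{L}(\mathbf{w})$ toward this fiber shrinks $\alpha_{2,2},\alpha_{2,1},\alpha_{2,0},\alpha_{3,0}$ into a Weinstein neighborhood $\cong D^*\R P^3$ by the monotonicity lemma, and since $\partial\blacksquare_2=0$ and $\pi_2(\R P^3)=0$ the class $\blacksquare_2$ is trivial in $D^*\R P^3$, hence in $\overline{U}_{w_2}$. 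Therefore the basic classes span $\pi_2\big(\overline{U}_{w_2},\overline{L}(\mathbf{w})\big)\otimes\Q$, so $\mu=\tfrac{2}{3-w_2}\,\omega$ holds on all of $\pi_2\big(\overline{U}_{w_2},\overline{L}(\mathbf{w})\big)$, and $\overline{L}(\mathbf{w})$ — hence $\overline{U}_{w_2}$ — is monotone.

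The hard part will be this last step: controlling $\pi_2\big(\overline{U}_{w_2},\overline{L}(\mathbf{w})\big)$ in the presence of the non-toric $\mathrm{SO}(3)$-stratum. Concretely one must rule out any spherical class of zero symplectic area and nonzero Chern number; the content is that the only such candidate surviving the degeneration is $\blacksquare_2$, and that $\blacksquare_2$ is killed by the $\pi_2(\R P^3)=0$ localization, which in turn relies on shrinking the four basic disks into a cotangent neighborhood of the $\R P^3$-fiber via the monotonicity lemma.
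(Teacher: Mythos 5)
The paper omits its own proof, pointing only to Lemma~\ref{lemma_monotoneutw1}, and your reconstruction follows that template faithfully: the reduction to $t=1$ via $\Phi_2$-equivariance of $\psi_{1,t}$, the identification of the five gradient disks (four from the facets of $\Delta_{\mathrm{SO}(3)}$ plus $\overline{\alpha}$ from the cut), the verification via Proposition~\ref{proposition_maslovformula} and Archimedes' lemma that each has $\mu=2$ and $\omega=3-w_2$, and the localization of $\blacksquare_2$ into a Weinstein neighborhood of the $\R P^3$-fiber are all correct and in the spirit of Proposition~\ref{lemma_monotonecbs}.

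One step could be tightened. To conclude $\mu=\tfrac{2}{3-w_2}\,\omega$ on all of $\pi_2\bigl(\overline{U}_{w_2},\overline{L}(\mathbf{w})\bigr)\otimes\Q$, it is not enough to show the five basic classes span a $4$-dimensional subspace isomorphic to $H_2(\overline{X}^0,\overline{L}^0;\Q)$ via $\psi_{1,0,*}$; one must also know that $\pi_2\bigl(\overline{U}_{w_2},\overline{L}(\mathbf{w})\bigr)\otimes\Q$ itself has dimension $4$, so no class is left outside the span. (In Proposition~\ref{lemma_monotonecbs} this role is played by Lemma~\ref{lemma_secondhomotopygpz2n3}, which supplies the extra dimensions and identifies them with the vanishing spherical classes $\alpha_1,\alpha_n$.) Here the needed input is $\pi_2(\overline{U}_{w_2})\otimes\Q\simeq\Q$; this follows either from the diffeomorphism $\overline{U}_{w_2}\cong\CP^3$, established in the topological part of Lemma~\ref{lemma_sympleccp3} independently of monotonicity, or directly from $\pi_2(\R P^3)=0$ plus the effect of the symplectic cut. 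You invoke $\pi_2(\R P^3)=0$ only to kill $\blacksquare_2$, but its more important role is that, unlike the $\bS^2$ case, there is no vanishing spherical class analogous to $\alpha_1$ to handle separately. (Relatedly, $\blacksquare_2=0$ is not strictly needed for the monotonicity conclusion, since $\mu(\blacksquare_2)=\omega(\blacksquare_2)=0$ identically; it is needed only to match the dimension count with $H_2(\overline{X}^0,\overline{L}^0;\Q)$.) With that clarification your proof is complete and matches the approach the paper intends.
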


The proof of the Lemma~\ref{lemma_monotoneutw2} is omitted because it is similar to that of Lemma~\ref{lemma_monotoneutw1}.

\begin{lemma}\label{lemma_sympleccp3}
For each $t$ with $t \in (0,1]$,  the cut space $\overline{{U}}_{w_2}^{t}$ is symplectomorphic to $\CP^3$ equipped with a multiple of the Fubini--Study form.
\end{lemma}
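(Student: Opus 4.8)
The plan is to mimic the proof of Lemma~\ref{lemma_sympleccp1cp1} with $\mathbb{S}^2$ replaced by $\R P^3 \simeq \mathrm{SO}(3)$. As in Lemma~\ref{lemma_monotoneutw1}, the first reduction is to the case $t = 1$: the toric degeneration $\psi_{1,t}$ in Corollary~\ref{theorem_toricdegepolygon} is $\mathbb{S}^1$-equivariant with respect to the action generated by $\Phi_2^t$, so it descends to a symplectomorphism $\overline{\psi}_{1,t}\colon \overline{U}_{w_2} \to \overline{U}^t_{w_2}$ carrying $\overline{L}(\mathbf{w})$ to $\overline{L}^t(\mathbf{w})$; hence it suffices to treat $\overline{U}_{w_2}$ inside $\mcal{M}_\mathbf{r}$. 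By Theorem~\ref{theorem_Bouloc}, $L(\mathbf{u}_{\mathrm{SO}(3)})$ is a Lagrangian $\mathrm{SO}(3)$, so a Weinstein neighborhood identifies a neighborhood of this fiber in ${U}_{\mathrm{SO}(3)}$ with a disk subbundle of $T^*\R P^3$; the geodesic flow for the round metric on $\R P^3 = \mathrm{SO}(3)$ is periodic, generating a Hamiltonian $\mathbb{S}^1$-action (the unit cosphere bundle of $\R P^3$), and the symplectic cut of $T^*\R P^3$ along a level set of this moment map is well known to be $\CP^3$ with a multiple of the Fubini--Study form. (This is the $\R P^n$ analogue of the $T^*\mathbb{S}^2 \rightsquigarrow \CP^1\times\CP^1$ picture used in Lemma~\ref{lemma_sympleccp1cp1}; see also \cite{AF08}.)

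The technical core is to build, exactly as in Lemma~\ref{lemma_sympleccp1cp1}, an explicit $\mathbb{S}^1$-equivariant symplectic bundle map $\phi\colon U_{w_2} \to D_{r_1}(T^*\R P^3)$ intertwining the bending $\mathbb{S}^1$-action along the diagonal $d_2$ (which collapses as $\mathbf{u} \to \mathbf{u}_{\mathrm{SO}(3)}$) with the cogeodesic $\mathbb{S}^1$-action. First I would note that a point $[\mathbf{v}] \in \Delta_{\mathrm{SO}(3)}$ corresponds, via Theorem~\ref{proposition_NNUgzbe}, to a hexagon in $\R^3$ with $|d_1| = |v_1+v_2| = 2-u_1$, $|d_2| = |v_1+v_2+v_3| = 3-u_2$, $|d_3| = 4-u_3$; the distinguished fiber at $\mathbf{u}_{\mathrm{SO}(3)}$ has $d_2 = 0$, so the polygon degenerates into a triangle $(v_1,v_2,v_3)$ (with $v_1+v_2+v_3=0$) attached to a second triangle $(v_4,v_5,v_6)$ sharing the vanishing vertex, and the configuration space of such degenerate polygons, modulo $\mathrm{SO}(3)$, is $\mathrm{SO}(3)$ (the relative rotation of the two rigid triangles). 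Pushing a point along the segment $\mathbf{u}_s = (1-s)\mathbf{u}_{\mathrm{SO}(3)} + s\mathbf{u}_\mathbf{v}$ and taking $s \to 0$, the direction $n_2 = \lim d_{s,2}/\|d_{s,2}\|$ is defined, and the infinitesimal bending along $d_2$ converges to the geodesic flow on $\mathrm{SO}(3)$ in the direction determined by $n_2$. The map $\phi(\mathbf{v}) = (\pi(\mathbf{v}), \tfrac{|d_2|}{2}\,\xi_\mathbf{v})$ with $\pi(\mathbf{v}) \in \mathrm{SO}(3)$ the limit configuration (after fixing a slice of the residual rotation about the $n_2$-axis) and $\xi_\mathbf{v}$ the corresponding cotangent covector, is then $\mathbb{S}^1$-equivariant, and the scaling factors $r_1 = 2-w_2$, $|d_2|/2$ are chosen so that $\phi$ is symplectic on the complement of the zero section. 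Consequently the symplectic cut of $U_{w_2}$ along $\partial U_{w_2}$ with respect to the bending action is symplectomorphic to the cut of $D_{r_1}(T^*\R P^3)$ along the cosphere bundle, i.e.\ to $\CP^3$ with some scaled $\omega_{FS}$.

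Finally, as in the last paragraph of Lemma~\ref{lemma_sympleccp1cp1}, the cut form on $\overline{U}_{w_2}$ is K\"ahler (since ${U}_{\mathrm{SO}(3)}$ is K\"ahler), and by Lemma~\ref{lemma_monotoneutw2} it is monotone; on $\CP^3$ a monotone K\"ahler form is a fixed multiple of $\omega_{FS}$ up to cohomology, so a Moser argument along the linear interpolation of the two K\"ahler forms produces the desired symplectomorphism $\overline{U}^{1}_{w_2} \cong (\CP^3, k\,\omega_{FS})$ with $k$ computed from the area of a line, which by the area formula $\omega(\beta_{i,j}) = \ell_{i,j}(\mathbf{u})$ in the proof of Proposition~\ref{lemma_monotonecbs} works out to a multiple of $(3-w_2)$. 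The main obstacle I anticipate is the bookkeeping in the explicit construction of $\phi$: unlike the $\mathbb{S}^2$ case, the ``fiber direction'' here is a full copy of $\mathrm{SO}(3)$ rather than a $2$-sphere, so one must carefully choose a slice for the residual rotational freedom about the collapsing diagonal and check that the resulting section of $T^*\R P^3$ is well-defined and that $\phi$ is genuinely symplectic (not merely a diffeomorphism) — but the argument is structurally identical to Lemma~\ref{lemma_sympleccp1cp1}, which is why the authors say the proof ``is similar'' and I would likewise compress the routine parts.
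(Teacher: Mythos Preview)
Your proposal is correct and follows essentially the same approach as the paper: reduce to $t=1$ via equivariance of $\psi_{1,t}$, build an explicit $\mathbb{S}^1$-equivariant bundle map from $U_{w_2}$ to a disk bundle in $T^*\R P^3$ intertwining the bending action with the geodesic flow, then identify the symplectic cut with $\CP^3$ and finish with Moser via monotonicity. Two minor slips: the radius should be $3-w_2$ (not $2-w_2$), and the cut of $T^*\R P^3$ yielding $\CP^3$ is due to Oakley--Usher \cite{OakleyUsher} rather than \cite{AF08}, which treats the $\mathbb{S}^2$ case.
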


\begin{proof}[Sketch of Proof]
We may assume that $t = 1$. As we did in Lemma~\ref{lemma_sympleccp1cp1}, we relate our case to a well-known construction of $\CP^3$ from a disk bundle of the tangent bundle of $\mathbb{R}P^3 \simeq \mathbb{S}^3 / (\mathbb{Z}/2)$. Identifying $T^{ } \mathbb{R}P^3$ with $T^*\mathbb{R}P^3$ via the round metric, we regard $T^{ } \mathbb{R}P^3$ as a symplectic manifold. Then $\CP^3$ is obtained by the symplectic cut on $T^{ } \mathbb{R}P^3$ along a level set of a moment map $\mu_{\mathbb{S}^1}$ of the Hamiltonian $\mathbb{S}^1$-action generated by the geodesic flow, see in \cite[Section 3.1]{OakleyUsher}.

For $[\mathbf{v}] = [ (v_1, v_2, \cdots, v_6)] \in U_{w_2} \backslash L(\mathbf{u}_{\mathrm{SO}(3)})$, we denote by $\mathbf{u}_\mathbf{v}$ the position of $\mathbf{v}$, i.e. $\Phi_\mathbf{r}([\mathbf{v}]) = \mathbf{u}_{\mathbf{v}}$. We take the line segment connecting $\mathbf{u}_{\mathrm{SO}(3)}$ and $\mathbf{u}_\mathbf{v}$ as
$$
\mathbf{u}_s \coloneqq (1-s) \mathbf{u}_{\mathrm{SO}(3)} + s \mathbf{u}_\mathbf{v}.
$$
Note that the diagonal vector $d_{2}$ shrinks to the zero vector as $s \to 0$. Each representative $\mathbf{v}$ in $\R^3$ consists of four triangles
$$
(v_1, v_2, -d_1), (d_1, v_3, -d_2), (d_2, v_4, -d_3), \mbox{ and } (d_3, v_4, v_5).
$$
Let $(x,y,z)$ be the coordinate system for the Euclidean space $\R^3$ in which the polygon $\mathbf{v}$ is contained. To represent an equivalence class $[\mathbf{v}]$, we take its slice in a way that the vector $d_2$ is located at the positive $x$-axis and the second triangle $(d_2, v_4, -d_3)$ is contained in $\{ (x,y,z) \mid y \geq 0, z =0 \}$. We take a family 
$$
\{ \mathbf{v}_s = (v_{s,1}, v_{s,2}, \cdots, v_{s,6}) \mid s \in (0,1]\}
$$ 
of points (representatives) in $U_{w_2}$ such that 
\begin{enumerate}
\item $\mathbf{v}_s$ is contained in the fiber over $\mathbf{u}_s$,
\item the given point $\mathbf{v}$ occurs at $s=1$,
\item every element $\mathbf{v}_s$ is contained in $\mathcal{M}_\mathbf{r}$, and
\item the angles between triangles in $\mathbf{v}$ are same as those between triangles in $\mathbf{v}_s$.
\end{enumerate}

To describe $\pi(\mathbf{v})$ under the projection map $\pi \colon U_{w_2} \to \mathrm{SO}(3)$ in~\eqref{equ_cutboundary2} of the disk bundle, we take a limit of the above family
$$
\mathbf{v}_0 \coloneqq (v_{0,1}, v_{0,2}, \cdots, v_{0,6}) \coloneqq \lim_{s \to 0} (v_{s,1}, v_{s,2}, \cdots, v_{s,6}).
$$
Note that the limit configuration consists of two triangles $(v_{0,1}, v_{0,2}, v_{0,3})$ and $(v_{0,4}, v_{0,5}, v_{0,6})$. For $i = 1$ or $4$, let $\rho_{\mathbf{v}, i}$ be an element of $\mathrm{SO}(3)$ acting on $\R^3$ which sends the triangle $(v_{0,i}, v_{0,i+1}, v_{0,i+2})$ to its congruent triangle such that
\begin{enumerate}
\item $\rho_{\mathbf{v},i} (v_{0,i})$ is located at the positive $x$-axis and
\item $\rho_{\mathbf{v},i} ((v_{0,i}, v_{0,i+1}, v_{0,i+2}))$ is contained in $\{ (x,y,z) \mid y \geq 0, z =0 \}$.
\end{enumerate}
Then the projection map $\pi$ is defined by 
$$
\pi (\mathbf{v}) = \rho_{\mathbf{v},4}^{-1} \circ  \rho^{}_{\mathbf{v},1} \in \mathrm{SO}(3).
$$ 
In other words, we first locate $\mathbf{v}_0$ at the ``reference" position and then assigns an orientation preserving isometry from the reference position to the first triangle $\rho^{-1}_{\mathbf{v},4}((v_{0,1}, v_{0,2}, v_{0,3}))$ as the output of the projection $\pi$. 

Consider
$$
{n}_{0,2} \coloneqq  \rho_{\mathbf{v},4}^{-1} \left( \lim_{ \| d_{s,2} \| \to 0} \frac{{d}_{s,2}}{\|{d}_{s,2}\|} \right) \in \mathbb{S}^2.
$$
By bending the first triangle along the axis $n_{0,2}$, we have an $\mathbb{S}^1$-family $\gamma_\mathbf{v}(\theta)$ of elements in $\mathrm{SO}(3)$ such that $\gamma_\mathbf{v}(0) = \pi (\mathbf{v})$. By differentiating the curve, we obtain a tangent vector of $\mathrm{SO}(3)$ at $\pi(\mathbf{v})$. The magnitude will be determined by the difference of the second component of $\mathbf{u}_{\mathrm{SO}(3)} - \mathbf{u}_{\mathbf{v}}$. Taking $r_2 = (3 - w_2)$, the bundle map is given by
$$
\phi \colon U_{w_2} \to D_{r_2}(T^{}\mathrm{SO}(3)), \quad \phi (\mathbf{v}) = \left( \pi(\mathbf{v}), \frac{|d_2|}{2} \cdot \frac{d}{d \theta} \Big{|}_{\theta = 0} \gamma_\mathbf{v}(\theta) \right).
$$

Moreover, the map $\phi$ is $\mathbb{S}^1$-equivariant with respect to the bending $\mathbb{S}^1$-action along $d_{s,2}$ on $U_{w_2} \backslash L(\mathbf{u}_{\mathrm{SO}(3)})$ and the $\mathbb{S}^1$-action generated by the geodesic flow on $D_{r_2}(T^{}\mathrm{SO}(3)) \backslash \{o_{\mathrm{SO}(3)} \} \simeq D_{r_2}(T^{}\R P^3) \backslash \{o_{\R P^3} \} $. To see that this bending action corresponds to the geodesic flow of $T^{} \R P^3$, recall that the geodesic flow on $T^{}{\R P^3}$ is induced from one on $T^{}{\mathbb{S}^3}$ regarding $\R P^3 \simeq \mathbb{S}^3 / (\Z / 2)$. Also, $\R P^3$ is diffeomorphic to the quotient space $\mathbb{S}^3 / (\Z / 2) \simeq \mathbb{D}^3 / \sim$ where $\sim$ is given by the identification of the antipodal points at its boundary. Consider the isomorphism $\mathrm{SO}(3) \to \mathbb{D}^3 / \sim$ defined by $(v, \theta) \to [v \theta / {\pi}]$ where $v$ is the axis of rotations and $\theta$ ($- \pi \leq \theta \leq \pi$) is a rotational angle. Thus, the bending $\mathbb{S}^1$-action induces a rotation with a fixed axis, which generates the flow traveling along a line passing through the origin in the disk $\mathbb{D}^3$. It lifts to the geodesic flow in $\R P^3$. 

The induced symplectic form on the cut space $\overline{{U}}_{w_2}$ is K\"{a}hler since ${U}_{\mathrm{SO}(3)}$ is K\"{a}hler. By Lemma~\ref{lemma_monotoneutw2}, the cut space $\overline{{U}}_{w_2}$ is monotone, the K\"{a}hler form associated to the symplectic cut is in the same de Rham cohomology class $k \cdot [ \omega_{FS} ]$ for $k = 4(3 - w_2)$. By applying the Moser argument to the linear interpolation of the K\"{a}hler forms, the space $\overline{{U}}_{w_2}$ is symplectomorphic to $\CP^3$ equipped with a multiple of the Fubini--Study form. 
\end{proof}

By applying the same arguments in Proposition~\ref{proposition_effectiveclasses} and~\ref{proposition_countinginvarinatone}, one can verify that the following homotopy classes
$$
\overline{\alpha}, \alpha_{2,2}, \alpha_{2,1}, \alpha_{2,0}, \alpha_{3,0}, \alpha \coloneqq  ({\alpha_{3,0} + \alpha_{2,2}})/{2} =  ({\alpha_{2,0} + \alpha_{2,1}})/{2}
$$
can contribute the disk potential of $\overline{L}(\mathbf{w})$ and the counting invariants of $\alpha_{2,2}, \alpha_{2,1}, \alpha_{2,0}, \alpha_{3,0}$, and $\overline{\alpha}$ are all one. Here, $\overline{\alpha}$ is the class in a gradient disk generated by the $\mathbb{S}^1$-action associated with the moment map $(2w_2 - 3) - \Phi_2$. Note that $\partial \overline{\alpha}, \partial \alpha_{2,2}, \partial \alpha_{2,1}, \partial  \alpha_{2,0}, \partial \alpha_{3,0}$, and $\partial \alpha$ correspond to the lattice points $(0, 1, 0), (0, -1, 1), (1, -1, 0), (-1,-1,0), (0, -1, -1)$, and $(0, -1, 0)$ in $N$, respectively. Therefore, the disk potential function of $\overline{L}^1$ is of the form
$$
W_{\overline{L}(\mathbf{w})} (\mathbf{y}) = y_{2} + \frac{y_{3}}{y_{2}} + \frac{y_{1}}{y_{2}} + \frac{1}{y_{1}y_{2}} + \frac{1}{y_{2}y_{3}} +  \frac{\kappa}{y_{2}} 
$$
for some $\kappa \in \Z$. The following lemma computes the number $\kappa$.


\begin{lemma}\label{lemma_diskpotus3}
For $\mathbf{w} = (1, w_0,3) \in \Delta_{\mathrm{SO}(3)}$ with $2.5 < w_0 < 3$, the counting invariant is
\begin{equation}\label{equ_countinginvariantssumell2}
\kappa = n_{\alpha} = 0.
\end{equation}
\end{lemma}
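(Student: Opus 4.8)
The plan is to run the $\CP^3$ analogue of the argument used for Lemma~\ref{lemma_diskpotus2}. By Lemma~\ref{lemma_sympleccp3} the cut space $\overline{U}_{w_2}^t$ is symplectomorphic to $\CP^3$ with a multiple of the Fubini--Study form, and by Lemma~\ref{lemma_monotoneutw2} the torus $\overline{L}(\mathbf{w})$, which sits at the center of the cut polytope $\overline{\Delta}_{\mathrm{SO}(3)}$, is monotone inside it; all of its Maslov-two disks have the common area $a=3-w_2$, recorded trivially in the Novikov parameter. Rewriting the already established shape of the potential as
$$W_{\overline{L}(\mathbf{w})}(\mathbf{y}) = y_2 + \frac{1}{y_2}\left( y_1 + \frac{1}{y_1} + y_3 + \frac{1}{y_3} + \kappa \right),$$
the unknown is the integer $\kappa=n_\alpha$, the coefficient of the monomial $1/y_2$; note that no sphere-corrected class can enter this coefficient, since $\CP^3$ carries no area-zero spherical class and $\alpha+c[\ell]$ has Maslov index $2+8c$, which is $2$ only for $c=0$. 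The strategy is to pin down $\kappa$ by Sheridan's structural theorem \cite{SheridanFano}: every critical value of $W_{\overline{L}(\mathbf{w})}$ must be an eigenvalue of quantum multiplication by $c_1$ on $QH^*(\CP^3)$, exactly as in \cite[Section 6]{KimQuad}.

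First I would record the spectrum. In the chosen normalization a complex line of $\CP^3$ lifts to $2\overline{\alpha}+\alpha_{2,2}+\alpha_{3,0}$, hence has area $4a$, so the quantum parameter equals $1$ and $QH^*(\CP^3)\cong\C[h]/(h^4-1)$ with $c_1=4h$; therefore the eigenvalues of $c_1\star$ are $\{4,-4,4i,-4i\}$ — equivalently, the critical values of the Givental--Hori--Vafa potential $z_1+z_2+z_3+(z_1z_2z_3)^{-1}$. Next I would compute the critical locus of $W_{\overline{L}(\mathbf{w})}$ directly: $\partial_{y_1}W=\partial_{y_3}W=0$ force $y_1,y_3\in\{1,-1\}$, so $S:=(y_1+y_1^{-1})+(y_3+y_3^{-1})+\kappa\in\{\kappa+4,\kappa,\kappa-4\}$; then $\partial_{y_2}W=0$ gives $y_2^2=S$ (no critical point when $S=0$, since $y_2\in\C^*$) and critical value $W=y_2+S/y_2=\pm2\sqrt{S}$. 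Imposing that each attained critical value lie in $\{4,-4,4i,-4i\}$ amounts to requiring each of $\kappa+4$, $\kappa$, $\kappa-4$ to be $0$ or in $\{4,-4\}$, i.e. $\kappa\in\{-8,-4,0\}\cap\{-4,0,4\}\cap\{0,4,8\}=\{0\}$. This forces $\kappa=n_\alpha=0$; the argument is non-vacuous because for $\kappa=0$ the branch $S=4$ produces the honest critical points $y_1=y_3=1$, $y_2=\pm2$ with critical value $\pm4\in\mathrm{Spec}(c_1\star)$, so $\overline{L}(\mathbf{w})$ is a non-zero object.

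The main obstacle is the bookkeeping of normalizations needed to apply the structural theorem: one must identify the monotonicity constant of the cut $\CP^3$ (equivalently the value of the quantum parameter) and match it to the normalization in which the coefficients of $W_{\overline{L}(\mathbf{w})}$ are literally $1$ — this is exactly what the lift $[\ell]=2\overline{\alpha}+\alpha_{2,2}+\alpha_{3,0}$ supplies, together with the relation $\omega=a\cdot c_1$ on $\pi_2(\CP^3)$ coming from monotonicity of $\overline{L}(\mathbf{w})$. A secondary point, standard in the monotone setting and handled as in the $\CP^1\times\CP^1$ case, is the identification of the self-Floer cohomology of a monotone torus at a critical point of its potential as a non-zero object on which the closed--open image of $c_1$ acts by that critical value. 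Alternatively, as in the referee's remark following Lemma~\ref{lemma_sympleccp3}, one may note that the symplectomorphism $\overline{U}_{w_2}\simeq\CP^3$ carries $\overline{L}(\mathbf{w})$ to a Chekanov-type monotone torus in $\CP^3$ and quote its potential, from which $\kappa=0$ follows at once.
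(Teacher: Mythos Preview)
Your proposal is correct and takes essentially the same approach as the paper: both invoke Sheridan's structural theorem to force every critical value of $W_{\overline{L}(\mathbf{w})}$ to lie in the spectrum $\{\pm 4,\pm 4i\}$ of $c_1\star$ on $QH^*(\CP^3)$, which pins down $\kappa=0$. The paper defers the actual elimination to \cite[Section~7]{KimQuad}, whereas you carry out the critical-point computation explicitly; your remarks that only $c=0$ can contribute (since adding a line class shifts the Maslov index by $8$) and that the constraint is non-vacuous at $\kappa=0$ are helpful details not spelled out in the paper. One small inaccuracy: there is no referee remark after Lemma~\ref{lemma_sympleccp3} identifying $\overline{L}(\mathbf{w})$ with a Chekanov-type torus in $\CP^3$ --- that remark appears only after the $\CP^1\times\CP^1$ lemma --- so your ``alternatively'' clause should be dropped or rephrased as an analogy.
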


\begin{proof}
Regrading the quantum cohomology as a vector space, it has the eigenvalues $\{\pm 4, \pm 4 \sqrt{-1}\}$ of the linear transformation induced by the quantum cup product of the first Chern class of the tangent bundle of $\CP^3$. Then the object in the monotone Fukaya category can be non-zero only when the critical value of a deformation datum is $\{\pm 4, \pm 4 \sqrt{-1}\}$ again by the structural theorem of the monotone Fukaya category by Sheridan \cite[Corollary 1.5]{SheridanFano}. It determines $\kappa = 0$ as desired, see \cite[Section 7]{KimQuad}
\end{proof}

\subsection{Global model}\label{section_Globalmodel}

The goal of this section is to complete the proof of Theorem~\ref{theorem_main}. Let us fix $\mathbf{r}$ with $r_1 = r_2 = \dots = r_{n+3} = 1$. Consider the face $g_k$ of $\Delta_{\mathbf{r}}$ contained in $u_{k} = k+1$. 

\begin{enumerate}
\item When $k = 1$ or $n$, the face $g_k$ is of codimension two in the $n$-dimensional polytope $\Delta_\mathbf{r}$. The fiber over every point in the relative interior of $g_k$ is diffeomorphic to $\bS^2 \times T^{n-2} \eqqcolon \mcal{S}_k$.
\item When $k = 2, 3, \dots, n-2$, or $n-1$, the face $g_k$ is of codimension three in the $n$-dimensional polytope $\Delta_\mathbf{r}$. The fiber over each point in the relative interior of $g_k$ is diffeomorphic to $\mathrm{SO}(3) \times T^{n-3} \eqqcolon \mcal{S}_k$.
\end{enumerate}

Recall that the following facets intersect at the stratum $g_k$.
$$
\begin{cases}
\ell_{k,0}(\mathbf{u}) = 2k - u_{k-1} - u_{k} = 0, &\ell_{k+1,0}(\mathbf{u}) =  2(k+1) - u_{k} - u_{k+1} = 0, \\
\ell_{k,1}(\mathbf{u}) = 2 - u_{k} + u_{k-1} = 0, &\ell_{k,2}(\mathbf{u}) = u_{k+1} - u_{k} = 0.
\end{cases}
$$
Here $u_{0}$ and $u_{n+1}$ are set to the constants $0$ and $n+1$, respectively.  For sufficiently small $\varepsilon > 0$, let ${U}_k$ be the the following inverse image$\colon$ 
$$
U_k \coloneqq \Phi_\mathbf{r}^{-1} \left( \left\{\mathbf{u} \in \Delta_\mathbf{r} \mid u_{k} > k + \varepsilon \right\}  \, \cap \, \bigcap_{j \neq k-1, k, k+1} \left\{\mathbf{u} \in \Delta_\mathbf{r} \mid |u_j -j | < \varepsilon \right\} \right).
$$ 

\begin{proposition}\label{proposition_producttype}
For a choice of $\mathbf{r}$ with $r_1 = r_2 = \dots = r_{n+3} = 1$, consider the face $g_k$ contained in $u_{k} = k+1$. Then the open set ${U}_{k}$ is diffeomorphic to $T^* \mcal{S}_k$. Namely,
\begin{enumerate}
\item $T^* \mcal{S}_k \coloneqq T^*\bS^2 \times (T^*\bS^1)^{n-2}$ when $k = 1$ or $n$,
\item $T^* \mcal{S}_k \coloneqq T^*SO(3) \times (T^*\bS^1)^{n-3}$ when $k = 2, 3, \cdots, n-2$, or $n-1$.
\end{enumerate}
Moreover, the Hamiltonian torus action generated by the components in $\mcal{M}_\mathbf{r}$ other than $\Phi_{k-1}, \Phi_{k}$, and $\Phi_{k+1}$ acts on the cotangent bundle of the torus factors of $\mcal{S}_k$. 
\end{proposition}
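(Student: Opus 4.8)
To prove the proposition, the plan is to produce an explicit diffeomorphism $U_k\cong T^*\mathcal{S}_k$ carrying the residual torus action to the cotangent lift of rotation on the torus factors, by combining Bouloc's description of the relevant fiber with the two local models already constructed in Section~\ref{Section_computationofopenGWinv}. First, by Theorem~\ref{theorem_Bouloc} the fiber $\Phi_\mathbf{r}^{-1}(\mathbf{u}_k)$ over the center $\mathbf{u}_k$ of $\mathrm{relint}(g_k)$ is diffeomorphic to $\mathcal{S}_k$: the length data on $g_k$ forces the configuration polygon to have exactly one line segment (when $k=1,n$), resp.\ one $3$-gon (when $2\le k\le n-1$), glued to a chain of triangulated regions that contribute the torus factors, and $U_k$ is by construction an open neighborhood of $\Phi_\mathbf{r}^{-1}(\mathrm{relint}(g_k))$. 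I would next record that each component $\Phi_j$ with $j\notin\{k-1,k,k+1\}$ is smooth on all of $U_k$; this follows from the implicit function theorem exactly as in Lemma~\ref{lemma_smoothstratavij}, using that, since $\varepsilon$ is small, $U_k$ meets no wall of $\Delta_\mathbf{r}$ other than the four (resp.\ two) walls through $g_k$, and none of those is of the form $\ell_{j,\ast}=0$. Genericity of $\mathbf{r}$ then makes the commuting Hamiltonian $\mathbb{S}^1$-actions generated by these $\Phi_j$ free, so $U_k$ carries a free Hamiltonian $T^{n-3}$-action (resp.\ $T^{n-2}$-action for $k=1,n$) with moment map the partial system $(\Phi_j)_{j\notin\{k-1,k,k+1\}}$.

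Second, over the ``cube'' $\prod_{j\notin\{k-1,k,k+1\}}\{|u_j-j|<\varepsilon\}$ of values of this partial moment map, the triviality of a principal torus bundle over a contractible base (equivalently, the action--angle normal form) splits off a factor $T^*T^{n-3}$, realized concretely as the orbit torus times the window of action coordinates. Passing to the symplectic reduction by this free torus, the quotient of $U_k$ is a neighborhood of the collapsed-diagonal fiber in a \emph{lower}-dimensional bending picture in which only $u_{k-1},u_k,u_{k+1}$ (resp.\ $u_{k-1},u_k$ or $u_k,u_{k+1}$ at the ends) survive, with a single line-segment, resp.\ $3$-gon, fiber over the reduced image of $\{u_k>k+\varepsilon\}$. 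This is precisely the situation of the local models $U_{\bS^2}$ and $U_{\mathrm{SO}(3)}$: there the bundle map $\phi$, built from limiting polygon configurations, identifies such a neighborhood symplectically --- hence diffeomorphically --- with $T^*\bS^2$ and with $T^*\mathrm{SO}(3)=T^*\R P^3$ (Lemmas~\ref{lemma_sympleccp1cp1} and~\ref{lemma_sympleccp3}), and that construction goes through verbatim here because it only involves the triangle(s) bordering the diagonal $d_k$. Combining the two steps yields $U_k\cong T^*T^{n-3}\times T^*\mathrm{SO}(3)\cong T^*(T^{n-3}\times\mathrm{SO}(3))=T^*\mathcal{S}_k$ (and the analogous product with $\bS^2$ and $T^{n-2}$ when $k=1,n$), and the residual torus action is, by construction, translation on the $T^{n-3}$-factor, i.e.\ the cotangent lift of the rotation action on the torus factors of $\mathcal{S}_k$.

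The step I expect to be the main obstacle is the ``splitting'': the defining inequalities of $\Delta_\mathbf{r}$ couple $u_{k\pm1}$ to $u_{k\pm2}$, so the image $R_k=\Phi_\mathbf{r}(U_k)$ is not literally a product of a cube with a $3$-dimensional slab, and one must verify that, after restricting the coupled variables to their $\varepsilon$-windows, the fibration over $R_k$ is still a trivial bundle with fiber $\mathcal{S}_k$ over $g_k$ and that the symplectic form splits compatibly with the free torus action (a relative, torus-equivariant Weinstein-type normal form). A clean way to bypass the symplectic bookkeeping, sufficient because only the diffeomorphism type is claimed, is to trivialize $\Phi_\mathbf{r}$ over $R_k$ directly: flow along the gradient--Hamiltonian vector fields of the free $\Phi_j$ to trivialize the torus directions, then sweep $R_k$ radially onto $\mathrm{relint}(g_k)$, using the toric degeneration of Corollary~\ref{theorem_toricdegepolygon} to control the flow near the degenerate strata, thereby reducing the whole statement to Bouloc's description of $\mathcal{S}_k$ together with the cotangent structure of the two small local models. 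The boundary cases $k=1$ and $k=n$, where $g_k$ has codimension two by Proposition~\ref{proposition_monotone} and one of $u_{k-1},u_{k+1}$ is the frozen constant, are handled the same way with the $3$-dimensional slab replaced by a $2$-dimensional one and $T^*\mathrm{SO}(3)$ replaced by $T^*\bS^2$.
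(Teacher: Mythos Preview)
Your proposal is correct and follows essentially the same outline as the paper: split off the torus directions using the free Hamiltonian $T^{n-2}$ (resp.\ $T^{n-3}$) action generated by the $\Phi_j$ with $j\notin\{k-1,k,k+1\}$, and identify the remaining factor with the local model $U_{\bS^2}$ or $U_{\mathrm{SO}(3)}$.

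The one substantive difference is in how the trivialization is carried out. You invoke action--angle coordinates over the contractible cube of moment values. The paper instead trivializes the principal $T^{n-2}$-bundle $V_k^1\to\underline{U}_k^1$ (here $V_k^1$ is the level set $(\Phi_{\geq 3})^{-1}(3,\dots,n)$) by pushing it through the toric degeneration $\psi_{1,0}$: on the toric side $U_k^0$ the fan has only two $1$-cones, so a $(\C^*)^{n-2}$-factor splits off and $V_k^0$ is a trivial bundle; pulling back along $\underline{\psi}_{1,0}$ makes $V_k^1$ trivial as well. The $\R^{n-2}$ factor is then peeled off separately by projecting each $\Phi_j$ onto its open interval of values. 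Finally the reduced space $\underline{U}_k^1$ is identified with $U_{\bS^2}$ by an explicit \emph{forgetful} map $\Upsilon$ on polygon configurations (deleting the vertices beyond $d_3$). Your approach avoids the degeneration here and is a bit cleaner conceptually; the paper's approach has the advantage of producing the identification with the local model explicitly as a map on polygon configurations, which is used again later when comparing moduli spaces of disks.
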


\begin{proof}
Since the method proving the statement is similar for all $k$, we exhibit the proof of the statement only when $k = 1$. Set $k \coloneqq 1$.
Consider a toric degeneration 
$$
\psi_{1,0} \colon {U}_{k} = {U}^1_{k} \to {U}^0_{k}
$$ 
induced from the toric degeneration of $\mathcal{M}_\mathbf{r}$. Let $\Phi^t_{\geq \ell} \coloneqq  (\Phi^t_{\ell}, \Phi^t_{\ell+1}, \dots, \Phi^t_{n}).$ We take the level set of $\Phi^t_{\geq 3}$ at which the monotone Lagrangian torus fiber is located. Namely, we take
$$
{V}_k^t \coloneqq (\Phi^t_{\geq 3})^{-1}(3,4, \dots, n).
$$

For each $t$, the torus action generated by $\Phi^t_{\geq 3}$ on ${V}_{k=1}^t$ is free. We then have the principal $T^{n-2}$-bundle over the quotient space ${V}_k^t / T^{n-2}$. By the equivariance of $\psi_{1,0}$ under the $T^{n-2}$-action, we obtain the following bundle morphism of principal $T^{n-2}$-bundles
\begin{equation}\label{equ_commutdiapsi10}
\xymatrix{
{V}_k^1  \ar[rr]^{\psi_{1,0}} \ar[d] & & {V}_k^0 \ar[d] \\
\underline{{U}}_k^1\coloneqq {{U}}_k^1 / (\C^*)^{n-2} ={V}_k^1 / T^{n-2} \ar[rr]^{\underline{\psi}_{1,0}} & & {V}_k^0/T^{n-2} =  {{U}}_k^0 / (\C^*)^{n-2} \eqqcolon \underline{{U}}_k^0.
}
\end{equation}

The fan associated to the toric variety ${U}^0_{k=1}$ consists of two $1$-cones so that the cones generate two-dimensional subspace and do \emph{not} span $N_\R$. It has a $(\C^*)^{n-2}$-torus factor and ${U}^0_{k}$ can be trivialized as ${U}^0_{k} \simeq \underline{{U}}^0_k \times (\C^*)^{n-2}$. This trivialization induces a trivialization of ${V}_k^0 \simeq \underline{{U}}^0_k \times T^{n-2}$ and hence ${V}^0_{k}$ is a trivial principal $T^{n-2}$-bundle over $\underline{{U}}^0_k$. Therefore, the pullback bundle by $\underline{\psi}_{1,0}$ is also trivial. We have a bundle map from the pullback bundle $\underline{\psi}_{1,0}^* {{V}}^0_k$ to ${{V}}^1_k$ over the same base $\underline{{U}}_k^1$. As this bundle map is an isomorphism, the bundle ${{V}}^1_k \to \underline{{U}}^1_k$ is then also trivial. In particular, we obtain a trivialization 
$$
{{V}}^1_k \simeq \underline{{U}}^1_k \times T^{n-2}.
$$
A priori, this trivialization is a homeomorphism as the horizontal maps in~\eqref{equ_commutdiapsi10} are continuous. We may assume that this trivialization is a diffeomorphism because of the equivalence of smooth and continuous equivalences of principal bundles, see \cite{MullerWockel} for instance.

Recall the center of $\Delta_\mathbf{r}$ is located at $(u_1 = 1, u_2 = 2, \cdots, u_n =n)$ in~\eqref{equ_centerofdeltamathbfr} under our choice of $\mathbf{r} = (1, 1, \cdots, 1)$.
Consider the sequence of following maps, which project to an action variable$\colon$
\begin{itemize}
\item $\Phi_3 \colon {U}^1_k \to \R$
\item $\Phi_4 \colon \Phi_3^{-1}(3) \to \R$
\item $\Phi_5 \colon (\Phi_3, \Phi_4)^{-1}(3, 4) \to \R$
\item $\cdots$
\item $\Phi_n \colon  (\Phi_3, \Phi_4, \cdots, \Phi_{n-1})^{-1}(3, 4, \cdots, n-1) \to \R$
\end{itemize}
The image of each map is an open interval, which is contractible. Then the map $\Phi_3$ is a \emph{trivial} fiber bundle over $\Phi_3({U}^1_k)$ whose typical fiber is $\Phi_{3}^{-1}(3)$. In particular, ${U}^1_k$ and $\Phi_{3}^{-1}(3) \times \R$ are diffeomorphic. Proceeding inductively, we obtain a diffeomorphism 
$$
{U}^1_k \simeq {V}^1_k \times \R^{n-2}.
$$
In summary, we have 
\begin{equation}\label{equ_trivializationcho}
{U}^1_k \simeq  {V}^1_k \times \R^{n-2} \simeq \underline{{U}}^1_k \times T^{n-2} \times \R^{n-2}.
\end{equation}

The map $(\Phi_1, \Phi_2)$ factors through $ \underline{{U}}^1_k$ and we then have
$$
\xymatrix{
{V}^1_k \ar[rr]^{(\Phi_1, \Phi_2)} \ar[rd] & & \R^2  \\
& \underline{{U}}^1_k  \ar[ru]_{(\underline{\Phi}_1, \underline{\Phi}_2)} &  } 
$$
Then $\underline{{U}}^1_k$ can be identified with the first local model ${U}_{\bS^2}$ as follows. 
Since ${V}_k^1 = (\Phi^1_{\geq 3})^{-1}(3,4, \dots, n)$ is a subset of $\mathcal{M}_\mathbf{r}$, each point in this set is an equivalence class of an $(n+3)$ tuple of vectors such that
\begin{enumerate}
\item $v_1 + v_2 + \dots + v_{n+3} = 0$
\item $|d_3| = |d_4| = \cdots = |d_n| = 1$.
\end{enumerate}
Note that the lengths of diagonals $d_3, d_4, \cdots, d_n$ are determined because the level of $\Phi_{\geq 3}$ is chosen as $(3, 4, \cdots, n)$. As in Figure~\ref{fig_identification}, consider the \emph{forgetful} map given by
$$
\Upsilon \colon {{V}}^1_k \to {U}_{\bS^2}, \quad 
[(v_1, v_2, \dots, v_{n+3})] \mapsto [(v_1, v_2, \dots, v_4, -d_3)].
$$
This map is well-defined because $|d_3| = 1$ and the length of each edge of ${U}_{\bS^2}$ is one. Moreover, the map factors through $\underline{{U}}_k^1$ and induces 
\begin{equation}\label{equ_underlineupsilon}
\underline{\Upsilon} \colon \underline{{U}}^1_k \to {U}_{\bS^2}, \quad 
\left[ [(v_1, v_2, \dots, v_{n+3})] \right] \mapsto [(v_1, v_2, \dots, v_4, -d_3)].
\end{equation}
Explicitly, ${{U}}_k^1$ and ${U}_{\bS^2}$ are identified via the map $\underline{\Upsilon}$. Hence, combining with~\eqref{equ_trivializationcho}, we have
$$
{U}^1_k \simeq \underline{{U}}^1_k \times T^{n-2} \times \R^{n-2} \simeq T^*\bS^2 \times T^*(\bS^1)^{n-2}.
$$
\end{proof}

\begin{figure}[h]
	\scalebox{0.95}{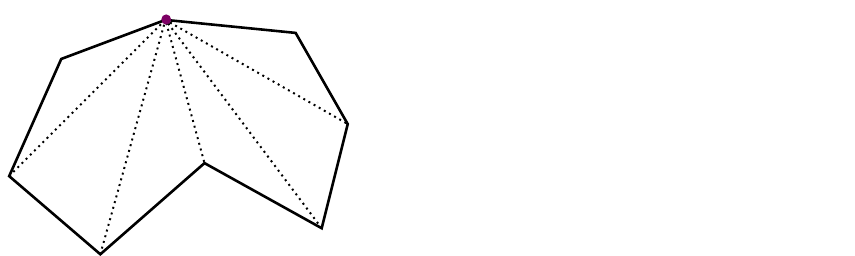}
	\caption{\label{fig_identification} The map $\Upsilon$.}	
\end{figure}

To compute the remaining counting invariants, we need to carry the Lagrangian torus at the center to one near the Lagrangian fiber of the form $\bS^2 \times T^{n-2}$ or $\mathrm{SO}(3) \times T^{n-3}$ and pass it into the local model. When passing it into the local model, we need to make sure that the counting invariant of a  targeted class does not change through the process. For this reason, we take the line in the center $\mathbf{u}_0$ and the center $\mathbf{u}_k$ of the face $g_k$. We denote by the line segment $[\mathbf{u}_0, \mathbf{u}_k]$. The fiber at each point of the half open segment $[\mathbf{u}_0, \mathbf{u}_k) \coloneqq [\mathbf{u}_0, \mathbf{u}_k] - \{ \mathbf{u}_k \}$ is a Lagrangian torus. We have the fiber $\bS^2 \times T^{n-2}$ or $\mathrm{SO}(3) \times T^{n-3}$ at the point $\mathbf{u}_k$. A reason to take the line segment is that the carried Lagrangian torus in the local model becomes a monotone Lagrangian torus, which will facilitate counting pseudo-holomorphic disks.

For a choice of a point $\mathbf{w}_0 \in [\mathbf{u}_0, \mathbf{u}_k)$ and each $t$, the choice of the segment $[\mathbf{u}_0, \mathbf{w}_0]$ leads to a Lagrangian isotopy $\iota^t \colon [0,1] \times T^{n} \to {X}^t$ such that $\iota^t (0, T^n) = L^t(\mathbf{u}_0)$ and  $\iota^t(1, T^n) = L^t(\mathbf{w}_0)$. To show that the counting invariants are preserved through the isotopy, we construct a cobordism between the moduli space of pseudo-holomorphic disks bounded by $L(\textbf{u}_0)$ and that by $L(\textbf{w}_0)$. As most of them are non-monotone in ${X}^t$, a Lagrangian torus over a certain point in the segment may bound a pseudo-holomorphic disks of Maslov index less than or equal to zero. Indeed, the existence of such a disk is sensitive to a choice of compatible almost complex structure. Nevertheless, we may choose a position $\textbf{w}_0$ ``close" to the center $\mathbf{u}_0$ so that the holomorphic disk of Maslov index $\leq 0$ does not occur along the isotopy.

\begin{lemma}\label{lemma_genericacs}
For a generic and compatible almost complex structure $J$, there exists a position $\mathbf{w}_0$ in $[\mathbf{u}_0, \mathbf{u}_k)$ such that the fiber of each point $\mathbf{u} \in [\mathbf{u}_0, \mathbf{w}_0]$ does not bound any  $J$-holomorphic disk of Maslov index less than or equal to zero.
\end{lemma}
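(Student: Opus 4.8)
\textbf{Proof strategy for Lemma~\ref{lemma_genericacs}.}
The plan is to exploit monotonicity at the center together with a dimension/energy argument along the segment. First, I would recall that the fiber $L(\mathbf{u}_0)$ at the center is monotone (Proposition~\ref{lemma_monotonecbs}), so for a generic $J$ it bounds no holomorphic disk of Maslov index $\le 0$ at all; more importantly, any nonconstant holomorphic disk it bounds has strictly positive symplectic area bounded below by the common area $r_1$ of a basic class, and there is a positive lower bound $\hbar > 0$ on the area of any nonconstant $J$-holomorphic sphere or disk with boundary on any fiber over the compact segment $[\mathbf{u}_0, \mathbf{u}_k]$ (Gromov compactness, after the Nishinou--Nohara--Ueda trick of embedding the degeneration family into a product of projective spaces and extending Lagrangians to totally real submanifolds, as used in the proof of Proposition~\ref{proposition_nonnegativeness}).

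Next I would quantify how the areas of disk classes vary along the segment. For a point $\mathbf{u} \in [\mathbf{u}_0, \mathbf{u}_k)$, the relative homotopy group $\pi_2(\mcal{M}_\mathbf{r}, \Phi_\mathbf{r}^{-1}(\mathbf{u}))$ is canonically identified with $\pi_2(\mcal{M}_\mathbf{r}, L(\mathbf{u}_0))$ by the Lagrangian isotopy (as in~\eqref{equation_pi2mr}), and under this identification the symplectic area of a class $\alpha$ at the fiber over $\mathbf{u}$ is an affine function $\omega_\mathbf{r}(\alpha) + \langle \partial\alpha, \mathbf{u} - \mathbf{u}_0\rangle$ of $\mathbf{u}$ (this is the standard computation: the change in area equals the integral of the tautological one-form, equivalently the pairing of the boundary class $\partial \alpha \in H_1(L;\Z) \simeq \Z^n$ with the displacement in the moment polytope). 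The Maslov index $\mu(\alpha)$ is locally constant along the isotopy, so it is unchanged. Now suppose, for a contradiction, that there is a sequence $\mathbf{w}_j \to \mathbf{u}_0$ and $J$-holomorphic disks $\varphi_j$ bounded by $L(\mathbf{w}_j)$ with $\mu([\varphi_j]) \le 0$. Each $\varphi_j$ has area $\ge \hbar$; on the other hand, writing $[\varphi_j]$ in the fixed group $\pi_2(\mcal{M}_\mathbf{r}, L(\mathbf{u}_0))$, its area at $\mathbf{u}_0$ is $\omega_\mathbf{r}([\varphi_j]) = \text{area}_{\mathbf{w}_j}([\varphi_j]) - \langle \partial[\varphi_j], \mathbf{w}_j - \mathbf{u}_0\rangle$. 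Since $L(\mathbf{u}_0)$ is monotone, $\omega_\mathbf{r}([\varphi_j]) = \tfrac{r_1}{2}\mu([\varphi_j]) \le 0$; but the correction term $\langle \partial[\varphi_j], \mathbf{w}_j - \mathbf{u}_0\rangle$ can a priori be large if $\partial[\varphi_j]$ is large, so one cannot conclude immediately. The fix is to control $\|\partial[\varphi_j]\|$: by Gromov compactness the disks $\varphi_j$ converge (after passing to a subsequence, and allowing bubbling) to a stable map bounded by $L(\mathbf{u}_0)$, whose total area is $\lim_j \text{area}_{\mathbf{w}_j}([\varphi_j])$, a finite number; hence the areas $\text{area}_{\mathbf{w}_j}([\varphi_j])$ are uniformly bounded, and since the area at $\mathbf{w}_j$ is a proper function of the class (on the lattice of classes realized by holomorphic curves, the area pairing is a norm up to the sphere directions, which have zero area and zero Maslov index by monotonicity), the classes $[\varphi_j]$ lie in a finite set. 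So after passing to a subsequence $[\varphi_j] = \alpha$ is constant, with $\mu(\alpha) \le 0$, and its area at $\mathbf{w}_j$ tends to $\omega_\mathbf{r}(\alpha) = \tfrac{r_1}{2}\mu(\alpha) \le 0$ as $\mathbf{w}_j \to \mathbf{u}_0$, contradicting $\text{area}_{\mathbf{w}_j}(\alpha) \ge \hbar > 0$.

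To make this rigorous I would instead argue directly with the finite set: for the fixed generic $J$, monotonicity of $L(\mathbf{u}_0)$ rules out $\mu \le 0$ disks at $\mathbf{u}_0$ itself, and there are only finitely many classes $\alpha \in \pi_2(\mcal{M}_\mathbf{r}, L(\mathbf{u}_0))$ with $\mu(\alpha) \le 0$ and $\text{area}_{\mathbf{u}_k}(\alpha) \le C$ for any fixed $C$ (again because area is a proper function on the relevant lattice). For each such $\alpha$, the function $\mathbf{u} \mapsto \text{area}_\mathbf{u}(\alpha)$ is affine and takes a value $\le 0$ at $\mathbf{u}_0$, so it is $< \hbar$ on a neighborhood of $\mathbf{u}_0$ in $[\mathbf{u}_0,\mathbf{u}_k]$; the intersection of these finitely many neighborhoods, together with the a priori energy bound $\text{area}_\mathbf{u} \le C$ (valid on the compact segment because any $J$-holomorphic disk bounded by a fiber over $[\mathbf{u}_0,\mathbf{u}_k]$ has area at most the symplectic volume, say), gives the desired $\mathbf{w}_0$: no class $\alpha$ with $\mu(\alpha)\le 0$ has positive area at any point of $[\mathbf{u}_0,\mathbf{w}_0]$, hence no such class can be represented by a $J$-holomorphic disk there (a nonconstant $J$-holomorphic disk has positive area, and the constant ones have $\mu = 0$ but are not what we are excluding). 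The main obstacle is the properness of the area pairing on the lattice of effective classes — i.e., ensuring that bounding the area really does bound the class — which requires knowing that the only zero-area directions are the isotropic spherical classes $\alpha_1, \alpha_n$ (and these have Maslov index zero and are genuinely represented only by isotropic, non-holomorphic cycles); this is exactly the content already established around~\eqref{equ_monotonequeg} and Proposition~\ref{proposition_effectiveclasses}, so I would lean on the classification of effective classes to get the finiteness, and then the affine-area argument finishes the proof.
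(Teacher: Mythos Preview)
Your approach is quite different from the paper's and carries a genuine gap. The paper's proof is a short openness argument that never touches areas at all: (i) for generic $J$, a parametrized transversality argument shows that \emph{simple} $J$-holomorphic disks of Maslov index $-\ell$ do not occur for any fiber once $\ell$ is large enough (the one-parameter family of fibers contributes only one extra dimension, so the relevant moduli spaces have negative expected dimension); (ii) for each of the finitely many remaining values $k \le 0$, the set of $\mathbf{u}$ in the segment whose fiber bounds a simple $J$-disk of Maslov index $k$ is closed; (iii) monotonicity says $\mathbf{u}_0$ is not in any of these closed sets. Since any multiply covered disk of nonpositive Maslov index sits over a simple one of nonpositive Maslov index, intersecting the finitely many open complements gives the required $\mathbf{w}_0$.

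The gap in your argument is the a priori energy bound. The assertion that any $J$-holomorphic disk bounded by a fiber has area at most ``the symplectic volume'' is false, so you have no uniform $C$ with which to invoke Gromov compactness or your finiteness-of-classes step. Without it, nothing rules out a sequence $\varphi_j$ with $\mu([\varphi_j]) \to -\infty$ and $\|\partial[\varphi_j]\| \to \infty$ balanced so that the area at $\mathbf{w}_j$ stays positive (and even grows); your own computation $\omega_\mathbf{r}([\varphi_j]) = \text{area}_{\mathbf{w}_j}([\varphi_j]) - \langle \partial[\varphi_j], \mathbf{w}_j - \mathbf{u}_0\rangle$ shows exactly how this can happen. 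Appealing to Proposition~\ref{proposition_effectiveclasses} for ``properness on effective classes'' does not help: that classification is specific to Maslov index two disks bounded by the monotone fiber, and says nothing about which $\mu \le 0$ classes are effective for nearby non-monotone fibers. The missing ingredient is precisely step~(i) above---genericity of $J$ used via dimension counting to bound the Maslov indices from below---after which you would no longer need any area argument at all.
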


\begin{proof}
First, the generic choice of compatible almost complex structure $J$ ensures that any simple $J$-holomorphic disks of Maslov index $- \ell$ for sufficiently large $\ell$ do not occur. Second, appearance of simple $J$-holomorphic disks of Maslov index $k$ for any given fixed integer $k \leq 0$ is a closed condition. Lastly, $L(\mathbf{u}_0)$ does not bound any non-constant $J$-holomorphic disks of Maslov index $\leq 0$ since $L(\mathbf{u}_0)$ is monotone. Therefore, combining the above three facts, we see that there exists a position $\mathbf{w}_0$ (depending on $J$) sufficiently close to $\mathbf{u}_0$ such that the torus located at each point $\mathbf{u} \in [\mathbf{u}_0, \mathbf{w}_0]$ does not bound any $J$-holomorphic disk of Maslov index $\leq 0$.
\end{proof}

When $k=1$, we are now aiming at computing the (sum of) counting invariants for the classes
\begin{equation}\label{equ_alphaell}
\alpha{(c)} \coloneqq c \cdot \alpha_1 + (\alpha_{1,0} + \alpha_{1,1}) / 2 = c \cdot \alpha_1 + (\alpha_{2,0} + \alpha_{1,2}) / 2 \quad (c \in \Z)
\end{equation}
in Proposition~\ref{proposition_effectiveclasses}.
The sum of counting invariants of those classes is equal to $\kappa_1$. The remaining invariants can be similarly calculated and we shall omit the process.

\begin{proposition}\label{proposition_monotoneuklu}
The counting invariant for $\alpha{(c)}$ in $\pi_2( \mcal{M}_\mathbf{r}, L(\mathbf{u}_0))$ is equal to that for $\alpha{(c)}$ in $\pi_2( \mcal{M}_\mathbf{r}, L(\mathbf{u}))$ for $\mathbf{u} \in  [\mathbf{u}_0, \mathbf{u}_k)$ such that $\mathbf{u}$ is sufficiently close to $\mathbf{u}_k$.
\end{proposition}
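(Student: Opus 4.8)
The statement asserts that the open Gromov--Witten invariant of the class $\alpha(c)$ (and hence the coefficient $\kappa_1$ of $1/z_1$ in the superpotential) is unchanged as we move the Lagrangian torus fiber along the segment $[\mathbf{u}_0,\mathbf{u}_k)$ from the center $\mathbf{u}_0$ toward $\mathbf{u}_k$, provided we stop at a point $\mathbf{u}$ sufficiently close to $\mathbf{u}_k$. The strategy is the standard cobordism argument for open Gromov--Witten invariants, combined with the control of disk bubbling provided by Lemma~\ref{lemma_genericacs}. First I would fix a generic compatible almost complex structure $J$ on $\mcal{M}_\mathbf{r}$, and invoke Lemma~\ref{lemma_genericacs} to produce a position $\mathbf{w}_0\in[\mathbf{u}_0,\mathbf{u}_k)$ so that no torus fiber $L(\mathbf{u})$ with $\mathbf{u}\in[\mathbf{u}_0,\mathbf{w}_0]$ bounds a $J$-holomorphic disk of Maslov index $\leq 0$. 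Any $\mathbf{u}$ in the interior of $[\mathbf{u}_0,\mathbf{w}_0]$ will then serve as the desired point; the phrase ``sufficiently close to $\mathbf{u}_k$'' in the statement is to be read relative to this $J$-dependent $\mathbf{w}_0$ (and in the next section one further shrinks so the fiber lands in the local model neighborhood $U_k$).

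The next step is to set up the Lagrangian isotopy and the parametrized moduli space. The segment $[\mathbf{u}_0,\mathbf{w}_0]$ (composed with a regular value path of the caterpillar bending system, which is a torus fibration over the interior of $\Delta_\mathbf{r}$) gives a smooth Lagrangian isotopy $\iota\colon[0,1]\times T^n\to\mcal{M}_\mathbf{r}$ with $\iota(0,\cdot)$ the fiber at $\mathbf{u}_0$ and $\iota(1,\cdot)$ the fiber at $\mathbf{u}$; since all these fibers lie over the interior of the bending polytope, the identification $\pi_2(\mcal{M}_\mathbf{r},L(\mathbf{u}_0))\simeq\pi_2(\mcal{M}_\mathbf{r},L(\mathbf{u}))$ of \eqref{equation_pi2mr} carries $\alpha(c)$ to $\alpha(c)$. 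Choosing a generic homotopy of almost complex structures (or keeping $J$ fixed and varying only the Lagrangian), one forms the parametrized moduli space $\bigcup_{s\in[0,1]}\mcal{M}_1(\mcal{M}_\mathbf{r},\iota(s,T^n),\alpha(c);J)$ together with its evaluation map to the total space of the fibration $[0,1]\times T^n$. Under the no-negative-disk condition from Lemma~\ref{lemma_genericacs}, a standard compactness-and-gluing argument shows this parametrized moduli space is a compact oriented manifold with boundary, and its boundary is exactly $\mcal{M}_1(\mcal{M}_\mathbf{r},L(\mathbf{u}_0),\alpha(c);J)\sqcup\mcal{M}_1(\mcal{M}_\mathbf{r},L(\mathbf{u}),\alpha(c);J)$ (with appropriate signs). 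Hence the degrees of the two evaluation maps agree, which is precisely the claimed equality of counting invariants; summing over $c\in\Z$ (a finite sum by Gromov compactness) then gives $\kappa_1$ computed at $\mathbf{u}$ equals $\kappa_1$ computed at $\mathbf{u}_0$.

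The heart of the matter, and the step requiring the most care, is ruling out bubbling in the parametrized moduli space. A sequence of $J$-holomorphic disks in class $\alpha(c)$ with $\mathbf{u}\to\mathbf{u}(s)$ along the isotopy could in principle degenerate into a stable map with several disk and sphere components; because $\alpha(c)$ has Maslov index two and the ambient manifold is monotone (Proposition~\ref{lemma_monotonecbs}), any sphere bubble has nonnegative Chern number, and a disk/disk splitting forces one bubble component to have Maslov index $\leq 0$ unless the splitting is into a Maslov-two disk plus constant components. By Lemma~\ref{lemma_genericacs} no fiber over $[\mathbf{u}_0,\mathbf{w}_0]$ bounds a nonconstant Maslov-$\leq 0$ disk, so no genuine boundary bubbling occurs along the path; a sphere bubble with positive Chern number would leave the disk component with negative Maslov index, again excluded, and a sphere bubble of Chern number zero does not occur for generic $J$ by dimension count. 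Thus the only codimension-one boundary of the parametrized moduli comes from the two endpoint fibers, as needed. I expect the bookkeeping of orientations and the verification that the relevant Fredholm setup is regular along the whole path (so that the parametrized space is genuinely a manifold-with-boundary) to be the main technical obstacle, but these are by now routine in the monotone setting, following \cite{EliashbergPolterovich} and the arguments already used for Lemma~\ref{lemma_invarianceofcounting}.
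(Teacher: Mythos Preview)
Your cobordism argument is correct as far as it goes, but it does not prove the stated proposition. Lemma~\ref{lemma_genericacs} only furnishes a point $\mathbf{w}_0$ \emph{close to the center} $\mathbf{u}_0$: its proof works by openness of the no-negative-disk condition around the monotone fiber $L(\mathbf{u}_0)$, and gives no control whatsoever on how far $\mathbf{w}_0$ lies from $\mathbf{u}_k$. Your reinterpretation ``sufficiently close to $\mathbf{u}_k$ means inside $[\mathbf{u}_0,\mathbf{w}_0]$'' is therefore illegitimate, because the entire downstream application (Proposition~\ref{proposition_producttype} and the reduction to the local models ${U}_{\bS^2}$, ${U}_{\mathrm{SO}(3)}$) requires the fiber $L(\mathbf{u})$ to sit inside the neighborhood $U_k$ of the non-toric fiber over $\mathbf{u}_k$, and there is no reason $\mathbf{w}_0$ lies in $U_k$. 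The generic intermediate tori on the long segment $[\mathbf{w}_0,\mathbf{u}_k)$ are \emph{not} monotone in $\mathcal{M}_\mathbf{r}$, so your bubbling analysis breaks down there and the single cobordism you construct cannot be extended across this gap.

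The paper bridges this gap by an essential extra mechanism you have omitted: the toric degeneration. One first passes via the symplectomorphism $\psi_{1,t}$ to $X^t$ with $t$ small, moves the short distance to $\mathbf{w}_0$ as you do, and then shows that in the toric limit $X^0$ every holomorphic disk in the class $\psi_{1,0,*}(\alpha(c))=(\beta_{2,0}+\beta_{1,2})/2$ is confined to $U_k^0$ (by the Cho--Poddar classification of toric orbi-disks and the maximum principle). Compactness then forces all $J_t$-disks in $\psi_{1,t,*}(\alpha(c))$ to lie in $U_k^t$ for $t$ small. The key point is that \emph{inside} $U_k^t$, which embeds in $T^*\mcal{S}_k$, every torus fiber over $[\mathbf{u}_0,\mathbf{u}_k)$ is monotone (Lemma~\ref{lemma_monotoneutw1}, \ref{lemma_monotoneutw2}), so one can now run the cobordism argument along the entire remaining segment up to any $\mathbf{u}$ near $\mathbf{u}_k$ without bubbling. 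Your proposal captures only step~(2) of this five-step argument and misses the localization-via-degeneration steps~(3)--(5), which are where the actual content lies.
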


\begin{proof}
We proceed the proof of Proposition~\ref{proposition_monotoneuklu} in five steps. 
\begin{enumerate}
\item Proposition~\ref{lemma_monotonecbs} asserts that the Lagrangian torus $L^1(\mathbf{u}_0)$ at the center is monotone. For each nonzero $t$, the symplectomorphism $\psi_{1,t}$ gives rise to an oriented cobordism so that
$$
n(\alpha{(c)}; \mcal{M}_\mathbf{r} \simeq {X}^1, L^1(\mathbf{u}_0)) = n(\psi_{1,t,* } (\alpha{(c)}) ; {X}^t, L^t(\mathbf{u}_0))
$$
\item Note that each $L^t(\mathbf{u}_0)$ does not bound any stable map of Maslov index zero because of the monotonicity. For each $t$ sufficiently close to $0$, there exists a position $\mathbf{w}_0$ close to $\mathbf{u}_0$ in Lemma~\ref{lemma_genericacs}, the family of Lagrangian tori over $[\mathbf{u}_0, \mathbf{w}_0]$ does not bound any holomorphic disks of Maslov index less than or equal to zero. We then have an oriented cobordism between the moduli spaces so that the counting invariants do not change. (By abuse of notation, we shall employ the same notation for the relative homotopy classes in $\pi_2({X}^1, L^1(\mathbf{u}_0))$ to denote those in $\pi_2({X}^1, L^1(\mathbf{w}_0))$ via the isomorphism of the homotopy groups induced by the isotopy $\iota^1$.) 
Namely, we have
$$
n(\psi_{1,t,* }({\alpha{(c)}}); {X}^t, L^t(\mathbf{u}_0)) = n(\psi_{1,t,* }({\alpha{(c)}}); {X}^t, L^t(\mathbf{w}_0)).
$$

\item We claim that each holomorphic disk $\varphi$ bounded by the toric fiber $L^0(\mathbf{w}_0)$ in the class $\psi_{1,0,*} (\alpha{(c)}) =  (\beta_{2,0} + \beta_{1,2})/2$ is contained in ${U}^0_k$. To see this, let $\Pi$ be the toric morphism from a simplicialization $\widetilde{{X}}^0$ of ${X}^0$ to the toric variety ${X}^0$ associated with $\Delta_\mathbf{r}$. We denote by $\widetilde{L}^0(\mathbf{w}_0)$ the Lagrangian toric fiber such that $\Pi(\widetilde{L}^0(\mathbf{w}_0)) = L^0(\mathbf{w}_0)$. Consider the strict transformation $\widetilde{\varphi}$ of the holomorphic disk $\varphi$, which is a holomorphic disk bounded by $\tilde{L}^0(\mathbf{w}_0)$ in the class $(\widetilde{\beta}_{2,0} + \widetilde{\beta}_{1,2})/2$. The disk $\widetilde{\varphi}$ can only intersect the union of the toric divisors corresponding to $\ell_{2,0}(\mathbf{u})=0$ and $\ell_{1,2}(\mathbf{u})=0$. By the classification result \cite[Theorem 6.2]{ChoPoddar} and the modulus principle applying to the coordinate function $u_k$, $\widetilde{\varphi}$ is contained in $\Pi^{-1}(U_k^0)$ and hence $\varphi$ is contained in $U_k^0$.

Note that a sequence of $J_t$-holomorphic disks in $\psi_{1,t,*} (\alpha{(c)})$ converges to the holomorphic disk $\varphi$ in the class $\psi_{1,0,*} (\alpha{(c)})$. Since the moduli space of $J_1$-holomorphic disks in $\alpha{(c)}$ is compact, for $t$ sufficiently close to $0$, every $J_t$-holomorphic disk in the class $\psi_{1,t,*} (\alpha{(c)})$ is fully contained in ${U}^t_k$. Thus,
$$
n(\psi_{1,t,* }({\alpha{(c)}}); {X}^t, L^t(\mathbf{w}_0)) =  n(\psi_{1,t,* }({\alpha{(c)}}); {U}_k^t, L^t(\mathbf{w}_0)).
$$
\item The set ${U}_k^t$ is symplectically embedded into the cotangent bundle $T^* \mcal{S}_k$.
Since each Lagrangian torus at $\mathbf{u} \in  [\mathbf{u}_0, \mathbf{u}_k)$ is monotone, it does not bound any holomorphic disks of Maslov index $\leq 0$. Thus, we have
$$
n(\psi_{1,t,* }({\alpha{(c)}});{U}_k^t, L^t(\mathbf{w}_0)) = n(\psi_{1,t,* }({\alpha{(c)}}); {U}_k^t, L^t(\mathbf{u})).
$$
\item If $\mathbf{u}$ is close to $\mathbf{u}_k$, then every $J_t$-holomorphic disk in the class $\psi_{1,t,* }({\alpha{(c)}})$ is contained in ${U}_k^t$ for each nonzero $t$. Since $L^t(\mathbf{u})$ is monotone for each $t$, we have
$$
n(\psi_{1,t,* }({\alpha{(c)}});{U}_k^t, L^t(\mathbf{w}_0)) = n({\alpha{(c)}}; {U}^1_k, L^1(\mathbf{u})).
$$
\end{enumerate}
Combining the five steps, we establish the desired statement.
\end{proof}

\begin{remark}
Proposition~\ref{proposition_monotoneuklu} tells us  that the holomorphic disks in the class Proposition~\ref{proposition_effectiveclasses} can be captured by a local model. If an effective disk class were degenerated into a combination of the disk classes which can intersect many lower dimensional strata, then any local model cannot capture such a contribution to the disk potential function.
\end{remark}

We are ready to compute $\kappa_\bullet$'s.

\begin{proof}[Proof of Theorem~\ref{theorem_main}]
By Proposition~\ref{proposition_monotoneuklu}, we can carry the Lagrangian torus at the center to a Lagrangian torus $L$ near the Lagrangian fiber of the form $\bS^2 \times T^{n-2}$ or $\mathrm{SO}(3) \times T^{n-3}$ in $U_k$ without changing the counting invariants. We now take the symplectic reduction of the Hamiltonian torus action generated by the components other than $\Phi_{k-1}, \Phi_{k}$, and $\Phi_{k+1}$. By Proposition~\ref{proposition_producttype}, the reduced space is diffeomorphic to either the first local model ${U}_{\bS^2}$ or the second local model $U_{\mathrm{SO}(3)}$.

Concerning the symplectic forms on ${U}_{\bS^2}$ (or $U_{\mathrm{SO}(3)}$), there are two natural symplectic forms that one can adorn the space with. The first one is the reduced symplectic form and the other is one from the local model. When it comes to pseudo-holomorphic curves, the choice of symplectic forms does not cause any differences on counting invariants because the residual torus action on the reduced space agrees with the torus action from the local model. Thus, the counting invariants in the reduced space can be computed by Lemma~\ref{lemma_diskpotus2} and~\ref{lemma_diskpotus3}.

We claim that there are orientation preserving diffeomorphisms
$$
\begin{cases}
\mcal{M}( {U}_k, {L}; \alpha{(c)}) \to \mcal{M}( {U}_{\bS^2}, L / T^{n-2}; \underline{\alpha}{(c)}) \times T^{n-2}, \\
\mcal{M}( {U}_k, {L}; \alpha{(c)}) \to \mcal{M}( {U}_{\mathrm{SO}(3)}, L / T^{n-3}; \underline{\alpha}{(c)}) \times T^{n-3},
\end{cases}
$$
which is constructed below. We only discuss the isomorphism when $k = 1$ as the other cases are similar. We start to choose with a generic almost complex structure $J_{\bS^2}$ on the reduced space $\underline{U}_k \simeq U_{\bS^2}$ in~\eqref{equ_underlineupsilon}. Recall that $U_k \simeq U_{\bS^2} \times (T^* \bS^1)^{n-2}$ are diffeomorphic by Proposition~\ref{proposition_producttype}

 Take a generic almost complex structure $J_{\mathrm{prod}}$ of product type on the product $U_k \simeq U_{\bS^2} \times (T^* \bS^1)^{n-2}$ by taking the product of $J_{\bS^2}$ and an almost complex structure on $(T^* \bS^1)^{n-2}$. In the second component, there is no non-trivial holomorphic disk bounded by $\bS^1$ in $(T^* \bS^1)^{n-2}$. If $\varphi \colon (\mathbb{D}^2, \partial \mathbb{D}^2) \to (U_k, L)$ is a $J_{\mathrm{prod}}$-holomorphic curve, then $\mathrm{pr}_2 \circ \varphi$ is constant. Because of our choice of $J_{\mathrm{prod}}$, one can lift the holomorphic disk bounded by $L/T^{n-2}$ in $U_{\bS^2}$ to one bounded by $L$ in $U_{k}$. Thus, we have a diffeomorphism
$$
\mcal{M}( {U}_k, {L}, J_{\mathrm{prod}}; \alpha{(c)}) \to \mcal{M}( {U}_{\bS^2}, L / T^{n-2}; \underline{\alpha}{(c)}) \times T^{n-2}
$$
given $\varphi \mapsto (\mathrm{pr}_1 \circ \varphi, \mathrm{pr}_2 \circ \varphi)$. Giving the orientations and spin structures arising from the torus actions and their residual torus actions, the diffeomorphism is indeed orientation preserving. Finally, since $L$ is monotone, the counting is invariant under a generic choice of almost complex structure. Hence,~\eqref{equ_computationk1k2} is derived. 
\end{proof}

\appendix
\section{Polygon space of pentagons}\label{AppendixA}

Suppose that a tuple $\mathbf{r}$ is generic. The polygon space $\mathcal{M}_\mathbf{r}$ can admit a monotone torus fiber (and hence $\mathcal{M}_\mathbf{r}$ is monotone) even if $\mathbf{r}$ is \emph{not} equilateral. 
In this section, we discuss the disk potential function of a monotone torus fiber of the caterpillar bending system $\Phi_\mathbf{r}$ of some non-equilateral polygon space in the four dimensional case.

Depending on a choice of $\mathbf{r}$, we can produce different polygon spaces and complete integrable systems. Some classification result on polygon spaces (as algebraic varieties) was obtained in \cite{HausmannKnutson, AlessiaMilena}. We recall one fact on polygon spaces, see \cite[Section 5]{AlessiaMilena} for instance. 

\begin{lemma}\label{lemma_5tupletoric}
For a generic $5$-tuple $\mathbf{r}$, let $\mcal{M}_\mathbf{r}$ be the space of pentagons associated to $\mathbf{r}$. Then the caterpillar bending torus action on $\mcal{M}_\mathbf{r}$ is toric if and only if $r_1 \neq r_2$ and $r_4 \neq r_5$.
\end{lemma}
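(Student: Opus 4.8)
The plan is to exploit that for a pentagon $n=2$, so $\dim_\R\mcal{M}_\mathbf{r}=4$ and the caterpillar bending system is $\Psi_\mathbf{r}=(\Psi_1,\Psi_2)$ with $\Psi_1=|d_{1,3}|=|v_1+v_2|$ and $\Psi_2=|d_{1,4}|=|v_1+v_2+v_3|$; its image $\Delta=\Psi_\mathbf{r}(\mcal{M}_\mathbf{r})\subset\R^2$ is cut out by the triangle inequalities of the three triangles $(v_1,v_2,d_{1,3})$, $(d_{1,3},v_3,d_{1,4})$, $(d_{1,4},v_4,v_5)$ of the caterpillar triangulation. The flow of $\Psi_1$ (resp. $\Psi_2$) rotates the ``ear'' $\{v_1,v_2\}$ about $d_{1,3}$ (resp. the ear $\{v_4,v_5\}$ about $d_{1,4}$), these two Hamiltonian $\bS^1$-flows commute, and by definition the bending torus action is \emph{toric} exactly when they assemble into an effective Hamiltonian $T^2$-action on all of $\mcal{M}_\mathbf{r}$; equivalently $\Psi_\mathbf{r}$ is then a toric moment map, so the fibre over the relative interior of each codimension-$k$ face of $\Delta$ is a single $T^{2-k}$-orbit. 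The approach is to decide when this holds by combining Bouloc's description of the fibres (Theorem~\ref{theorem_Bouloc}) with the combinatorics of $\Delta$, following \cite[Section 5]{AlessiaMilena}.

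For the ``only if'' direction I would assume $r_1=r_2$ (the case $r_4=r_5$ being symmetric). Then the ear $(v_1,v_2,d_{1,3})$ is isoceles and the value $\Psi_1=|r_1-r_2|=0$ is attained along $\{d_{1,3}=0\}$, where the folded ear $v_1=-v_2$ is free to point anywhere in $\bS^2(r_1)$. When $(r_3,r_4,r_5)$ satisfies the triangle inequality this locus is non-empty, and Theorem~\ref{theorem_Bouloc} identifies it with a fibre diffeomorphic to $\bS^2$ (that is $m_2=1$, $m_1=m_3=0$, sitting over the point $(0,r_3)\in\Delta$); since a compact toric $4$-manifold has no $2$-sphere moment-map fibre, the action is not toric. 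In the remaining sub-case ($r_1=r_2$ but $(r_3,r_4,r_5)$ not a triangle) the locus $\{d_{1,3}=0\}$ is empty, and I would instead work on $\Delta$: the absence of the value $\Psi_1=0$ forces the facet of $\Delta$ supported on the hyperplane where the middle triangle $(d_{1,3},v_3,d_{1,4})$ degenerates to be essential, and comparing its primitive inward normal against the circle subgroup of $T^2$ fixing its preimage shows the two disagree, so the bending circles cannot be coordinate circles of a toric structure. This last comparison is precisely the pentagon bookkeeping of \cite{AlessiaMilena}, which I would reproduce.

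For the ``if'' direction, assuming $r_1\neq r_2$ and $r_4\neq r_5$, both $\{d_{1,3}=0\}$ and $\{d_{1,4}=0\}$ are empty, so the two bending flows are smooth and globally $2\pi$-periodic and together generate a global Hamiltonian $T^2$-action with moment map $\Psi_\mathbf{r}$; it is effective since the two flows move disjoint ears. To see $\Psi_\mathbf{r}$ is a toric moment map I would reduce by the $\bS^1$-action of $\Psi_1$: the reduction at level $\ell_1$ is the quadrilateral polygon space $\mcal{M}_{(\ell_1,r_3,r_4,r_5)}\cong\CP^1$, which exhibits $\mcal{M}_\mathbf{r}$ as an $\bS^1$-space over a $\CP^1$-bundle over an interval where, because $r_1\neq r_2$, the ear contributes a genuine $\CP^1$-factor carrying its standard $\bS^1$-action at each end; running the same analysis for $\Psi_2$ yields an explicit toric model identifying $\mcal{M}_\mathbf{r}$ with one of $\CP^2$, $\CP^1\times\CP^1$, or a blow-up of $\CP^2$ at one, two, or three points, and matching its Delzant polytope with $\Delta$ completes the proof. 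The main obstacle is the ``only if'' direction in the sub-case with no $2$-sphere fibre: there the non-toricness has to be detected purely combinatorially, by pinning down which half-spaces in the triangle-inequality presentation of $\Delta$ are essential and exhibiting the normal/circle mismatch, which is exactly what \cite[Section 5]{AlessiaMilena} carries out.
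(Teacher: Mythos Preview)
The paper does not supply its own proof of this lemma; it simply cites \cite[Section~5]{AlessiaMilena}. Your overall approach --- detecting an $\bS^2$-fibre via Theorem~\ref{theorem_Bouloc} for the ``only if'' direction, and checking smoothness and effectiveness of the bending flows for the ``if'' direction --- is the standard one and aligns with the cited reference.

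There is, however, a genuine gap in your sub-case of the ``only if'' direction. You suppose $r_1=r_2$ with $(r_3,r_4,r_5)$ violating the triangle inequality, so that $\{d_{1,3}=0\}=\emptyset$ in $\mcal{M}_\mathbf{r}$, and then assert a ``normal/circle mismatch'' forces the action to be non-toric. In fact the bending action \emph{is} toric in this sub-case: both diagonals are everywhere nonzero, so $\Psi_1,\Psi_2$ are globally smooth and generate an effective Hamiltonian $T^2$-action. Take the generic tuple $\mathbf{r}=(1,1,5,1,3)$: the moment image is the Delzant triangle with vertices $(1,4),(2,4),(2,3)$, and along the edge $\Psi_1+\Psi_2=5$ where the middle triangle degenerates the isotropy circle is the diagonal $\{(\theta,\theta)\}\subset T^2$, which \emph{does} match the primitive inward normal $(1,1)$ --- there is no mismatch, and $\mcal{M}_\mathbf{r}\cong\CP^2$ as a toric manifold. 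The correct criterion is that the caterpillar bending action on a pentagon space is toric if and only if neither diagonal can vanish in $\mcal{M}_\mathbf{r}$, which is strictly weaker than ``$r_1\neq r_2$ and $r_4\neq r_5$''. This does not affect the three examples treated in the appendix, in each of which $(r_3,r_4,r_5)$ does form a triangle and the $\bS^2$-fibre is present, but it does mean your sub-case admits no valid argument, since its conclusion is false there.
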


If the polygon space $\mcal{M}_\mathbf{r}$ is toric and the bending torus action is toric, then the disk potential function of a toric fiber can be computed by the result of the the second named author with Chan \cite{ChanLau}. This appendix only deals with non-toric examples. We can compute the disk potential by applying the same strategy in Section~\ref{sec_classificationofeffec} and~\ref{Section_computationofopenGWinv}.  That is, we first classify all possible effective disks using a toric degeneration. We then compute the counting invariants by considering on the local model. In the case of dimension four, $T^*\mathbb{S}^2$ is the only interesting local model. We only state the computational result without giving the proof.

\begin{itemize}
\item  For $\mathbf{r} = (1,1,3,3,3)$, we have $\mcal{M}_\mathbf{r} \simeq \CP^1 \times \CP^1$. The caterpillar bending polytope is depicted in the first figure of Figure~\ref{fig_momentpolytope}. The disk potential function of the Lagrangian monotone torus fiber $\Phi_\mathbf{r}^{-1}(\mathbf{u}_0)$ at $\mathbf{u}_0 = (1,2)$ is 
\begin{equation}\label{equ_catertorusfiber11322}
W_{\mathbf{r}}(\mathbf{y}) =y_1 + \frac{y_2}{y_1} + \frac{2}{y_1} + \frac{1}{y_1y_2}. 
\end{equation}

\item For $\mathbf{r} = (1,1,3,2,2)$, we have $\mcal{M}_\mathbf{r} \simeq \CP^2 \# 2\overline{\CP^2}$.  The caterpillar bending polytope is depicted in the second figure of Figure~\ref{fig_momentpolytope}. The disk potential function of the Lagrangian monotone torus fiber $\Phi_\mathbf{r}^{-1}(\mathbf{u}_0)$ at $\mathbf{u}_0 = (1,2)$ is 
\begin{equation}\label{equ_catertorusfiber11322}
W_{\mathbf{r}}(\mathbf{y}) =y_1 + \frac{y_2}{y_1} + \frac{2}{y_1} + \frac{1}{y_1y_2} + {y_2}. 
\end{equation}

%

\item For $\mathbf{r} = (2,2,2,4,4)$, we have $\mcal{M}_\mathbf{r} \simeq \CP^2 \# 3\overline{\CP^2}$.  The caterpillar bending polytope is depicted in the third figure of Figure~\ref{fig_momentpolytope}.  The disk potential function of the Lagrangian monotone torus fiber $\Phi_\mathbf{r}^{-1}(\mathbf{u}_0)$ at $\mathbf{u}_0 = (2,4)$ is 
\begin{equation}\label{equ_catertorusfiber22244}
W_{\mathbf{r}}(\mathbf{y}) =y_1 + \frac{y_2}{y_1} + \frac{2}{y_1} + \frac{1}{y_1y_2} + \frac{2}{y_2} + \frac{y_1}{y_2}. 
\end{equation}
\end{itemize}

\begin{figure}[h]
	\scalebox{1}{
\begingroup%
  \makeatletter%
  \providecommand\color[2][]{%
    \errmessage{(Inkscape) Color is used for the text in Inkscape, but the package 'color.sty' is not loaded}%
    \renewcommand\color[2][]{}%
  }%
  \providecommand\transparent[1]{%
    \errmessage{(Inkscape) Transparency is used (non-zero) for the text in Inkscape, but the package 'transparent.sty' is not loaded}%
    \renewcommand\transparent[1]{}%
  }%
  \providecommand\rotatebox[2]{#2}%
  \newcommand*\fsize{\dimexpr\f@size pt\relax}%
  \newcommand*\lineheight[1]{\fontsize{\fsize}{#1\fsize}\selectfont}%
  \ifx\svgwidth\undefined%
    \setlength{\unitlength}{753.78808397bp}%
    \ifx\svgscale\undefined%
      \relax%
    \else%
      \setlength{\unitlength}{\unitlength * \real{\svgscale}}%
    \fi%
  \else%
    \setlength{\unitlength}{\svgwidth}%
  \fi%
  \global\let\svgwidth\undefined%
  \global\let\svgscale\undefined%
  \makeatother%
  \begin{picture}(1,0.12421987)%
    \lineheight{1}%
    \setlength\tabcolsep{0pt}%
    \put(0,0){\includegraphics[width=\unitlength,page=1]{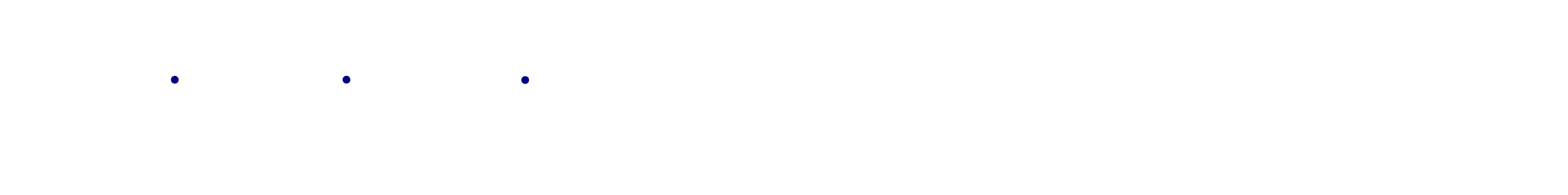}}%
    \put(0.09853295,0.00701535){\color[rgb]{0,0,0}\makebox(0,0)[lt]{\lineheight{1.25}\smash{\begin{tabular}[t]{l}$A.1$\end{tabular}}}}%
    \put(0.2099701,0.00701535){\color[rgb]{0,0,0}\makebox(0,0)[lt]{\lineheight{1.25}\smash{\begin{tabular}[t]{l}$A.2$\end{tabular}}}}%
    \put(0.32140726,0.00701535){\color[rgb]{0,0,0}\makebox(0,0)[lt]{\lineheight{1.25}\smash{\begin{tabular}[t]{l}$A.3$\end{tabular}}}}%
    \put(0,0){\includegraphics[width=\unitlength,page=2]{fig_momentpolytope.pdf}}%
    \put(0.43875659,0.0692902){\color[rgb]{0,0,0}\makebox(0,0)[lt]{\lineheight{1.25}\smash{\begin{tabular}[t]{l}$\mathbb{S}^2$-fiber\end{tabular}}}}%
    \put(0.43776162,0.04541081){\color[rgb]{0,0,0}\makebox(0,0)[lt]{\lineheight{1.25}\smash{\begin{tabular}[t]{l}The center $\mathbf{u}_0$\end{tabular}}}}%
    \put(-0.00301213,0.05834548){\color[rgb]{0,0,0}\makebox(0,0)[lt]{\lineheight{1.25}\smash{\begin{tabular}[t]{l}$\,$\end{tabular}}}}%
  \end{picture}%
\endgroup%
}
	\caption{\label{fig_momentpolytope} Caterpillar bending polytopes with different choice of $\mathbf{r}$.}	
\end{figure}

%

\providecommand{\bysame}{\leavevmode\hbox to3em{\hrulefill}\thinspace}
\providecommand{\MR}{\relax\ifhmode\unskip\space\fi MR }
\providecommand{\MRhref}[2]{%
  \href{http://www.ams.org/mathscinet-getitem?mr=#1}{#2}
}
\providecommand{\href}[2]{#2}

\end{document}